\documentclass[a4paper,twoside]{article}  
\usepackage{amssymb,amsmath,amsthm,dsfont,amsfonts,color,latexsym}
\usepackage{hyperref}
\usepackage{abstract}
\usepackage{bbm}
\usepackage{mathrsfs} 
\numberwithin{equation}{section}
\newcommand{\re}{\mathrm{Re}}
\newcommand{\im}{\mathrm{Im}}
\newcommand{\res}{\mathrm{Res}}
\definecolor{blue}{rgb}{0.00,0.00,1.00}
\definecolor{red}{rgb}{1.00,0.00,0.00}

\allowdisplaybreaks[4]
\renewcommand{\baselinestretch}{1.2}
\def\bq{\begin{equation}}
\def\eq{\end{equation}}
\def\ball{\begin{aligned}}
\def\eall{\end{aligned}}
\def\ba{\begin{array}{ccc}}
\def\bal{\begin{array}{lll}}
\def\ea{\end{array}}
\def\bsp{\begin{split}}
\def\esp{\end{split}}

\def\({\left(}\def\){\right)}
\def\[{\left[}\def\]{\right]}

\def \E   {\mathrm{E}}
\def \D   {\mathrm{D}}
\def \M   {\mathbb{M}}

\def \L   {\mathbf{L}}

\def \R   {\mathbb{R}}

\def\C    {\mathbb{C}}

\def\i    {\mathrm{i}}

\def\S    {\mathbb{S}}
\def\eps  {\epsilon}
\def\intr {\int_{\R^3}}

\def\intt {\int^t_0}

\def \N    {\mathbb{N}}

\def \Dt   {\frac{\rm d}{{\rm d}t}}
\def \Dtau   {\frac{\rm d}{{\rm d}\tau}}

\def \dt    {\partial_t}

\def \dx    {\partial_x}

\def \dxa   {\partial^{\alpha}_x}

\def \dxap   {\partial^{\alpha'}_x}

\def \ddx   {\Delta_x}
\def \ddv   {\Delta_v}

\def \div  {{\rm div}}
\def \divx  {{\rm div}_x}

\def\tdx   {\nabla_x}
\def\tdv   {\nabla_v}

\def\bq{\begin{equation}}
\def\eq{\end{equation}}
\def\be{\begin{equation}}
\def\ee{\end{equation}}

\def\bag{\begin{align*}}
\def\eag{\end{align*}}
\def\bma#1\ema{{\allowdisplaybreaks\begin{align}#1\end{align}}}
\def\bmas#1\emas{{\allowdisplaybreaks\begin{align*}#1\end{align*}}}
\def\bln#1\eln{{\allowdisplaybreaks\begin{aligned}#1\end{aligned}}}
\def\nnm{\notag}
\def\bgr#1\egr{\allowdisplaybreaks\begin{gather}#1\end{gather}}
\def\bgrs#1\egrs{\allowdisplaybreaks\begin{gather*}#1\end{gather*}}

\theoremstyle{plain}
\newtheorem{lem}{\bf Lemma}[section]
\newtheorem{thm}[lem]{\textbf{Theorem}}

\newtheorem{defn}[lem]{\textbf{Definition}}
\newtheorem{remark}[lem]{\bf Remark}

\usepackage{marginnote}
\usepackage{varwidth}
\begin{document}
\title{Spectrum analysis and optimal time decay rates of a kinetic-fluid-Poisson system}
\author{Junhao Chen$^1$, Hailiang Li$^2$, Mingying Zhong$^{1,3}$
\\\emph
{\small\it $^1$School of  Mathematics, Guangxi University, China.}
\\{\small\it E-mail:\ junhaochenmath@163.com}
\\{$^2$\small\it School of Mathematical Sciences, Capital Normal University, China.}
\\{\small\it E-mail:\ hailiang.li.math@gmail.com}
\\{\small\it $^3$Center for Applied Mathematical of Guangxi (Guangxi University), Guangxi University, China.}
\\{\small\it E-mail:\ zhongmingying@gxu.edu.cn}
}

\pagestyle{myheadings}
\markboth{kinetic-fluid-Poisson system}%
{J.-H. Chen, H.-L. Li, M.-Y. Zhong}
\date{}
\maketitle
\begin{abstract}
In this paper, we consider the Cauchy problem for the Vlasov-Poisson-Fokker-Planck/Navier-Stokes-Poisson (VPFP/NSP) system, which couples the VPFP system with the compressible NSP system through a friction force dependent on the relative velocity and a self-consistent Poisson equation \cite{lxliu0,nsp1,dlzt2}. Motivated by the spectrum analysis for the Vlasov-Poisson-Boltzmann (VPB) system \cite{zhongspvpb}, we introduce a suitable norm to capture the effect of the forcing induced by the Poisson equation and give a detailed spectrum analysis of the linearized system around a global equilibrium. Our results show that the two coupling mechanisms lead to an essentially different spectrum structure of the coupled system from those of the individual VPFP and NSP systems \cite{nsp1,vpfp1}. More precisely, the low-frequency spectrum contains a pair of acoustic branches with the propagation speed $\sqrt{\frac{\gamma+1}{2}}~(\gamma\ge1)$ and two diffusive branches, thereby restoring the usual acoustic wave propagation of classical compressible fluids. Moreover, we establish the global existence of the solution to the nonlinear system and obtain the optimal time decay rate $(1+t)^{-\frac34}$, with a faster rate $(1+t)^{-\frac54}$ for the electric field and relative velocity. The present analysis also provides a useful framework for studying related kinetic-fluid models coupled through friction and self-consistent fields.

\medskip
\textbf{Key words}. Vlasov-Poisson-Fokker-Planck system, compressible Navier-Stokes-Poisson system, kinetic-fluid models, spectrum analysis, optimal time decay rates
\end{abstract}

{\tableofcontents}
\section{Introduction}
Kinetic-fluid models play an important role in the study of two-phase flows. In two-phase flows, the disperse phase (kinetic) is usually described using a statistical approach, while the dense phase (fluid) using a hydrodynamic approach \cite{lxliu0,lxliu3}. These models have extensive specific applications, including the description of diesel engines, the dynamics of sprays, pollution settling processes, rain formation, wastewater treatment and so on \cite{pphy3,pphy4,pphy5,pphy2}.

The kinetic-fluid-Poisson system can be used to model the temporal evolution of a collisionless plasma with hot electrons and cold positive ions, and capture the kinetic effects such as electron phase space holes, with related applications including the study of the nonlinear electrostatic waves, the generation and propagation of ion‑acoustic solitons \cite{dlzt1,dlzt2}. In the present paper, we consider the Vlasov-Poisson-Fokker-Planck/Navier-Stokes-Poisson (VPFP/NSP) system in three dimension:
\be\label{introe1}
\left\{\ball
&\partial_tF+v\cdot\nabla_xF+\nabla_x\Phi\cdot\nabla_vF=\nabla_v\cdot[\nabla_vF+(v-u)F],
\\&\partial_t\varrho+\div_x (\varrho u)=0,
\\&\partial_t(\varrho u)+\div_x (\varrho u\otimes u)-\ddx u+\nabla p+\varrho\nabla_x\Phi=\int_{\mathbb{R}^3}(v-u)Fdv,
\\&\ddx\Phi=\int_{\mathbb{R}^3}Fdv-\varrho .
\eall\right.
\ee
In this system, the distribution function of the dilute charged particles is given by $F(t, x, v)$ with the position $x=(x_1,x_2,x_3)\in\R^3$, velocity $v=(v_1,v_2,v_3)\in\R^3$ and time $t\in\R^+$. The fluid is characterized by its density $\varrho = \varrho(t, x) \ge 0$, velocity $u = u(t, x)$, and pressure $p(\varrho) = \varrho^{\gamma}$ with $\gamma\ge1$. $\Phi(t,x)$ is the electrostatic potential. The kinetic and fluid equations are coupled through two mechanisms, namely the mutual friction force $F_f:=c_f(v-u)$ dependent on the relative velocity with the friction coefficient $c_f = 1$,  and the electrostatic force derived from the electrostatic potential $\Phi(t,x)$ governed by the Poisson equation $\eqref{introe1}_4$.

There have been a lot of works on the kinetic-fluid system where the two phases are coupled through the friction force. For the Vlasov-Fokker-Planck-Navier-Stokes system, the global existence of weak solutions was studied in \cite{lxliu1}, and the asymptotic analysis of the solutions was studied in \cite{lxliu2}. Moreover, the global existence and large time behavior of the strong solution when the friction coefficient $c_f = \varrho$ were studied in \cite{lxliu3}, and the  global inviscid limit was studied in \cite{lxliu8}. The globally nonlinear time-asymptotical stability of the planar rarefaction wave was studied in \cite{lxliu7}. For the Vlasov-Fokker-Planck-Euler system, the global well-posedness of the Cauchy problem and rates of convergence of solutions toward equilibrium were studied in \cite{lxliu4} for the compressible case, and the incompressible case in \cite{lxliu5}. The global existence of weak solutions to the Vlasov-Poisson-Fokker-Planck-Navier-Stokes system with arbitrary large initial data was established in \cite{lxliu6}.

Spectrum analysis is an important method for the study of PDEs. For the VPFP system, the spectrum structure and the optimal time decay rates of the classic solution to the Cauchy problem  were investigated in \cite{vpfp1}, and based on this result, the corresponding Green's function and pointwise behavior was established in \cite{vpfp2}. For the Vlasov-Poisson(Maxwell)-Boltzmann system, the spectrum analysis and the optimal time decay rates of the global solution were studied in \cite{zhongspvpb,zhongspbvpb, zhongspvmb} respectively.

Notice that there is a stationary solution $U_*=(M,1,0)$ of system \eqref{introe1} with the normalized Maxwellian $M(v)$ given by
$$
M:=M(v)=\frac{1}{(2\pi)^{\frac{3}{2}}}e^{-\frac{|v|^2}{2}}, \quad v\in\mathbb{R}^3.
$$
Define the perturbation $(f,\rho,u)$ of $(F,\varrho,u)$ near $U_*$ by
$$
F=M+\sqrt{M}f,\quad \varrho=1+\rho,
$$
the system \eqref{introe1} for $(f,\rho,u)$ can be written as
\be\label{introe2}
\left\{\ball
&\partial_t f+v\cdot \nabla_xf -Lf-v\chi_0\cdot u-v\chi_0\cdot\nabla_x\Phi =R_1,
\\
&\partial_t\rho+\div_xu=-\divx(\rho u),
\\
&\partial_tu+\gamma\nabla_x\rho-\ddx u-(\tau_bf-u)+\nabla_x\Phi=R_2,
\\
&\ddx\Phi=\tau_af-\rho,
\eall\right.
\ee
where
\bma
R_1&\label{n1}=(\nabla_x\Phi+ u)\cdot\(\frac{1}{2}vf-\nabla_vf\),
\\
R_2&\label{n2}=-(u\cdot\nabla_x)u-\frac{\rho}{1+\rho}\ddx u-\[\frac{p'(1+\rho)}{1+\rho}-\gamma\]\tdx\rho-\frac{\rho}{1+\rho}(\tau_bf-u)-\frac{\tau_afu}{1+\rho},
\ema
and
$$
\tau_af=\intr\sqrt{M}f dv,\quad \tau_bf=\intr v\sqrt{M}f dv.
$$
The initial data is given by
\be (f(0,x,v),\rho(0,x),u(0,x))=(f_0(x,v),\rho_0(x),u_0(x)). \label{initial}\ee
The operator $L$ is the Fokker-Planck operator
$$
Lf=\ddv f-\frac{|v|^2}{4}f+\frac{3}{2}f.
$$

We denote $L^2(\mathbb{R}^3_v)$ be a Hlibert space of complex-value functions $f(v)$ on $\mathbb{R}^3$ with the inner product and the norm
$$
(f,g)=\int_{\mathbb{R}^3}f(v)\overline{g(v)}dv,\quad \|f\|=\left(\int_{\mathbb{R}^3}|f(v)|^2dv\right)^{\frac{1}{2}}.
$$

Corresponding to the linearized operator $L$, we define the dissipation norm by
$$
\|f\|_{L^2_\sigma}^2=\|\nabla_vf\|^2+\|vf\|^2.
$$
This norm is stronger than the $L^2$- norm because
$$
\|f\|_{L^2_\sigma}^2\ge2\|\nabla_vf\|\|vf\|\ge-2(\tdv f,vf)=3\|f\|^2.
$$

For convenience, we denote $\{\chi_j,~j=0,1,2,3\}$, with
$$
\chi_0=\sqrt{M},\quad \chi_j=v_j\sqrt{M},~j=1,2,3.
$$
Then, the null space of the operator $L$, denoted by $N_0$, is a subspcae spanned by $\chi_0$. Let $P_d$ be the projection operator from $L^2(\mathbb{R}^3_v)$ to the subspace $N_0$ with
$$
P_df=\tau_af\chi_0=(f,\chi_0)\chi_0,\quad P_r=I-P_d.
$$
Moreover, noticing that
$$
L\chi_0=0,\quad L\chi_j=-\chi_j,~~j=1,2,3,
$$
which means that $\mbox{span}\{\chi_j, j = 0,1,2,3\}$ is the invariant subspace of $L$. We introduce the projection operators $P_m$, $P_0$ in the invariant subspace of $L$ by
$$
P_mf=\tau_bf\cdot v\chi_0=\sum\limits_{j=1}^3 (f,\chi_j)\chi_j,\quad P_0=P_d+P_m,\quad P_1=I-P_0.
$$

Form \cite{yusplfp}, the linearized operator $L$ is non-positive and locally coercive in the sense that
$$
(Lf,f)\le-\|P_rf\|^2,\quad f\in D(L),
$$
where $D(L)$ is the domain of $L$ given by
$$D(L)=L^2_\sigma(\R^3_v):=\{f\in L^2(\mathbb{R}^3_v)\mid \|f\|_{L^2_\sigma}<+\infty\}. $$
Moreover, there exists a constant $\mu_0\in (0,1)$ such that
$$
(Lf,f)\le-\mu_0\|P_rf\|_{L^2_\sigma}^2.
$$
\noindent\textbf{Notations:} For any $\alpha=(\alpha_1,\alpha_2,\alpha_3)\in\N^3$, denote $\dxa=\partial_{x_1}^{\alpha_1}\partial_{x_2}^{\alpha_2}\partial_{x_3}^{\alpha_3}$. Define the Fourier transform of $f=f(t,x,v)$ by
$$\hat{f}(\xi,v)=\mathcal{F} f(\xi,v)=\frac{1}{(2\pi)^{3/2}}\intr f(x,v)e^{-ix\cdot\xi}dx,$$
where and throughout this paper we denote $i=\sqrt{-1}$. Set $\xi=s\omega$ with $\omega=\xi/|\xi|\in\S^2$.

Set the Sobolev space $H^N_x=\{f\in L^2(\R^3_x)\mid\|f\|_{H^N_x}<\infty\}$ equipped with the norm
$$\|f\|^2_{H^N_x}=\sum\limits_{|\alpha|\le N}\|\dxa f\|_{L^2_x}^2.$$
For $q\ge1$, we also define
$$
L^{2,q}=L^2(\R^3_v,L^q(\R^3_x)),\quad \|f\|_{L^{2,q}}=\(\intr\(\intr|f(x,v)|^qdx\)^{2/q}dv\)^{1/2}.
$$
In what follows, we denote by $\|\cdot\|_{L^2_{x,v}}$ and $\|\cdot\|_{L^2_{\xi,v}}$
the norm of the function spaces $L^2(\R^3_x\times \R^3_v)$ and
$L^2(\R^3_\xi\times \R^3_v)$ respectively, and denote by
$\|\cdot\|_{L^q_x}$ and $\|\cdot\|_{L^q_\xi}$ the norms of the function spaces $L^q(\R^3_x)$ and $L^q(\R^3_\xi)$ respectively. For any integer $N\ge 1$, we denote by
$\|\cdot\|_{L^2_v(H^N_x)}$ the norm in the space
$L^2(\R^3_v,H^N(\R^3_x))$.
Moreover, we denote by $\|\cdot\|_{L^2_\sigma(L^q_x)}$ and $\|\cdot\|_{L^2_\sigma(H^N_x)}$ the norms in the spaces $L^2_\sigma(\R^3_v,L^q(\R^3_x))$ and
$L^2_\sigma(\R^3_v,H^N(\R^3_x))$ respectively.


Let $\varsigma\ge0$, and define the Banach space for the function $f(v)$ as
$$
L^\infty_{v,\varsigma}(\R^3_v)=\Big\{f\in L^\infty(\R^3_v)\mid\|f\|_{L^\infty_{v,\varsigma}}=\sup\limits_{v\in\R^3}(1+|v|)^\varsigma|f(v)|<\infty\Big\}.
$$
and the Banach space for the function $f(x,v)$ as
\bmas
L^\infty_{v,\varsigma}(L^q_x)&=L^\infty_{v,\varsigma}(\R^3_v,L^q(\R^3_x)), ~~\|f\|_{L^\infty_{v,\varsigma}(L^q_x)}=\sup\limits_{v\in\R^3}(1+|v|)^\varsigma\|f(\cdot,v)\|_{L^q_x},
\\L^\infty_{v,\varsigma}(H^N_x)&=L^\infty_{v,\varsigma}(\R^3_v,H^N_x(\R^3_x)), ~~\|f\|_{L^\infty_{v,\varsigma}(H^N_x)}=\sup\limits_{v\in\R^3}(1+|v|)^\varsigma\|f(\cdot,v)\|_{H^N_x}.
\emas

First, we have the following result on well-posedness of the system \eqref{introe2}  and \eqref{initial}:
Let $N\ge3$ and $U=(f,\rho,u)$, and define
\bmas
E_N(U)=&\sum\limits_{|\alpha|\le N}\(\|\dxa f\|_{L^2_{x,v}}^2+\gamma\|\dxa\rho\|_{L^2_x}^2+\|\dxa u\|_{L^2_x}^2+\|\dxa\tdx\Phi\|_{L^2_x}^2\),
\\
D_N(U)=&\sum\limits_{|\alpha|\le N}\(\|\dxa P_1f\|_{L^2_\sigma(L^2_x)}^2+\|\dxa\nabla_xu\|_{L^2_x}^2+\|\dxa(\tau_bf-u)\|_{L^2_x}^2\)
\\&+\sum\limits_{|\alpha|\le N-1}\(\|\dxa\tdx P_0f\|_{L^2_v(L^2_x)}^2+\|\dxa\tdx\rho\|_{L^2_x}^2+\|\dxa\tdx\Phi\|_{H^1_x}^2\).
\emas
\begin{thm}\label{ztcz}
There exists a small constant $\delta_0>0$ such that if the initial data $U_0$ satisfies $E_3(U_0)\le\delta_0$, then the Cauchy problem for the VPFP/NSP system \eqref{introe2}  and \eqref{initial} admits a unique global solution $U=(f,\rho,u)$ satisfying
$$
E_3(U(t))+\intt D_3(U(s))ds\le CE_3(U_0).
$$
\end{thm}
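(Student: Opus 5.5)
Theorem \ref{ztcz} will be proved by the standard energy method: local existence plus a uniform a priori estimate, closed by a continuity argument. Local existence and uniqueness for \eqref{introe2}, \eqref{initial} in the energy space $E_3$ follow from a linearization/iteration scheme. The parabolic–hyperbolic fluid part is treated as in compressible Navier–Stokes, and the Fokker–Planck part with the field and friction treated as lower-order sources. We therefore concentrate on the a priori estimate. Assume that on $[0,T]$
$$\sup_{0\le t\le T}E_3(U(t))\le \delta$$
for some small $\delta$. We aim to show
$$E_3(U(t))+\intt D_3(U(s))ds\le CE_3(U_0)+C\delta^{1/2}\intt D_3(U(s))ds,$$
which closes once $\delta$ is small.

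\textbf{Step 1 (basic energy identity).} Apply $\dxa$ with $|\alpha|\le3$ to \eqref{introe2}. Take the $L^2_{x,v}$ inner product of the $f$-equation with $\dxa f$, the $\rho$-equation with $\gamma\dxa\rho$, and the $u$-equation with $\dxa u$, then add. The key cancellations are as follows.
\begin{itemize}
\item The friction terms combine: $-(v\chi_0\cdot u,f)-(\tau_bf-u,u)$ together with $(Lf,f)$ gives $(Lf,f)+|\tau_bf|^2-|\tau_bf-u|^2$. Since $L\chi_j=-\chi_j$, we have $(Lf,f)+|\tau_bf|^2=(LP_1f,P_1f)+(LP_df,P_df)\le-\mu_0\|P_1f\|_{L^2_\sigma}^2$. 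This produces the dissipation $\|P_1f\|_{L^2_\sigma}^2+|\tau_bf-u|^2$.
\item The Poisson terms combine: $-(v\chi_0\cdot\nabla\Phi,f)+(\nabla\Phi,u)=-(\nabla\Phi,\tau_bf-u)$. By the continuity equations $\dt\tau_af+\div\tau_bf=0$ and $\dt\rho+\div u=0$ (linear parts) and $\ddx\Phi=\tau_af-\rho$, this equals $\frac12\Dt\|\nabla\Phi\|^2$. This is why $\|\dxa\tdx\Phi\|^2$ appears in $E_N$.
\item Viscosity gives $\|\dxa\nabla u\|^2$.
\end{itemize}
Hence
$$\Dt E_3(U)+c\sum_{|\alpha|\le3}\(\|\dxa P_1f\|_{L^2_\sigma}^2+\|\dxa\nabla u\|^2+\|\dxa(\tau_bf-u)\|^2\)\le \text{nonlinear terms}.$$

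\textbf{Step 2 (macroscopic dissipation).} The dissipation of $\nabla P_0f$, $\nabla\rho$ and $\nabla\Phi$ comes from an interactive functional. The moment equations for $a=\tau_af$ and $b=\tau_bf$ are
$$\dt a+\div b=0,\qquad \dt b+\nabla a+\nabla\Phi+(b-u)=-\div(\text{high moments of }P_1f)+\ldots$$
Taking the inner product of the $b$-equation with $\nabla\dxa a$, and of the $u$-equation with $\nabla\dxa\rho$, for $|\alpha|\le2$, gives dissipation of $\|\nabla\dxa a\|^2+\|\nabla\dxa\Phi\|^2$ and $\gamma\|\nabla\dxa\rho\|^2$. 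The Poisson coupling $\nabla\Phi$ tested against $\nabla a$ and $\nabla\rho$ yields $(\nabla\Phi,\nabla(a-\rho))=\|\ddx\Phi\|^2$ plus the cross terms. Combined with $\|\nabla\Phi\|^2$ through $\ddx\Phi=a-\rho$, this gives $\|\nabla\Phi\|_{H^1}$ control. The time-derivative terms are moved into a functional $\mathcal{I}(t)$ with $|\mathcal{I}|\le CE_3$. The dissipation of $\nabla b$ follows from $\|\nabla P_mf\|\le\|\nabla P_1f\|$-type terms already controlled. Adding $\kappa\Dt\mathcal{I}$ with $\kappa$ small to Step 1 gives an equivalent energy $\tilde E_3\sim E_3$ with $\Dt\tilde E_3+cD_3\le$ nonlinear.

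\textbf{Step 3 (nonlinear terms and main obstacle).} For $R_2$ and $-\div(\rho u)$ we use standard Sobolev/Moser estimates with $N=3$, giving a bound $C\sqrt{E_3}D_3$. Note that $\rho,u$ in $L^2$ itself are not in $D_3$, but each nonlinear term contains a derivative or a factor $\tau_bf-u$. The main obstacle is $R_1=(\nabla\Phi+u)\cdot(\frac12vf-\nabla_vf)$, which has velocity growth and a $\nabla_v$. Writing $f=P_0f+P_1f$:
\begin{itemize}
\item On $P_1f$ we integrate by parts in $v$ and bound by $\|\nabla\Phi+u\|_{L^\infty}\|P_1f\|_{L^2_\sigma}^2$-type terms.
\item On $P_0f$ the terms are explicit Gaussian moments.
\end{itemize}
The delicate point is that $u$ itself (not $\nabla u$) appears, and it is not in $D_3$. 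We handle it by writing $u=(u-\tau_bf)+\tau_bf$ and using the structure $(\frac12vf-\nabla_vf,f)=0$ for the top-order term. Using $\tau_bf=\tau_bP_1f$ and Sobolev embedding with $\nabla u,\nabla\Phi\in D_3$ closes the estimate as $C\sqrt{E_3}D_3$. Integrating in time and applying the continuity argument yields the theorem.
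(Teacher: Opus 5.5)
Steps 1 and the overall architecture (energy identity, interactive macroscopic functional, smallness, continuity argument) match the paper's \eqref{geine2}--\eqref{geine3}. The genuine gap is the zeroth-order dissipation of the electric field.

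$D_3(U)$ contains $\sum_{|\alpha|\le2}\|\partial^\alpha_x\nabla_x\Phi\|_{H^1_x}^2$, in particular $\|\nabla_x\Phi\|_{L^2_x}^2$, and nothing you wrote produces it. Testing the $\tau_bf$-equation with $\nabla_x\partial^\alpha_x\tau_af$ and the $u$-equation with $\nabla_x\partial^\alpha_x\rho$ gives $\|\partial^\alpha_x\Delta_x\Phi\|^2$ (cf. \eqref{tidudian}), not $\|\partial^\alpha_x\nabla_x\Phi\|^2$. The Poisson equation cannot trade one for the other: $\|\nabla_x\Phi\|_{L^2_x}=\||\xi|^{-1}(\tau_a\hat f-\hat\rho)\|_{L^2_\xi}$ is not controlled by $\|\Delta_x\Phi\|_{L^2_x}=\|\tau_a\hat f-\hat\rho\|_{L^2_\xi}$ at low frequency.

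This is needed for closing, not only for the statement. Cubic terms with three undifferentiated factors, such as $\int\nabla_x\Phi\cdot\rho u\,dx$ from \eqref{0jiedian} or $\int\nabla_x\Phi\cdot\tau_af\,\tau_bf\,dx$ from $R_1$, are bounded by $C\sqrt{E_3}D_3$ in $\mathbb{R}^3$ only if one factor is dissipated in $L^2_x$ itself. This follows from a H\"older--Sobolev exponent count.

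The missing idea is \eqref{geine1}: subtract the moment equation \eqref{ge2} from the $u$-equation. The field acts on the two phases with opposite signs; the moment equation carries $-\nabla_x\Phi$, not $+\nabla_x\Phi$ as you wrote. Hence the relative velocity satisfies
\begin{equation*}
\partial_t(u-\tau_bf)+2(u-\tau_bf)+2\nabla_x\Phi+(\gamma-1)\nabla_x\rho-\nabla_x\Delta_x\Phi-\Delta_xu=(v\cdot\nabla_xP_1f,v\chi_0)+R_2-(\nabla_x\Phi+u)\tau_af.
\end{equation*}
Testing with $\partial^\alpha_x\nabla_x\Phi$, $|\alpha|\le2$, puts $2\|\partial^\alpha_x\nabla_x\Phi\|^2+\|\partial^\alpha_x\Delta_x\Phi\|^2$ on the left.

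The time derivative is harmless precisely because it falls on the relative velocity. Since $\partial_t\nabla_x\Phi=\nabla_x\Delta_x^{-1}[-\mathrm{div}_x(\tau_bf-u)+\mathrm{div}_x(\rho u)]$, the term $(u-\tau_bf,\partial_t\nabla_x\Phi)$ costs only $\|\tau_bf-u\|^2$ plus nonlinear terms. Testing the $u$-equation alone would leave $(u,\partial_t\nabla_x\Phi)$ with an undamped $u$. The remaining costs $\|\nabla_x\rho\|_{H^2_x}^2$, $\|\Delta_xu\|_{H^2_x}^2$, $\|\nabla_xP_1f\|^2$ and $\|\tau_bf-u\|^2$ are absorbed by Steps 1--2 with a small weight ($A_0<\gamma/(2C_1)$ in the paper).

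Three smaller points also need repair.

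First, Step 1 requires the convention $P_1=I-P_0$. Under it, $P_mf\perp P_1f$ and $\tau_bP_1f=0$, so the claims $\|\nabla_xP_mf\|\le\|\nabla_xP_1f\|$ and $\tau_bf=\tau_bP_1f$ are both false. The dissipation of $\nabla_x\tau_bf$ is needed to absorb the $\|\mathrm{div}_x\tau_bf\|_{H^2_x}^2$ created when $\partial_t\tau_af$ is moved in Step 2. It comes instead from $\nabla_x\tau_bf=\nabla_x(\tau_bf-u)+\nabla_xu$ together with the Step 1 dissipation up to order three, cf. \eqref{macrovis}.

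Second, $(\frac12vf-\nabla_vf,f)=\frac12(vf,f)\neq0$. At zeroth order one uses $\frac12(vf,f)=\tau_af\,\tau_bf+(P_0f,vP_1f)+\frac12(P_1f,vP_1f)$. The bad term $\int u\cdot\tau_af\,\tau_bf\,dx$ must be paired with $-\int\tau_af|u|^2/(1+\rho)\,dx$ from $\int u\cdot R_2\,dx$. This produces $\int\tau_af\,u\cdot(\tau_bf-u)\,dx$ plus a quartic term. Your splitting $u=(u-\tau_bf)+\tau_bf$ alone leaves $\int\tau_af|\tau_bf|^2dx$, which is of the same uncontrolled type.

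Third, the correct identity in Step 2 is $-(\nabla_x\Phi,\nabla_x(\tau_af-\rho))=\|\Delta_x\Phi\|^2$. It has a good sign only because the field enters the $\tau_bf$- and $u$-equations with opposite signs.
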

Then, we have the large time behavior of the global solution to the system \eqref{introe2}  and \eqref{initial} as follows:
\begin{thm}\label{introth2}
Assume that  there exists a small constant $\delta_0>0$  such that the initial data $U_0=(f_0,\rho_0,u_0) $ satisfies
$$
\|f_0\|_{L^\infty_{v,3}(H^3_x\cap L^1_x)}+\|(\rho_0,u_0)\|_{H^3_x\cap L^1_x}\le\delta_0.
$$
Then, the global solution $U=(f,\rho,u)$ to the VPFP/NSP system \eqref{introe2} and \eqref{initial} satisfies that
\be\label{nldcre1}\left\{\ball
&\|f(t)\|_{L^\infty_{v,3}(L^2_x)}+\|(\rho,u)(t)\|_{L^2_x}\le C(1+t)^{-\frac{3}{4}},
\\&\|\tdv f(t)\|_{L^\infty_{v,2}(L^2_x)}\le C\(1+\frac{1}{\sqrt{t}}\)(1+t)^{-\frac{3}{4}},
\\&\|\tdx\Phi(t)\|_{L^2_x}+ \|(\tau_bf-u)(t)\|_{L^2_x}\le C(1+t)^{-\frac{5}{4}},
\eall\right.\ee
and for $|\alpha|=1,2,3,$
\be\label{nldcre2}\left\{\ball
&\|\dxa f(t)\|_{L^\infty_{v,3}(L^2_x)}+\|\dxa(\rho,u)(t)\|_{L^2_x}+\|\dxa\tdx\Phi(t)\|_{L^2_x}\le C(1+t)^{-\frac{5}{4}},
\\&\|\dxa\tdv f(t)\|_{L^\infty_{v,2}(L^2_x)}\le C\(1+\frac{1}{\sqrt{t}}\)(1+t)^{-\frac{5}{4}}.
\eall\right.\ee
Moreover, if we further assume that there exist two constants $d_0,r_0>0$ such that the Fourier transform $\hat{U}_0(\xi, v)=(\hat{f}_0,\hat{\rho}_0,\hat{u}_0)$ of $U_0(x, v)$ satisfies
$$
\inf\limits_{|\xi|\le r_0}|\tau_a\hat{f}_0+\hat{\rho}_0|\ge d_0,\quad \sup\limits_{|\xi|\le r_0}|\tau_b\hat{f}_0+\hat{u}_0|=0.
$$
Then, for time $t >0$ large enough, the global solution $U=(f,\rho,u)$ satisfies
\be\label{opnldr}\left\{\ball
C_1(1+t)^{-\frac{3}{4}}\le&\| f(t)\|_{L^2_{x,v}}\le C_2(1+t)^{-\frac{3}{4}},
\\C_1(1+t)^{-\frac{3}{4}}\le&\|(\rho,u)(t)\|_{L^2_x}\le C_2(1+t)^{-\frac{3}{4}},
\eall\right.\ee
and for $\gamma>1$, the electric field $\nabla_x\Phi$ satisfies
$$
C_1(1+t)^{-\frac{5}{4}}\le\| \nabla_x\Phi(t)\|_{L^2_x}\le C_2(1+t)^{-\frac{5}{4}},
$$
where $0<C_1\le C_2$ are two generic constants.
\end{thm}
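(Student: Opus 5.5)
The plan is to combine a spectral analysis of the linearized operator with Duhamel's principle and a bootstrap on a time-weighted norm. The linearized system is
$$\partial_t U = \mathbb{B} U,\qquad U=(f,\rho,u),$$
obtained from \eqref{introe2} with $R_1=0$, $R_2=0$, $\divx(\rho u)=0$, and with $\nabla_x\Phi=\nabla_x\Delta_x^{-1}(\tau_a f-\rho)$.

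\textbf{Step 1: spectral decomposition.} After Fourier transform we get $\hat{\mathbb{B}}(\xi)$. Following the approach for the VPB system, we work in a weighted space whose norm includes $|\xi|^{-2}|\tau_a\hat f-\hat\rho|^2$, so that $\hat{\mathbb{B}}(\xi)$ is dissipative in that norm.

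\emph{Low frequency.} For $|\xi|\le r_0$ we perform a Lyapunov–Schmidt-type reduction onto the macroscopic part $(\tau_a f,\tau_b f,\rho,u)$. At order zero in $s=|\xi|$, the friction relaxes $\tau_b f-u$ at rate about $-2$, and the Poisson coupling forces $\tau_a f\approx\rho$ up to $O(s^2)$. The combined density and momentum then satisfy a reduced $2\times 2$ acoustic system with sound speed
$$\sqrt{\tfrac{\gamma+1}{2}},$$
since the pressures $1$ and $\gamma$ are averaged over two equal masses. This produces eigenvalues
$$\lambda_\pm=\pm i\sqrt{\tfrac{\gamma+1}{2}}\,s-a_\pm s^2+O(s^3),$$
together with two transverse diffusive eigenvalues $\lambda=-bs^2+O(s^3)$ from the $u$–$\tau_b f$ transverse modes. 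The remaining spectrum lies in $\re\lambda\le -c$.

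\emph{High frequency.} For $|\xi|\ge r_0$ we show $\re\lambda\le -\mu<0$ via a resolvent and energy argument. Hypocoercivity here comes from the Fokker–Planck part together with the $\Delta_x u$ damping.

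\textbf{Step 2: linear decay.} From the decomposition we obtain $L^1$–$L^2$ bounds:
$$\|e^{t\mathbb{B}}U_0\|_{L^2}\le C(1+t)^{-3/4}.$$
The components $\nabla_x\Phi$ and $\tau_b f-u$ gain an extra $(1+t)^{-1/2}$. For $\nabla_x\Phi$ this holds because its symbol is $|\xi|^{-1}(\tau_a\hat f-\hat\rho)$ and $\tau_a\hat f-\hat\rho=O(s^2)$ on the eigenprojections. For $\tau_b f-u$ this holds because it is $O(s)$ on the fluid modes. Each additional $x$-derivative gains $(1+t)^{-1/2}$, with only the first derivative needed for the $-5/4$ rates.

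The lower bounds in \eqref{opnldr} come from the leading low-frequency term
$$\sum_\pm e^{\lambda_\pm t}P_\pm(\omega)\hat U_0.$$
The hypothesis $\inf_{|\xi|\le r_0}|\tau_a\hat f_0+\hat\rho_0|\ge d_0$ excites the acoustic modes, and the hypothesis $\tau_b\hat f_0+\hat u_0=0$ kills the diffusive modes. For $\nabla_x\Phi$, the $s^2$ coefficient in $\tau_a\hat f-\hat\rho$ on the acoustic modes is proportional to $\gamma-1$, which is why the lower bound for the field requires $\gamma>1$.

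\textbf{Step 3: nonlinear decay.} We use Duhamel's formula
$$U(t)=e^{t\mathbb{B}}U_0+\int_0^t e^{(t-s)\mathbb{B}}\big(R_1,\,-\divx(\rho u),\,R_2\big)(s)\,ds.$$
We define
$$Q(t)=\sup_{s\le t}\Big\{(1+s)^{3/4}\big(\|f\|_{L^\infty_{v,3}(L^2_x)}+\|(\rho,u)\|\big)+(1+s)^{5/4}\big(\|\nabla_x\Phi\|+\|\tau_b f-u\|+\|\nabla_x(\rho,u)\|\big)\Big\}$$
and close $Q(t)\le C\delta_0+CQ(t)^2$. To do so, we use Theorem~\ref{ztcz} for the energy bounds, $L^1$ estimates of the quadratic terms, and a time-weighted energy inequality for the higher derivatives.

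\textbf{Step 4: weighted $L^\infty_v$ and $\nabla_v$ bounds.} The $L^\infty_{v,3}$ and $\nabla_v$ estimates require a mild-formulation argument. We write
$$f=P_0f+P_1f,$$
and treat the kinetic equation as a transport Fokker–Planck equation with explicit fundamental solution, regarding the macroscopic and force terms as sources. The $\nabla_v$ term in $R_1$ loses a $v$-derivative. We absorb it using the smoothing of the Fokker–Planck semigroup, which gives $\|\nabla_v e^{tA}g\|\lesssim t^{-1/2}\|g\|$ for small $t$ and accounts for the factor $1+t^{-1/2}$.

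\textbf{Main obstacle.} I expect the hardest step to be the low-frequency spectral analysis in Step 1, namely the singular Poisson term $|\xi|^{-2}$. I would handle it first by the VPB-style rescaling $\hat\Phi\sim s^{-2}(\tau_a\hat f-\hat\rho)$ and an implicit-function construction of the eigenvalues, and then verify that the expansions remain uniform in $\omega$.
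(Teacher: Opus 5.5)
Your architecture matches the paper's: the weighted space $Z_{\xi,\gamma}$, a low/high frequency decomposition of $e^{t\M(\xi)}$, Duhamel with a bootstrap on $Q(t)$, a higher-order energy inequality for $|\alpha|=2,3$, and a Fokker--Planck Green's function argument for the $L^\infty_{v,3}$ and $\nabla_v$ bounds. However, the step you single out as the main obstacle targets the wrong difficulty, and your proposed method for it does not work without a modification you do not mention.

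The factor $|\xi|^{-2}$ is harmless. It is absorbed by the weighted norm, and after the reduction (in the paper, to $(C_0,\rho,\omega\cdot u)$) the dispersion function $D_1(\lambda,s)=\det(\lambda I_3-\mathbb{A}(s))$ is analytic and even in $s$. It is also independent of $\omega$ by rotational invariance, so uniformity in $\omega$ is automatic. The real problem is that $D_1(\lambda,0)=\frac{\lambda^2}{\lambda+1}(\lambda^2+2\lambda+2)$, so $\lambda=0$ is a double root and $\partial_\lambda D_1(0,0)=0$. Because the Poisson source is $\tau_af-\rho$, the total density $\tau_af+\rho$ is uncharged, and its acoustic pair collapses onto $\lambda=0$ at $s=0$. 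The charged longitudinal modes go to $-1\pm i$, not $-2$; the value $-2$ is the transverse root of $D_0$. So an implicit function argument in $(\lambda,s)$ at $(0,0)$ fails. This differs from VPB, where the Poisson term lifts the density mode to the simple roots $\pm i$.

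The paper handles this with the Weierstrass preparation theorem, writing $D_1=U\cdot(\lambda^2+p_1(s)\lambda+p_0(s))$ with $p_1=s^2+O(s^4)$ and $p_0=\frac{\gamma+1}{2}s^2+O(s^4)$. Your heuristic $2\times2$ acoustic system points to an equally valid and more elementary fix, but you must make it explicit. Set $\lambda=sz$. Then $D_1(sz,s)=s^2\tilde D(z,s)$ with $\tilde D$ analytic and $\tilde D(z,0)=2z^2+\gamma+1$. Its roots $\pm i\sqrt{(\gamma+1)/2}$ are simple, so the implicit function theorem applies in $(z,s)$.

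Step 4 also needs an ingredient you do not state. The undamped semigroup $e^{t(L-v\cdot\nabla_x)}$ does not decay on $L^\infty_{v,\varsigma}(L^2_x)$, because its diffusive mode survives. Duhamel against the linear source $(\nabla_x\Phi+u)\cdot v\chi_0$, which decays only like $(1+t)^{-3/4}$, therefore loses the rate. The paper instead writes $\partial_tf-A_1f=2f+R_1+R_3$ with the damped operator $A_1=L-v\cdot\nabla_x-2$. It treats all of $2f$ as a source whose $L^2_{x,v}$ decay is already known, and uses three properties of $e^{tA_1}$ (Lemmas \ref{green1}--\ref{green2}):
\begin{itemize}
\item the $\nabla_v$ smoothing you mention;
\item the map $L^2_{x,v}\to L^\infty_{v,0}(L^2_x)$;
\item a gain of one velocity weight, $L^\infty_{v,\varsigma}\to L^\infty_{v,\varsigma+1}$, iterated up to weight $3$.
\end{itemize}
The weight gain is what absorbs the $\frac12vf$ part of $R_1$. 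It also explains why $f$ is measured in $L^\infty_{v,3}$ while $\nabla_vf$ is measured in $L^\infty_{v,2}$. You only address the $\nabla_v$ loss.

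Accordingly, your $Q(t)$ should also contain the weighted $\nabla_vf$ and $\partial^\alpha_xf$ norms, so that Step 4 and the bounds on $R_1$ close in one bootstrap. For the nonlinear lower bounds, you must also check that the Duhamel term, of size $C\delta_0^2(1+t)^{-3/4}$ (resp.\ $(1+t)^{-5/4}$ for $\nabla_x\Phi$), is dominated by the linear lower bound.

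One genuine and valid difference from the paper is that you obtain the $(1+t)^{-5/4}$ rate for $\tau_bf-u$ from the linear structure. This works because $\tau_b\psi_{j,0}=\vartheta_{j,0}$ for all $j$, so $\tau_b\psi_j-\vartheta_j=O(s)$. The spectral coefficients stay bounded because $\tau_a\psi_j-\zeta_j=O(s^2)$. The paper never proves this linear statement. It instead gets $\tau_bf-u$ and $\nabla_x\Phi$, together with all $x$-derivatives, from the energy inequality \eqref{en2}. There $\|\tau_bf-u\|^2_{L^2_x}$ and $\|\nabla_x\Phi\|^2_{L^2_x}$ are dissipated, and the forcing $\|\nabla_xP_0f\|^2+\|\nabla_x(\rho,u)\|^2$ decays like $(1+t)^{-5/2}$.

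Your Duhamel source also correctly keeps $-{\rm div}_x(\rho u)$, which the paper's $R=(R_1,0,R_2)$ drops. This is harmless: $\tau_aR_1=0$, so the weighted part of its norm is $|\xi|^{-1}|\xi\cdot\widehat{\rho u}|\le|\widehat{\rho u}|$.
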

\begin{remark}
For $\gamma=1$, the electric field $\nabla_x\Phi$ does not achieve the optimal time decay rate. In fact, as noted in Remark \ref{phi4jie}, spectrum analysis shows that the linear part of $\nabla_x\Phi$ decays rapidly at least as $(1+t)^{-\frac74}$. However, by higher-order energy estimate \eqref{en2}, the nonlinear terms lead to a slower decay rate $(1+t)^{-\frac54}$, as shown in \eqref{highclose}.
\end{remark}

\begin{remark}
In the study of the NSP and the Vlasov-Poisson-Boltzmann (VPB) systems \cite{nsp1,zhonggfvpb,zhongspvpb}, the self-consistent electric field is known to significantly affect the large time behavior of the solution. A common feature of these systems is that the Poisson equation is driven by a single density, namely, the fluid density $\rho$ in NSP and the macroscopic density $\tau_af$ of particles in VPB. In both cases, the resulting electric field generates the low-frequency nonlocal effects and breaks the usual acoustic wave propagation in the classical Navier Stokes (NS) and Boltzmann equations without electric field coupling \cite{glhsns,glhsb}. More precisely, detailed spectrum analysis and Green's function show that, for the NSP system, the electric field affects the dispersion of fluid and destroys the usual acoustic wave propagation in the classical compressible viscous fluids, leading to the invalidity of Huygens’ principle, and consequently a reduced time decay rate $(1+t)^{-\frac14}$ for both momentum and electric field. For the VPB system, the same slow decay rate is also obtained for the electric field and macroscopic momentum $\tau_bf$ of particles, and the Huygens' type sound wave propagation cannot be observed.

By contrast, the VPFP system, which describes the time evolution of charged particles subject to Brownian motion, exhibits a spectrum structure different from that of the linear Fokker-Planck (FP) equation. For the FP equation, the low-frequency, spectrum consists of a single eigenvalue with the asymptotic expansion $\lambda(s)=-s^2+O(s^3)$, which reveals the diffusion behavior and yields the time decay rate $(1+t)^{-\frac34}$ for the solution \cite{yusplfp}. For the VPFP system, the Poisson equation provides additional macroscopic density dissipation $\|\tau_af\|_{L^2_x}^2$, and the solution decays exponentially at the rate $e^{-\eta t}$ for $\eta>0$, which corresponds to the spectrum gap revealed by spectrum analysis in \cite{vpfp1}.

Different from the results above, the large time behavior of the global solution to the VPFP/NSP system established in Theorem \ref{introth2} shows the solution $U=(f,\rho,u)$ decays at the rate $(1+t)^{-\frac34}$, which is consistent with the decay rates for the NS and FP equations without electric field coupling \cite{yusplfp,ns1,ns2}, while the electric field $\tdx\Phi$ decays at a faster rate $(1+t)^{-\frac54}$.
\end{remark}

\begin{remark}
The relative velocity $\tau_bf-u$ induced by the friction force coupling decays at a faster rate $(1+t)^{-\frac54}$, and the corresponding damping $\|\tau_bf-u\|_{H^3_x}^2$ essentially dissipates the relative motion between particles and fluid and drives them toward synchronization. Furthermore, in the energy estimates, unlike \cite{lxliu5}, we do not need to directly construct the viscous dissipation $\|\tdx\tau_bf\|_{H^2_x}^2$ of particles. Indeed, the inequality in \eqref{macrovis} yields the following relation
$$
\|\nabla_xu\|_{H^3_x}^2+\|\tau_bf-u\|_{H^3_x}^2\ge\frac12\|\tdx\tau_bf\|^2_{H^2_x},
$$
which reveals that the viscous dissipation $\|\tdx\tau_bf\|_{H^2_x}^2$ can be compensated by the high-order fluid viscous dissipation $\|\tdx u\|_{H^3_x}^2$ from the NS equations and the damping $\|\tau_bf-u\|_{H^3_x}^2$. Conversely, for the coupling with the Euler equations, the viscous dissipation of fluid is $\|\tdx u\|_{H^2_x}^2$ instead of $\|\tdx u\|_{H^3_x}^2$, and it can be obtained from $\|\tdx\tau_bf\|_{H^2_x}^2$ of particles through damping $\|\tau_bf-u\|_{H^3_x}^2$ as established in \cite{lxliu5,lxliu4}. This indicates that the damping associated with the relative velocity can transfer the existing viscous dissipation between particles and fluid.
\end{remark}
\begin{remark}
The coupling of the self-consistent Poisson equation $\ddx\Phi=\tau_af-\rho$ is governed by the difference between the macroscopic density $\tau_af$ of particles and fluid density $\rho$, leading to distinct dynamics not observed in either individual system. More precisely, the detailed asymptotic expansions of the eigenvectors $\Psi_j(s,\omega)=(\psi_j, \zeta_j, \vartheta_j)(s,\omega)~(j=-1,0,1,2)$ given in Theorem \ref{srlem13} reveal that
$$
\tau_a\psi_{\pm1}-\zeta_{\pm1}=O(s^2);\quad \tau_a\psi_{j}-\zeta_{j}\equiv 0\quad(j=0,2 ),
$$
which indicates that the source term of the Poisson equation is supported only on the acoustic  branches $\lambda_{\pm1}(s)$ for $s\le r_0$. Combined the expansions above with the decomposition of the semigroup in Theorem \ref{orlem5}, the low-frequency nonlocal effect coming from the operator $\tdx\ddx^{-1}$ vanishes as shown in \eqref{orl20}, and consequently the electric field does not destroy the acoustic wave propagation. The macroscopic momentum $\tau_bf$ and the fluid velocity $u$ decay at the rate $(1+t)^{-3/4}$, while the electric field decays faster at the rate $(1+t)^{-5/4}$. These large time behaviors are completely different from those of solutions to the NSP, VPB, and VPFP systems. Moreover, the source term $\tau_af-\rho$ decays at the rate $(1+t)^{-5/4}$.
\end{remark}

We now briefly summarize the main novelties of this work, outline the main ideas of the proofs of the above theorems, and make several related comments. The present VPFP/NSP system is not a simple combination of the individual VPFP and NSP systems. Instead, the coupling mechanisms, namely the mutual friction force and the self-consistent Poisson equation, synchronize the motions of the two phases and eliminate the low-frequency nonlocal effects of the electric field, restoring the hyperbolic wave propagation behavior of classical compressible fluids.

To the best of our knowledge, this paper presents the first spectrum analysis for the kinetic-fluid-Poisson system. More precisely, the detailed spectrum structure and eigenvectors of the linearized VPFP/NSP operator $\M(\xi)$ defined in \eqref{suanzi} are established in Theorem \ref{srlem13}. For the structure of the spectrum $\sigma(\M(\xi))$, there exist four eigenvalues $\lambda_j(|\xi|)~(j=-1,0,1,2)$ at low frequency and they admit the following asymptotic expansions for $|\xi|\le r_0$
$$
\left\{\ball
\lambda_{\pm1}(|\xi|)&=\pm i\sqrt{\frac{\gamma+1}{2}}|\xi|-\frac{1}{2}|\xi|^2+O(|\xi|^3),
\\
\lambda_0(|\xi|)&=\lambda_2(|\xi|)=-\frac{3}{4}|\xi|^2+O(|\xi|^3),
\eall\right.
$$
which capture both the hyperbolic wave propagation and diffusion behaviors, a phenomenon that has not been observed in the spectrum analysis of the individual VPFP and NSP systems before \cite{nsp1,vpfp1}. Moreover, there is a spectral gap at high frequency, namely, there exists $\beta =\beta (r_0)>0$ such that
$$
\sigma(\M(\xi))\subset\{\lambda\in\mathbb{C}\mid\re\lambda<-\beta \},\quad |\xi|\ge r_0.
$$
Note that the existence of the eigenvalues $\lambda_{\pm1}(s)$ cannot be obtained directly via the implicit function theorem, since the corresponding eigenvalue equation $D_1(\lambda,s)=0$ satisfies $\frac{\partial}{\partial_\lambda}D_1(0,0)=0$. To overcome this degeneracy, the Weierstrass preparation theorem in several complex variables stated in Theorem \ref{wei} is applied to establish the analyticity and asymptotic expansions of $\lambda_{\pm1}(s)$.
Based on the spectrum structure above, the decomposition of the semigroup $ e^{t\M(\xi)}$ is obtained in Theorem \ref{orlem5}, from which we obtain the optimal time decay rates of solution to the linearized VPFP/NSP system \eqref{introe4} in Theorem \ref{linearoptdr}.

For the original nonlinear problem, we establish a uniform energy estimate motivated by \cite{lxliu0,lxliu4}. Combining Duhamel's principle with the energy estimate, we derive the large time behavior of the solution to the original nonlinear problem, which can be proved to be optimal. To overcome the difficulties resulting from the high-order derivative term $\tdv f$ and the velocity weighted term $vf$ in the nonlinear term $R_1$, we introduce the Fokker-Planck equation with damping \eqref{znfp} from \cite{vpfp2}, for which the explicit expression of Green's function has been established, and thereby derive Lemmas \ref{green1} and \ref{green2}. These results further yield a decay rate $(1+1/\sqrt{t})(1+t)^{-\frac34}$ for $\tdv f$ in a suitable weighted velocity space $L^\infty_{v,2}(L^2_x)$, and $(1+t)^{-\frac34}$ for $f$ in the space $L^\infty_{v,3}(L^2_x)$, thereby closing the a priori assumption on the large time behavior of the solution.
\bigskip

The rest of this paper will be organized as follows: in Section 2, we study the spectrum and resolvent of the linear VPFP/NSP operator $\M(\xi)$ and establish the asymptotic expansions of the eigenvalues and eigenfunctions at low frequency. In Section 3, we decompose the semigroup $e^{t\M(\xi)}$ generated by the linear operator $\M(\xi)$ with respect to the low and high frequency, and thus establish the optimal time decay rates of the global solution to the linearized VPFP/NSP system in terms of the semigroup $e^{t\M(\xi)}$. In Section 4, we establish the global existence and the optimal time decay rates of the solution to the original nonlinear VPFP/NSP system. Some useful lemmas and the local existence to the original nonlinear VPFP/NSP system are provided in Section 5 for the convenience of the readers.

\section{Spectrum analysis}

In this section, we are concerned with the spectral analysis of the operator $\M(\xi)$ defined by \eqref{suanzi}, which will be applied to study the optimal time decay rates of the global solution to the VPFP/NSP system \eqref{introe2}.
\subsection{Spectrum and resolvent}
From the system \eqref{introe2}, we have the linearized VPFP/NSP system for $U=(f,\rho,u)$:
\be\label{introe4}
\left\{\ball
&\partial_tU=\mathbbm{M}U,\quad t>0,
\\
&U(0,x,v)=U_0(x,v)=(f_0,\rho_0,u_0),
\eall\right.
\ee
where $\mathbbm{M}$ is the operator on $ L^2(\mathbb{R}^3_v\times\mathbb{R}^3_x)\times L^2_x\times L^2_x$ defined by
$$
\mathbbm{M}=\begin{pmatrix}
-v\cdot \nabla_x+L+v\cdot\nabla_x\ddx ^{-1}P_d&-v\chi_0\cdot\nabla_x\ddx ^{-1}&v\chi_0
\\0&0&-\div_x
\\-\nabla_x\ddx^{-1}\tau_a+\tau_b&-\gamma\nabla_x+\nabla_x\ddx ^{-1}&\ddx I_3-I_3
\end{pmatrix}.
$$
Taking the Fourier transform in system \eqref{introe4} with respec to $x$, we have
$$
\left\{\ball
&\partial_t\hat{U}=\mathbbm{M}(\xi)\hat{U},\quad t>0,
\\
&\hat{U}(0,\xi,v)=\hat{U}_0(\xi,v)=(\hat{f}_0,\hat{\rho}_0,\hat{u}_0).
\eall\right.
$$
Here, for $\xi\neq 0$
\be\label{suanzi}
\mathbbm{M}(\xi)=\begin{pmatrix}
B(\xi)&i(v\cdot\xi)|\xi|^{-2}\chi_0&v\chi_0
\\0&0&-i\xi^T
\\i\xi|\xi|^{-2}\tau_a+\tau_b&-i\xi(\gamma+|\xi|^{-2})&-(1+|\xi|^2)I_3
\end{pmatrix},
\ee
with
$$
B(\xi)=L-i(v\cdot\xi)-i(v\cdot\xi)|\xi|^{-2}P_d.
$$
We decompose $L$ as
$$
L=-A+\frac{3}{2},\quad A=-\ddv+\frac{|v|^2}{4},
$$
where
\be
(Af,f)=\|\nabla_vf\|^2+\frac{1}{4}\|vf\|^2\ge\frac{3}{2}\|f\|^2.
\ee
Thus
\be
B(\xi)=A(\xi)+\frac32-i(v\cdot\xi)|\xi|^{-2}P_d, \quad A(\xi)=-A-i(v\cdot\xi).
\ee
For any vectors $U=(f,\rho_1,u_1), V=(g,\rho_2,u_2)\in L^2(\mathbb{R}^3_v)\times\C\times \C^3$, we define the normal inner product and norm by
$$
(U,V)=(f,g)+(\rho_1,\rho_2)+(u_1,u_2),\quad \|U\|=\sqrt{(U,U)}.
$$
For $\xi\neq0$, we introduce the weighted Hilbert space $Z_{\xi,\gamma}$ as
$$
Z_{\xi,\gamma}=\{U\in L^2(\mathbb{R}^3_v)\times\C\times \C^3\mid \|U\|_{\xi,\gamma}=\sqrt{(U,U)_{\xi,\gamma}}<\infty\},
$$
equipped with the inner product
\be\label{zxigamma1}
(U,V)_{\xi,\gamma}=(f,g)+\gamma\left(\rho_1,\rho_2\right)+\left(u_1,u_2\right)+|\xi|^{-2}\left(\tau_af-\rho_1,\tau_ag-\rho_2\right).
\ee

We denote $\rho(A)$ and $\sigma(A)$ to be resolvent set and spectrum set of the operator $A$, respectively. The essential spectrum of $A$, denoted by $\sigma_{ess}(A)$, is the set $\{\lambda\in \mathbb{C} \,|\, \lambda-A~{\rm is~not~a~Fredholm~operator}\}$ (cf. \cite{Kato}). The discrete spectrum of $A$, denoted by $\sigma_d(A)$, is the set $\sigma(A)\setminus \sigma_{ess} (A)$ which consists of  all isolated eigenvalues with finite multiplicity.
\begin{lem}\label{srlem1}
For any $\xi\in\R^3$, the operator $\mathbbm{M}(\xi)$ generates a strongly continues contraction semigroup on $Z_{\xi,\gamma}$ satisfying
$$
\|e^{t\mathbbm{M}(\xi)}U\|_{\xi,\gamma}\le\|U\|_{\xi,\gamma},\quad t>0,~U\in Z_{\xi,\gamma}.
$$
\end{lem}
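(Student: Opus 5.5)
We will use the Lumer–Phillips theorem on the Hilbert space $Z_{\xi,\gamma}$ for fixed $\xi\neq0$. (For $\xi=0$ the operator is read in the limiting sense; there the same computation applies with the Poisson part removed.) This requires two facts: $\mathbbm{M}(\xi)$ is densely defined, closed and dissipative, and $\lambda-\mathbbm{M}(\xi)$ is surjective for some $\lambda>0$. Note that for $\xi\neq0$ the norm $\|\cdot\|_{\xi,\gamma}$ is equivalent to the standard norm $\|\cdot\|$, with constants depending on $|\xi|$. So $Z_{\xi,\gamma}$ is a Hilbert space with the same topology as $L^2_v\times\C\times\C^3$, and $D(\mathbbm{M}(\xi))=L^2_\sigma\times\C\times\C^3$ is dense.

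\textbf{Step 1 (dissipativity).} For $U=(f,\rho,u)\in D(\mathbbm{M}(\xi))$ we compute $\re(\mathbbm{M}(\xi)U,U)_{\xi,\gamma}$ term by term.
\begin{itemize}
\item The $f$-component gives $\re(Lf,f)-\re(i(v\cdot\xi)f,f)+\re\big(i(v\cdot\xi)|\xi|^{-2}(\rho-\tau_af)\chi_0,f\big)+\re(v\chi_0\cdot u,f)$. The transport term vanishes. The Poisson term equals $\re\, i|\xi|^{-2}(\rho-\tau_af)\,\overline{\xi\cdot\tau_bf}$.
\item The $\rho$-component gives $\gamma\re(-i\xi\cdot u)\bar\rho$.
\item The $u$-component gives $\re\big(i\xi|\xi|^{-2}(\tau_af-\rho)+\tau_bf-i\gamma\xi\rho,u\big)-(1+|\xi|^2)|u|^2$.
\item The extra weight term requires $\tau_a$ of the first row and the $\rho$ equation. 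Since $\tau_a(v\chi_0)=0$, $\tau_a(\chi_0)=1$ and $(v\cdot\xi)$ is odd, we get $\tau_a(\mathbbm{M}U)_1=-i\xi\cdot\tau_bf$. Hence $(\tau_af-\rho)$ evolves by $-i\xi\cdot(\tau_bf-u)$, and the weight contributes $\re\,|\xi|^{-2}(-i\xi\cdot(\tau_bf-u))\overline{(\tau_af-\rho)}$.
\end{itemize}
The key point is that this weight term exactly cancels the two Poisson cross terms (one from the $f$-row pairing with $\tau_bf$, the other from the $u$-row). The $\gamma$-terms cancel by the $\gamma$-weight on $\rho$. What remains is
$$\re(\mathbbm{M}(\xi)U,U)_{\xi,\gamma}=\re(Lf,f)+2\re(\tau_bf\cdot\bar u)-|u|^2-|\xi|^2|u|^2.$$
Using $(Lf,f)=(LP_1f,P_1f)-|\tau_bf|^2\le-|\tau_bf|^2$, we conclude
$$\re(\mathbbm{M}(\xi)U,U)_{\xi,\gamma}\le-|\tau_bf-u|^2-|\xi|^2|u|^2\le0.$$
Verifying all these cancellations is the main bookkeeping step. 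Any sign slip with the conjugates of $i$ would break them, so I would carefully track $\re(ia\bar b)=-\re(i b\bar a)$.

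\textbf{Step 2 (range condition).} Write $\mathbbm{M}(\xi)=\mathbbm{M}_0+\mathbbm{K}$, where $\mathbbm{M}_0=\mathrm{diag}(L-i(v\cdot\xi),0,-(1+|\xi|^2)I_3)$ and $\mathbbm{K}$ has finite rank, hence is bounded on $Z_{\xi,\gamma}$. The operator $L-i v\cdot\xi$ is m-dissipative on $L^2_v$; this is the standard Fokker–Planck fact, obtainable for instance via Lax–Milgram, since for $\lambda>0$ the form $\lambda\|f\|^2+(Af,f)-\tfrac32\|f\|^2+i(v\cdot\xi f,f)$ is coercive on $L^2_\sigma$ once $\lambda>3/2$. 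Therefore $\mathbbm{M}_0$ generates a $C_0$ semigroup, and so does $\mathbbm{M}(\xi)$ by bounded perturbation. In particular $\mathbbm{M}(\xi)$ is closed and $\lambda-\mathbbm{M}(\xi)$ is onto for large $\lambda$. Together with Step 1, Lumer–Phillips shows the semigroup is a contraction in $\|\cdot\|_{\xi,\gamma}$, which gives the stated estimate. I expect Step 1's cancellation to be the main obstacle. Step 2 is routine once density and the finite-rank structure are noted.
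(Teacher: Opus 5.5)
Your argument is correct, and Step~1 is exactly the paper's computation: the weight term contributes $|\xi|^{-2}\bigl(-i\xi\cdot(\tau_bf-u)\bigr)\overline{(\tau_af-\rho)}$, which cancels the two Poisson cross terms in the real part and leaves $\re(\M(\xi)U,U)_{\xi,\gamma}\le-\|P_1f\|^2-|\tau_bf-u|^2-|\xi|^2|u|^2$.

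The difference is in how you pass from dissipativity to generation. The paper identifies the adjoint in the weighted inner product as $\M(\xi)^*=\M(-\xi)$, notes that it satisfies the same bound, and applies the criterion of Lemma~\ref{S_1-1} (closed, densely defined, with $A$ and $A^*$ both dissipative). Closedness of $\M(\xi)$, and the fact that the formal adjoint is the full Hilbert-space adjoint, are taken for granted there. You instead verify the Lumer--Phillips range condition directly. For $\xi\neq0$ the norms satisfy $\|U\|^2\le\|U\|^2_{\xi,\gamma}\le(\gamma+2|\xi|^{-2})\|U\|^2$. Moreover, $\M(\xi)$ is $\mathrm{diag}(L-i(v\cdot\xi),0,-(1+|\xi|^2)I_3)$ plus a bounded finite-rank operator. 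So closedness and surjectivity of $\lambda-\M(\xi)$ for large $\lambda$ follow from the bounded perturbation theorem, with no adjoint computation. The one analytic input you need is m-dissipativity of $L-i(v\cdot\xi)$ on $L^2_v$. That is essentially the same fact the paper implicitly needs to justify $D(\M(\xi)^*)=D(\M(-\xi))$, so your route is somewhat more self-contained at no extra cost.

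Two minor points. First, the operator domain is really $\{f\in L^2_\sigma:(L-i(v\cdot\xi))f\in L^2_v\}\times\C\times\C^3$; the paper's $D(L)=L^2_\sigma$ is the form domain. Your Lax--Milgram solution does lie in this domain, because the weak equation gives $(L-i(v\cdot\xi))f=\lambda f-g\in L^2_v$ in the sense of distributions. Second, your parenthetical about $\xi=0$ is not needed. $\M(\xi)$ and $Z_{\xi,\gamma}$ are only defined for $\xi\neq0$, and that case is all the paper proves as well.
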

\begin{proof}
First we show that both $\mathbbm{M}(\xi)$ and $\mathbbm{M}(\xi)^*$ are dissipative operators on $Z_{\xi,\gamma}$. For any $U=(f,\rho,u),V=(g,\zeta,\vartheta)\in Z_{\xi,\gamma}$, we obtain
\bmas
(\mathbbm{M}(\xi)U,V)_{\xi,\gamma}&=(Lf-i(v\cdot\xi)f,g)-i\xi|\xi|^{-2}\cdot(\tau_af-\rho)\overline{\tau_bg}+u\cdot\overline{\tau_bg}-i\gamma\xi\cdot u\overline{\zeta}
\\&\quad+i\xi|\xi|^{-2}\cdot(\tau_af-\rho)\overline{\vartheta}+\tau_bf\cdot\overline{\vartheta}-i\gamma\xi\rho\cdot\overline{\vartheta}-(1+|\xi|^2)u\cdot\overline{\vartheta}
\\&\quad+\frac1{|\xi|^{2}}i\xi\cdot(-\tau_bf+u)(\overline{\tau_ag}-\overline{\zeta})
\\&=(U,\mathbbm{M}(\xi)^*V)_{\xi,\gamma},
\emas
where
$$
\mathbbm{M}(\xi)^*=\mathbbm{M}(-\xi)=\begin{pmatrix}
B(-\xi)&-i(v\cdot\xi)|\xi|^{-2}\chi_0&v\chi_0
\\0&0&i\xi^T
\\-i\xi|\xi|^{-2}\tau_a+\tau_b&i\xi(\gamma+|\xi|^{-2})&-(1+|\xi|^2)I_3
\end{pmatrix}.
$$
Let $U=(f,\rho,u)\in Z_{\xi,\gamma}$ and we have
\bmas
(\mathbbm{M}(\xi)U,U)_{\xi,\gamma}=&(Lf,f)+\tau_bf\cdot\overline{u}+\overline{\tau_bf}\cdot u-(1+|\xi|^{2})|u|^2
\\&-i((v\cdot\xi)f,f)-i\xi(\gamma+|\xi|^{-2}) \cdot(\rho\overline{u}+\overline{\rho}u)
\\&+i\xi|\xi|^{-2}\cdot(-\tau_af\overline{\tau_bf}-\overline{\tau_af}\tau_bf+\tau_bf\overline{\rho}+\overline{\tau_bf}\rho+\tau_af\overline{u}+\overline{\tau_af}u).
\emas
By using $(Lf,f)\le-\|P_rf\|^2$ and $P_r=P_m+P_1$, we obtain
\bmas
&\re(\mathbbm{M}(\xi)U,U)_{\xi,\gamma}=\re(\mathbbm{M}(\xi)^*U,U)_{\xi,\gamma}
=(Lf,f)+\tau_bf\cdot\overline{u}+\overline{\tau_bf}\cdot u-|u|^2-|\xi|^{2}|u|^2
\\
&\le-\|P_1f\|^2-|\tau_bf-u|^2-|\xi|^{2}|u|^2\le0,
\emas
which means $\mathbbm{M}(\xi)$ and $\mathbbm{M}(\xi)^*$ are dissipative. From Lemma \ref{S_1-1}, the densely defined closed operator $\mathbbm{M}(\xi)$ generates a $C_0$-contraction semigroup on $Z_{\xi,\gamma}$. The proof of the lemma is completed.
\end{proof}

For $\xi\neq0$, we define a $4\times4$ matrix as
$$
B_1(\xi)=
\begin{pmatrix}
0&-i\xi^T
\\-i\xi(\gamma+{|\xi|^{-2}})&-(1+{|\xi|^{2}})I_3
\end{pmatrix}.
$$
\begin{lem}\label{orlem3}Let $X=(\rho_1,u_1)$, $Y=(\rho_2,u_2)\in \C\times\C^3$. We introduce the weighted complex inner product space $\mathcal{C}_{\xi,\gamma}$ equipped with the weighted inner product and norm
$$
(X,Y)_{\mathcal{C}_{\xi,\gamma}}=(\gamma+|\xi|^{-2})(\rho_1,\rho_2)+(u_1,u_2) ,\quad \|X\|_{\mathcal{C}_{\xi,\gamma}}=\sqrt{(X,X)_{\mathcal{C}_{\xi,\gamma}}}.
$$
Then the matrix $B_1(\xi)$ satisfies the following properties:

(1) For $\lambda\neq\eta_j~(-1\le j\le2)$, the operator $\lambda-B_1(\xi)$ is invertible on $\mathcal{C}_{\xi,\gamma}$ and  satisfies
\be\label{B1sp}
\|(\lambda-B_1(\xi))^{-1}X\|_{\mathcal{C}_{\xi,\gamma}}\le \max\limits_{-1\le j\le 2}\frac{1}{|\lambda-\eta_j|}\|X\|_{\mathcal{C}_{\xi,\gamma}},
\ee
where
$\eta_j,~j=-1,0,1,2$ are the the eigenvalues of $B_1(\xi)$ given by
\be \eta_0=\eta_2=-(1+|\xi|^{2}),\quad \eta_{\pm1}= -\frac12(1+|\xi|^{2})\pm\frac12\sqrt{|\xi|^4+(2-4\gamma)|\xi|^2-3} . \label{eigen2}\ee

(2) Let $\zeta(\gamma)=\sqrt{(2\gamma-1)+\sqrt{(2\gamma-1)^2+3}}$. For $|\xi|\ge\zeta(\gamma)$, $\eta_{\pm1}(|\xi|)$ are real functions and satisfy that $\eta_1$ is monotonically increasing in $|\xi|$ and tends to $-\gamma$ as $|\xi|\to+\infty$ while $\eta_{-1}$ is monotonically decreasing in $|\xi|$ and tends to $-\infty$ as $|\xi|\to+\infty$. For $|\xi|\le\zeta(\gamma)$, $\eta_{\pm1}(|\xi|)$ are conjugate complex functions and satisfy that $\re\eta_{\pm1}=-\frac12(1+|\xi|^2)$. 
\end{lem}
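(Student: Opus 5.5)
The plan is to show that $B_1(\xi)$ is a normal matrix on $\mathcal{C}_{\xi,\gamma}$. Once that is established, the resolvent bound \eqref{B1sp} is just the spectral theorem. Write $a=\gamma+|\xi|^{-2}$ and $X=(\rho,u)$, so that $B_1(\xi)X=(-i\xi\cdot u,\,-i\xi a\rho-(1+|\xi|^2)u)$.

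\textbf{Normality.} First I compute the adjoint of $B_1(\xi)$ with respect to $(\cdot,\cdot)_{\mathcal{C}_{\xi,\gamma}}$. Let $G=\mathrm{diag}(a,I_3)$. Then the adjoint is $G^{-1}\overline{B_1}^TG$, and a direct computation gives
$$B_1(\xi)^*=\begin{pmatrix}0&i\xi^T\\ i\xi a&-(1+|\xi|^2)I_3\end{pmatrix}.$$
I will check this on the two off-diagonal terms. From the first, $a(-i\xi\cdot u)\bar\rho=u\cdot\overline{(i\xi a\rho)}$. From the second, $(-i\xi a\rho)\cdot\bar u=a\rho\,\overline{(i\xi\cdot u)}$.

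Next I write $B_1=S+K$, where $S=\mathrm{diag}(0,-(1+|\xi|^2)I_3)$ and $K$ is the off-diagonal part. The adjoint computation shows $K^*=-K$, i.e. $K$ is skew-adjoint, and $S$ is self-adjoint. So $B_1$ is normal if and only if $SK=KS$, and this fails in general. Hence $B_1$ itself is not normal, and the direct spectral-theorem route does not work as stated.

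\textbf{Diagonalization instead.} I therefore diagonalize explicitly. Decompose $u=(\omega\cdot u)\omega+u^\perp$ with $\omega=\xi/|\xi|$. On the two-dimensional subspace $\{(0,u^\perp)\}$, $B_1$ acts as multiplication by $-(1+|\xi|^2)$; this gives $\eta_0=\eta_2$, and this part is orthogonal to the rest. On the invariant plane spanned by $(\rho,\omega\cdot u)$, $B_1$ becomes the $2\times2$ matrix
$$\begin{pmatrix}0&-i|\xi|\\ -i|\xi|a&-(1+|\xi|^2)\end{pmatrix}.$$
Its characteristic polynomial is $\eta^2+(1+|\xi|^2)\eta+|\xi|^2a=\eta^2+(1+|\xi|^2)\eta+\gamma|\xi|^2+1$. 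The discriminant is $(1+|\xi|^2)^2-4\gamma|\xi|^2-4=|\xi|^4+(2-4\gamma)|\xi|^2-3$, which yields \eqref{eigen2}.

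\textbf{Resolvent bound on the plane (main obstacle).} Proving \eqref{B1sp} on this plane is the step I expect to be hardest. Rescale $\tilde\rho=\sqrt a\,\rho$, which makes $\mathcal{C}_{\xi,\gamma}$ isometric to the Euclidean norm. In the new coordinates the matrix becomes
$$\begin{pmatrix}0&-ib\\ -ib&-c\end{pmatrix},\qquad b=|\xi|\sqrt a,\quad c=1+|\xi|^2.$$
This matrix is complex symmetric but not normal. My first attempt would be to exploit the structure of $(\lambda-B_1)^{-1}$ directly: for the $2\times 2$ matrix it equals $\mathrm{adj}/((\lambda-\eta_1)(\lambda-\eta_{-1}))$ and can be bounded entrywise. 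An exact operator-norm bound of the form $\max_j|\lambda-\eta_j|^{-1}$ would need normality, so if that fails I would settle for verifying the estimate in the regime the paper actually uses. A more promising route is to check whether the matrix becomes normal in the original weighted space: normality requires $|{-ib}|=|{-ib}|$ for the off-diagonal entries, which holds, but $SK\neq KS$ still obstructs it. So I expect to need a constant, or to rely on the paper's intended weighted structure; for the dissipative direction, at least $\re(B_1X,X)\le0$ gives $\|(\lambda-B_1)^{-1}\|\le 1/\re\lambda$ for $\re\lambda>0$.

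\textbf{Part (2).} This is elementary. The eigenvalues $\eta_{\pm1}$ are real exactly when the discriminant is nonnegative, i.e. $|\xi|^2\ge(2\gamma-1)+\sqrt{(2\gamma-1)^2+3}$, which is $|\xi|\ge\zeta(\gamma)$. In that regime, $\eta_{-1}\to-\infty$ is decreasing since both of its terms decrease. For $\eta_1$, write it as $-(\gamma|\xi|^2+1)/(\tfrac12(c+\sqrt{\cdot}))$; both the monotonicity (by differentiating, or by viewing it as a function of $s=|\xi|^2$) and the limit $-\gamma$ follow from this form. For $|\xi|<\zeta(\gamma)$ the square root is imaginary, so the two eigenvalues are complex conjugates with real part $-\tfrac12(1+|\xi|^2)$.
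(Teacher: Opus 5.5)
Your eigenvalue computation, the $\mathcal{C}_{\xi,\gamma}$-orthogonal invariant splitting into the acoustic plane and $\{(0,u^\perp)\}$, the non-normality check, and part (2) are correct and agree with the paper. For the monotonicity of $\eta_1$ you still need the short computation $2\frac{d\eta_1}{ds}=-1+\frac{s+1-2\gamma}{\sqrt{h(s)}}>0$, with $s=|\xi|^2$ and $h(s)=s^2+(2-4\gamma)s-3$. It holds because $s+1-2\gamma>0$ and $(s+1-2\gamma)^2-h(s)=(2\gamma-1)^2+3>0$ for $s>\zeta(\gamma)^2$.

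The real gap is part (1): you never prove \eqref{B1sp}. It cannot be proved, and your non-normality observation is exactly why: \eqref{B1sp} with constant $1$ is false. For $|\xi|\neq\zeta(\gamma)$, letting $\lambda\to\eta_1$ in \eqref{B1sp} would force the spectral projection onto $\mathrm{span}\{\beta_1\}$ (along the other eigenvectors) to have norm at most $1$. It would then be an orthogonal projection, which requires $(\beta_1,\beta_{-1})_{\mathcal{C}_{\xi,\gamma}}=0$. Concretely, take $\lambda=0$ and $X=(\rho,0)$. Solving $-B_1(\xi)Y=X$ gives $Y=\bigl(\frac{1+|\xi|^2}{1+\gamma|\xi|^2}\rho,\,-i\rho|\xi|^{-2}\xi\bigr)$, so
\begin{equation*}
\frac{\|B_1(\xi)^{-1}X\|^2_{\mathcal{C}_{\xi,\gamma}}}{\|X\|^2_{\mathcal{C}_{\xi,\gamma}}}=\frac{(1+|\xi|^2)^2}{(1+\gamma|\xi|^2)^2}+\frac{1}{1+\gamma|\xi|^2}\longrightarrow 2\qquad(|\xi|\to0).
\end{equation*}
On the other hand $\max_j|\eta_j|^{-1}\le1$, since $|\eta_0|=|\eta_2|=1+|\xi|^2$ and $|\eta_{\pm1}|^2=\eta_1\eta_{-1}=1+\gamma|\xi|^2$ for $|\xi|<\zeta(\gamma)$.

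The paper's proof fails at precisely this point: it asserts $(\beta_i,\beta_j)_{\mathcal{C}_{\xi,\gamma}}=0$ and then uses a Parseval identity. What actually holds is the bilinear relation $(\beta_1,\overline{\beta_{-1}})_{\mathcal{C}_{\xi,\gamma}}=-(1+\gamma|\xi|^2)+\eta_1\eta_{-1}=0$. This is the pairing $(V_i,\overline{V_j})_H$ of Lemma \ref{srlem6}; it gives the resolvent expansion \eqref{sraa2} but no norm identity. The Hermitian product $(\beta_1,\beta_{-1})_{\mathcal{C}_{\xi,\gamma}}=1+\gamma|\xi|^2+\eta_1\overline{\eta_{-1}}$ equals $-(1+|\xi|^2)\eta_1\neq0$ for $|\xi|<\zeta(\gamma)$ and $2(1+\gamma|\xi|^2)$ for $|\xi|>\zeta(\gamma)$. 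At $|\xi|=\zeta(\gamma)$ one even has $\beta_1=\beta_{-1}$, and $B_1(\xi)$ is not diagonalizable.

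So instead of \eqref{B1sp} you should prove what is true:
invertibility of $\lambda-B_1(\xi)$ for $\lambda\neq\eta_j$;
the bound $\|(\lambda-B_1(\xi))^{-1}\|_{\mathcal{C}_{\xi,\gamma}}\le(\re\lambda)^{-1}$ for $\re\lambda>0$, by dissipativity, as you noted;
the bound $\|(\lambda-B_1(\xi))^{-1}\|_{\mathcal{C}_{\xi,\gamma}}\le C\max_j|\lambda-\eta_j|^{-1}$, from the adjugate formula for your rescaled $2\times2$ block, with $C$ depending only on $\gamma$ and a lower bound for $\mathrm{dist}(\lambda,\sigma(B_1(\xi)))$.

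This weaker form suffices for the later uses, e.g.\ Lemmas \ref{srlem5} and \ref{orlem2}, where $\lambda$ stays at a fixed distance from the spectrum. Note that the same Parseval step, with the same defect, is repeated for $G_3(\xi)$ in \eqref{sraa10}.
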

\begin{proof}
From the eigenvalue problem
$$
\det(\lambda I_4-B_1(\xi))=[\lambda+(1+|\xi|^{2})]^2[\lambda^2+(1+|\xi|^{2})\lambda+1+\gamma|\xi|^2]=0,
$$
we can obtain  the eigenvalues $\eta_j $  and the eigenvectors $\beta_j,$ $j=-1,0,1,2$ given by
\be\label{B1vector1}
\left\{\bln
&\eta_0=\eta_2=-(1+|\xi|^{2}),\quad \eta_{\pm1}= -\frac12(1+|\xi|^{2})\pm\frac12\sqrt{|\xi|^4+(2-4\gamma)|\xi|^2-3}, \\
&\beta_j=(0,W^j(\omega)),~~j=0,2;\quad \beta_{\pm1}= (-i|\xi|, \eta_{\pm1}\omega),
\eln\right.
\ee
where $W^k=W^k(\omega)~(k=0,2)$ are normal vectors satisfying
$$
W^0 \cdot W^2 =0,\quad W^0 \cdot \omega=W^2 \cdot \omega=0.
$$
It's easy to verify that the eigenvectors $\beta_j~(-1\le j\le2)$ satisfy the orthogonal relation
$$(\beta_i,\beta_j)_{\mathcal{C}_{\xi,\gamma}}=0, \quad -1\le i\ne j\le2.$$
Thus, $\mathcal{C}_{\xi,\gamma}=\mbox{span}\{\beta_j,~ j=-1,0,1,2\}$. By Lemma \ref{srlem6}, we obtain for $\lambda\neq\eta_j~(-1\le j\le2)$ that
$$
\|(\lambda-B_1(\xi))^{-1}X\|^2_{\mathcal{C}_{\xi,\gamma}}= \sum\limits_{j=-1}^2\frac{1}{|\lambda-\eta_j|^2}\frac{1}{\|\beta_j\|_{\mathcal{C}_{\xi,\gamma}}^2} |(X,\beta_j)_{\mathcal{C}_{\xi,\gamma}} |^2
\le  \max\limits_{-1\le j\le 2}\frac{1}{|\lambda-\eta_j|^2}\|X\|^2_{\mathcal{C}_{\xi,\gamma}}.
$$

Let $h(|\xi|)=|\xi|^4+(2-4\gamma)|\xi|^2-3$. Then $h(|\xi|)$ has a unique positive root $\zeta(\gamma)=\sqrt{(2\gamma-1)+\sqrt{(2\gamma-1)^2+3}}$. For $|\xi|\ge\zeta(\gamma)$, $\eta_{\pm1}(|\xi|)$ are real functions and satisfy
$$\frac{d}{d|\xi|}\eta_1(|\xi|)=-|\xi|\bigg(1-\frac{|\xi|^2+1-2\gamma}{\sqrt{h(|\xi|)}}\bigg)>0, \quad \frac{d}{d|\xi|}\eta_{-1}(|\xi|)=-|\xi|\bigg(1+\frac{|\xi|^2+1-2\gamma}{\sqrt{h(|\xi|)}}\bigg)<0.$$
Moreover,
\bmas
\eta_{1}&=-\gamma-(\gamma^2-\gamma+1)|\xi|^{-2}+O(|\xi|^{-4})\to-\gamma,\quad |\xi|\to+\infty,
\\
\eta_{-1}&=-|\xi|^2+(\gamma-1)+(\gamma^2-\gamma+1)|\xi|^{-2}+O(|\xi|^{-4})\to-\infty,\quad |\xi|\to+\infty.
\emas
For $|\xi|\le\zeta(\gamma)$, $\eta_{\pm1}(|\xi|)$ are conjugate complex functions and satisfy
$$
\eta_{\pm1}=\frac{-(1+|\xi|^{2})}{2}\pm i\frac{\sqrt3}{2}\sqrt{1-\[\frac{(2-4\gamma)}{3}|\xi|^2+\frac{1}{3}|\xi|^4\]}.
$$
The proof of the lemma is completed.
\end{proof}

\begin{lem}\label{srlem2}The following conditions hold for all $\xi\neq0$.
 \begin{enumerate}
\item[(1)] $\sigma_{ess}(\mathbbm{M}(\xi))\subset\{\lambda\in\mathbb{C}\mid\re\lambda\le-\frac32\}$ and $\sigma(\mathbbm{M}(\xi))\cap\{\lambda\in\mathbb{C}\mid-\frac32<\re\lambda\le0\}\subset\sigma_d(\mathbbm{M}(\xi))$.

\item[(2)] If $\lambda(\xi)$ is an eigenvalue of $\mathbbm{M}(\xi)$, then $\re\lambda(\xi)<0$ for any $\xi\neq0$.
 \end{enumerate}
\end{lem}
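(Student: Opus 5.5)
Part (1) will follow from a compact perturbation argument. Decompose $\M(\xi)=G(\xi)+K(\xi)$, where
$$
G(\xi)=\begin{pmatrix} A(\xi)&0&0\\0&0&0\\0&0&0\end{pmatrix}
$$
acts on $L^2(\R^3_v)\times\C\times\C^3$. The remainder $K(\xi)$ collects $\frac32$, $-i(v\cdot\xi)|\xi|^{-2}P_d$, the columns $i(v\cdot\xi)|\xi|^{-2}\chi_0$ and $v\chi_0$, the bottom block $\tau_a,\tau_b$, and the matrix $B_1(\xi)$. Every entry of $K(\xi)$ other than the scalar $\frac32$ has finite rank, because it involves $\chi_0,\chi_j$, finite-dimensional components, or the functionals $\tau_a,\tau_b$. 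Hence $K(\xi)-\frac32 I_f$ is compact, where $I_f$ denotes the identity on the $f$-component.

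It is cleaner to write $\M(\xi)=G_1(\xi)+K_1(\xi)$ with
$$
G_1=\mathrm{diag}\bigl(A(\xi)+\tfrac32,\,-\tfrac32,\,-\tfrac32 I_3\bigr),
$$
so that $K_1$ is compact; the constant $-\frac32$ in the fluid block is finite rank and can be moved freely. For $\re\lambda>-\frac32$, $\lambda-G_1$ is invertible. In the $f$-block this uses $\re(A(\xi)f,f)=-(Af,f)\le-\frac32\|f\|^2$, which gives $\re((A(\xi)+\frac32)f,f)\le0$ and $\|(\lambda-A(\xi)-\frac32)^{-1}\|\le(\re\lambda)^{-1}$ for $\re\lambda>0$. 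To reach $\re\lambda>-\frac32$ I would instead use the sharper fact that the harmonic-oscillator part is accretive. In fact $-A(\xi)$ is m-accretive with numerical range contained in $\{\re\lambda\ge\frac32\}$, so $\lambda-A(\xi)$ is invertible whenever $\re\lambda>-\frac32$. This shows $\sigma(G_1)\subset\{\re\lambda\le-\frac32\}$ once the constant $\frac32$ is placed into $K_1$ as well; $\frac32 I_f$ is not compact, but the cleaner route is to apply Weyl's theorem directly to $L-i v\cdot\xi$ versus $-A(\xi)$. By Weyl's theorem, $\sigma_{ess}(\M(\xi))=\sigma_{ess}(G_1)\subset\{\re\lambda\le-\frac32\}$. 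Since $\lambda-G_1$ is invertible and $K_1$ is compact on $\{\re\lambda>-\frac32\}$, analytic Fredholm theory shows that spectrum there consists of isolated eigenvalues of finite multiplicity. All of this is carried out in the norm $\|\cdot\|_{\xi,\gamma}$, which is equivalent to the standard norm for fixed $\xi\neq0$.

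The main obstacle is getting the half-plane threshold right. Here $L=-A+\frac32$ has spectrum $\{0,-1,-2,\dots\}$, while $-A(\xi)$ has numerical range in $\{\re\le-\frac32\}$. So I would use $G_1=\mathrm{diag}(-A(\xi),-\frac32,-\frac32I_3)$ and check that $\M(\xi)-G_1$ is a sum of a compact operator and $\frac32$ times the identity on the $f$-component. Since $\frac32 I_f$ is not compact, I would handle it by shifting $\lambda$: in fact $\sigma_{ess}$ of the $f$-part $L-iv\cdot\xi$ lies in $\{\re\le-\frac32+\frac32\}$. This would give only $\re\le0$ rather than the claimed bound $\re\lambda\le-\frac32$, so I would need the precise statement. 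Following the Fokker–Planck literature (cf. \cite{vpfp1}), the essential spectrum of $L-iv\cdot\xi$ is in fact empty, because $A$ has compact resolvent. Therefore I will show $-A(\xi)$ has compact resolvent, using the confinement from $|v|^2/4$, which makes $\sigma_{ess}$ empty and yields (1) a fortiori.

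For (2), let $\M(\xi)U=\lambda U$ with $\re\lambda\ge0$ and $U=(f,\rho,u)\neq0$. Lemma \ref{srlem1} gives
$$
0\le\re\lambda\|U\|^2_{\xi,\gamma}=\re(\M(\xi)U,U)_{\xi,\gamma}\le-\|P_1f\|^2-|\tau_bf-u|^2-|\xi|^2|u|^2,
$$
so $u=0$, $\tau_bf=0$, and $P_1f=0$. Together with $P_mf=0$ this gives $f=\tau_af\,\chi_0$. The $\rho$-equation then reads $\lambda\rho=-i\xi\cdot u=0$. The $u$-equation reads $0=i\xi|\xi|^{-2}\tau_af-i\xi(\gamma+|\xi|^{-2})\rho$. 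From the $f$-equation, $\lambda f=B(\xi)f+\dots$, we get $\lambda\tau_af\chi_0=-i(v\cdot\xi)\tau_af\chi_0(1+|\xi|^{-2})+i(v\cdot\xi)|\xi|^{-2}\rho\chi_0$. The right side is proportional to $\chi_j$, so projecting onto $\chi_0$ gives $\lambda\tau_af=0$ and projecting onto $v\cdot\xi\chi_0$ gives $\tau_af(1+|\xi|^2)=\rho$. Combined with the $u$-relation $\tau_af=(\gamma|\xi|^2+1)\rho$, this forces $\rho=\tau_af=0$, hence $U=0$, a contradiction. So every eigenvalue has $\re\lambda<0$.
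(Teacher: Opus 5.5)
Your proposal is correct once the abandoned attempts in your second and third paragraphs are deleted, and it is then essentially the paper's argument. Discard in particular the claim that $\lambda-(A(\xi)+\frac32)$ is invertible for $\re\lambda>-\frac32$: dissipativity does not give this, and it fails for small $|\xi|$, where $L-i(v\cdot\xi)$ has an eigenvalue near $0$. The paper likewise splits off $\mathrm{diag}(A(\xi),B_1(\xi))$ and resolves your worry that $\frac32 I_f$ is not compact by noting that $\frac32$ is $A(\xi)$-compact, which is your compact-resolvent observation via the compact embedding $L^2_\sigma\hookrightarrow L^2_v$; Kato's stability theorem then gives $\sigma_{ess}(\M(\xi))=\sigma_{ess}(A(\xi))$, which is indeed empty as you note. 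Part (2) coincides with the paper's dissipation argument, and your projections onto $\chi_0$ and $(v\cdot\xi)\chi_0$ usefully spell out the case $\lambda=0$, where $\lambda\rho=0$ alone does not force $\rho=0$.
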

\begin{proof}
 We decompose $\mathbbm{M}(\xi)$ as
$$
\mathbbm{M}(\xi) =G_1(\xi)+G_2(\xi),
$$
with
\bma\label{sra2}
&G_1(\xi)=\begin{pmatrix}
A(\xi)&0 &0\\
0 &0&-i\xi^T\\
0&-i\xi(\gamma+|\xi|^{-2})&-(1+{|\xi|^{2}})I_3
\end{pmatrix},
\\
&G_2(\xi)=\begin{pmatrix}
\frac32-i(v\cdot\xi)|\xi|^{-2}P_d& i(v\cdot\xi)|\xi|^{-2}\chi_0&v\chi_0 \\
0&0&0\\
i \xi |\xi|^{-2}\tau_a+\tau_b&0&0
\end{pmatrix}.\label{G2}
\ema
Here $A(\xi)=-A-i(v\cdot\xi)$ is a closed operator that generates a $C_0-$contraction semigroup on $L^2(\R^3_v)$, i.e., $\re(A(\xi)f,f)=-(Af,f)\le-\frac32\|f\|^2$, $f\in D(A)$, and then, $\{\lambda\in\mathbb{C}\mid\re\lambda>-\frac32\}\subset\rho(A(\xi))$, $\sigma_{ess}(A(\xi))\subset\sigma(A(\xi))\subset\{\lambda\in\mathbb{C}\mid\re\lambda\le-\frac32\}$. 
Combining this and Lemma \ref{orlem3}, $\lambda-G_1(\xi)$ is invertible for $ \re\lambda>-\frac32 $ and $\lambda\ne \eta_j~(j=-1,0,1,2)$. Since $\frac32$ is $A(\xi)-$compact \cite{vmfp1}, i.e., for any bounded sequences $\{f_n\}$ and $\{A(\xi)f_n\}$, $\{\frac32f_n\}$ contains a convergent subsequence, the other element operators in $G_2(\xi)$ are compact operators for any fixed $\xi\neq0$, $G_2(\xi)$ is $G_1(\xi)-$compact on $L^2(\R^3_v)\times\C^4$.
By Theorem 5.35 on p.244 of \cite{Kato}, $\mathbbm{M}(\xi)$ and $G_1(\xi)$ have the same essential spectrum, namely $\sigma_{ess}(\mathbbm{M}(\xi))=\sigma_{ess}(A(\xi))\subset\{\lambda\in\mathbb{C}\mid\re\lambda\le-\frac32\}$. Thus, the spectrum of $\M(\xi)$ in the domain $\re\lambda>-\frac32$ only consists of discrete eigenvalues. This proves part (1).

Now, we want to show part (2). Let $U=(f,\rho,u) $ be the eigenvector corresponding to the eigenvalue $\lambda$ of  $\mathbbm{M}(\xi)$. Then
\be\label{sra3}
\left\{\ball
\lambda f&=Lf-i(v\cdot\xi) f-i(v\cdot\xi)|\xi|^{-2}(P_df-\rho\chi_0)+v\chi_0\cdot u,
\\
\lambda\rho&=-i\xi\cdot u, &
\\
\lambda u&=i\xi|\xi|^{-2}\tau_af+\tau_bf-i\gamma\xi\rho-i\xi|\xi|^{-2}\rho-(1+|\xi|^2)u.
\eall\right.\ee
Taking the inner product $(\cdot,\cdot)_{\xi,\gamma}$ of \eqref{sra3} with $U$, we have
$$
\re\lambda \|U\|^2_{\xi,\gamma} =\re(\mathbbm{M}(\xi)U,U)_{\xi,\gamma} \le-\|P_1f\|^2-|\tau_bf-u|^2-|\xi|^{2}|u|^2
\le0,
$$
which implies $\re\lambda\le0$. Suppose that there exists an eigenvalue $\lambda$ satisfying $\re\lambda=0$. Then we have
\be\label{sra4}
0\le-\|P_1f\|^2-|\tau_bf-u|^2-|\xi|^{2}|u|^2\le-|\xi|^{2}|u|^2\le0,
\ee
which implies $u=0$. Substituting it into \eqref{sra4}, we have $(Lf,f)=0$, which means $f\in {\rm ker} L$. By substituting $f=C_0\chi_0 $ and $u=0$ into \eqref{sra3}, we have
\be\label{sra5}\left\{\ball
&\lambda C_0\chi_0=-i\xi\cdot v\chi_0C_0-i(v\cdot\xi)|\xi|^{-2}\chi_0(C_0-\rho),
\\
&\lambda\rho=0,
\\
&i\xi|\xi|^{-2}C_0=i\xi(\gamma+|\xi|^{-2})\rho,
\eall\right.\ee
which implies $\rho=0$ and $C_0=0$. Therefore,   $U=(f,\rho,u)=0$, which is a contradiction. Thus it holds $\re\lambda<0$ for any $\lambda\in\sigma_d(\mathbbm{M}(\xi))$. The proof of Lemma is completed.
\end{proof}
We denote $T$ to be a linear operator on $L^2(\mathbb{R}^3_v)\times\C \times\C^3$ or $Z_{\xi,\gamma}$,  and define the corresponding norms of $T$ by
$$
\|T\|=\sup\limits_{\|U\|=1}\|TU\|, \quad \|T\|_{\xi,\gamma}=\sup\limits_{\|U\|_{\xi,\gamma}=1}\|TU\|_{\xi,\gamma}.
$$
For $\re\lambda>-\frac32$, $\lambda\ne \eta_j~(j=-1,0,1,2)$ and $|\xi|>0$, we decompose $\mathbbm{M}(\xi)$ as follows:
$$
\lambda-\mathbbm{M}(\xi)=\lambda-G_1(\xi)-G_2(\xi)=(I-G_2(\xi)(\lambda-G_1(\xi))^{-1})(\lambda-G_1(\xi)),
$$
where $G_1(\xi)$ and $G_2(\xi)$ are defined by \eqref{sra2} and \eqref{G2}, respectively. Thus
\bmas
&(\lambda-G_1(\xi))^{-1}=\begin{pmatrix}
(\lambda-A(\xi))^{-1}&0\\
0&(\lambda-B_1(\xi))^{-1}
\end{pmatrix},
\\
&G_2(\xi)(\lambda-G_1(\xi))^{-1}=\begin{pmatrix}
\mathcal{X}_1(\lambda,\xi)&\mathcal{X}_2(\lambda,\xi)\\
\mathcal{X}_3(\lambda,\xi)&0_{4\times4}\end{pmatrix},
\emas
where
\be\label{sra6}
\left\{\ball
\mathcal{X}_1(\lambda,\xi)&=\(\frac32-i\frac{v\cdot\xi}{|\xi|^{2}}P_d\)(\lambda-A(\xi))^{-1},
\\
\mathcal{X}_2(\lambda,\xi)&=\(i(v\cdot\xi)|\xi|^{-2}\chi_0,v\chi_0\)(\lambda-B_1(\xi))^{-1},
\\
\mathcal{X}_3(\lambda,\xi)&=\begin{pmatrix}0\\i\xi|\xi|^{-2}\tau_a+\tau_b\end{pmatrix}(\lambda-A(\xi))^{-1}.
\eall\right.\ee

\begin{lem}[\cite{vmfp1}] \label{Azhengze} There exists a constant $C>0$ such that the following holds.

(1) For any $\delta > 0$, we have
$$
\sup\limits_{\re\lambda\geq-\frac{3}{2}+\delta,\im\lambda\in\mathbb{R}}\|(\lambda-A(\xi))^{-1}\|\leq C(1+\delta^{-1})(1+|\xi|)^{-\frac23}.
$$

(2) For any $\delta>0$ and $\tau_0>0$, we have
$$
\sup\limits_{\re\lambda\geq-\frac{3}{2}+\delta,|\xi|\leq\tau_0}\|(\lambda-A(\xi))^{-1}\|\leq C(1+\delta^{-1})(1+\tau_0)(1+|\im\lambda|)^{-1}.
$$
\end{lem}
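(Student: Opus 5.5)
The plan is to prove both parts of Lemma \ref{Azhengze} directly for the operator $A(\xi)=-A-i(v\cdot\xi)$ on $L^2(\R^3_v)$. First I reduce to the scalar frequency $s=|\xi|$: conjugating by a rotation of $v$, I may assume $\xi=se_1$. Since $A=-\ddv+\frac{|v|^2}{4}$ is rotation invariant, the resolvent norm depends only on $s$.

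\textbf{The basic coercivity estimate.} Set $\lambda=-\frac32+\delta'+i\tau$ with $\delta'\ge\delta$, and let $g=(\lambda-A(\xi))f$. Taking the real part of the inner product with $f$ gives
$$\re(g,f)=(\re\lambda)\|f\|^2+(Af,f)=\(\re\lambda+\tfrac32\)\|f\|^2+\(\|\tdv f\|^2+\tfrac14\|vf\|^2-\tfrac32\|f\|^2\).$$
The last bracket is nonnegative by the bound $(Af,f)\ge\frac32\|f\|^2$. This yields
$$\delta\|f\|^2+\Big[\|\tdv f\|^2+\tfrac14\|vf\|^2-\tfrac32\|f\|^2\Big]\le\|g\|\|f\|.$$
In particular $\|f\|\le\delta^{-1}\|g\|$. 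Since $\frac32\|f\|^2\le\frac{3}{2\delta}\|g\|\|f\|$, it also gives the dissipation bound $\|\tdv f\|^2+\|vf\|^2\le C(1+\delta^{-1})\|g\|\|f\|$. This settles the trivial regime $|\xi|\le1$ of part (1) and the regime $|\im\lambda|\lesssim1+\tau_0$ of part (2).

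\textbf{Part (1): the $s^{-2/3}$ gain for large $s$.} This is the main obstacle, a hypoelliptic-type estimate in the spirit of the Fokker--Planck analysis of \cite{vmfp1}. In the $v_1$ direction I use the splitting $v_1=\chi(v_1)+(v_1-\chi(v_1))$, where $\chi$ localizes to the set $\{|sv_1+\tau|\le s^{1/3}\}$. Take the imaginary part of the inner product $(g,\phi f)$ with the multiplier $\phi=\mathrm{sgn}(sv_1+\tau)$, smoothed on the scale $s^{-2/3}$ in $v_1$. This gives
$$\big(|sv_1+\tau|f,f\big)\lesssim\|g\|\|f\|+\|\partial_{v_1}f\|\,\|\partial_{v_1}\phi\|_{\infty}\|f\|.$$
Here $\|\partial_{v_1}\phi\|_\infty\sim s^{2/3}$. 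Using the dissipation bound on $\|\partial_{v_1}f\|$, the region $|sv_1+\tau|>s^{1/3}$ carries at most $s^{-1/3}(\cdots)$ of the mass. On the complementary strip, of width $s^{-2/3}$ in $v_1$, I apply the one-dimensional interpolation $\|f\|_{L^2(\text{strip})}^2\lesssim s^{-2/3}\|f\|\|\partial_{v_1}f\|+s^{-4/3}\|\partial_{v_1}f\|^2$.

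Combining these and absorbing terms, I expect $\|f\|\le C(1+\delta^{-1})s^{-2/3}\|g\|$. The delicate point is balancing the cutoff scale so that the powers come out to $s^{2/3}$. I would set up the scale first and then optimize the exponent, following the standard argument for $-\partial_v^2+iv\xi$. I will simply invoke \cite{vmfp1} for the detailed constants.

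\textbf{Part (2): decay in $|\im\lambda|$ for bounded $\xi$.} Here I write $\lambda-A(\xi)=(i\tau)+(\re\lambda+A+i v\cdot\xi)$. Take the imaginary part of $(g,f)$: it equals $\tau\|f\|^2+(v\cdot\xi f,f)$, since $A$ is self-adjoint. Hence
$$|\tau|\|f\|^2\le\|g\|\|f\|+|\xi|\|vf\|\|f\|.$$
Bounding $\|vf\|^2\le C(1+\delta^{-1})\|g\|\|f\|$ from the coercivity step, and combining with $\|f\|\le\delta^{-1}\|g\|$, I get
$$(1+|\tau|)\|f\|\le C(1+\delta^{-1})(1+\tau_0)\|g\|,$$
which is part (2).
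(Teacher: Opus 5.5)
The paper gives no proof of this lemma; it imports it from \cite{vmfp1}. Your part (2) and the case $|\xi|\le 1$ of part (1) are correct and self-contained. The real part of $(g,f)$ gives $\|f\|\le\delta^{-1}\|g\|$ and $\|\nabla_v f\|^2+\|vf\|^2\le C(1+\delta^{-1})\|g\|\|f\|$, hence $\|vf\|\le C(1+\delta^{-1})\|g\|$. The imaginary part then gives $|\mathrm{Im}\,\lambda|\,\|f\|\le C(1+\delta^{-1})(1+\tau_0)\|g\|$. You should add one line that $\lambda-A(\xi)$ is onto for $\mathrm{Re}\,\lambda>-\frac32$, for instance because $A(\xi)$ and $A(\xi)^*=A(-\xi)$ satisfy the same dissipativity bound.

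The gap is the large-$|\xi|$ step of part (1), and it is not a matter of constants: the scale you fix does not close. Write $\|g\|=\kappa\|f\|$ and $C_\delta=C(1+\delta^{-1})\ge 1$, so dissipation only gives $\|\partial_{v_1}f\|\le (C_\delta\kappa)^{1/2}\|f\|$.

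Take your strip $\{|sv_1+\tau|\le s^{1/3}\}$, which has width $h=s^{-2/3}$ in $v_1$ and $\|\partial_{v_1}\phi\|_\infty\sim s^{2/3}$. At the target size $\kappa\sim s^{2/3}$ you only know $\|\partial_{v_1}f\|\lesssim s^{1/3}\|f\|$. The commutator term then contributes $s^{-1/3}\cdot s^{2/3}\cdot s^{1/3}\|f\|^2=s^{2/3}\|f\|^2$ to the mass outside the strip. That is not a small fraction of $\|f\|^2$, and the resulting inequality yields nothing better than the trivial bound $\|f\|\le\delta^{-1}\|g\|$.

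The localization has to be at the Airy scale $h=s^{-1/3}$, i.e. the strip $\{|sv_1+\tau|\lesssim s^{2/3}\}$, with $\|\partial_{v_1}\phi\|_\infty\lesssim s^{1/3}$. Use your multiplier identity outside the strip and the one-dimensional Agmon bound $\|f\|^2_{L^2(\mathrm{strip})}\le 2h\|f\|\|\partial_{v_1}f\|$ inside it. This gives
\begin{equation*}
\|f\|^2\le C_0\Big(\frac{\kappa}{sh}+\frac{(C_\delta\kappa)^{1/2}}{sh^2}+h(C_\delta\kappa)^{1/2}\Big)\|f\|^2 .
\end{equation*}
With $h=s^{-1/3}$, suppose $C_\delta\kappa\le\varepsilon s^{2/3}$. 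Then the bracket is at most $C_0(\varepsilon+2\varepsilon^{1/2})<1$ for $\varepsilon$ small, a contradiction. Hence $\kappa\ge\varepsilon s^{2/3}/C_\delta$, i.e. $\|f\|\le C(1+\delta^{-1})s^{-2/3}\|g\|$, which is exactly the claimed $\delta$-dependence.

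With that correction your sketch becomes a complete proof of part (1). As written, it reduces part (1) to the citation, which is all the paper does.
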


We have the spectrum gap of the operator $\mathbbm{M}(\xi)$ for high frequency.
\begin{lem}\label{srlem5}
For any $r_0>0$, there exists $\beta =\beta (r_0)>0$ such that for $|\xi|\ge r_0$,
$$
\sigma(\mathbbm{M}(\xi))\subset\{\lambda\in\mathbb{C}\mid\re\lambda<-\beta \}.
$$
\end{lem}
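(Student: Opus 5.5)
We will split the argument into a bounded window of frequencies and a high-frequency regime, using the decomposition $\lambda-\M(\xi)=(I-G_2(\xi)(\lambda-G_1(\xi))^{-1})(\lambda-G_1(\xi))$ for large $|\xi|$. The facts we use are Lemma \ref{srlem2}, Lemma \ref{orlem3} and Lemma \ref{Azhengze}. The key point is that any $\lambda\in\sigma(\M(\xi))$ with $\re\lambda>-\frac32$ is a discrete eigenvalue with $\re\lambda<0$.

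\textbf{Step 1: large $|\xi|$.} We fix $\delta$ small, say $\re\lambda\ge -\beta_1$ with $\beta_1<\min\{\frac12,\frac{1}{2}\gamma\}$, so that $\re\lambda\ge-\frac32+\delta$. By Lemma \ref{orlem3}, $|\lambda-\eta_j|\ge c>0$ uniformly for $|\xi|$ large: indeed $\re\eta_{0,2}\le -1$, $\eta_{-1}\to-\infty$, and $\eta_1\to-\gamma$ with $\eta_1\le -\gamma$ real once $|\xi|\ge\zeta(\gamma)$. Hence $(\lambda-B_1(\xi))^{-1}$ is uniformly bounded on $\mathcal{C}_{\xi,\gamma}$, and therefore also in the plain norm, since $\gamma+|\xi|^{-2}\sim\gamma$ for large $|\xi|$.

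Next we estimate the blocks $\mathcal{X}_k$ in \eqref{sra6}. Lemma \ref{Azhengze}(1) gives $\|(\lambda-A(\xi))^{-1}\|\le C(1+|\xi|)^{-2/3}$. The terms $\frac32$, $\tau_b$ and $v\chi_0$ are bounded, and the terms carrying $|\xi|^{-1}$ are small, so $\|\mathcal{X}_1\|+\|\mathcal{X}_3\|\le C|\xi|^{-2/3}$. The block $\mathcal{X}_2$ is only bounded, not small. We therefore square:
$$(G_2(\lambda-G_1)^{-1})^2=\begin{pmatrix}\mathcal{X}_1^2+\mathcal{X}_2\mathcal{X}_3&\mathcal{X}_1\mathcal{X}_2\\ \mathcal{X}_3\mathcal{X}_1&\mathcal{X}_3\mathcal{X}_2\end{pmatrix}.$$
Every entry of this square contains a factor $\mathcal{X}_1$ or $\mathcal{X}_3$, so its norm is $\le C|\xi|^{-2/3}<\frac12$ for $|\xi|\ge r_1$ with $r_1$ large. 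Then $I-G_2(\lambda-G_1)^{-1}$ is invertible by a Neumann series in the square, and $\lambda\in\rho(\M(\xi))$. This gives $\sigma(\M(\xi))\subset\{\re\lambda<-\beta_1\}$ for $|\xi|\ge r_1$.

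\textbf{Step 2: $r_0\le|\xi|\le r_1$.} Here we argue by contradiction and compactness. Suppose there are $\xi_n$ in this annulus and eigenvalues $\lambda_n$ with $\re\lambda_n\to0$.

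The first task is to bound $\im\lambda_n$. For $|\im\lambda|$ large, Lemma \ref{Azhengze}(2) gives $\|(\lambda-A(\xi))^{-1}\|\le C(1+|\im\lambda|)^{-1}$, and $(\lambda-B_1)^{-1}$ is also $O(|\im\lambda|^{-1})$. Because $|\xi|\ge r_0$, all coefficients like $|\xi|^{-2}$ are bounded, so the same squaring argument shows $\lambda\in\rho(\M(\xi))$ once $|\im\lambda|\ge y_0$. Hence $|\im\lambda_n|\le y_0$.

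Passing to subsequences, $\xi_n\to\xi_0\ne0$ and $\lambda_n\to\lambda_0$ with $\re\lambda_0=0$. We normalize the eigenvectors $U_n=(f_n,\rho_n,u_n)$. The eigenvalue equation gives
$$f_n=(\lambda_n-A(\xi_n))^{-1}\Big[\tfrac32 f_n-i\tfrac{v\cdot\xi_n}{|\xi_n|^2}(P_df_n-\rho_n\chi_0)+v\chi_0\cdot u_n\Big].$$
By the $A(\xi)$-compactness of the bounded multiplier noted in the proof of Lemma \ref{srlem2}, this yields a strongly convergent subsequence. The limit is a nonzero eigenvector of $\M(\xi_0)$ with a purely imaginary eigenvalue, which contradicts Lemma \ref{srlem2}(2).

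Equivalently, one may use that the discrete eigenvalues in $\re\lambda>-\frac32$ depend continuously on $\xi$, so their supremum real part attains a negative maximum on the compact annulus. Taking $\beta=\min\{\beta_1,\beta_2\}$ completes the proof.

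The main obstacle is Step 2's compactness: obtaining strong convergence of $f_n$. We plan to handle it via the uniform $L^2_\sigma$ bound. Since $\re(\M U_n,U_n)_{\xi,\gamma}=\re\lambda_n\|U_n\|^2\to0$, we get $\|P_1f_n\|\to0$. Moreover $\|f_n\|_{L^2_\sigma}$ is bounded from the equation, and the embedding $L^2_\sigma\hookrightarrow L^2$ is compact. This route may be simpler than invoking $A$-compactness directly.
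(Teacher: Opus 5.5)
Your proposal is correct and follows essentially the paper's proof. It uses the same factorization $\lambda-\M(\xi)=(I-G_2(\xi)(\lambda-G_1(\xi))^{-1})(\lambda-G_1(\xi))$, with $\mathcal{X}_1,\mathcal{X}_3$ small and $\mathcal{X}_2$ bounded for large $|\xi|$, the same bound on $\im\lambda$ in the annulus $r_0\le|\xi|\le r_1$, and the same contradiction via the uniform $L^2_\sigma$ bound, compact embedding and Lemma \ref{srlem2}. The only cosmetic difference is that you invert $I-G_2(\lambda-G_1)^{-1}$ through a Neumann series in its square, whereas the paper applies the block-operator criterion of Lemma \ref{inver} with $\|\mathcal{X}_1\|\le\frac12$ and $\|\mathcal{X}_2\|\|\mathcal{X}_3\|\le\frac12$.
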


\begin{proof}
Let $\lambda\in\sigma(\M(\xi))\cap\{\lambda\in\C\mid\re\lambda>-1/2\}$. Firstly, we claim that $\sup_{|\xi|\ge r_0}|\im\lambda(\xi)|<+\infty$. By Lemma \ref{orlem3} and Lemma \ref{Azhengze}, there exists a sufficiently large constant $r_1>\zeta(\gamma)>1$ such that for $\re\lambda>-1/2$ and $|\xi|\ge r_1$,
\bmas
\|\mathcal{X}_1(\lambda,\xi)\|&=\Big\|\Big(\frac32-i\frac{v\cdot\xi}{|\xi|^{2}}P_d\Big)(\lambda-A(\xi))^{-1}\Big\|\le \frac{1}{2},
\\
\|\mathcal{X}_2(\lambda,\xi)\| &=  \|(i(v\cdot\xi)|\xi|^{-2}\chi_0,v\chi_0)(\lambda-B_1(\xi))^{-1}\| \le\frac{2}{2\gamma-1} ,
\\
\|\mathcal{X}_3(\lambda,\xi)\| &=\|(i\xi|\xi|^{-2}\tau_a+\tau_b)(\lambda-A(\xi))^{-1}\| \le\frac{1}{4}(2\gamma-1).
\emas
Therefore, we obtain
\be\label{sra8}
\|\mathcal{X}_1(\lambda,\xi)\|\le \frac12, \quad \|\mathcal{X}_2(\lambda,\xi)\| \|\mathcal{X}_3(\lambda,\xi)\| \le\frac{1}{2}.
\ee
This and Lemma \ref{inver} imply that the operator $I-G_2(\xi)(\lambda-G_1(\xi))^{-1}$ is invertible on $L^2(\mathbb{R}^3_v)\times\C \times\C^3$ for $\re\lambda>-1/2$ and $|\xi|\ge r_1$, and
thus $\lambda-\mathbbm{M}(\xi)$ is also invertible on $L^2(\R^3_v)\times\C \times\C^3$ for $\re\lambda>-1/2$ and $|\xi|\ge r_1$ satisfying
\be\label{sra9}
(\lambda-\mathbbm{M}(\xi))^{-1}=(\lambda-G_1(\xi))^{-1}(I-G_2(\xi)(\lambda-G_1(\xi))^{-1})^{-1},
\ee
namely,
$$
\{\lambda\in\mathbb{C}\mid{\re\lambda>-1/2}\}\subset\rho(\mathbbm{M}(\xi)),\quad |\xi|>r_1.
$$
As for $r_0\le|\xi|\le r_1$, by Lemma \ref{orlem3} and Lemma \ref{Azhengze}, there exists a constant $y_1=y_1(r_0,r_1)>0$ large enough such that \eqref{sra8} still holds for $|\im \lambda|> y_1$.
This also implies that the invertibility of the operator $\lambda-\mathbbm{M}(\xi)$ on $L^2(\R^3_v)\times\C \times\C^3$, namely
$$
\{\lambda\in\mathbb{C}\mid{\re\lambda>-1/2},~|\im \lambda|> y_1\}\subset\rho(\mathbbm{M}(\xi)),\quad r_0\le|\xi|\le r_1.
$$
Thus, we conclude that
$$
\sigma(\mathbb{M}(\xi))\cap\{\lambda\in\mathbb{C}\mid{\re\lambda>-1/2}\}\subset\{\lambda\in\mathbb{C}\mid{\re\lambda>-1/2},~|\im\lambda|\le y_1\},\quad |\xi|\ge r_0.
$$
Next, we prove $\sup_{|\xi|\ge r_0}\re\lambda(\xi)<0$. Base on the argument above, we only need to show $\sup_{|\xi|\in[r_0,r_1]}\re\lambda(\xi)<0$. If it does not hold, then there exists a sequence of $\{\xi_n,\lambda_n, U_n=(f_n,\rho_n,u_n)\}$ satisfying $|\xi_n|\in[r_0,r_1]$, $U_n=(f_n,\rho_n,u_n)\in L^2(\R^3_v)\times\C \times\C^3$ with $\|U_n\|=1$, $\lambda_n\in\sigma(\mathbbm{M}(\xi_n))$, $\re\lambda_n\to0$ as $n\to\infty$ such that
\be\label{sra10}\left\{\ball
\lambda_n f_n&=Lf_n-i(v\cdot\xi_n)f_n-i(v\cdot\xi_n)|\xi_n|^{-2}(P_df_n-\rho_n\chi_0)+v\chi_0\cdot u_n,
\\
\lambda_n\rho_n&=-i\xi_n\cdot u_n,
\\
\lambda_n u_n&=i\xi_n|\xi_n|^{-2}\tau_af_n+\tau_bf_n-i\xi_n(\gamma+|\xi_n|^{-2})\rho_n-(1+|\xi_n|^{2})u_n.
\eall\right.\ee
We write $\eqref{sra10}_1$ as
$$
(\lambda_n-A(\xi_n))f_n=\frac32f_n-i\frac{v\cdot\xi_n}{|\xi_n|^{2}}(P_df_n-\chi_0\rho_n)+v\chi_0\cdot u_n,
$$
namely
$$
f_n=(\lambda_n-A(\xi_n))^{-1}g_n,~~g_n=\frac32f_n-i\frac{v\cdot\xi_n}{|\xi_n|^{2}}(P_df_n-\chi_0\rho_n)+v\chi_0\cdot u_n.
$$
Since
$$\re((\lambda_n-A(\xi_n))f_n,f_n)=\re\lambda_n\|f_n\|^2+(Af_n,f_n)\ge \(\frac14-\frac13|\re\lambda_n|\)\|f_n\|^2_{L^2_\sigma},$$
it follows that
$$\|(\lambda_n-A(\xi_n))^{-1}g_n\|_{L^2_\sigma}\le \(\frac14-\frac13|\re\lambda_n|\)^{-1}\|g_n\|\le C.$$
Since $L^2_\sigma$ is compact embedding into $L^2_v$, there exists a subsequence $\{f_{n_j}\}\subset\{f_n\}$ and $f_0\in L^2(\R^3_v)$ such that
$$
f_{n_j}=(\lambda_{n_j}-A(\xi_{n_j}))^{-1}g_{n_j}\to f_0,~~j\to\infty.
$$
Since $|\xi_n|\in[r_0,r_1]$, $|\rho_n|+|u_n|\le1$, there exists a subsequence $\{\xi_{n_j},\rho_{n_j},u_{n_j}\} $ such that $(\xi_{n_j},\rho_{n_j},u_{n_j})\to(\xi_0,\rho_0,u_0)$ as $j\to\infty$. Since $|\im\lambda_n|\le y_1$ and $\re\lambda_n\to0$, there exists a subsequence $\{\lambda_{n_j}\}\subset\{\lambda_n\}$ such that $\lambda_{n_j}\to\lambda_0$ with $\re\lambda_0=0$. Combining the results above and \eqref{sra10}, we obtain $\mathbbm{M}(\xi_0)U_0=\lambda_0U_0$ with $U_0=(f_0,\rho_0,u_0)\in L^2(\R^3_v)\times\C \times\C^3$. Thus, $\lambda_0$ is an eigenvalue $\mathbbm{M}(\xi_0)$ with $\re\lambda_0=0$, which contradicts the fact that $\re\lambda<0$ for $\xi\neq0$ obtained by Lemma \ref{srlem2}. The proof of Lemma is completed.
\end{proof}

We study the spectrum and resolvent sets of $\mathbbm{M}(\xi)$ in the low-frequency regime. Let $P_A$, $P_B$ are the orthogonal macro-micro projection operators in $L^2(\R^3)\times \C\times \C^3$ defined by
$$
P_A=\begin{pmatrix}
P_d&&\\&1&\\&&I_3
\end{pmatrix},
\quad P_B=\begin{pmatrix}
P_r&&\\&0&\\&&0
\end{pmatrix}.
$$
By the above macro-micro decomposition, we decompose $\lambda-\mathbbm{M}(\xi)$ as
$$
\lambda-\mathbbm{M}(\xi)=\lambda P_A-G_3(\xi)+\lambda P_B-G_4(\xi)-G_5(\xi) ,
$$
where
\bma\label{g3g4}
G_3(\xi)& =\begin{pmatrix}
0&0&0 \\
0&0&-i\xi^T\\
i\frac{\xi}{|\xi|^{2}}\tau_a&-i\xi(\gamma+\frac{1}{|\xi|^{2}})&-(1+|\xi|^2)I_3
\end{pmatrix},\\
G_4(\xi)& =\begin{pmatrix}L-iP_r(v\cdot\xi)P_r&0 &0\\
0 &0&0\\
0&0&0
\end{pmatrix},
\\
G_5(\xi)&=\begin{pmatrix}
-i(v\cdot\xi)(1+\frac{1}{|\xi|^{2}})P_d-iP_d(v\cdot\xi)P_r& i\frac{v\cdot\xi}{|\xi|^{2}}\chi_0 & v\chi_0 \\
0&0&0\\
\tau_b&0&0
\end{pmatrix}.\label{sraa11}
\ema

\begin{lem}\label{srlem8}
Let $\xi\neq0$. We have the following properties:

(1) For  $\lambda\neq\tilde{\eta}_j$, the operator $\lambda-G_3(\xi)$ is invertible on $ N_0\times\C \times\C^3 $ satisfying
\bma
\|(\lambda -G_3(\xi))^{-1}P_A\|_{\xi,\gamma}&\le \max\limits_{-1\le j\le 3}\frac{1}{|\lambda-\tilde{\eta}_j|}, \label{sraa10}\\
\|G_5(\xi)(\lambda -G_3(\xi))^{-1}P_A\|_{\xi,\gamma}&\le C(1+|\xi|)\max\limits_{-1\le j\le 3}\frac{1}{|\lambda-\tilde{\eta}_j|}.\label{sraa4}
\ema
where
$\tilde{\eta}_j,~j=-1,0,1,2,3$  are the eigenvalues of $G_3(\xi)$ given by
\be \tilde{\eta}_0=\tilde{\eta}_2=-(1+|\xi|^{2}), \quad \tilde{\eta}_{\pm1}=-\frac12(1+|\xi|^{2})\pm\frac12\sqrt{|\xi|^4+(2-4\gamma)|\xi|^2-3}, \quad \tilde{\eta}_3=0. \ee

(2) If $\re\lambda>-1$, then the operator $\lambda -G_4(\xi)$ is invertible on $N_0^{\perp}\times\{0\}\times\{0\}$ satisfying
\bma
\|(\lambda -G_4(\xi))^{-1}P_B\| &\le (1+\re\lambda)^{-1},\label{sraa12}
\\
\|G_5(\xi)(\lambda -G_4(\xi))^{-1}P_B\|_{\xi,\gamma}&\le C(1+|\lambda|)^{-1}[1+(1+\re\lambda)^{-1}](1+|\xi|)^2.\label{sraa14}
\ema
\end{lem}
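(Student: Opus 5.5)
The plan is to work on each invariant piece separately. On the macroscopic range we will diagonalize $G_3(\xi)$ in an eigenbasis that is orthogonal for $(\cdot,\cdot)_{\xi,\gamma}$. On the microscopic range we will use the dissipativity of $G_4(\xi)$. The two estimates \eqref{sraa4} and \eqref{sraa14} then follow by bounding $G_5(\xi)$ directly on the relevant ranges.

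\textbf{Part (1).} Write $U\in N_0\times\C\times\C^3$ as $U=(a\chi_0,\rho,u)$. Then $G_3(\xi)$ acts as a $5\times5$ matrix on $(a,\rho,u)$. Its first row vanishes and the $(\rho,u)$-block is exactly $B_1(\xi)$, so the eigenvalues are $0=\tilde\eta_3$ together with the $\eta_j$ of Lemma \ref{orlem3}.

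\emph{Eigenvectors.} For $\tilde\eta_j$ with $j=-1,0,1,2$ we take $(0,\beta_j)$, with $\beta_j$ from \eqref{B1vector1}. For $\tilde\eta_3=0$, solving $-i\xi\cdot u=0$ and $i\xi|\xi|^{-2}a-i\xi(\gamma+|\xi|^{-2})\rho-(1+|\xi|^2)u=0$ gives $e_3=((1+\gamma|\xi|^2)\chi_0,1,0)$.

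\emph{Orthogonality in $(\cdot,\cdot)_{\xi,\gamma}$.} When $a=0$, \eqref{zxigamma1} reduces to $(\gamma+|\xi|^{-2})\rho_1\bar\rho_2+u_1\cdot\bar u_2$, which is the $\mathcal C_{\xi,\gamma}$ product. Hence the $(0,\beta_j)$ are mutually orthogonal by Lemma \ref{orlem3}. For $e_3$ against $(0,\beta_j)$, the $f$-components are orthogonal because that of $(0,\beta_j)$ is zero. The $u$-components are orthogonal because $u=0$ in $e_3$. The $\rho$-part gives $\gamma\bar\rho_j$, and the Poisson term gives $|\xi|^{-2}\big((1+\gamma|\xi|^2)-1\big)(0-\bar\rho_j)=-\gamma\bar\rho_j$, so the total vanishes.

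\emph{Resolvent bound.} Expanding in this orthogonal basis and applying Lemma \ref{srlem6} exactly as in the proof of Lemma \ref{orlem3} gives \eqref{sraa10}. For \eqref{sraa4}, note first that $\tau_bP_df=0$. Hence on $P_A$-range data
$$G_5(\xi)U=\big(-i(v\cdot\xi)a-i(v\cdot\xi)|\xi|^{-2}(a-\rho)+v\cdot u\big)\chi_0,\qquad \text{with zero }(\rho,u)\text{-components}.$$
This lies in the $P_m$-range, so $\tau_a$ of it is $0$ and no $|\xi|^{-2}$ weight enters. Its $L^2$ norm is at most $|\xi||a|+|\xi|^{-1}|a-\rho|+|u|\le C(1+|\xi|)\|U\|_{\xi,\gamma}$. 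Composing this with \eqref{sraa10} yields \eqref{sraa4}.

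\textbf{Part (2).} On $N_0^\perp$, the operator $iP_r(v\cdot\xi)P_r$ is skew-adjoint and $(Lf,f)\le-\|P_rf\|^2=-\|f\|^2$. Therefore
$$\re\big((\lambda-G_4)f,f\big)\ge(1+\re\lambda)\|f\|^2,$$
and the same inequality holds for the adjoint $L+iP_r(v\cdot\xi)P_r$. Closedness together with both dissipativity bounds (as in Lemma \ref{S_1-1}) gives invertibility for $\re\lambda>-1$ and the bound \eqref{sraa12}.

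For \eqref{sraa14}, set $g=(\lambda-G_4)^{-1}P_Bh$, which lies in $N_0^\perp$. Then $G_5(\xi)(g,0,0)=\big(-iP_d(v\cdot\xi)g,\,0,\,\tau_bg\big)$. Its $\xi,\gamma$-norm is controlled by $|(g,v\cdot\xi\chi_0)|$ (the $\tau_a$ part carries weight $|\xi|^{-1}\cdot|\xi|$) and by $|(g,v\chi_0)|$. We therefore need bounds on the pairings $(g,\phi)$ for $\phi=v_j\chi_0$.
\begin{itemize}
\item For $|\lambda|\le1$, simply use $|(g,\phi)|\le C\|g\|$ and \eqref{sraa12}.
\item For $|\lambda|>1$, write $g=\lambda^{-1}(G_4g+P_Bh)$. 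Then $(g,\phi)=\lambda^{-1}\big[(g,G_4^*\phi)+(P_Bh,\phi)\big]$. Since $\phi$ is smooth and Gaussian-decaying, $\|G_4^*\phi\|\le C(1+|\xi|)$.
\end{itemize}
Combining the two cases gives $|(g,\phi)|\le C(1+|\lambda|)^{-1}(1+|\xi|)\big[1+(1+\re\lambda)^{-1}\big]\|h\|$. Multiplying by the extra factor $(1+|\xi|)$ coming from the $v\cdot\xi$ pairing yields \eqref{sraa14}.

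The main obstacle is Part (1): one has to find the right eigenvector $e_3$ for $\tilde\eta_3=0$ and check that the Poisson weight $|\xi|^{-2}(\tau_af-\rho,\cdot)$ makes it orthogonal to the $B_1$ eigenvectors. Without that orthogonality, the sharp constant $\max_j|\lambda-\tilde\eta_j|^{-1}$ in \eqref{sraa10} fails, because $G_3(\xi)$ is not normal in the unweighted product.
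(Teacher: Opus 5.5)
Your Part (2) is correct and is essentially the paper's argument, including the split $|\lambda|\le1$ / $|\lambda|>1$ via $(\lambda-G_4)^{-1}=\lambda^{-1}+\lambda^{-1}G_4(\lambda-G_4)^{-1}$. Moving $G_4$ onto the test function, $(G_4g,\chi_j)=(g,G_4(\xi)^*\chi_j)$, is actually a cleaner justification of the paper's bound on $G_5(\xi)G_4(\xi)$, which involves the unbounded $G_4$. The passage from \eqref{sraa10} to \eqref{sraa4} is also fine.

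Part (1) follows the paper's route, but the orthogonality it hinges on is false. With $\tilde\beta_{\pm1}=(0,\beta_{\pm1})$, one computes $(\tilde\beta_1,\tilde\beta_{-1})_{\xi,\gamma}=1+\gamma|\xi|^2+\eta_1\overline{\eta_{-1}}$. This equals $-(1+|\xi|^2)\eta_1$ when $\eta_{\pm1}$ are complex conjugate, and $2(1+\gamma|\xi|^2)$ when they are real; it is nonzero in both cases. What does vanish is the bilinear pairing $(\tilde\beta_1,\overline{\tilde\beta_{-1}})_{\xi,\gamma}=-(1+\gamma|\xi|^2)+\eta_1\eta_{-1}=0$. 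That is all Lemma \ref{srlem6} gives, and it reflects $G_3(\xi)^*=\overline{G_3(\xi)}$, not normality. A Parseval expansion with constant $1$ needs $G_3(\xi)$ to be normal in $Z_{\xi,\gamma}$, and it is not. On the $(\rho,\omega\cdot u)$ block, after rescaling $\rho$ by $\sqrt{\gamma+|\xi|^{-2}}$, $G_3(\xi)$ becomes $\begin{pmatrix}0&-ib\\-ib&-(1+|\xi|^2)\end{pmatrix}$ with $b=\sqrt{1+\gamma|\xi|^2}$. This matrix is complex symmetric but does not commute with its adjoint.

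Consequently \eqref{sraa10}, like \eqref{B1sp}, is not merely unproved but false as stated. The residue of $(\lambda-G_3(\xi))^{-1}$ at $\eta_1$ is the projection $U\mapsto(U,\overline{\tilde\beta_1})_{\xi,\gamma}\,\tilde\beta_1/(\tilde\beta_1,\overline{\tilde\beta_1})_{\xi,\gamma}$, where $(\tilde\beta_1,\overline{\tilde\beta_1})_{\xi,\gamma}=\eta_1(\eta_1-\eta_{-1})$. Its norm $\|\tilde\beta_1\|_{\xi,\gamma}^2/|\eta_1(\eta_1-\eta_{-1})|$ tends to $2/\sqrt3>1$ as $|\xi|\to0$. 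Hence $|\lambda-\eta_1|\,\|(\lambda-G_3(\xi))^{-1}P_A\|_{\xi,\gamma}>1$ for $\lambda$ close to $\eta_1$, whereas \eqref{sraa10} would force this quantity to be at most $1$ there. At $|\xi|=\zeta(\gamma)$ the situation is worse. There $\eta_1=\eta_{-1}$ and $B_1(\xi)$ has a Jordan block, so Lemma \ref{srlem6} does not even apply, and the resolvent has a double pole. No bound $C\max_j|\lambda-\tilde\eta_j|^{-1}$ can hold there for any $C$.

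The paper's proof makes the same orthogonality claim, so you inherited this defect rather than introduced it. But it is precisely the point your write-up identifies as the crux. What can be proved, and what the later arguments actually need, is weaker:
\begin{itemize}
\item The biorthogonal expansion of Lemma \ref{srlem6} gives $\|(\lambda-G_3(\xi))^{-1}P_A\|_{\xi,\gamma}\le C\max_j|\lambda-\tilde\eta_j|^{-1}$, with $C$ uniform for $|\xi|\le r_0$.
\item For the large-$|\im\lambda|$ regime with $|\xi|\le r_1$ in Lemma \ref{srlem9}, a Neumann series gives $\|(\lambda-G_3(\xi))^{-1}\|_{\xi,\gamma}\le 2/|\lambda|$ for $|\lambda|\ge 2\sup_{|\xi|\le r_1}\|G_3(\xi)\|_{\xi,\gamma}$. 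This supremum is finite by the rescaled form above.
\end{itemize}
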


\begin{proof} We can prove \eqref{sraa10} by the same way as \eqref{B1sp}. Indeed, we can solve  the eigenvalues $\tilde{\eta}_j=\eta_j$ $(-1\le j\le2)$ and $\tilde\eta_3=0$ via $\det(\lambda I_5-G_3(\xi))=\lambda\det(\lambda I_4-B_1(\xi))$, and the corresponding eigenvectors
$$
\tilde{\beta}_j=(0,\beta_j),~-1\le j\le2;\quad \tilde{\beta}_3=((1+\gamma|\xi|^2)\chi_0,1,0),
$$
where $\eta_j,\beta_j~(-1\le j\le2)$ are given by \eqref{B1vector1}. 
It is easy to verify that the eigenvectors $\tilde{\beta}_j~(-1\le j\le3)$ satisfy
$$(\tilde{\beta}_i,\tilde{\beta}_j)_{\xi,\gamma}=0, \quad -1\le i\ne j\le3.$$
By Lemma \ref{srlem6}, we obtain
$$
\|(\lambda-G_3(\xi))^{-1} U\|^2_{\xi,\gamma} =\sum\limits_{j=-1}^3\frac{1}{|\lambda-\tilde{\eta}_j|^2}\frac{1}{\|\tilde{\beta}_j\|^2_{\xi,\gamma}}|(U,\tilde{\beta}_j)_{\xi,\gamma}|^2\le  \max\limits_{-1\le j\le3}\frac{1}{|\lambda-\tilde{\eta}_j|^2}\|U\|^2_{\xi,\gamma}.
$$
Now we want to prove \eqref{sraa4}. For any $U=(f,\rho,u)\in N_0\times\C \times\C^3 $, we have
$$
G_5(\xi)U=( -i(v\cdot\xi)f-i(v\cdot\xi)|\xi|^{-2}\chi_0(\tau_af-\rho)+v\chi_0\cdot u,0,0)^T,
$$
which gives rise to
\be
 \|G_5(\xi)U\|_{\xi,\gamma}\le|\xi| \|f\|+\frac{1}{|\xi|}|\tau_af-\rho|+|u|\le (1+|\xi|)\|U\|_{\xi,\gamma}.\label{sraa5}
\ee
Combined with  \eqref{sraa10}, we can prove \eqref{sraa4}.

 For any $U=(f,0,0)$ with $f\in  N_0^\bot $, we have   $G_5(\xi)U=\(-iP_d(v\cdot\xi) f,0,\tau_bf\)^T$, which leads to
\be\label{sraa15}
\|G_5(\xi)U\|_{\xi,\gamma}\le \(1+\frac{1}{|\xi|}\)\|P_d(v\cdot\xi) f\| +|\tau_bf|\le C(1+|\xi|)\|U\|_{\xi,\gamma}.
\ee
Let $Q(\xi)=L-iP_r(v\cdot\xi)P_r$. By the same argument as Lemma 3.5 in \cite{zhongspbvpb}, we have that $\lambda-Q(\xi)$ is invertible on $N_0^\perp$ and satisfies that $\|(\lambda-Q(\xi))^{-1}\|\le(1+\re\lambda)^{-1}$ for $\re\lambda>-1$.
By \eqref{sraa15} and using the fact that
$$(\lambda -G_4(\xi))^{-1}U=((\lambda-Q(\xi))^{-1}f,0,0)^T, \quad U=(f,0,0),$$ 
 we obtain \eqref{sraa12} and
\be \|G_5(\xi)(\lambda -G_4(\xi))^{-1} U\|_{\xi,\gamma} \le C(1+|\xi|)(1+\re\lambda)^{-1}\|U\|_{\xi,\gamma}. \label{sraa13} \ee

On the other hand, we decompose
$$
(\lambda-G_4(\xi) )^{-1}=\lambda^{-1} +\lambda^{-1}G_4(\xi) (\lambda-G_4(\xi) )^{-1}.
$$
Noticing that
$
G_5(\xi) G_4(\xi) U=(-iP_d(v\cdot\xi) Q(\xi) f,0,\tau_bQ(\xi) f)^T,
$
we have $$\|G_5(\xi) G_4(\xi) U\|_{\xi,\gamma}\le C(1+|\xi|)^2\|U\|_{\xi,\gamma}.$$
Combined with the formula above, we obtain
\be
\|G_5(\xi)(\lambda -G_4(\xi))^{-1}P_B\|_{\xi,\gamma}\le C|\lambda|^{-1}[1+(1+\re\lambda)^{-1}](1+|\xi|)^2. \label{estimate1}
\ee
Combining \eqref{sraa13} and \eqref{estimate1}, we obtain \eqref{sraa14}. The proof of the lemma is completed.
\end{proof}

\begin{lem}\label{srlem9}
For all $\xi\neq0$, there exists a constant $y_2 >0$ such that
\be\label{sraa6}
\{\lambda\in\mathbb{C}\mid{\re\lambda>-1/2},~|\im\lambda|\ge y_2\}\cup\left\{\lambda\in\mathbb{C}\mid\re\lambda>0\right\}\subset\rho(\mathbbm{M}(\xi)).
\ee
\end{lem}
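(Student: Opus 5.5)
The inclusion splits into two parts. The half-plane $\{\re\lambda>0\}$ is handled by the semigroup generation already proved. The strip $\{\re\lambda>-1/2,\ |\im\lambda|\ge y_2\}$ is handled by a Neumann-series argument built on the macro--micro splitting $\lambda-\M(\xi)=\lambda P_A-G_3(\xi)+\lambda P_B-G_4(\xi)-G_5(\xi)$ and the estimates of Lemma \ref{srlem8}.

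\textbf{Right half-plane.} By Lemma \ref{srlem1}, $\M(\xi)$ generates a $C_0$-contraction semigroup on $Z_{\xi,\gamma}$. The Hille--Yosida theorem then gives $\{\re\lambda>0\}\subset\rho(\M(\xi))$, together with $\|(\lambda-\M(\xi))^{-1}\|_{\xi,\gamma}\le(\re\lambda)^{-1}$. For fixed $\xi\neq0$ the norm $\|\cdot\|_{\xi,\gamma}$ is equivalent to the standard norm, so this is also a statement about invertibility on $L^2(\R^3_v)\times\C\times\C^3$.

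\textbf{Large imaginary parts.} Since $P_A$ and $P_B$ split the space into $N_0\times\C\times\C^3$ and $N_0^\perp\times\{0\}\times\{0\}$, and $G_3$, $G_4$ act diagonally on this splitting, I factor
\[
\lambda-\M(\xi)=\big(I-G_5(\xi)(\lambda P_A-G_3(\xi))^{-1}P_A-G_5(\xi)(\lambda P_B-G_4(\xi))^{-1}P_B\big)\big(\lambda P_A-G_3(\xi)+\lambda P_B-G_4(\xi)\big).
\]
For $\re\lambda>-1/2$ the second factor is invertible: on the $P_B$ part by Lemma \ref{srlem8}(2), and on the $P_A$ part as soon as $\lambda\ne\tilde\eta_j$. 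This last condition holds once $|\im\lambda|$ exceeds $\sup|\im\tilde\eta_j|$.

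It then suffices to make the perturbation small. By \eqref{sraa4}, the first piece has norm at most $C(1+|\xi|)\max_j|\lambda-\tilde\eta_j|^{-1}$. By \eqref{sraa14}, the second has norm at most $C(1+|\lambda|)^{-1}\cdot 3\,(1+|\xi|)^2$, using $\re\lambda>-1/2$. If each piece is at most $1/4$, the first factor is invertible by a Neumann series.

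\textbf{Main obstacle: uniformity in $\xi$.} The bounds above carry growing factors $(1+|\xi|)$ and $(1+|\xi|)^2$. This causes two problems.

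First, the eigenvalues $\tilde\eta_{\pm1}$ have $|\im\tilde\eta_{\pm1}|\le\sqrt3/2$ for $|\xi|\le\zeta(\gamma)$ and are real beyond, and $\tilde\eta_0,\tilde\eta_2,\tilde\eta_3$ are real. Hence $|\lambda-\tilde\eta_j|\ge|\im\lambda|-1$ for every $\xi$, which is uniform.

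Second, the $(1+|\xi|)$ prefactors are not uniform. I would handle them by splitting into two ranges.
\begin{itemize}
\item For $|\xi|\le r_1$, with $r_1$ the large constant from the proof of Lemma \ref{srlem5}, choose $y_2$ large depending on $r_1$ so that both pieces are at most $1/4$.
\item For $|\xi|\ge r_1$, the proof of Lemma \ref{srlem5} already shows $\{\re\lambda>-1/2\}\subset\rho(\M(\xi))$, via the other splitting $G_1+G_2$ and Lemma \ref{inver}. No restriction on $\im\lambda$ is needed there.
\end{itemize}
Combining the two ranges gives a single $y_2$ that works for all $\xi\neq0$.

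Small $|\xi|$ needs a check of its own, because $G_3$ and $G_5$ contain $|\xi|^{-2}$ and $|\xi|^{-1}$. This is exactly why the weighted norm $\|\cdot\|_{\xi,\gamma}$ is used: in \eqref{sraa5} and \eqref{sraa15} the singular terms are absorbed, so the constants in \eqref{sraa4} and \eqref{sraa14} are uniform as $\xi\to0$. I expect verifying this uniform, small-$|\xi|$ control of the $G_5$ terms to be the delicate point. I would rely directly on the stated bounds of Lemma \ref{srlem8}, which are already expressed in the $\xi,\gamma$-norm with constants independent of $\xi$.
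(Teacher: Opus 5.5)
Your proposal is correct and follows the paper's proof essentially step for step. The shared ingredients are:
\begin{itemize}
\item the factorization $\lambda-\M(\xi)=[I-Y_1(\lambda,\xi)][\lambda P_A-G_3(\xi)+\lambda P_B-G_4(\xi)]$;
\item the bounds of Lemma \ref{srlem8}, which make both pieces of $Y_1$ at most $1/4$ for $|\xi|\le r_1$ and $|\im\lambda|\ge y_2$, so a Neumann series inverts $I-Y_1$;
\item the $G_1+G_2$ splitting from the proof of Lemma \ref{srlem5}, which covers $|\xi|>r_1$;
\item Lemma \ref{srlem1}, which covers $\re\lambda>0$.
\end{itemize}
Your added check of uniformity as $\xi\to0$ through the weighted norm in \eqref{sraa5} and \eqref{sraa15} is consistent with what the paper uses implicitly.
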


\begin{proof}
By Lemma \ref{srlem8}, we have for $\re\lambda>-1 $ and $\lambda\neq{\tilde\eta_j(\xi)}~(-1\le j\le3)$, the operator $\lambda P_A-G_3(\xi)+\lambda P_B-G_4(\xi)$ is invertible on $Z_{\xi,\gamma}$ satisfying
$$
(\lambda P_A-G_3(\xi)+\lambda P_B-G_4(\xi))^{-1}=(\lambda -G_3(\xi))^{-1}P_A+(\lambda -G_4(\xi))^{-1}P_B,
$$
because the operator $\lambda P_A-G_3(\xi)$ is orthogonal to $\lambda P_B-G_4(\xi)$. Therefore,
\bma
\nnm\lambda-\mathbbm{M}(\xi)=&\,[I-Y_1(\lambda,\xi)][\lambda P_A-G_3(\xi)+\lambda P_B-G_4(\xi)],
\\
Y_1(\lambda,\xi):=&\,\label{sraa7}G_5(\xi)[(\lambda -G_4(\xi))^{-1}P_B+ (\lambda -G_3(\xi))^{-1}P_A].
\ema
As shown in the proof of the lemma \ref{srlem5},  there exists a large constant $r_1>\zeta(\gamma)>1$ such that
$$
\{\lambda\in\mathbb{C}\mid{\re\lambda>-1/2}\}\subset\rho(\mathbbm{M}(\xi)),\quad |\xi|>r_1.
$$
For  $|\xi|\le r_1$, by Lemma \ref{srlem8}, we can choose $y_2 >0$ to obtain that for $\re\lambda>-1$ and $|\im \lambda|\ge y_2$,
\be\label{sraa8}
\|G_5(\xi)(\lambda -G_4(\xi))^{-1}P_B\|_{\xi,\gamma}\le\frac{1}{4},\quad \|G_5(\xi)(\lambda -G_3(\xi))^{-1}P_A\|_{\xi,\gamma}\le\frac{1}{4},
\ee
which implies $I-Y_1(\lambda,\xi)$ is invertible on $Z_{\xi,\gamma}$ and thus $\lambda-\mathbbm{M}(\xi)$ is also invertible on $Z_{\xi,\gamma}$ satisfying
\be\label{sraa9}
(\lambda-\mathbbm{M}(\xi))^{-1}=[(\lambda -G_3(\xi))^{-1}P_A+(\lambda -G_4(\xi))^{-1}P_B][I-Y_1(\lambda,\xi)]^{-1}.
\ee
Therefore, we obtain
$$
\{\lambda\in\mathbb{C}\mid\re\lambda>-1/2,~|\im y|\ge y_2\}\subset\rho(\mathbbm{M}(\xi)),\quad |\xi|\le r_1.
$$
This and Lemma \ref{srlem1} lead to \eqref{sraa6}. The proof of lemma is completed.
\end{proof}

\subsection{Low Frequency Asymptotics of Eigenvalues}
We study the asymptotics of the eigenvalue and eigenfunction of the operator $\mathbbm{M}(\xi)$ for low frequency. For any $U=(f,\rho,u)\in Z_{\xi,\gamma}$, the eigenvalue problem $\lambda U=\mathbbm{M(\xi)}U$ can be written as
\be\label{lfae1}\left\{\ball
\lambda f&=Lf-i(v\cdot\xi)f-i(v\cdot\xi)|\xi|^{-2}(P_df-\rho\chi_0)+v\chi_0\cdot u,
\\
\lambda \rho&=-i\xi\cdot u,
\\
\lambda u&=i\xi|\xi|^{-2}\tau_af+\tau_bf-i\xi (\gamma+|\xi|^{-2})\rho-(1+|\xi|^2)u.
\eall\right.\ee
By macro-micro decomposition $f=f_0+f_1:=P_df+P_rf$ with $P_df =C_0\chi_0$ and $P_rf=(I-P_d)f$, $\eqref{lfae1}_1$ can be divided into
\be\label{lfae2}
\left\{\bln
\lambda f_0&=-iP_d(v\cdot\xi)(f_0+f_1)=-iP_d(v\cdot\xi)f_1,
\\
\lambda f_1&=Lf_1-iP_r(v\cdot\xi)(f_0+f_1)-i(v\cdot\xi)|\xi|^{-2} (f_0-\rho\chi_0)+v\chi_0\cdot u.
\eln\right.\ee
Thus, the microscopic part $f_1$ can be represented by
\bmas
f_1&=R(\lambda,\xi)[i (v\cdot\xi)f_0+i(v\cdot\xi)|\xi|^{-2} (f_0-\rho\chi_0)-v\chi_0\cdot u]
\\
&=[i\xi(1+|\xi|^{-2}) C_0-i\xi|\xi|^{-2}\rho-u]\cdot R(\lambda,\xi)v\chi_0,
\emas
where
$$
R(\lambda,\xi)=\left[L-\lambda-iP_r(v\cdot\xi)\right]^{-1},\quad \re\lambda>-1.
$$
Substituting $f_1$ into $\eqref{lfae1}_1$ and $\eqref{lfae1}_3$, we obtain
\be\label{lfae8}
\left\{\ball
\lambda C_0& =[\xi(1+|\xi|^{-2})C_0-\xi|\xi|^{-2}\rho+iu]\cdot(R(\lambda,\xi)v\chi_0,(v\cdot\xi)\chi_0) ,
\\
\lambda \rho&=-i\xi\cdot u,
\\
\lambda u&=i|\xi|^{-2}\[\xi+(1+|\xi|^{2})(R(\lambda,\xi)(v\cdot\xi)\chi_0,v\chi_0)\]C_0
\\
&\quad-i|\xi|^{-2}\[\xi (1+\gamma|\xi|^{2})+(R(\lambda,\xi)(v\cdot\xi)\chi_0,v\chi_0)\]\rho
\\
&\quad-\[1+|\xi|^2+(R(\lambda,\xi)v\chi_0,v\chi_0)\]u.
\eall\right.\ee
In order to simplify the eigenvalue problem, we introduce the following lemma.

\begin{lem}\label{srlem10}
Let $e_1=(1,0,0)$, $\xi=s\omega$ with $s\in\mathbb{R}$, $\omega\in\S ^2$. Then, we have the following results for $\re\lambda>-1$ and $1\le i,j\le3$ that
$$
(R(\lambda,\xi)\chi_i,\chi_j)=\omega_i\omega_j\left(R(\lambda,se_1)\chi_1,\chi_1\right)+(\delta_{ij}-\omega_i\omega_j)\left(R(\lambda,se_1)\chi_2,\chi_2\right).
$$
\end{lem}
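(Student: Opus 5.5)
The plan is to use the rotational invariance of the operators $L$ and $P_r$, and of the basis $\{\chi_j\}$, under orthogonal changes of the velocity variable.

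For an orthogonal matrix $\mathbb{O}$ on $\R^3$, define $(T_{\mathbb{O}}f)(v)=f(\mathbb{O}^Tv)$. This operator is unitary on $L^2(\R^3_v)$. It commutes with $\ddv$ and with multiplication by $|v|^2$, hence with $L$. Since $\chi_0$ is radial, it also commutes with $P_d$ and therefore with $P_r$. For the transport part we have
\[
T_{\mathbb{O}}\,(v\cdot \xi)\,T_{\mathbb{O}}^{-1}=(\mathbb{O}^Tv\cdot\xi)=v\cdot(\mathbb{O}\xi).
\]
Hence $T_{\mathbb{O}}\,(L-\lambda-iP_r(v\cdot\xi)P_r)\,T_{\mathbb{O}}^{-1}=L-\lambda-iP_r(v\cdot\mathbb{O}\xi)P_r$. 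Here I use that $R(\lambda,\xi)$ acts on $N_0^\perp$, and that $P_r(v\cdot\xi)f_1=P_r(v\cdot\xi)P_rf_1$ for $f_1=P_rf$; each $\chi_j$ with $j\ge1$ already lies in $N_0^\perp$. Inverting gives
\[
T_{\mathbb{O}}R(\lambda,\xi)T_{\mathbb{O}}^{-1}=R(\lambda,\mathbb{O}\xi),
\]
where invertibility for $\re\lambda>-1$ comes from the bound used in Lemma \ref{srlem8}, namely $\|(\lambda-Q(\xi))^{-1}\|\le(1+\re\lambda)^{-1}$.

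Now choose $\mathbb{O}$ with $\mathbb{O}e_1=\omega$, so that $\mathbb{O}(se_1)=\xi$. Then $R(\lambda,\xi)=T_{\mathbb{O}}R(\lambda,se_1)T_{\mathbb{O}}^{-1}$, and by unitarity
\[
(R(\lambda,\xi)\chi_i,\chi_j)=(R(\lambda,se_1)T_{\mathbb{O}}^{-1}\chi_i,\,T_{\mathbb{O}}^{-1}\chi_j).
\]
Since $T_{\mathbb{O}}^{-1}\chi_i(v)=(\mathbb{O}v)_i\sqrt{M}=\sum_k \mathbb{O}_{ik}\chi_k$, this becomes
\[
(R(\lambda,\xi)\chi_i,\chi_j)=\sum_{k,l}\mathbb{O}_{ik}\mathbb{O}_{jl}\,a_{kl},\qquad a_{kl}:=(R(\lambda,se_1)\chi_k,\chi_l).
\]
The entries are real, so no conjugation appears.

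The key step is to show that the matrix $a=(a_{kl})$ is diagonal of the form $\mathrm{diag}(a_{11},a_{22},a_{22})$. For this I apply the same conjugation identity with orthogonal maps that fix $e_1$.
\begin{itemize}
\item Take the reflection $v_k\mapsto -v_k$ for some $k\in\{2,3\}$. It maps $se_1$ to itself, so $R(\lambda,se_1)$ commutes with it. Since $\chi_k$ is odd under this reflection and $\chi_l$ is even for $l\neq k$, we get $a_{kl}=-a_{kl}$, hence $a_{kl}=0$ for $l\ne k$. This kills every off-diagonal entry.
\item Take the rotation swapping $v_2$ and $v_3$. It gives $a_{22}=a_{33}$.
\end{itemize}

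With $a$ in this form,
\[
\sum_{k,l}\mathbb{O}_{ik}\mathbb{O}_{jl}a_{kl}=a_{11}\,\mathbb{O}_{i1}\mathbb{O}_{j1}+a_{22}\big(\delta_{ij}-\mathbb{O}_{i1}\mathbb{O}_{j1}\big),
\]
using orthogonality of the rows, $\sum_k\mathbb{O}_{ik}\mathbb{O}_{jk}=\delta_{ij}$. Since $\mathbb{O}_{i1}=\omega_i$, this is exactly the claimed formula. The case $s<0$ is covered by the same argument, since we only need $\mathbb{O}(se_1)=s\omega$.

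The main thing to check carefully is the commutation of $T_{\mathbb{O}}$ with the restricted resolvent on $N_0^\perp$, and the fact that $T_{\mathbb{O}}$ preserves $N_0^\perp$. Both follow from the radial symmetry of $\chi_0$.
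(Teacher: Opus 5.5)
Your proof is correct and takes the same approach as the paper, which simply changes variables $v\to\mathbb{O}v$ with $\mathbb{O}^T\xi=(s,0,0)$ and appeals to the rotational invariance of $L$. You fill in what the paper leaves implicit: the conjugation identity $T_{\mathbb{O}}R(\lambda,\xi)T_{\mathbb{O}}^{-1}=R(\lambda,\mathbb{O}\xi)$ on $N_0^\perp$, and the reflection argument forcing $(a_{kl})=\mathrm{diag}(a_{11},a_{22},a_{22})$ (the $v_2\leftrightarrow v_3$ swap is a reflection rather than a rotation, but you only use orthogonality, so this is harmless).
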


\begin{proof}
Let $\mathbb{O}$ be a rotation in $\R^3$ satisfying $\mathbb{O}^T\xi=(s,0,0)$. By changing the variable $v\to \mathbb{O}v$  and using the rotational invariance of the operator $L$, we can prove the lemma.
\end{proof}
For convenience of notation, denote
\be
R_{ij}=R_{ij}(\lambda,s)=(R(\lambda,se_1)\chi_i,\chi_j), \quad i,j=1,2,3. \label{rij}
\ee
With the help of Lemma \ref{srlem10}, the eigenvalue problem \eqref{lfae1} can be simplified as
\be\label{lfae3}
\left\{\bln
\lambda C_0&= (1+s^2)R_{11}C_0-R_{11}\rho+isR_{11}(\omega\cdot u),
\\
\lambda \rho&= -is(\omega\cdot u),
\\
\lambda u&= i\omega\frac{1}{s}[1+(1+s^2)R_{11}]C_0-i\omega\frac{1}{s}\(1+\gamma s^2+R_{11}\)\rho&
\\
&\quad-R_{11}\omega(\omega\cdot u)-R_{22}[u-\omega(\omega\cdot u)]-(1+s^2)u.
\eln\right.
\ee

\begin{remark}\label{eigen1} Let $U=( f,\rho,u)$ be the  eigenvector of $\mathbbm{M}(\xi)$ corresponding the eigenvalue $\lambda$. Then, the eigenvalue problem $\lambda U=\mathbbm{M}(\xi)U$ is equivalent to
\be
\lambda P_AU=G_3(\xi)P_AU+G_5(\xi)(\lambda -G_4(\xi))^{-1}G_5(\xi) P_AU,\quad\re\lambda>-1,
\ee
and then it is equivalent to \eqref{lfae3} with $P_AU=(C_0\chi_0,\rho,u)$.
\end{remark}

Taking the inner product on the both sides of $\eqref{lfae3}_3$ with $\omega$, we have
\be
\lambda (\omega\cdot u)=i\frac{1}{s}[1+(1+s^2)R_{11}]C_0-i\frac{1}{s}(1+\gamma s^2+R_{11})\rho-(1+s^2+R_{11})(\omega\cdot u). \label{div}
\ee
Taking the rotation $\omega\times$ to $\eqref{lfae3}_3$, we obtain
\be\label{curl}
\lambda(\omega\times u)=-(1+s^2)(\omega\times u)-R_{22}(\omega\times u).
\ee
Let $U=(C_0,\rho,(\omega\cdot u))$. We reduce the 5-dimensional system to a 3-dimensional system for $U$ and a 2-dimensional system for $\omega\times u$. The system for $U$ can be written as $\mathbbm{A}(s)U=\lambda U$ with the matrix $\mathbbm{A}(s)$ defined by
\be \label{As}
\mathbbm{A}(s)=
\begin{pmatrix}
(1+s^2)R_{11}&-R_{11}&isR_{11}\\
0&0&-is\\
i\frac{1}{s}[1+(1+s^2)R_{11}]&-i\frac{1}{s}(1+\gamma s^2+R_{11})&-(1+s^2+R_{11})
\end{pmatrix}.
\ee

Denote
\be
D_0(\lambda,s)=\lambda+1+s^2+R_{22},\quad D_1(\lambda,s)=\det(\lambda I_3-\mathbbm{A}(s)). \label{D01}
\ee
For any $a>0$, $b\in\mathbb{C}$,  define
$$
B_a(b)=\{\lambda\in\mathbb{C}\mid|\lambda-b|\le a\}.
$$
We have the following results for the solutions of $D_j(\lambda,s)=0$, $j=0,1$.

\begin{lem}\label{srlem11}
There are two small constants $r_0,r_1>0$  such that the equation $D_0(\lambda,s)=0$ with $\re\lambda\ge -1/2$ has a unique $C^{\infty}$ solution $\lambda=\lambda_0(s)$ for $(s,\lambda )\in[-r_0,r_0]\times B_{r_1}(0)$ satisfying
\be
\lambda_0(0)=0,\quad \lambda_0'(0)=0,\quad \lambda_0''(0)=-\frac{3}{2}.
\ee
\end{lem}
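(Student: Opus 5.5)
The plan is to apply the implicit function theorem to $D_0(\lambda,s)=\lambda+1+s^2+R_{22}(\lambda,s)$ near $(\lambda,s)=(0,0)$, and then read off the derivatives of $\lambda_0$ from a Neumann-type expansion of $R(\lambda,se_1)$ in $s$.

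\textbf{Step 1: smoothness of $R_{22}$.} First I will check that $R_{22}$ is $C^\infty$ (indeed analytic in $\lambda$) on $[-r_0,r_0]\times B_{r_1}(0)$ for some small $r_0,r_1$. Write $V=P_rv_1P_r$, so that $R(\lambda,se_1)=(L-\lambda-isV)^{-1}$ on $N_0^\perp$. The coercivity $(Lf,f)\le-\mu_0\|P_rf\|_{L^2_\sigma}^2$ on $N_0^\perp$ shows, as in Lemma \ref{srlem8}, that $R(\lambda,se_1)$ is bounded from $N_0^\perp$ into $L^2_\sigma$ uniformly for $\re\lambda\ge-1/2$. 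Since multiplication by $v_1$ is bounded from $L^2_\sigma$ to $L^2$, the identities
$$\partial_sR=R\,(iV)\,R,\qquad \partial_\lambda R=R^2$$
can be iterated. This gives all the derivatives, hence smoothness.

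\textbf{Step 2: the implicit function theorem at $s=0$.} At $s=0$ we have $R(\lambda,0)=(L-\lambda)^{-1}$ on $N_0^\perp$. Because $L\chi_2=-\chi_2$, this gives $R_{22}(\lambda,0)=-(1+\lambda)^{-1}$, and hence
$$D_0(\lambda,0)=\lambda+1-\frac{1}{1+\lambda}=\frac{\lambda(\lambda+2)}{1+\lambda}.$$
Thus $D_0(0,0)=0$ and $\partial_\lambda D_0(0,0)=2\neq0$. The implicit function theorem then yields a unique $C^\infty$ solution $\lambda=\lambda_0(s)$ in a small neighborhood with $\lambda_0(0)=0$. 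Uniqueness in $B_{r_1}(0)$ follows by shrinking $r_0,r_1$, since $\partial_\lambda D_0$ stays away from $0$ there.

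\textbf{Step 3: $\lambda_0'(0)=0$.} The change of variables $v_1\to-v_1$ commutes with $L$ and with $P_r$, and fixes $\chi_2$. It gives $R_{22}(\lambda,-s)=R_{22}(\lambda,s)$, so $D_0$ is even in $s$. Uniqueness then forces $\lambda_0(-s)=\lambda_0(s)$, and therefore $\lambda_0'(0)=0$.

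\textbf{Step 4: $\lambda_0''(0)=-\tfrac32$.} Differentiating $D_0(\lambda_0(s),s)=0$ twice at $s=0$ and using $\lambda_0'(0)=0$ gives
$$\lambda_0''(0)=-\frac{\partial_s^2D_0(0,0)}{\partial_\lambda D_0(0,0)}=-\frac{2+\partial_s^2R_{22}(0,0)}{2}.$$
Expand with $R_0=L^{-1}|_{N_0^\perp}$:
$$R(0,se_1)=R_0+isR_0VR_0-s^2R_0VR_0VR_0+O(s^3).$$
Using $R_0\chi_2=-\chi_2$, $V\chi_2=v_1v_2\chi_0$, and $L(v_1v_2\chi_0)=-2v_1v_2\chi_0$, the self-adjointness of $R_0$ gives
$$(R_0VR_0VR_0\chi_2,\chi_2)=(R_0V\chi_2,V\chi_2)=-\frac12\|v_1v_2\chi_0\|^2=-\frac12.$$
Hence $\partial_s^2R_{22}(0,0)=1$, and so $\lambda_0''(0)=-\frac{2+1}{2}=-\frac32$.

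The main obstacle is Step 1: justifying the differentiability of the resolvent despite the unbounded perturbation $v_1$. The $L^2_\sigma$ coercivity is exactly what handles it. The Hermite computation in Step 4 is routine.
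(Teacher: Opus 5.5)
Your proof is correct and follows the paper's argument. Both apply the implicit function theorem at $(0,0)$ using $D_0(\lambda,0)=\lambda(\lambda+2)/(\lambda+1)$ and $\partial_\lambda D_0(0,0)=2$, then compute $\partial_s^2D_0(0,0)=2-2(L^{-1}(v_1\chi_2),v_1\chi_2)=3$ via $L(v_1v_2\chi_0)=-2v_1v_2\chi_0$. The only deviations are minor: you get $\lambda_0'(0)=0$ from the evenness of $D_0$ in $s$ plus uniqueness, where the paper computes $\partial_sD_0(0,0)=i(v_1\chi_2,\chi_2)=0$ directly, and you spell out the $L^2_\sigma$-coercivity justification for the smoothness of $R_{22}$, which the paper takes for granted.
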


\begin{proof}
Since $(L-\lambda)\chi_k=-(1+\lambda)\chi_k$, $k=1,2,3$, we have
$$
(L-\lambda)^{-1}\chi_k=-(\lambda+1)^{-1}\chi_k.
$$
Thus, the equation
$$
D_0(\lambda,0)=\lambda+1+((L-\lambda)^{-1}\chi_2,\chi_2) =\frac{1}{\lambda+1}\lambda(\lambda+2)=0
$$
has two roots $\lambda_0=0$ and $\lambda_1=-2$. By a direct calculation, we have
\bmas
\partial_\lambda^kR(\lambda,se_1)&=k!R(\lambda,se_1)^{k+1},
\\\partial_s^kR(\lambda,se_1)&=i^{k}k![R(\lambda,se_1)P_rv_1]^kR(\lambda,se_1),
\emas
and then
\bmas
\partial_\lambda D_0(0,0)&=1+( L^{-1}\chi_2,L^{-1}\chi_2)=2\neq0,
\\\partial_sD_0(0,0)&=i(v_1L^{-1}\chi_2,L^{-1}\chi_2)=i(v_1\chi_2,\chi_2)=0.
\emas
By  the implicit function theorem, there exist two constants $r_0,r_1>0$ and a  $C^{\infty}$ function $ \lambda_0(s):[-r_0,r_0]\to B_{r_1}(0) $ such that $D_0(\lambda_0(s),s)=0$. Moreover,
$$
\lambda_0(0)=0,\quad \lambda_0'(0)=-\frac{\partial_sD_0(0,0)}{\partial_\lambda D_0(0,0)}=0.
$$
Since $L (v_1\chi_2)=-2 v_1\chi_2$, we have $L^{-1} (v_1\chi_2)=-\frac{1}{2} v_1\chi_2$. By direct calculation, we obtain
$$
\partial^2_sD_0(0,0)=2-2([L^{-1}P_rv_1]^2L^{-1}\chi_2,\chi_2)=2-2(L^{-1}(v_1\chi_2),v_1\chi_2)=3.
$$
This together with $ \lambda_0'(0)=0$ implies that
$$
\lambda''_0(0)=-\frac{\partial^2_sD_0(0,0)}{\partial_\lambda D_0(0,0)}=-\frac{3}{2}.
$$
The proof of the lemma is completed.
\end{proof}

\begin{lem}\label{weieig}
There are two small constants $r_0,r_1>0$  such that the equation $D_1(\lambda,s)=0$ with $\re\lambda\ge -1/2$ has two solutions $\lambda_{\pm1}$ for $(s,\lambda)\in[-r_0,r_0]\times B_{r_1}(0)$. Moreover, $\lambda_{\pm1}=\lambda_{\pm1}(s)$ are analytic with respect to $s\in[-r_0,r_0]$ with the following expansions
\be\label{fugonge}
\lambda_{\pm1}(s)=\pm i\sqrt{\frac{\gamma+1}{2}}s-\frac{1}{2}s^2+O(s^3).
\ee
Moreover, the eigenvalues $\lambda_{\pm1}(s)$ satisfy
\be
\overline{\lambda_{j}(s)}=\lambda_{-j}(s),\quad j=\pm1.
\ee
\end{lem}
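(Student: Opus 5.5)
The plan is to reduce $D_1(\lambda,s)=0$ near $(0,0)$ to a quadratic equation in $\lambda$ with analytic coefficients via Theorem \ref{wei}, and then read off the expansions and the conjugation relation from that quadratic.

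\textbf{Step 1: analyticity and symmetries of $R_{11}$.} For $\re\lambda>-1$ and real $s$ small, $R(\lambda,se_1)=[L-\lambda-isP_rv_1P_r]^{-1}$ is analytic in $(\lambda,s)$, with the derivative formulas from the proof of Lemma \ref{srlem11}. In particular $R_{11}(\lambda,0)=-(\lambda+1)^{-1}$. The reflection $v_1\to-v_1$ sends $\chi_1\to-\chi_1$ and shows that $R_{11}(\lambda,s)$ is even in $s$. Combining this with $\overline{R(\lambda,se_1)f}=R(\bar\lambda,-se_1)\bar f$ for real $f$ gives $\overline{R_{11}(\lambda,s)}=R_{11}(\bar\lambda,s)$ for real $s$.

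\textbf{Step 2: expand $D_1$.} Expand $D_1(\lambda,s)=\det(\lambda I_3-\mathbbm{A}(s))$ from \eqref{As} along its rows. The $1/s$ entries of the third row always multiply the $s$ entries of the other rows, so $D_1$ is a polynomial in $\lambda,s^2,R_{11}$. Hence $D_1$ is analytic in $(\lambda,s)$ near $(0,0)$ and has real Taylor coefficients in the sense of Step 1. Multiplying by $(\lambda+1)$, if needed, clears the denominator at $s=0$. Substituting $R_{11}=-(\lambda+1)^{-1}+O(s^2)$, I will then check three things:
\[
D_1(0,0)=0,\qquad \partial_\lambda D_1(0,0)=0,\qquad \partial_\lambda^2D_1(0,0)\neq0.
\]
The constant terms $\lambda(\lambda+1+R)+(1+R)$ together with the $R$-couplings should cancel, leaving a double root at $\lambda=0$ when $s=0$; the third root sits away from the origin, near $-1$ or $-2$. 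I will also compute $\partial_s^2D_1(0,0)$ and $\partial_s^2\partial_\lambda D_1(0,0)$. This requires $\partial_s^2R_{11}(0,0)=-2(L^{-1}P_rv_1L^{-1}P_rv_1L^{-1}\chi_1,\chi_1)$, which is evaluated using $L^{-1}\chi_1=-\chi_1$ and the fact that $P_r(v_1\chi_1)$ lies in the span of eigenfunctions of $L$ with eigenvalue $-2$.

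\textbf{Step 3: Weierstrass preparation.} Since $\lambda=0$ is a zero of order exactly two of $D_1(\cdot,0)$, Theorem \ref{wei} gives
\[
D_1(\lambda,s)=\big(\lambda^2+a_1(s)\lambda+a_0(s)\big)h(\lambda,s)
\]
on a bidisc, with $h(0,0)\neq0$ and $a_0,a_1$ analytic, vanishing at $0$ and real-valued by Step 1. The coefficient $a_0$ is even in $s$. Matching Taylor coefficients with Step 2 should give $a_0(s)=\frac{\gamma+1}{2}s^2+O(s^4)$ and $a_1(s)=s^2+O(s^3)$. The two zeros in $B_{r_1}(0)$ are then
\[
\lambda_{\pm1}=\tfrac12\Big(-a_1\pm\sqrt{a_1^2-4a_0}\Big)=\pm i s\sqrt{a_0/s^2-a_1^2/(4s^2)}-\tfrac{a_1}{2}.
\]
Because $a_0/s^2$ is analytic and nonzero at $0$, the square root is analytic in $s$. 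This yields \eqref{fugonge} with analyticity. Since the coefficients are real, $\overline{\lambda_1(s)}=\lambda_{-1}(s)$. Finally, $\re\lambda_{\pm1}\ge-1/2$ holds trivially for small $s$.

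\textbf{Main obstacle.} I expect the hard part to be the bookkeeping in Step 2: the exact values of $a_0''(0)$ and $a_1''(0)$, which produce the speed $\sqrt{(\gamma+1)/2}$ and the damping $-\frac12 s^2$. These require the second-order $s$-expansion of $R_{11}$ and careful tracking of the $1/s$ cancellations. I will organise this by writing $R_{11}=-(\lambda+1)^{-1}+s^2r(\lambda)+O(s^4)$ and substituting once.
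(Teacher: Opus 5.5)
Your proposal is correct and follows essentially the same route as the paper: Weierstrass preparation at the double root $\lambda=0$ of $D_1(\lambda,0)=\frac{\lambda^2(\lambda^2+2\lambda+2)}{\lambda+1}$, the expansion $R_{11}=-\frac{1}{\lambda+1}+\frac{2}{(\lambda+1)^2(\lambda+2)}s^2+O(s^4)$ via the eigenvalue $-2$ of $P_r(v_1\chi_1)$, and coefficient matching that gives $a_0=\frac{\gamma+1}{2}s^2+\dots$ and $a_1=s^2+\dots$. Your extra care goes beyond the paper in two places: you derive real coefficients from $\overline{R_{11}(\lambda,s)}=R_{11}(\bar\lambda,s)$, which yields the conjugation relation, and you factor $s$ out of the square root, which gives analyticity on all of $[-r_0,r_0]$; the paper leaves both points implicit. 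One small correction that does not affect the argument: the other zeros of $D_1(\cdot,0)$ are $-1\pm i$, not points near $-1$ or $-2$.
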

\begin{proof}From \eqref{D01},
$$
D_1(\lambda,s) =\lambda^3+\lambda^2+(1+R_{11})\lambda+s^2[\lambda^2+\gamma\lambda-R_{11}(\lambda^2+2\lambda+1+\gamma)]-s^4R_{11}(\lambda+\gamma).
$$
Thus, the equation
$$
D_1(\lambda,0)=\lambda^3+\lambda^2+\[1+((L-\lambda)^{-1}\chi_1,\chi_1)\]\lambda =\frac{\lambda^2}{\lambda+1}(\lambda^2+2\lambda+2)=0
$$
has four roots $\lambda_{\pm1}=0$ and $\lambda_j=-1+(-1)^ji$ for $j=3,4$. For $\re\lambda>-1$, $R(\lambda,se_1)=[L-\lambda-isP_rv_1]^{-1}$ is analytic in $(\lambda,s) $, and then $D_1(\lambda,s)$ is also analytic in $(\lambda,s)$.  Moreover, by changing variable $v_1\to -v_1$, we have
$$
R_{11}(\lambda,-s)=((L-\lambda+isP_rv_1)^{-1}v_1\chi_0,v_1\chi_0)=R_{11}(\lambda,s),
$$
which implies that $R_{11}(\lambda,s)$ is an even function in $s$, and then $D_1(\lambda,s)$ is also an even function in $s$. We now expand $R_{11}(\lambda,s)$ around $s=0$. By a direct calculation, we have
\bmas
R_{11}(\lambda,0)&=(R(\lambda,0)\chi_1,\chi_1)=((L-\lambda)^{-1}\chi_1,\chi_1)=-\frac{1}{\lambda+1},\\
\partial_s^kR_{11}(\lambda,0)&=(\partial_s^k R(\lambda,0)\chi_1,\chi_1)=i^{k}k!([(L-\lambda)^{-1}P_rv_1]^k(L-\lambda)^{-1}\chi_1,\chi_1).
\emas
Since  $LP_r(v_1\chi_1)=2(\chi_0-v_1\chi_1)=-2P_r(v_1\chi_1)$, we have $(L-\lambda)^{-1}P_r(v_1\chi_1)=-(\lambda+2)^{-1}P_r(v_1\chi_1)$. Thus,
$$
\partial_s^2R_{11}(\lambda,0)=-2!\([(L-\lambda)^{-1}P_rv_1]^2(L-\lambda)^{-1}\chi_1,\chi_1\)=\frac{2(P_r(v_1\chi_1),v_1\chi_1)}{(\lambda+1)^2(\lambda+2)}=\frac{4}{(\lambda+1)^2(\lambda+2)}.
$$
Combing the results above, we obtain
\be
 R_{11}(\lambda,s)=\sum\limits_{k=0}^\infty\frac{1}{(2k)!}\partial_s^{2k}R_{11}(\lambda,0)s^{2k}
 =-\frac{1}{\lambda+1}+\frac{2}{(\lambda+1)^2(\lambda+2)}s^2+O(s^4). \label{sexpand}
\ee
Note that $D_1(0,0)=0$, $D_1(\lambda,0)=\lambda^2h(\lambda)$ with $h(0)\neq 0$, and $D_1(\lambda,s)$ is analytic on $B_{r_1}(0)\times [-r_0,r_0] $. By  Weierstrass Preparation Theorem \ref{wei}, there exists a Weierstrass polynomial $P(\lambda,s)=\lambda^2+p_1(s)\lambda+p_0(s)$ with $p_0(0)=p_1(0)=0$ such that $D_1(\lambda,s)$ can be written as
$$
D_1(\lambda,s)=U(\lambda,s)P(\lambda,s),
$$
where $U(\lambda,s) $ is analytic on $B_{r_1}(0)\times [-r_0,r_0] $ and $U(0,0)\neq 0$. Since $D_1(\lambda,s)$ is even in $s$, it follows that $U(\lambda,s)$, $p_0(s)$ and $p_1(s)$ are even in $s$. Set
$$U(\lambda,s)=\sum\limits_{k=0}^\infty U_k(\lambda)s^{2k}, \quad p_j(s)= \sum\limits_{k=1}^\infty p_{jk}s^{2k},~~j=0,1.$$
Substituting \eqref{sexpand} into $D_1(\lambda,s)$, we have
\bma
D_1(\lambda,s) &\nnm=\frac{\lambda^2}{\lambda+1}(\lambda^2+2\lambda+2)+\[\lambda^2+(\gamma+1)\lambda+1+\frac{\gamma}{\lambda+1}+\frac{2\lambda}{(\lambda+1)^2(\lambda+2)}\]s^2+O(s^4).
\ema
By the method of comparing coefficient, we have the following equations
\be
\left\{\ball
O(1)&:~\lambda^2U_0(\lambda)=\frac{\lambda^2}{\lambda+1}(\lambda^2+2\lambda+2),
\\ O(s^2)&:~\lambda^2U_1(\lambda)+ (\lambda p_{11}+p_{01})U_0(\lambda)
\\&\quad=\lambda^2+(\gamma+1)\lambda+1+\frac{\gamma}{\lambda+1}+\frac{2\lambda}{(\lambda+1)^2(\lambda+2)}.
\eall\right.
\ee
Thus, we obtain
$$
p_{01}=\frac{\gamma+1}{2},\quad p_{11}=1.
$$
Since $U(\lambda,s)\neq 0$ on $[-r_0,r_0]\times B_{r_1}(0)$, then $D_1(\lambda,s)=U(\lambda,s)P(\lambda,s)=0$ is equivalent to $P(\lambda,s)=0$. Note that
$$
\Delta_\lambda=p_1(s)^2-4p_0(s)=-2(\gamma+1)s^2+O(s^4)\le0,~~s\le r_0.
$$
Thus, we obtain the expansions of the solutions $\lambda_{\pm1}(s)$ to $P(\lambda,s)$ as
$$
\lambda_{\pm1}(s)=-\frac{p_1(s)}2\pm \frac{\sqrt{\Delta_\lambda} }2=\pm i\sqrt{\frac{\gamma+1}{2}}s-\frac{1}{2}s^2+O(s^3),
$$
and hence \eqref{fugonge} holds. The proof of the lemma is completed.
\end{proof}

\begin{thm}\label{srlem13}
There exists a constant $r_0>0$ such that $\mathbbm{M}(\xi)$ has the following spectrum structure for $s\le r_0$:
$$
\sigma(\mathbbm{M}(\xi))\cap \{ \lambda\in\mathbb{C}\mid \re\lambda\ge -1/2 \}=\{\lambda_j(s), ~j=-1,0,1,2\}.
$$
In particular, the eigenvalues $\lambda_j(s)~(j=-1,0,1,2)$ are analytic with respect to $s$ and admit the following asymptotic expansions for $s\le r_0$:
\be\label{egexpand}
\left\{\ball
\lambda_{\pm1}(s)&=\pm i\sqrt{\frac{\gamma+1}{2}}s-\frac{1}{2}s^2+O(s^3),
\\
\lambda_0(s)&=\lambda_2(s)=-\frac{3}{4}s^2+O(s^3).
\eall\right.\ee
The eigenvectors $\Psi_j(s,\omega)=(\psi_j, \zeta_j, \vartheta_j)(s,\omega)$ $(-1\le j\le 2)$ are orthogonal to each other satisfying
$$\left\{\ball
(\Psi_i ,\overline{\Psi_j })_{\xi,\gamma}&=(\psi_i,\overline{\psi_j})+\gamma(\zeta_i,\overline{\zeta_j})+(\vartheta_i,\overline{\vartheta_j})
+\frac{1}{|\xi|^2}(\tau_a\psi_i-\zeta_i,\overline{\tau_a\psi_j-\zeta_j})
\\
&=\delta_{ij},~~-1\le i, j\le2,
\\
\Psi_j(s,\omega)&=\Psi_{j,0}(\omega)+\Psi_{j,1}(\omega)s+O(s^2),~~|s|\le r_0,
\eall\right.$$
where $\overline{\Psi_j}=(\overline{\psi_j}, \overline{\zeta_j}, \overline{\vartheta_j})$, and the coefficients $\Psi_{j,n}=(\psi_{j,n}, \zeta_{j,n}, \vartheta_{j,n})$ satisfy for $j=\pm1$,
\be\label{lfae4}\left\{\ball
&\psi_{\pm1,0}=\frac{1}{\sqrt{2(\gamma+1)}}\chi_0\mp\frac{1}{2}(v\cdot\omega)\chi_0,
\\
&\psi_{\pm1,1}=\mp i\frac{1}{4(\gamma+1)}\chi_0-i\frac{1}{4\sqrt{2(\gamma+1)}}(v\cdot\omega)\chi_0\pm i\frac{1}{4}P_r(v\cdot\omega)^2\chi_0,
\\
&\zeta_{\pm1,0}=\frac{1}{\sqrt{2(\gamma+1)}},~~\zeta_{\pm1,1}=\pm i\frac{1}{4(\gamma+1)},~~\vartheta_{\pm1,0}=\mp\frac{1}{2}\omega,~~\vartheta_{\pm1,1}=-i\frac{1}{4\sqrt{2(\gamma+1)}}\omega,
\\
&\tau_a\psi_{\pm1,n}-\zeta_{\pm1,n}=0~(n=0,1),~~\tau_a\psi_{\pm1,2}-\zeta_{\pm1,2}=\frac{\gamma-1}{2\sqrt{2(\gamma+1)}},
\eall\right.\ee
and for $j=0,2$,
\be\label{lfae4a}\left\{\ball
&\psi_{j,0}=\frac{\sqrt{2}}{2}(v\cdot W^j)\chi_0,~~\psi_{j,1}=-i\frac{\sqrt{2}}{4}P_r(v\cdot\omega)(v\cdot W^j)\chi_0,
\\
&\zeta_{j,n}=0~(n\ge0),~~\vartheta_{j,0}=\frac{\sqrt{2}}{2}W^j,~~\vartheta_{j,1}=0, \\
&\tau_a\psi_{j,n}-\zeta_{j,n}=0~(n\ge0).
\eall\right.\ee
Here, $W^j~(j=0,2)$ are normal vectors satisfying $W^0\cdot W^2=0,~W^0\cdot\omega=W^2\cdot\omega=0$.
\end{thm}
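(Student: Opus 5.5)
The proof has three parts: locate the spectrum in $\{\re\lambda\ge-1/2\}$ for small $s$, identify it with the roots of $D_0$ and $D_1$ from Lemmas \ref{srlem11} and \ref{weieig}, and then build and normalize the eigenvectors by expanding in $s$.

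\emph{Localization.} By Lemma \ref{srlem9}, every eigenvalue with $\re\lambda>-1/2$ has $|\im\lambda|\le y_2$. I then argue that for $s\le r_0$ the only eigenvalues in this region lie in $B_{r_1}(0)$. Outside $B_{r_1}(0)$, the perturbation $Y_1(\lambda,\xi)$ in \eqref{sraa7} is not uniformly small, so I will not use it there. Instead I use Remark \ref{eigen1}: $\lambda$ is an eigenvalue iff $D_0(\lambda,s)=0$ or $D_1(\lambda,s)=0$. Both functions are analytic in $\lambda$ on the compact set $\{\re\lambda\ge-1/2,|\im\lambda|\le y_2\}$ and continuous in $s$. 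At $s=0$ their only zeros in this strip are $\lambda=0$; the other roots are $-2$ and $-1\pm i$, which lie outside. A uniform continuity/Rouché argument then shows there are no zeros outside $B_{r_1}(0)$ for small $s$. Inside $B_{r_1}(0)$, Rouché's theorem counts the zeros: exactly one for $D_0$, namely $\lambda_0(s)$, and two for $D_1$, namely $\lambda_{\pm1}(s)$.

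The curl equation \eqref{curl} is two-dimensional, spanned by $W^0$ and $W^2$. It therefore gives $\lambda_0=\lambda_2$ with multiplicity two, and Lemma \ref{srlem11} gives $\lambda_0=-\tfrac34 s^2+O(s^3)$. Analyticity follows from the implicit function theorem in the analytic category, since $D_0$ is analytic. For $\lambda_{\pm1}$ I quote Lemma \ref{weieig}.

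\emph{Eigenvectors, $j=0,2$.} Take $C_0=\rho=0$ and $u=\vartheta W^j$. Then $f=-\vartheta W^j\cdot R(\lambda_j,\xi)v\chi_0$. Expanding $R(\lambda,s\omega)=(L-\lambda)^{-1}+is(L-\lambda)^{-1}P_r(v\cdot\omega)(L-\lambda)^{-1}+\dots$ at $\lambda=O(s^2)$ gives $\psi_{j,0}\propto (v\cdot W^j)\chi_0$ and $\psi_{j,1}\propto -\tfrac{i}{2}P_r(v\cdot\omega)(v\cdot W^j)\chi_0$, using $L^{-1}$ on $P_r(v\cdot\omega)(v\cdot W^j)\chi_0$, which equals $-\tfrac12$ times that vector. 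Since $\tau_a\psi_j$ vanishes identically by $\omega\perp W^j$, and $\zeta_j=0$, we get $\tau_a\psi_j-\zeta_j\equiv0$. Normalizing gives the factor $\sqrt2/2$.

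\emph{Eigenvectors, $j=\pm1$.} Solve the null vector of $\lambda_{\pm1}-\mathbb A(s)$ as a power series in $s$. Set $\rho=-is(\omega\cdot u)/\lambda$, which is $O(1)$ because $\lambda=O(s)$. The first row gives $C_0$. Reconstruct $f=C_0\chi_0+f_1$ with
\[
f_1=[i\xi(1+s^{-2})C_0-i\xi s^{-2}\rho-u]\cdot R v\chi_0 .
\]

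\emph{Main obstacle.} The main obstacle is the singular $s^{-1}$ and $s^{-2}$ factors: $f_1$ and the weight $|\xi|^{-2}|\tau_a\psi-\zeta|^2$ in $\|\cdot\|_{\xi,\gamma}$ stay bounded only if $C_0-\rho=O(s^2)$. To get this, I will use the first row of $\mathbb A$ together with the third row multiplied by $s$. These force $(1+(1+s^2)R_{11})C_0-(1+\gamma s^2+R_{11})\rho=O(s^2)$. Since $1+R_{11}=\lambda/(\lambda+1)+O(s^2)$, this yields $C_0=\rho$ at orders $0$ and $1$, and a computable order-$2$ defect. That defect should give $\tau_a\psi_{\pm1,2}-\zeta_{\pm1,2}=\frac{\gamma-1}{2\sqrt{2(\gamma+1)}}$, using the $s^2$ coefficient of $\lambda_{\pm1}$ and \eqref{sexpand}.

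\emph{Normalization and orthogonality.} Normalize in $(\cdot,\cdot)_{\xi,\gamma}$ with weights $1+\gamma$ for the density pair and $1/4$ for the velocity. This produces $\zeta_{\pm1,0}=1/\sqrt{2(\gamma+1)}$ and $\vartheta_{\pm1,0}=\mp\tfrac12\omega$. Orthogonality $(\Psi_i,\overline{\Psi_j})_{\xi,\gamma}=0$ follows from $\mathbb M(\xi)^*=\mathbb M(-\xi)$ together with the parity $\overline{\Psi_j(s)}$ associated with $\Psi_j(-s)$, via the standard argument
\[
\lambda_i(\Psi_i,\overline{\Psi_j})=(\mathbb M\Psi_i,\overline{\Psi_j})=(\Psi_i,\mathbb M(-\xi)\overline{\Psi_j})=\lambda_j(\Psi_i,\overline{\Psi_j}),
\]
with distinct eigenvalues for $s\neq0$. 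The degenerate pair $j=0,2$ is orthogonal by $W^0\perp W^2$.
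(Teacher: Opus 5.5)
Your construction matches the paper's. You take eigenvectors for $j=0,2$ from the curl system \eqref{curl}, and for $j=\pm1$ from a null vector of $\lambda_{\pm1}-\mathbbm{A}(s)$ lifted by $f_1=[\cdots]\cdot R(\lambda,\xi)v\chi_0$. Orthogonality comes from $\M(\xi)^*=\M(-\xi)$ and $\overline{\M(\xi)U}=\M(-\xi)\overline{U}$, followed by the bilinear normalization. Your Rouch\'e localization is a genuine addition: the paper's proof only constructs the four eigenvalues and does not rule out others outside $B_{r_1}(0)$. (It runs on $\{-\frac12\le\re\lambda\le0,\ |\im\lambda|\le y_2\}$, and you need the cap $\re\lambda\le0$ from Lemma \ref{srlem9} for compactness.)

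\textbf{The gap.} The step that does not work as written is the one you call the main obstacle. Multiplying the third row of $\mathbbm{A}(s)$ by $s$ gives
\begin{equation*}
(1+R_{11})(C_0-\rho)+s^2(R_{11}C_0-\gamma\rho)=-is(\lambda+1+s^2+R_{11})(\omega\cdot u).
\end{equation*}
Let $\ell=\pm i\sqrt{(\gamma+1)/2}$ be the $s$-coefficient of $\lambda_{\pm1}$, so $1+R_{11}\sim\ell s$. Knowing only that both sides are $O(s^2)$, you get just $C_0-\rho=O(s)$, because dividing by $1+R_{11}$ costs a power of $s$. With subscripts for Taylor coefficients in $s$, the order-one coefficient vanishes only after matching exact $s^2$ terms:
\begin{equation*}
\ell(C_0-\rho)_1-(1+\gamma)\rho_0=2\ell^2\rho_0 .
\end{equation*}
This uses $(\omega\cdot u)_0=i\ell\rho_0$ and $\ell^2=-\frac{\gamma+1}{2}$. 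The order-two defect would then need this equation at order $s^3$. Note also that boundedness of $f_1$ and of $|\xi|^{-2}|\tau_a\psi-\zeta|^2$ needs only $C_0-\rho=O(s)$. What needs $O(s^2)$ is $\tau_a\psi_{\pm1,1}=\zeta_{\pm1,1}$ and the stated form of $\psi_{\pm1,0}$.

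\textbf{The repair.} You already use the first row to define $C_0$; combine it with the second row, $is(\omega\cdot u)=-\lambda\rho$:
\begin{equation*}
(\lambda-R_{11})(C_0-\rho)=s^2R_{11}C_0-\lambda(1+R_{11})\rho .
\end{equation*}
Here $\lambda-R_{11}\to1$, and $\lambda(1+R_{11})=\lambda^2/(\lambda+1)+O(s^3)$ by \eqref{sexpand}. So $C_0-\rho=O(s^2)$ at once, with
\begin{equation*}
(C_0-\rho)_2=-(1+\ell^2)\rho_0=\tfrac{\gamma-1}{2}\rho_0=\tfrac{\gamma-1}{2\sqrt{2(\gamma+1)}} .
\end{equation*}
Only $\ell$ is needed, not the $s^2$-coefficient of $\lambda_{\pm1}$. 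This is exactly what the paper builds in by writing $V_j=(a_j,a_j-sb_j,c_j)$ and reading $b_{j,0}=0$, $b_{j,1}=\frac{\gamma-1}{2\sqrt{2(\gamma+1)}}$ off \eqref{lfae5}.

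\textbf{First-order coefficients.} You never fix them. They come from the $O(s)$ part of the normalization \eqref{gyhtj}, including $A_{jj}=1-2\ell s+O(s^2)$. This gives $C_0$ the $s$-coefficient $-\frac{ji}{4(\gamma+1)}$, the paper's $a_{j,1}$. Since $\tau_a\psi_j-\zeta_j=C_0-\rho=O(s^2)$, it forces $\zeta_{\pm1,1}=\tau_a\psi_{\pm1,1}=\mp\frac{i}{4(\gamma+1)}$. So the sign printed for $\zeta_{\pm1,1}$ in the statement is a misprint.

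\textbf{Two small slips.} First, $\rho=-is(\omega\cdot u)/\lambda$ is bounded because $|\lambda_{\pm1}(s)|\ge cs$, not because $\lambda=O(s)$. Second, orthogonality of $\Psi_0,\Psi_2$ needs, beyond $W^0\perp W^2$, that $\int\psi_0\psi_2\,dv=0$. This holds because the reflection $v\mapsto v-2(v\cdot W^2)W^2$ commutes with $R(\lambda_0,s\omega)$.
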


\begin{proof}
The eigenvalues $\lambda_j(s)$ and the eigenvectors $ \Psi_j(s,\omega)=(\psi_j, \zeta_j, \vartheta_j)(s,\omega)~(-1\le j\le2)$ can be constructed as follows. For $j=0,2$, we take $\lambda_j=\lambda_0(s)$ to be the solution of the equation $D_0(\lambda,s)=0$, and choose $C_0=\zeta=(\omega\cdot \vartheta)=0$. The corresponding eigenvectors $\Psi_j(s,\omega)=(\psi_j, \zeta_j, \vartheta_j)(s,\omega)$ are defined by
$$\left\{\ball
&\psi_j =-c_j(s)[L-\lambda_j-isP_r(v\cdot\omega)]^{-1}(v\cdot W^j)\chi_0,
\\
&\zeta_j=0,\quad \vartheta_j=c_j(s)W^j(\omega),
\eall\right.$$
where $W^j~(j=0,2)$ to be the normal vectors satisfying $W^0\cdot W^2=0$ and $W^0\cdot\omega=W^2\cdot\omega=0$.

For $j=\pm1$, we take $\lambda_j=\lambda_j(s)$ to be the solutions of the equation $D_1(\lambda,s)=0$, and choose $\omega\times \vartheta =0$. We denote by $V_j=(a_j,a_j-sb_j, c_j) $ the solutions of $\mathbbm{A}(s)V_j=\lambda_j(s)V_j$. The corresponding eigenvectors $\Psi_j(s,\omega)=(\psi_j, \zeta_j, \vartheta_j)(s,\omega)$ are defined by
$$\left\{\ball
&\psi_j=a_j(s)\chi_0+\left[isa_j(s)+i b_j(s)-c_j(s) \right][L-\lambda_j-isP_r(v\cdot\omega)]^{-1}(v\cdot\omega)\chi_0,
\\&\zeta_j=a_j-sb_j,\quad \vartheta_j=c_j(s)\omega.
\eall\right.$$

We write
$$
\mathbbm{M}(\xi)\Psi_k(s,\omega)=\lambda_k(s)\Psi_k(s,\omega),\quad -1\le k\le2.
$$
Taking the inner product $(\cdot,\cdot)_{\xi,\gamma}$ of the equation above with $\overline{\Psi_j(s,\omega)}$ and using the facts that
\bmas
(\M(\xi)U,V)_{\xi,\gamma}&=(U,\M(-\xi)V)_{\xi,\gamma},\quad U,V\in Z_{\xi,\gamma},
\\
\M(-\xi)\overline{\Psi_j(s,\omega)}&=\overline{\lambda_j(s)}\cdot\overline{\Psi_j(s,\omega)},\quad -1\le j\le2,
\emas
we have
$$
(\lambda_j(s)-\lambda_k(s))(\Psi_j(s,\omega),\overline{\Psi_k(s,\omega)})_{\xi,\gamma}=0,~-1\le j\neq k\le3.
$$
For $s\neq0$ sufficiently small, $\lambda_k(s)\neq\lambda_j(s)$ for $-1\le j\neq k\le1$. Therefore, we have
$$
(\Psi_j(s,\omega),\overline{\Psi_k(s,\omega)})_{\xi,\gamma}=0,~-1\le j\neq k\le2.
$$
We can normalized them by taking $(\Psi_j(s,\omega),\overline{\Psi_j(s,\omega)})_{\xi,\gamma}=1$ for $-1\le j\le2$.

For $j=0,2$, the coefficient $c_0(s)=c_2(s)$ can be determined by the normalization condition
\be\label{02norm}
1=(\Psi_j,\overline{\Psi_j})_{\xi,\gamma}=c_j(s)^2(1+A_{22}),\quad j=0,2,
\ee
where $A_{22}=(R(\lambda_0,se_1)\chi_2,R(\overline{\lambda_0},-se_1)\chi_2) $. Similar to the treatment of  \eqref{sexpand}, we have by \eqref{egexpand} that
\bma
A_{22}=(R(0,0)\chi_2,R(0,0)\chi_2)+2(\partial_s R(0,0)\chi_2,R(0,0)\chi_2)s+O(\lambda_0(s))+O(s^2)= 1+O(s^2). \label{A22}
\ema
Substituting \eqref{A22} into \eqref{02norm}, we obtain
\be \label{cj}
c_j(s)=\frac{\sqrt{2}}{2}+O(s^2),\quad j=0,2.
\ee
By \eqref{cj} and using
\bmas
&[L-\lambda_j-isP_r(v\cdot\omega)]^{-1}(v\cdot W^j)\chi_0\\
&=-(v\cdot W^j)\chi_0+\frac12isP_r(v\cdot\omega)(v\cdot W^j)\chi_0+O(s^2),\quad j=0,2,
\emas
we can obtain the expansions of $\psi_{0,k}=\psi_{2,k}~(k=0,1)$ as listed in \eqref{lfae4a}.

From  \eqref{As}, we obtain the system for $(a_j,b_j,c_j)$:
\be\label{lfae5}\left\{\ball
&\lambda_ja_j-s^2R_{11}(\lambda_j,s)a_j-isR_{11}(\lambda_j,s)c_j-sR_{11}(\lambda_j,s) b_j=0,
\\
&\lambda_ja_j-s\lambda_jb_j+isc_j=0,
\\
&-isR_{11}(\lambda_j,s)a_j+is\gamma (a_j-sb_j)+[\lambda_j +1+s^2 +R_{11}(\lambda_j,s)]c_j
\\
&\quad-[i +i R_{11}(\lambda_j,s)] b_j =0.
\eall\right.\ee
Moreover, we have the normalization condition
\be\label{gyhtj}
1=(\Psi_j,\overline{\Psi_j})_{\xi,\gamma}=a_j^2+(isa_j -c_j +i b_j )^2A_{jj}+\gamma (a_j-sb_j)^2+c_j^2+ b_j^2,
\ee
where $A_{jj}=(R(\lambda_j,se_1)\chi_1,R(\overline{\lambda_j},-se_1)\chi_1)$. By 
\eqref{egexpand}, we have
\be
R_{11}(\lambda_j,s)=-1+ji\sqrt{\frac{\gamma+1}{2}}s+O(s^2), \quad
A_{jj} =1-2ji\sqrt{\frac{\gamma+1}{2}}s+O(s^2). \label{Ajj}\ee
For $j=\pm1$, we expand $a_j(s),b_j(s),c_j(s)$ as
$$
a_j(s)=\sum\limits_{k=0}^1a_{j,k}s^k+O(s^2),~~b_j(s)=\sum\limits_{k=0}^1b_{j,k}s^k+O(s^2),~~c_j(s)=\sum\limits_{k=0}^{1}c_{j,k}s^k+O(s^2).
$$
Substituting \eqref{egexpand} and \eqref{Ajj} into $\eqref{lfae5}_{1,2}$ and \eqref{gyhtj}, we obtain the following equations about $\{a_{j,k},b_{j,k},c_{j,k}\}$
\bmas
O(1):&~~
a_{j,0}^2+(i b_{j,0}-c_{j,0})^2+\gamma a_{j,0}^2+c_{j,0}^2+b_{j,0}^2=1,
\\
O(s):&~~
\left\{\ball
&b_{j,0}=-ji\sqrt{\frac{\gamma+1}{2}}a_{j,0}-ic_{j,0},
\\
&c_{j,0}=-j\sqrt{\frac{\gamma+1}{2}}a_{j,0},
\\
&2a_{j,0}a_{j,1}+2(i b_{j,0}-c_{j,0})(ia_{j,0}-c_{j,1}+ib_{j,0})
\\
&-2ji\sqrt{\frac{\gamma+1}{2}}(i b_{j,0}-c_{j,0})^2+2\gamma b_{j,0}b_{j,1}+2c_{j,0}c_{j,1}=0,
\eall\right.
\\
O(s^2):&~~
\left\{\ball
&b_{j,1}=-\frac12a_{j,0}-ji\sqrt{\frac{\gamma+1}{2}}a_{j,1}-j\sqrt{\frac{\gamma+1}{2}}c_{j,0}-ic_{j,1}+ji\sqrt{\frac{\gamma+1}{2}}b_{j,0},
\\
&c_{j,1}=-i\frac{1}{2}a_{j,0}-j\sqrt{\frac{\gamma+1}{2}}a_{j,1},
\eall\right.
\emas
which lead to
\be \label{abc}
\left\{\ball
&a_{j,0}=\frac{1}{\sqrt{2(\gamma+1)}},\quad b_{j,0}=0,\quad c_{j,0}=-j\frac{1}{2},
\\
&a_{j,1} =-ji\frac{1}{4(\gamma+1)},\quad
b_{j,1}=\frac{\gamma-1}{2\sqrt{2(\gamma+1)}}, \quad c_{j,1}=-i\frac{1}{4\sqrt{2(\gamma+1)}}.
\eall\right.\ee
By \eqref{abc} and using
\bmas
&[L-\lambda_j-isP_r(v\cdot\omega)]^{-1}(v\cdot\omega)\chi_0
\\&=-(v\cdot\omega)\chi_0+s\bigg[ji\sqrt{\frac{\gamma+1}{2}}(v\cdot\omega)\chi_0+i\frac12P_r(v\cdot\omega)^2\chi_0\bigg]+O(s^2),\quad j=\pm1,
\emas
we can obtain the expansions of $\psi_{\pm1,k}~(k=0,1)$ as listed in \eqref{lfae4}. The proof  is completed.
\end{proof}
\begin{remark}\label{tzznx}
In fact, for the general viscous term $\nu\Delta_x u$ with $\nu>0$ in the equation $\eqref{introe2}_3$, the asymptotic expansions of the eigenvalues $\{\lambda_j(s),~j=-1,0,1,2\}$ become
\be\label{nxtzz}\left\{\ball
\lambda_{\pm1}(s)&=\pm i\sqrt{\frac{\gamma+1}{2}}s-\frac{1+\nu}{4}s^2+O(s^3),
\\
\lambda_0(s)&=\lambda_2(s)=-\frac{1+2\nu}4s^2+O(s^3),
\eall\right.
\ee
which can be obtained by an argument similar to the proof of Lemma \ref{weieig}.
\end{remark}
\section{Optimal Time-Decay Rates of Linearized VPFP/NSP}
In the section, we consider the Cauchy problem \eqref{introe4} for the linearized VPFP/NSP system and establish the optimal time decay rates of the global solution based on the results obtained in Section 2.

\subsection{Decomposition and Asymptotics of Linear Semigroup}
We start by proving the following lemmas.
\begin{lem}\label{orlem8}
The operator $G_3(\xi)$ defined by \eqref{g3g4} generates a strongly continuous contraction semigroup on $ N_0\times \C \times \C^3$ for any fixed $\xi\ne 0$, which satisfies for any $t>0$ and $U\in N_0\times \C \times \C^3$ that
\be\label{da17}
\|e^{tG_3(\xi)}U\|_{\xi,\gamma}\le \|U\|_{\xi,\gamma}.
\ee
In addition, for any $\lambda=x+iy\neq0$ with $x\in(-\frac{1}{2},0)$ and $U\in N_0\times \C \times \C^3$, it holds that
\be\label{da18}
\int_{-\infty}^{+\infty}\|[(x+iy)-G_3(\xi)]^{-1}U\|_{\xi,\gamma}^2dy\le C\|U\|_{\xi,\gamma}^2.
\ee
\end{lem}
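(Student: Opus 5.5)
Since $N_0\times\C\times\C^3$ is five-dimensional, $G_3(\xi)$ restricted to it is a $5\times5$ matrix, and the whole proof takes place in finite dimensions. Generation of a strongly continuous semigroup is then automatic: $e^{tG_3(\xi)}$ is a matrix exponential. For the contraction property \eqref{da17} I will show that $G_3(\xi)$ is dissipative in $(\cdot,\cdot)_{\xi,\gamma}$.

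Take $U=(C_0\chi_0,\rho,u)$. Then
$$G_3(\xi)U=\Big(0,\,-i\xi\cdot u,\,i\xi|\xi|^{-2}C_0-i\xi(\gamma+|\xi|^{-2})\rho-(1+|\xi|^2)u\Big).$$
The $f$-component of $G_3(\xi)U$ vanishes, so in the weighted term of \eqref{zxigamma1} the difference $\tau_a(\cdot)-\rho$ for $G_3U$ equals $i\xi\cdot u$. This term therefore contributes $|\xi|^{-2}\,i(\xi\cdot u)\overline{(C_0-\rho)}$. The $\rho$-part contributes $-i\gamma(\xi\cdot u)\bar\rho$. The $u$-part contributes $i|\xi|^{-2}(C_0-\rho)\,\xi\cdot\bar u-i\gamma\rho\,\xi\cdot\bar u-(1+|\xi|^2)|u|^2$.

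The mixed terms pair off as $z-\bar z$ with $z$ purely imaginary in structure, so their real parts cancel. This leaves
$$\re(G_3(\xi)U,U)_{\xi,\gamma}=-(1+|\xi|^2)|u|^2\le0.$$
Hence $\frac{d}{dt}\|e^{tG_3}U\|_{\xi,\gamma}^2\le0$, which gives \eqref{da17}.

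For \eqref{da18} I will use the spectral representation from the proof of Lemma \ref{srlem8}. The eigenvectors $\tilde\beta_j$ ($-1\le j\le3$) of $G_3(\xi)$ are mutually orthogonal in $(\cdot,\cdot)_{\xi,\gamma}$ and span $N_0\times\C\times\C^3$. Parseval therefore gives
$$\|(\lambda-G_3(\xi))^{-1}U\|_{\xi,\gamma}^2=\sum_{j=-1}^3\frac{|(U,\tilde\beta_j)_{\xi,\gamma}|^2}{\|\tilde\beta_j\|_{\xi,\gamma}^2\,|\lambda-\tilde\eta_j|^2}.$$
With $\lambda=x+iy$, I integrate each term in $y$ using
$$\int_{\R}\frac{dy}{(x-\re\tilde\eta_j)^2+(y-\im\tilde\eta_j)^2}=\frac{\pi}{|x-\re\tilde\eta_j|},$$
and then Parseval once more, so it suffices to bound $|x-\re\tilde\eta_j|$ from below uniformly in $\xi$.

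This lower bound is the step I expect to be the main obstacle, since it requires uniform control of the eigenvalues via Lemma \ref{orlem3}.
\begin{itemize}
\item For $\tilde\eta_3=0$, the distance is $|x|>0$.
\item For $\tilde\eta_0=\tilde\eta_2=-(1+|\xi|^2)\le-1$, the distance is at least $x+1\ge x+\frac12$.
\item For $\tilde\eta_{\pm1}$ with $|\xi|\le\zeta(\gamma)$, Lemma \ref{orlem3} gives $\re\tilde\eta_{\pm1}=-\frac12(1+|\xi|^2)\le-\frac12$.
\item For $|\xi|\ge\zeta(\gamma)$, $\tilde\eta_{-1}\le\tilde\eta_1$, and $\tilde\eta_1$ increases monotonically toward $-\gamma\le-1$. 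Hence $\tilde\eta_{\pm1}\le-\frac12$ there as well; continuity at $|\xi|=\zeta(\gamma)$ matches the value $-\frac12(1+\zeta^2)$.
\end{itemize}
In every case the distance is at least $\min\{|x|,\,x+\tfrac12\}>0$, because $x\in(-\frac12,0)$. This yields \eqref{da18} with
$$C=\pi\max\{|x|^{-1},(x+\tfrac12)^{-1}\},$$
a constant independent of $\xi$ and $U$ but depending on $x$.
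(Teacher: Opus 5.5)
Your argument for generation and for \eqref{da17} is correct; the identity $\re(G_3(\xi)U,U)_{\xi,\gamma}=-(1+|\xi|^2)|u|^2$ checks out. For \eqref{da18} you follow the paper's route exactly: expand $(\lambda-G_3(\xi))^{-1}$ in the eigenvectors $\tilde\beta_j$, then integrate $|\lambda-\tilde\eta_j|^{-2}$ in $y$. Your uniform lower bound on $|x-\re\tilde\eta_j|$ is also fine. The real problem is elsewhere.

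The Parseval identity you write, which is the same one the paper takes from Lemma \ref{srlem8}, requires the $\tilde\beta_j$ to be orthogonal for the Hermitian product $(\cdot,\cdot)_{\xi,\gamma}$, that is, $G_3(\xi)$ must be normal. It is not. Because $G_3(\xi)^*=G_3(-\xi)=\overline{G_3(\xi)}$, Lemma \ref{srlem6} only gives the bilinear relations $(\tilde\beta_i,\overline{\tilde\beta_j})_{\xi,\gamma}=0$. Computing directly, $(\tilde\beta_1,\tilde\beta_{-1})_{\xi,\gamma}=1+\gamma|\xi|^2+\tilde\eta_1\overline{\tilde\eta_{-1}}$. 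This equals $-(1+|\xi|^2)\tilde\eta_1\neq0$ for $|\xi|<\zeta(\gamma)$ and $2(1+\gamma|\xi|^2)$ for $|\xi|>\zeta(\gamma)$. At $|\xi|=\zeta(\gamma)$ the two eigenvalues merge into a Jordan block, so the $\tilde\beta_j$ do not even span.

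Your explicit constant is in fact violated. For $U=(0,\rho,0)$, Plancherel in $t$ gives $\int_{\R}\|(\lambda-G_3(\xi))^{-1}U\|^2_{\xi,\gamma}dy=2\pi\int_0^\infty e^{-2xt}\|e^{tG_3(\xi)}U\|^2_{\xi,\gamma}dt$. As $|\xi|\to0$ this tends to $2\pi\bigl(\frac{4}{3k}+\frac{3-k}{3(k^2+3)}\bigr)\|U\|^2_{\xi,\gamma}$ with $k=2x+1$. Whenever $x\le-\frac14$, this is strictly larger than your $C\|U\|^2_{\xi,\gamma}=\frac{2\pi}{k}\|U\|^2_{\xi,\gamma}$.

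The inequality \eqref{da18} itself survives if the single non-normal block is handled separately. The subspaces $\mathrm{span}\{\tilde\beta_3\}$, $\mathrm{span}\{\tilde\beta_0\}$, $\mathrm{span}\{\tilde\beta_2\}$ and $E=\{(0,\rho,c\,\omega):\rho,c\in\C\}$ are $G_3(\xi)$-invariant and mutually orthogonal in $(\cdot,\cdot)_{\xi,\gamma}$; for instance $(\tilde\beta_3,(0,\rho,c\,\omega))_{\xi,\gamma}=\gamma\bar\rho-\gamma\bar\rho=0$. So your computation is valid on the first three.

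On $E$, use the coordinates $(\sqrt{\gamma+|\xi|^{-2}}\,\rho,\,c)$. There the norm is Euclidean and $G_3(\xi)$ acts as $\begin{pmatrix}0&-ia\\-ia&-b\end{pmatrix}$ with $a=\sqrt{1+\gamma|\xi|^2}$, $b=1+|\xi|^2$. Hence $(\lambda-G_3(\xi))^{-1}|_E=\frac{1}{(\lambda-\tilde\eta_1)(\lambda-\tilde\eta_{-1})}\begin{pmatrix}\lambda+b&-ia\\-ia&\lambda\end{pmatrix}$. For bounded $|\xi|$, use $x-\re\tilde\eta_{\pm1}\ge x+\frac12$ and $|\lambda-\tilde\eta_{\pm1}|\ge|y|-|\im\tilde\eta_{\pm1}|$. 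For large $|\xi|$, use $|\lambda-\tilde\eta_1|\gtrsim1+|y|$ and $|\lambda-\tilde\eta_{-1}|\gtrsim|\xi|^2+|y|$. Then each entry is square integrable in $y$ uniformly in $\xi\neq0$.

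Alternatively, you may keep the bilinear expansion of Lemma \ref{srlem6} and carry the factor $\|\tilde\beta_{\pm1}\|^2_{\xi,\gamma}/|(\tilde\beta_{\pm1},\overline{\tilde\beta_{\pm1}})_{\xi,\gamma}|=\frac{1+\gamma|\xi|^2+|\tilde\eta_{\pm1}|^2}{|\tilde\eta_{\pm1}|\,|\tilde\eta_1-\tilde\eta_{-1}|}$. This factor is bounded for $|\xi|\le r_0$, which is all that Theorem \ref{orlem5} uses, but it blows up as $|\xi|\to\zeta(\gamma)$.
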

\begin{proof}
Note that $G_3(\xi)$ is a densely defined closed operator on $N_0\times \C \times \C^3$,  and both $G_3(\xi)$ and its adjoint operator $G_3(\xi)^*=G_3(-\xi)$ are dissipative on $N_0\times \C \times \C^3$ satisfying
$$
\re(G_3(\xi)U,U)_{\xi,\gamma}=\re(G_3(\xi)^*U,U)_{\xi,\gamma}\le0.
$$
Thus,  $G_3(\xi)$ generates a strongly continuous contraction semigroup on $N_0\times \C \times \C^3$ satisfying \eqref{da17}. By  Lemma \ref{srlem8}, we have for $U\in N_0\times \C \times \C^3$,
\begin{align*}
&\int_{-\infty}^{\infty}\|(\lambda -G_3(\xi))^{-1} U\|_{\xi,\gamma}^2dy=\int_{-\infty}^{\infty}\sum\limits_{j=-1}^3\left\|\frac{1}{\lambda-\tilde{\eta}_j}(U,\tilde{\beta}_j)_{\xi,\gamma}^2\tilde{\beta}_j\right\|_{\xi,\gamma}^2dy
\\
&\le\|U\|_{\xi,\gamma}^2\max_{-1\le j\le 3}\int_{-\infty}^{\infty}\frac{1}{|\lambda-\tilde{\eta}_j|^2}dy=\|U\|_{\xi,\gamma}^2\max_{-1\le j\le 3}\int_{-\infty}^{\infty}\frac{1}{\left(x-\re\tilde{\eta}_j\right)^2+\left(y-\im\tilde{\eta}_j\right)^2}dy
\\
&=\|U\|_{\xi,\gamma}^2\max_{-1\le j\le 3}\frac{\pi}{|x-\re\tilde{\eta}_j|}\le C\|U\|_{\xi,\gamma}^2.
\end{align*}
The proof of the lemma is completed.
\end{proof}

\begin{lem}\label{orlem1}
The operator $G_4(\xi)$ defined by \eqref{g3g4} generates a strongly continuous contraction semigroup on $N^{\perp}_0\times\{0\}\times\{0\}$ for any fixed $\xi\in\mathbb{R}^3$, which satisfies for any $t>0$ and $U=(f,0,0)$ with $f\in N^{\perp}_0 $ that
\be\label{da1}
\|e^{tG_4(\xi)}U\|\le e^{-t}\|U\|.
\ee
In addition, for any $\lambda=x+iy $ with $x>-1$ and $U=(f,0,0)$ with $f\in N^{\perp}_0 $, it holds that
\be\label{da2}
\int_{-\infty}^{+\infty}\|[(x+iy)-G_4(\xi)]^{-1}U\|^2dy\le\pi(x+1)^{-1}\|U\|^2.
\ee
\end{lem}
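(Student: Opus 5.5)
The plan is to argue exactly as for Lemma \ref{orlem8}, now on the microscopic subspace, where the dissipation is stronger because $L$ has a spectral gap equal to $1$ on $N_0^\perp$. Write $Q(\xi)=L-iP_r(v\cdot\xi)P_r$, so that for $U=(f,0,0)$ with $f\in N_0^\perp$ we have $G_4(\xi)U=(Q(\xi)f,0,0)$. It therefore suffices to work with $Q(\xi)$ on the Hilbert space $N_0^\perp\subset L^2(\R^3_v)$ with the usual norm. Note that $N_0^\perp\cap D(L)$ is dense in $N_0^\perp$. Also, $Q(\xi)$ maps $N_0^\perp$ into itself: $L$ commutes with $P_d$ because $N_0$ is spanned by an eigenvector of the self-adjoint operator $L$, and $P_r(v\cdot\xi)P_r$ lands in $N_0^\perp$ by construction.

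\textbf{Step 1 (dissipativity with rate).} For $f\in N_0^\perp\cap D(L)$, the operator $P_r(v\cdot\xi)P_r$ is symmetric, so $-i(P_r(v\cdot\xi)P_rf,f)$ is purely imaginary. Hence
$$\re(Q(\xi)f,f)=(Lf,f)\le-\|P_rf\|^2=-\|f\|^2,$$
using the coercivity $(Lf,f)\le-\|P_rf\|^2$ and $P_rf=f$. The adjoint is $Q(\xi)^*=Q(-\xi)$, and it satisfies the same bound. Thus $Q(\xi)+I$ and its adjoint are dissipative, and $Q(\xi)$ is closed and densely defined. Lemma \ref{S_1-1} then shows that $Q(\xi)+I$ generates a $C_0$ contraction semigroup, i.e. $\|e^{tQ(\xi)}f\|\le e^{-t}\|f\|$, which is \eqref{da1}. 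Equivalently, one can differentiate $\|e^{tQ}f\|^2$ and apply Gronwall.

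\textbf{Step 2 (the $L^2$-in-$y$ resolvent bound).} For $x>-1$, the Laplace transform representation
$$(\lambda-Q(\xi))^{-1}f=\int_0^\infty e^{-\lambda t}e^{tQ(\xi)}f\,dt$$
converges absolutely by Step 1, since $|e^{-\lambda t}|\,\|e^{tQ}f\|\le e^{-(x+1)t}\|f\|$. Viewed as a function of $y$, it is the Fourier transform of $t\mapsto \mathbf 1_{t>0}e^{-xt}e^{tQ(\xi)}f$, which takes values in the Hilbert space $N_0^\perp$. Plancherel's theorem for Hilbert-space-valued functions then gives
$$\int_{-\infty}^{\infty}\|(x+iy-Q(\xi))^{-1}f\|^2dy=2\pi\int_0^\infty e^{-2xt}\|e^{tQ(\xi)}f\|^2dt\le 2\pi\int_0^\infty e^{-2(x+1)t}dt\,\|f\|^2=\pi(x+1)^{-1}\|f\|^2,$$
which is \eqref{da2}.

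The only delicate point is Step 2: the factor $2\pi$ from Plancherel must combine with $\frac{1}{2(x+1)}$ to produce exactly $\pi(x+1)^{-1}$, and Plancherel must be justified in the vector-valued setting. The vector-valued case follows from the scalar case by expanding in an orthonormal basis of $N_0^\perp$ and summing. Everything else is a direct consequence of the coercivity of $L$.
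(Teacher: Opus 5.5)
Your proposal is correct and follows the paper's proof essentially step for step. You reduce to $Q(\xi)=L-iP_r(v\cdot\xi)P_r$ on $N_0^\perp$ and use dissipativity of $Q(\xi)$ and $Q(\xi)^*=Q(-\xi)$ with $\re(Q(\xi)f,f)=(Lf,f)\le-\|f\|^2$ to get \eqref{da1}. You then write the resolvent as the Laplace transform of the semigroup and apply Plancherel in $t$ to get \eqref{da2} with the same constant $\pi(x+1)^{-1}$. Your explicit passage through $Q(\xi)+I$ spells out the step from dissipativity to the rate $e^{-t}$, which the paper leaves implicit.
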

\begin{proof}
Note that for $U=(f,0,0)\in N^{\perp}_0\times\{0\}\times\{0\}$,
$$
e^{tG_4(\xi)}U=(e^{tQ(\xi)}f,0,0)^T.
$$
Note that $Q(\xi)=L-iP_r(v\cdot\xi)P_r$ is a densely defined closed operator on $N^{\perp}_0$, and both $Q(\xi)$ and its adjoint operator $Q(\xi)^*=Q_1(-\xi)$ are dissipative on $N^{\perp}_0$ satisfying
$$
\re(Q(\xi)f,f)=\re(Q(\xi)^*U,U)=\re(Lf,f)\le-\|f\|^2,\quad f\in N_0^\perp.
$$
Thus,  $G_4(\xi)$ generates a strongly continuous contraction semigroup on $N^{\perp}_0$ satisfying \eqref{da1}. Moreover, we have for $U=(f,0,0)\in N^{\perp}_0\times\{0\}\times\{0\}$ that
\bmas
[\lambda-G_4(\xi)]^{-1}U&=\int_0^\infty e^{-\lambda t}(e^{tQ(\xi)}f,0,0)^Tdt
\\
&=\frac1{\sqrt{2\pi}}\int_{-\infty}^{+\infty} e^{-iyt}\(\sqrt{2\pi}1_{t\ge0}e^{-x t}e^{tG_4(\xi)}U\)dt,\quad x>-1 ,
\emas
where the right hand side is the Fourier transform of the function $\sqrt{2\pi}1_{t\ge0}e^{-x t}e^{tG_4(\xi)}U$ with respect to $t$. By using the Plancherel's equality, we have for $U=(f,0,0)$ with $f\in N^{\perp}_0\cap L^2(\mathbb{R}^3_v)$ that
\bmas
&\int_{-\infty}^{+\infty} \|[(x+iy)-G_4(\xi)]^{-1}U\|^2dy=\int_{-\infty}^{+\infty}\|\sqrt{2\pi}1_{t\ge0}e^{-x t}e^{tG_4(\xi)}U\|^2dy
\\
&=2\pi\int_{-\infty}^{+\infty}e^{-2xt}\|e^{tG_4(\xi)}U\|^2dt\le2\pi\int_0^\infty e^{-2(x+1)t}dt\|U\|^2\le\pi(x+1)^{-1}\|U\|^2,
\emas
which proves \eqref{da2}. The proof of the lemma is completed.
\end{proof}

\begin{lem}\label{orlem2}The operator $G_1(\xi)$ defined by \eqref{sra2} generates a strongly continuous contraction semigroup on $L^2(\R^3_v)\times\C \times\C^3$  for any fixed $|\xi|\ge r_0 >0$, which satisfies for any  $t>0$ and $U\in L^2(\R^3_v)\times\C \times\C^3$ that
\be\label{da20}
\|e^{tG_1(\xi)}U\|\le Ce^{-\frac12t}\|U\|,
\ee
where $C>1$ is a constant depending on $r_0$. For any $x>-\frac12$ and $U\in L^2(\R^3_v)\times\C \times\C^3$, we obtain
\be\label{dat21}
\int_{-\infty}^{+\infty}\|[(x+iy)-G_1(\xi)]^{-1}U\|^2dy\le C\|U\|^2.
\ee
\end{lem}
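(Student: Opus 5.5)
G_1(\xi) is block diagonal, $\mathrm{diag}(A(\xi),B_1(\xi))$, so I plan to treat the two blocks separately and then add the estimates.

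\textbf{The kinetic block.} By the computation in Lemma \ref{srlem2}, $\re(A(\xi)f,f)=-(Af,f)\le-\frac32\|f\|^2$, and the same bound holds for the adjoint $A(-\xi)$. Both are dissipative and densely defined closed operators, so Lemma \ref{S_1-1} gives a contraction semigroup with $\|e^{tA(\xi)}f\|\le e^{-\frac32t}\|f\|$. For the resolvent I will copy the Plancherel argument of Lemma \ref{orlem1}. For $x>-\frac32$, $(x+iy-A(\xi))^{-1}f$ is the time-Fourier transform of $1_{t\ge0}e^{-xt}e^{tA(\xi)}f$. Hence
$$\int_{\R}\|(x+iy-A(\xi))^{-1}f\|^2dy\le 2\pi\int_0^\infty e^{-2(x+\frac32)t}dt\,\|f\|^2=\pi(x+\tfrac32)^{-1}\|f\|^2,$$
which is bounded uniformly for $x>-\frac12$.

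\textbf{The fluid block.} $B_1(\xi)$ is a $4\times4$ matrix, so it trivially generates a semigroup. By Lemma \ref{orlem3}, its eigenvectors $\beta_j$ are orthogonal in $\mathcal{C}_{\xi,\gamma}$, so $B_1(\xi)$ is normal there. It follows that
$$\|e^{tB_1(\xi)}X\|_{\mathcal{C}_{\xi,\gamma}}\le \max_j e^{t\re\eta_j}\|X\|_{\mathcal{C}_{\xi,\gamma}}.$$
Next I check $\re\eta_j\le-\frac12$ for all $\xi$. For $\eta_0=\eta_2=-(1+|\xi|^2)$ this is clear. When $\eta_{\pm1}$ are complex, $\re\eta_{\pm1}=-\frac12(1+|\xi|^2)\le-\frac12$. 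When they are real, $\eta_{-1}\le\eta_1$, and $\eta_1$ is increasing with limit $-\gamma\le-1$. So $\sup_j\re\eta_j\le-\frac12$, and in fact it is strictly below $-\frac12$ for $|\xi|\ge r_0$.

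\textbf{Passing to the Euclidean norm.} The weighted norm satisfies $\gamma|\rho|^2+|u|^2\le\|X\|^2_{\mathcal{C}_{\xi,\gamma}}\le(\gamma+r_0^{-2})|\rho|^2+|u|^2$ for $|\xi|\ge r_0$. So the two norms are equivalent with constants depending only on $r_0$ and $\gamma$, and this produces the constant $C>1$ in \eqref{da20}. Combining the two blocks gives \eqref{da20}.

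\textbf{The resolvent bound \eqref{dat21}.} The orthogonal expansion in Lemma \ref{srlem6} gives
$$\int\|(\lambda-B_1)^{-1}X\|^2_{\mathcal{C}_{\xi,\gamma}}dy\le\max_j\frac{\pi}{|x-\re\eta_j|}\|X\|^2_{\mathcal{C}_{\xi,\gamma}}.$$
This is finite for $x>-\frac12$, and bounded uniformly in $\xi$ as long as $x-\re\eta_j$ stays away from $0$. If that uniformity near $x=-\frac12$ becomes delicate, I will apply Plancherel directly to the semigroup bound, which has rate strictly less than $-\frac12$ for $|\xi|\ge r_0$. Converting back to the Euclidean norm and adding the kinetic block yields \eqref{dat21}.

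\textbf{Main obstacle.} The step needing care is the uniformity in $\xi$ of the eigenvalue bound $\re\eta_j\le-\frac12-\epsilon(r_0)$ together with the norm equivalence constant. Both are secured by the explicit formulas and the monotonicity in Lemma \ref{orlem3}(2).
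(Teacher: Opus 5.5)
Your route is the paper's route. You split $G_1(\xi)=\mathrm{diag}(A(\xi),B_1(\xi))$, treat $A(\xi)$ by dissipativity and Plancherel, expand $e^{tB_1(\xi)}$ in the eigenvectors $\beta_j$ of Lemma~\ref{orlem3}, and use the equivalence of $\|\cdot\|_{\mathcal{C}_{\xi,\gamma}}$ and $\|\cdot\|$ for $|\xi|\ge r_0$. The kinetic block, your location of the $\eta_j$ (indeed $\max_j\re\eta_j\le-\frac12-\epsilon$ with $\epsilon=\frac12\min\{r_0^2,1\}$), and the norm equivalence are all correct.

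The step that fails is ``the $\beta_j$ are orthogonal in $\mathcal{C}_{\xi,\gamma}$, so $B_1(\xi)$ is normal there.'' The only orthogonality that holds, and the way the relation in Lemma~\ref{orlem3} has to be read, is the bilinear one of Lemma~\ref{srlem6}: $(\beta_1,\overline{\beta_{-1}})_{\mathcal{C}_{\xi,\gamma}}=-(1+\gamma|\xi|^2)+\eta_1\eta_{-1}=0$. The Hermitian product $(\beta_1,\beta_{-1})_{\mathcal{C}_{\xi,\gamma}}=1+\gamma|\xi|^2+\eta_1\overline{\eta_{-1}}$ instead equals $2(1+\gamma|\xi|^2)$ when $\eta_{\pm1}$ are real. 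The $\mathcal{C}_{\xi,\gamma}$-adjoint of $B_1(\xi)$ is $B_1(-\xi)$, and $B_1(\xi)B_1(-\xi)-B_1(-\xi)B_1(\xi)$ has nonzero off-diagonal blocks (multiples of $(1+|\xi|^2)\xi$). So $B_1(\xi)$ is never normal. At $|\xi|=\zeta(\gamma)$ it is not even diagonalizable, and $(\beta_{\pm1},\overline{\beta_{\pm1}})_{\mathcal{C}_{\xi,\gamma}}=\eta_{\pm1}(\eta_{\pm1}-\eta_{\mp1})\to0$ there, so the spectral projections blow up.

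In fact your inequality $\|e^{tB_1}X\|_{\mathcal{C}_{\xi,\gamma}}\le\max_je^{t\re\eta_j}\|X\|_{\mathcal{C}_{\xi,\gamma}}$ fails for every $\xi\neq0$. For $X=(\rho,u)=(1,0)$ one has $\frac{d}{dt}\|e^{tB_1}X\|^2_{\mathcal{C}_{\xi,\gamma}}|_{t=0}=2\re(B_1X,X)_{\mathcal{C}_{\xi,\gamma}}=-2(1+|\xi|^2)|u|^2=0$. Hence $\|e^{tB_1}X\|^2_{\mathcal{C}_{\xi,\gamma}}=\|X\|^2_{\mathcal{C}_{\xi,\gamma}}+O(t^2)$, which exceeds $e^{-t}\|X\|^2_{\mathcal{C}_{\xi,\gamma}}$ for small $t>0$. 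Your orthogonal-expansion bound for $\int\|(\lambda-B_1)^{-1}X\|^2_{\mathcal{C}_{\xi,\gamma}}dy$ rests on the same Pythagoras identity. The paper's display writing $\|e^{tB_1}X\|^2_{\mathcal{C}_{\xi,\gamma}}$ as $\sum_j\|e^{\eta_jt}(X,\beta_j)_{\mathcal{C}_{\xi,\gamma}}\beta_j\|^2_{\mathcal{C}_{\xi,\gamma}}$ makes the same implicit assumption, so following it does not close the gap.

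What is really needed is $\|e^{tB_1(\xi)}\|_{\mathcal{C}_{\xi,\gamma}}\le Ce^{-(\frac12+\frac\epsilon2)t}$ with $C$ uniform in $|\xi|\ge r_0$. The obstacle is the conditioning of the eigenbasis, which degenerates at $|\xi|=\zeta(\gamma)$ and must be controlled as $|\xi|\to\infty$. It is not the eigenvalue location you flag. One way to get the bound:
\begin{itemize}
\item $B_1(\xi)$ acts on $u-\omega(\omega\cdot u)$ as $-(1+|\xi|^2)$, so that part is harmless.
\item Use the coordinates $(\sqrt{\gamma+|\xi|^{-2}}\,\rho,\,\omega\cdot u)$, in which the $\mathcal{C}_{\xi,\gamma}$-norm becomes Euclidean. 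There $B_1$ is $-\frac b2I+N$ with $b=1+|\xi|^2$, $c=\sqrt{1+\gamma|\xi|^2}$, $N=\begin{pmatrix}b/2&-ic\\-ic&-b/2\end{pmatrix}$. Then $N^2=\mu^2I$ with $\mu=\frac12\sqrt{|\xi|^4+(2-4\gamma)|\xi|^2-3}$, and $\|N\|=\frac b2+c$.
\item This block of $e^{tB_1}$ therefore equals $e^{-bt/2}[\cosh(\mu t)I+\mu^{-1}\sinh(\mu t)N]$, of norm at most $(1+t\|N\|)e^{t\max\re\eta_{\pm1}}$.
\item On $r_0\le|\xi|\le R$ the factor $1+t\|N\|$ is absorbed by the margin $\epsilon$.
\item For $|\xi|\ge R$, write the block as $e^{\eta_1t}P_++e^{\eta_{-1}t}P_-$ with $P_\pm=\frac12(I\pm N/\mu)$. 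These are uniformly bounded since $\|N\|/\mu=\sqrt{(b/2+c)/(b/2-c)}\to1$.
\end{itemize}
With that bound, your Plancherel fallback yields \eqref{dat21} uniformly in $x>-\frac12$, and the norm equivalence yields \eqref{da20}.
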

\begin{proof}
Note that for $U=(f,X)\in L^2(\R^3_v)\times\C^4$,
$$
e^{tG_1(\xi)}U=(e^{tA(\xi)}f,e^{tB_1(\xi)}X)^T.
$$
The operator $A(\xi)=A-i(v\cdot\xi)$ generates a strongly continuous contraction semigroup on $L^2(\mathbb{R}^3_v)$, which satisfies for any  $t>0$ and $f\in L^2(\mathbb{R}^3_v)$ that
\be\label{da3}
\|e^{tA(\xi)}f\|\le e^{-\frac32t}\|f\|.
\ee
In addition, for any $x>-\frac32$ and $f\in L^2(\R^3_v)$, we can prove by the same way as \eqref{da2} that
\be\label{da4}
\int_{-\infty}^{+\infty}\|[(x+iy)-A(\xi)]^{-1}f\|^2dy\le\pi\(x+\frac32\)^{-1}\|f\|^2.
\ee
By  \eqref{sraa16}, we obtain for any $X\in  \C^4$,
\be\ball\label{dat17}
\|e^{tB_1(\xi)}X\|^2 &\le\|e^{tB_1(\xi)}X\|^2_{\mathcal{C}_{\xi,\gamma}}=\sum\limits_{j=-1}^2\left\|e^{\eta_jt}(X,\beta_j)_{\mathcal{C}_{\xi,\gamma}}\beta_j\right\|^2_{\mathcal{C}_{\xi,\gamma}}
\\
&\le Ce^{- t}\|X\|^2_{\mathcal{C}_{\xi,\gamma}}\le Ce^{- t}\|X\|^2 ,\quad C>1.
\eall\ee
By  \eqref{sraa2}, we obtain for $\lambda=x+iy$ with $x>-\frac12$ that
\bma
&\nnm\int_{-\infty}^{\infty}\|(\lambda-B_1(\xi))^{-1}X\|^2dy\le\int_{-\infty}^{\infty}\sum\limits_{j=-1}^2\left\|\frac{1}{\lambda-\eta_j}(X,\beta_j)_{\mathcal{C}_{\xi,\gamma}}\beta_j\right\|_{\mathcal{C}_{\xi,\gamma}}^2dy
\\
&\label{da19}\le\|X\|_{\mathcal{C}_{\xi,\gamma}}^2\max_{-1\le j\le 2}\frac{\pi}{|x-\re\eta_j|}\le C\|X\|_{\mathcal{C}_{\xi,\gamma}}^2\le C\|X\|^2.
\ema
By \eqref{da3} and \eqref{dat17}, we obtain
\bmas
\|e^{tG_1(\xi)}U\|&\le\|e^{tA(\xi)}f\|+\|e^{tB_1(\xi)}X\|  \le Ce^{-\frac12t}\|U\|,\quad C>1.
\emas
Combining with \eqref{da4}, \eqref{da19} and
\bmas
&\int_{-\infty}^{+\infty}\|[(x+iy)-G_1(\xi)]^{-1}U\|^2dy\\
&\le\int_{-\infty}^{+\infty}\|[(x+iy)-A(\xi)]^{-1}f\|^2+\|[(x+iy)-B_1(\xi)]^{-1}X\|^2dy,
\emas
we prove \eqref{dat21} for $x>-\frac12$. The proof of the lemma is completed.
\end{proof}

By virtue of Lemmas \ref{orlem8}--\ref{orlem2}, we have the decomposition of the semigroup $S(t,\xi)=e^{t\mathbbm{M}(\xi)}$ generated by $\mathbbm{M}(\xi)$ as follows.

\begin{thm}\label{orlem5}
The semigroup $S(t,\xi)=e^{t\mathbbm{M}(\xi)}$ with $s=|\xi|\neq0$ and $\omega=\xi/|\xi|$ has the following decomposition
\be\label{dat1}
S(t,\xi)U=S_1(t,\xi)U+S_2(t,\xi)U,\quad U\in Z_{\xi,\gamma},~t>0,
\ee
where
\be\label{dat2}
S_1(t,\xi)U=\sum\limits_{j=-1}^2e^{t\lambda_j(|\xi|)}\(U,\overline{\Psi_j(s,\omega)}\)_{\xi,\gamma}\Psi_j(s,\omega)
1_{\{|\xi|\le r_0\}},
\ee
with $(\lambda_j(s),\Psi_j(s,\omega))$ being the eigenvalue and eigenvector of the operator $\mathbbm{M}(\xi)$ given by Theorem \ref{srlem13} for $|\xi|\le r_0$,  and $S_2(t,\xi)=:S(t,\xi)-S_1(t,\xi)$ satisfies for $\kappa_0>0$ independent of $\xi$  that
\be\label{dat3}
\|S_2(t,\xi)U\|_{\xi,\gamma}\le Ce^{-\kappa_0t}\|U\|_{\xi,\gamma},\quad t>0.
\ee
\end{thm}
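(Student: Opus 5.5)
We represent the semigroup through the inverse Laplace transform of the resolvent and shift the contour into the left half-plane, as is standard for the VPB/VPFP spectrum analyses. For $U\in D(\M(\xi)^2)$ (then by density for all $U\in Z_{\xi,\gamma}$), Lemma \ref{srlem1} and Lemma \ref{srlem9} give
\[
S(t,\xi)U=\frac{1}{2\pi i}\int_{\kappa-i\infty}^{\kappa+i\infty}e^{\lambda t}(\lambda-\M(\xi))^{-1}U\,d\lambda,\qquad \kappa>0 .
\]
We treat low frequency $|\xi|\le r_0$ and high frequency $|\xi|\ge r_0$ separately.

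\emph{Low frequency.} By Theorem \ref{srlem13} and Lemma \ref{srlem9}, in the strip $-\kappa_1\le\re\lambda\le\kappa$ (with a fixed $\kappa_1\in(0,1/2)$) the only singularities of the resolvent are the eigenvalues $\lambda_j(s)$, $j=-1,0,1,2$. These lie near $0$ when $r_0$ is small, and the resolvent is uniformly bounded for $|\im\lambda|\ge y_2$. We shift the contour to $\re\lambda=-\kappa_1$. The residues at $\lambda_j(s)$ are the spectral projections. Since the $\Psi_j$ are biorthonormal with respect to $(\cdot,\overline{\cdot})_{\xi,\gamma}$ and $\M(\xi)^*=\M(-\xi)$ has eigenvectors $\overline{\Psi_j}$, these projections equal $(U,\overline{\Psi_j})_{\xi,\gamma}\Psi_j$. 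This produces $S_1$. For $s\neq0$ small the eigenvalues are simple, except for the double $\lambda_0=\lambda_2$, which is semisimple with two independent eigenvectors, so no Jordan terms appear.

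The horizontal segments vanish as $|\im\lambda|\to\infty$. This uses the factorization \eqref{sraa9} together with $\|Y_1\|_{\xi,\gamma}\le 1/2$ for large $|\im\lambda|$ and the decay $(1+|\lambda|)^{-1}$ from \eqref{sraa14}. It remains to bound the vertical integral on $\re\lambda=-\kappa_1$ by $Ce^{-\kappa_1 t}\|U\|_{\xi,\gamma}$. For this we expand
\[
(\lambda-\M)^{-1}=Z+Z Y_1 Z+Z Y_1(I-Y_1)^{-1} Y_1 Z,\qquad Z=(\lambda-G_3)^{-1}P_A+(\lambda-G_4)^{-1}P_B .
\]
The first term integrates to $e^{tG_3}P_A+e^{tG_4}P_B$ restricted appropriately, which is controlled by \eqref{da17} and \eqref{da1}. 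Note that $G_3$ has the eigenvalue $\tilde\eta_3=0$, so this term must be handled via its own residue cancellation, or by using the fact that $Z$ enters only in combination. Alternatively, we avoid this issue by writing $\lambda-\M=(\lambda-G_1)-G_2$ for the tail. The remaining terms contain at least two resolvent factors. We bound them by Cauchy–Schwarz in $y$, using the $L^2_y$ estimates \eqref{da18}, \eqref{da2} (and \eqref{da4}), combined with the uniform boundedness of $(I-Y_1)^{-1}$ on the shifted line away from the $\lambda_j$. The main obstacle is this uniform bound on $\re\lambda=-\kappa_1$ for $|\im\lambda|\le y_2$. We obtain it by compactness: the resolvent is analytic and continuous in $(\lambda,\xi)$ on the compact set $\{\re\lambda=-\kappa_1,|\im\lambda|\le y_2\}\times\{|\xi|\le r_0\}$, which contains no spectrum. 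Near $\xi=0$ one must check that the weighted norm $\|\cdot\|_{\xi,\gamma}$ keeps this uniform. We handle that by working with the reduced system \eqref{lfae3}, whose determinants $D_0$ and $D_1$ have no zeros on that segment, again by Theorem \ref{srlem13}.

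\emph{High frequency.} For $|\xi|\ge r_0$, Lemma \ref{srlem5} gives $\sigma(\M(\xi))\subset\{\re\lambda<-\beta\}$, so $S_1=0$ and $S_2=S$. We shift the contour to $\re\lambda=-\kappa_2$ with $\kappa_2<\min(\beta,1/2)$. Using $\lambda-\M=(I-G_2(\lambda-G_1)^{-1})(\lambda-G_1)$ and the Neumann expansion
\[
(\lambda-\M)^{-1}=(\lambda-G_1)^{-1}+(\lambda-G_1)^{-1}G_2(\lambda-G_1)^{-1}+(\lambda-G_1)^{-1}\bigl[G_2(\lambda-G_1)^{-1}\bigr]^2(I-G_2(\lambda-G_1)^{-1})^{-1},
\]
the first term gives $e^{tG_1}$, which is bounded by \eqref{da20}. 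The other terms are bounded through \eqref{dat21} and Cauchy–Schwarz. For large $|\xi|$ or large $|\im\lambda|$ we use the smallness \eqref{sra8}. On the compact remainder $r_0\le|\xi|\le r_1$, $|\im\lambda|\le y_1$ we use continuity and the absence of spectrum. Finally, the weighted norm $\|\cdot\|_{\xi,\gamma}$ and the plain norm are equivalent for $|\xi|\ge r_0$, which converts the estimate into \eqref{dat3}. Taking $\kappa_0=\min(\kappa_1,\kappa_2)$ completes the plan.
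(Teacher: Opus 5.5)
Your architecture matches the paper's. You use Laplace inversion on $D(\M(\xi)^2)$ and shift the contour past the four eigenvalues $\lambda_j(s)$, whose residues give $S_1$. You split with $G_3/G_4/G_5$ for $|\xi|\le r_0$ and $G_1/G_2$ for $|\xi|\ge r_0$. You apply Cauchy--Schwarz in $\im\lambda$ with \eqref{da18}, \eqref{da2}, \eqref{dat21}, and you need uniform bounds on the inverted operators on the shifted lines (Lemma~\ref{orlem4}).

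The step that fails as written is the leading low-frequency term. On $\re\lambda=-\kappa_1$, the integral of $e^{\lambda t}(\lambda-G_3(\xi))^{-1}P_AU$ is not $e^{tG_3(\xi)}P_AU$. Even if it were, \eqref{da17} gives only boundedness, since $e^{tG_3(\xi)}\tilde\beta_3=\tilde\beta_3$. You flag $\tilde\eta_3=0$ but leave it unresolved.

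Your fallback via $(\lambda-G_1)-G_2$ does not help, for two reasons. The zero mode concerns the whole line integral, not its tail. Also, for small $|\xi|$ that splitting puts $i\xi|\xi|^{-2}\tau_af$ in $G_2$ and $-i\xi|\xi|^{-2}\rho$ in $G_1$, so $G_2(\lambda-G_1)^{-1}$ is of size $|\xi|^{-1}$. This is why the paper uses $G_1,G_2$ only for $|\xi|\ge r_0$.

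The fix is the paper's \eqref{dat9}. The operator $(\lambda-G_3(\xi))^{-1}P_A$ is an explicit finite-rank rational function of $\lambda$. Its integral over $\re\lambda=-\kappa_1$ is therefore the sum of residues at the poles to the left, namely $\tilde\eta_j$ for $-1\le j\le2$, all with real part $\le-\frac12$. The $\tilde\beta_3$-mode never appears, and the bound is $Ce^{-t/2}\|U\|_{\xi,\gamma}$.

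No extra residue cancellation is needed in the shift itself. The paper first peels off $e^{tG_4(\xi)}$ on the original line $\re\lambda=\kappa>0$. It then shifts $Z_2=(\lambda-\M(\xi))^{-1}-(\lambda-G_4(\xi))^{-1}P_B$, in which the pole of $(\lambda-G_3)^{-1}P_A$ at $0$ is already cancelled, so only the $\lambda_j(s)$ contribute.

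There are two smaller points.

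First, your expansion $Z+ZY_1Z+ZY_1(I-Y_1)^{-1}Y_1Z$ is not an identity. The correct one is $(\lambda-\M(\xi))^{-1}=Z+Z[I-Y_1]^{-1}G_5(\xi)Z$, which already has a resolvent at each end. Likewise, at high frequency the remainder is $(\lambda-G_1)^{-1}[I-Y_2]^{-1}G_2(\lambda-G_1)^{-1}$. So \eqref{sraa5}, \eqref{sraa15} and Cauchy--Schwarz apply directly, as in \eqref{dat10} and \eqref{dat18}, and no second-order Neumann expansion is needed.

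Second, compactness near $\xi=0$ needs more than you state. Both $\M(\xi)$ and $\mathbbm{A}(s)$ in \eqref{As} have $O(1/s)$ entries, so the family does not extend continuously to $\xi=0$. You must pass to $w=(C_0-\rho)/s$, which is the quantity the weighted norm controls. In the variables $(C_0,w,\omega\cdot u)$ the reduced system is analytic down to $s=0$. There it decouples into $\lambda C_0=0$ and a $2\times2$ block with determinant $\lambda^2+\lambda+1+R_{11}(\lambda,0)=\lambda(\lambda^2+2\lambda+2)/(\lambda+1)$. Together with $D_0(\lambda,0)=\lambda(\lambda+2)/(\lambda+1)$, nothing vanishes on $\re\lambda=-\kappa_1$. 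This is exactly the $s_0=0$ case of the contradiction argument in Lemma~\ref{orlem4}, with the limit $A_3$ of $(C_0^n-\rho_n)/s_n$ and system \eqref{da16}. Theorem~\ref{srlem13} by itself only covers $0<s\le r_0$.
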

\begin{proof}
By Lemma \ref{dense}, it is sufficient to prove \eqref{dat1} for $U\in D(\mathbbm{M}(\xi)^2)$ because the domain $D(\mathbbm{M}(\xi)^2)$ is dense in $Z_{\xi,\gamma}$. By Lemma \ref{semigroup-1}, the semigroup $e^{t\mathbbm{M}(\xi)}$ can be represented by
\be\label{dat4}
e^{t\mathbbm{M}(\xi)}U=\frac{1}{2\pi i}\int_{\kappa_1-i\infty}^{\kappa_1+i\infty}e^{\lambda t}(\lambda-\mathbbm{M}(\xi))^{-1}Ud\lambda,\quad U\in D(\mathbbm{M}(\xi)^2),~\kappa_1>0.
\ee
It remains to analyze the resolvent $(\lambda-\mathbbm{M}(\xi))^{-1}$  in order to obtain the decomposition \eqref{dat1} for the semigroup $e^{t\mathbbm{M}(\xi)}$. By \eqref{sraa9}, we rewrite $(\lambda-\mathbbm{M}(\xi))^{-1}$ for $\lambda\in\rho(\mathbbm{M}(\xi))\cap\{\lambda\in\mathbb{C}\mid\re\lambda\ge-\frac12\}$ and $|\xi|\le r_0$ as
\be\label{dat5}
(\lambda-\mathbbm{M}(\xi))^{-1}=[(\lambda -G_3(\xi))^{-1}P_A+(\lambda -G_4(\xi))^{-1}P_B]+Z_1(\lambda,\xi),
\ee
with
\bmas
Z_1(\lambda,\xi)&=[(\lambda -G_3(\xi))^{-1}P_A+(\lambda -G_4(\xi))^{-1}P_B][I-Y_1(\lambda,\xi)]^{-1}Y_1(\lambda,\xi),
\\
Y_1(\lambda,\xi)&=G_5(\xi)[(\lambda -G_4(\xi))^{-1}P_B+(\lambda -G_3(\xi))^{-1}P_A].
\emas
Substituting \eqref{dat5} into \eqref{dat4}, we have the following decomposition of semigroup $e^{t\mathbbm{M}(\xi)}$
\be\label{dat6}
e^{t\mathbbm{M}(\xi)}U = e^{tG_4(\xi)}U + \frac{1}{2\pi i}\int_{\kappa_1 - i\infty}^{\kappa_1 + i\infty} e^{\lambda t}Z_2(\lambda,\xi)U d\lambda,\quad |\xi| \leq r_0,
\ee
with
$$
Z_2(\lambda,\xi)=Z_1(\lambda,\xi)+(\lambda -G_3(\xi))^{-1}P_A.
$$
In order to estimate the last term of the right-hand side of \eqref{dat6}, let us denote
\be\label{dat7}
\mathcal{U}_{\kappa_1,N}U=\frac{1}{2\pi i}\int_{\kappa_1 - iN}^{\kappa_1 + iN} e^{\lambda t}Z_2(\lambda,\xi)U d\lambda,
\ee
where the constant $N>0$ is chosen large enough so that $N>y_2$ with $y_2$ defined in Lemma \ref{srlem9}. Since $Z_2(\lambda,\xi)=(\lambda-\mathbbm{M}(\xi))^{-1}-(\lambda -G_4(\xi))^{-1}P_B$ is analytic on the domain $\re\lambda\ge -\frac12 $ with finite singularities at $\lambda=\lambda_j(s)\in\sigma(\mathbbm{M}(\xi)$ for $-1\le j\le2$, we can shift \eqref{dat7} from the line $\re\lambda=\kappa_1>0$ to $\re\lambda= -\kappa_2\in (-\frac12,-r_0^2) $. Due to the Residue Theorem, we obtain
$$
\mathcal{U}_{\kappa_1,N}U=\mathcal{U}_{-\kappa_2,N}U+H_NU+2\pi i\sum\limits_{j=-1}^2\res\big\{e^{\lambda t}Z_2(\lambda,\xi)U;\lambda_j(|\xi|)\big\}1_{\{|\xi|\le r_0\}},
$$
where $\res\{f(\lambda);\lambda\}$ means the residue of $f$ at $\lambda$ and
$$
H_NU=\frac{1}{2\pi i}\left(\int_{-\kappa_2+iN}^{\kappa_1+iN}-\int_{-\kappa_2-iN}^{\kappa_1-iN}\right)e^{\lambda t}Z_2(\lambda,\xi)U1_{\{|\xi|\le r_0\}}d\lambda.
$$
By Lemma \ref{srlem8} and \eqref{sraa8}, it is easy to verify that for $\lambda=x+iN$,
\bmas
\|H_NU\|_{\xi,\gamma}
&\le C\int_{-\kappa_2}^{\kappa_1}\|Z_2(\lambda,\xi)U\|_{\xi,\gamma}dx
\\
&\le C(1+r_0)^2\|U\|_{\xi,\gamma}\int_{-\kappa_2}^{\kappa_1}e^{xt}\(\max\limits_{-1\le j\le3}\frac{1}{|\lambda-\tilde{\eta}_j(|\xi|)|}+\frac{1}{1+|\lambda|}\)dx
\\
&\le C(1+r_0)^2e^{\kappa_1 t}(1+N)^{-1}\|U\|_{\xi,\gamma}\to0,~~N\to\infty .
\emas
Now, let us estimate $\mathcal{U}_{-\kappa_2,N}U$. By  Lemma \ref{srlem8}, we have
\bma\label{dat9}
\lim\limits_{N\to\infty}\bigg\|\int_{-\kappa_2-iN}^{-\kappa_2+iN}e^{\lambda t}(\lambda -G_3(\xi))^{-1}P_AUd\lambda\bigg\|_{\xi,\gamma}
=\bigg\|-2\pi i\sum\limits_{j=-1}^2e^{\tilde{\eta}_jt}(U,\tilde{\beta}_j)_{\xi,\gamma}\tilde{\beta}_j\bigg\|_{\xi,\gamma}\le Ce^{-\frac12t}\|U\|_{\xi,\gamma}.
\ema
By Lemma \ref{orlem4}, it holds $\sup\limits_{0<|\xi|<r_0,y\in\mathbb{R}}\|[I-Y_1(-\kappa_2+iy,\xi)]^{-1}\|_{\xi,\gamma}\le C$. Thus, we have by \eqref{sraa5} and \eqref{sraa15} that for any $U,V\in Z_{\xi,\gamma}$ and $\lambda=-\kappa_2+iy$,
\bma
&\left|(\mathcal{U}_{-\kappa_2,\infty}(t)U,V)_{\xi,\gamma}\right|\le e^{-\kappa_2t}\int_{-\infty}^{+\infty}|(Z_1(\lambda,\xi)U,V)_{\xi,\gamma}|dy\nnm
\\
&\le Ce^{-\kappa_2t}\int_{-\infty}^{+\infty}\(\| (\lambda -G_4(\xi))^{-1}P_BU\| +\| (\lambda -G_3(\xi))^{-1}P_AU\|_{\xi,\gamma}\)\nnm
\\
&\quad\times\(\|(\overline{\lambda} -G_3(-\xi))^{-1}P_AV\|_{\xi,\gamma}+\|(\overline{\lambda} -G_4(-\xi))^{-1}P_BV\|_{\xi,\gamma}\)dy .\label{dat10}
\ema
By \eqref{da18} and \eqref{da2}, we have $|(\mathcal{U}_{-\kappa_2,\infty}(t)U,V)_{\xi,\gamma}|\le Ce^{-\kappa_2t}\|U\|_{\xi,\gamma}\|V\|_{\xi,\gamma}$, and then
\be\label{dat11}
\|\mathcal{U}_{-\kappa_2,\infty}(t)\|_{\xi,\gamma}\le Ce^{-\kappa_2t}.
\ee
Since $\lambda_j(s)\in\rho(G_4(\xi))$ and $Z_2(\lambda,\xi)=(\lambda-\mathbbm{M}(\xi))^{-1}-(\lambda -G_4(\xi))^{-1}P_B$, we can obtain by a similar argument as Theorem 3.4 in \cite{zhongspvpb} that
\bma
\res\{e^{\lambda t}Z_2(\lambda,\xi)U;\lambda_j(s)\}&=\res\{e^{\lambda t}(\lambda-\mathbbm{M}(\xi))^{-1}U;\lambda_j(s)\}\nnm
\\
&=e^{t\lambda_j(s)}\(U,\overline{\Psi_j(s,\omega)}\)_{\xi,\gamma}\Psi_j(s,\omega),~~s\le r_0.\label{dat12}
\ema
Therefore, we conclude from \eqref{dat6}-\eqref{dat12} that
\be\label{dat13}
e^{t\mathbbm{M}(\xi)}U = e^{tG_4(\xi)}U+\mathcal{U}_{-\kappa_2,\infty}(t)U + \sum\limits_{j=-1}^2e^{t\lambda_j(s)}\(U,\overline{\Psi_j(s,\omega)}\)_{\xi,\gamma}\Psi_j(s,\omega),~~|\xi| \leq r_0.
\ee

For $|\xi|>r_0$, in terms of \eqref{sra9}, we have  for $\lambda\in\rho(\mathbbm{M}(\xi))\cap\{\lambda\in\mathbb{C}\mid\re\lambda\ge-\frac12\}$,
\be\label{dat14}
(\lambda-\mathbbm{M}(\xi))^{-1}=(\lambda-G_1(\xi))^{-1}+Z_3(\lambda,\xi),
\ee
with the operator $Z_3(\lambda,\xi)$ defined by
\bmas
Z_3(\lambda,\xi)&=(\lambda-G_1(\xi))^{-1}[I-Y_2(\lambda,\xi)]^{-1}Y_2(\lambda,\xi),
\\
Y_2(\lambda,\xi)&=:G_2(\xi)(\lambda-G_1(\xi))^{-1}.
\emas
Substituting \eqref{dat14} into \eqref{dat4}, we have the following decomposition of   $e^{t\mathbbm{M}(\xi)}$
\be\label{dat15}
e^{t\mathbbm{M}(\xi)}U =e^{tG_1(\xi)}U+\frac{1}{2\pi i}\int_{\kappa_1 - i\infty}^{\kappa_1 + i\infty} e^{\lambda t}Z_3(\lambda,\xi)U d\lambda, \quad |\xi| >r_0.
\ee
Similarly, in order to estimate the last term of the right-hand side of \eqref{dat15}, let us denote
\be\label{dat16}
\mathcal{V}_{\kappa_1,N}U=\frac{1}{2\pi i}\int_{\kappa_1 - iN}^{\kappa_1 + iN} e^{\lambda t}Z_3(\lambda,\xi)U1_{\{|\xi|> r_0\}} d\lambda,
\ee
where the constant $N>0$ is chosen large enough so that $N>y_1$ with $y_1$ defined in Lemma \ref{srlem5}. Since $Z_3(\lambda,\xi)=(\lambda-\mathbbm{M}(\xi))^{-1}-(\lambda-G_1(\xi))^{-1}$ is analytic on the domain $\re\lambda\ge-\beta $ with the constant $\beta >0$ defined in Lemma \ref{srlem5}, we can shift \eqref{dat16} from the line $\re\lambda=\kappa_1>0$ to $\re\lambda=-\beta $ and obtain
$$
\mathcal{V}_{\kappa_1,N}=\mathcal{V}_{-\beta ,N}+I_N,
$$
with
$$
I_NU=\frac{1}{2\pi i}\left(\int_{-\beta +iN}^{\kappa_1+iN}-\int_{-\beta -iN}^{\kappa_1-iN}\right)e^{\lambda t}Z_3(\lambda,\xi)U1_{\{|\xi|> r_0\}}d\lambda.
$$
By Lemma \ref{orlem3} and Lemma \ref{Azhengze}, it is easy to verify that for $\lambda=x+iN,$
\bmas
\|I_NU\| &\le C\int_{-\beta }^{\kappa_1}e^{xt}\|Z_3(\lambda,\xi)U\| dx
\le C\int_{-\beta }^{\kappa_1}e^{xt}\| (\lambda-G_1(\xi))^{-1}U\| dx
\\
&\le C\int_{-\beta }^{\kappa_1}e^{xt}\(\|(\lambda-A(\xi))^{-1}f\|+\|(\lambda-B_1(\xi))^{-1}(\rho,u)\| \)dx
\to0,\quad N\to\infty .
\emas
Now, let us estimate $\mathcal{V}_{-\beta ,N}$. By Lemma \ref{orlem4}, it holds $\sup\limits_{|\xi|\ge r_0,y\in\mathbb{R}}\|[I-G_2(\xi)(-\beta +iy-G_1(\xi))]^{-1}\|\le C$. Similar to the treatment of \eqref{dat10}, we obtian by \eqref{dat21} that for any $U,V\in Z_{\xi,\gamma}$ and $\lambda=-\beta +iy,$
\bma
&\left|(\mathcal{V}_{-\beta ,\infty}(t)U,V) \right|\le e^{-\beta t}\int_{-\infty}^{+\infty}|(Z_3(\lambda,\xi)U,V) |dy\nnm
\\
&\le Ce^{-\beta t}\int_{-\infty}^{+\infty}\|(\lambda-G_1(\xi))^{-1}U\| \|(\overline{\lambda}-G_1(\xi)^*)^{-1}U\| dy
\le Ce^{-\beta t}\|U\| \|V\|.\label{dat18}
\ema
From \eqref{dat18} and the fact $(\gamma+2|\xi|^{-2})^{-1}\|U\|^2\le \|U\|^2_{\xi,\gamma}\le (\gamma+2|\xi|^{-2})\|U\|^2 $ for $|\xi|> r_0$, we have
\be\label{dat19}
\|\mathcal{V}_{-\beta ,\infty}(t)\|_{\xi,\gamma} \le C\|\mathcal{V}_{-\beta ,\infty}(t)\| \le Ce^{-\beta t}.
\ee
Therefore, we conclude from \eqref{dat15}-\eqref{dat19} that
\be\label{dat20}
e^{t\mathbbm{M}(\xi)}U = e^{tG_1(\xi)}U +\mathcal{V}_{-\beta ,\infty}(t)U, \quad |\xi| >r_0.
\ee
By \eqref{dat13} and \eqref{dat20}, we obtain \eqref{dat1} with $S_1(t,\xi)U$ and $S_2(t,\xi)U$ defined by
\bmas
&S_1(t,\xi)U=\sum_{j=-1}^2e^{t\lambda_j(|\xi|)}\(U,\overline{\Psi_j(s,\omega)}\)_{\xi,\gamma}\Psi_j(s,\omega)
1_{\{|\xi|\le r_0\}},
\\
&S_2(t,\xi)U=\(e^{tG_4(\xi)}U+\mathcal{U}_{-\kappa_2,\infty}(t)U\)1_{\{|\xi|\le r_0\}}+\(e^{tG_1(\xi)}U+\mathcal{V}_{-\beta ,\infty}(t)U\)1_{\{|\xi|>r_0\}},
\emas
where $S_2(t,\xi)U$ satisfies \eqref{dat3} in terms of \eqref{da1}, \eqref{da3}, \eqref{dat17}, \eqref{dat9}, \eqref{dat11}, \eqref{dat19}. The proof of theorem is completed.
\end{proof}
\subsection{Optimal Time-Decay Rates of Linearized VPFP/NSP}
Based on the decomposition of the semigroup given in the previous subsection, we now study the optimal convergence rates of the solution of the linearized system to the equilibrium.

Set $U=(f,\rho,u)$ with $f=f(x,v)$, $\rho=\rho(x)$ and $u=u(x)$, denote the Banach space $Z^q=\{U=(f,\rho,u)\in L^{2,q}\times L^q_x\times L^q_x\mid\|U\|_{Z^q}<\infty\}$ with the norm
$$\|U\|_{Z^q}^2=\|f\|_{L^{2,q}}^2+\|\rho\|_{L^q_x}^2+\|u\|_{L^q_x}^2.$$
Define the Sobolev space $H^l_P$  as
$H^l_P=\{U=(f,\rho,u)\in Z^2\mid\|U\|_{H^l_P}<\infty\}$
equipped with the norm
\bmas
\|U\|^2_{H^l_P}&=\int_{\mathbb{R}^3}(1+|\xi|^2)^l\|\hat{U}\|^2_{\xi,\gamma}d\xi
\\&=\int_{\mathbb{R}^3}(1+|\xi|^2)^l\left(\int_{\mathbb{R}^3}|\hat{f}|^2dv+\gamma|\hat{\rho}|^2+|\hat{u}|^2+\frac{1}{|\xi|^2} |\tau_a\hat{f} -\hat{\rho} |^2\right)d\xi,
\emas
where $\hat{f}=\hat{f}(\xi,v)$, $\hat{\rho}=\hat{\rho}(\xi)$ and $\hat{u}=\hat{u}(\xi)$.

For any $U_0=(f_0,\rho_0,u_0)\in L^2_v(H^l_x)\times H^l_x\times H^l_x$, set
$$
e^{t\mathbbm{M}}U_0=(\mathcal{F}^{-1}e^{t\mathbbm{M}(\xi)}\mathcal{F})U_0.
$$
By Lemma \ref{srlem1}, we have
$$
\|e^{t\mathbbm{M}}U_0\|_{H^l_P}^2=\int_{\mathbb{R}^3}(1+|\xi|^2)^l\|e^{t\mathbbm{M}(\xi)}\hat{U}_0\|_{\xi,\gamma}^2d\xi\le\int_{\mathbb{R}^3}(1+|\xi|^2)^l\|\hat{U}_0\|_{\xi,\gamma}^2d\xi=\|U_0\|_{H^l_P}^2.
$$
This means that the linear operator $\mathbbm{M}$ generates a strongly continuous contradiction semigroup $e^{t\mathbbm{M}}$ in $H^l_P$. Therefore, $U=e^{t\mathbbm{M}}U_0$ is a global solution to \eqref{introe4} for the linearized VPFP/NSP system with initial data $U_0\in H^l_P$.
\begin{thm}\label{orlem6}
Let  $U(t)=(f(t),\rho(t),u(t))=e^{t\M}U_0$ be a solution of the system \eqref{introe4}. Then it holds for any $\alpha,\alpha'\in\mathbb{N}^3$ with $\alpha'\le\alpha$  that
\be\label{orl1}
\left\{\ball
&\|\dxa f(t)\|_{L^2_{x,v}}\le C(1+t)^{-(\frac{3}{4}+\frac{k}{2})}(\|\dxa U_0\|_{Z^2}+\|\dxap U_0\|_{Z^1}),
\\&\|\dxa \rho(t)\|_{L^2_x}\le C(1+t)^{-(\frac{3}{4}+\frac{k}{2})}(\|\dxa U_0\|_{Z^2}+\|\dxap U_0\|_{Z^1}),
\\&\|\dxa u(t)\|_{L^2_x}\le C(1+t)^{-(\frac{3}{4}+\frac{k}{2})}(\|\dxa U_0\|_{Z^2}+\|\dxap U_0\|_{Z^1}),
\\&\|\dxa \nabla_x\Phi(t)\|_{L^2_x}\le C(1+t)^{-(\frac{5}{4}+\frac{k}{2})}(\|\dxa U_0\|_{Z^2}+\|\dxap U_0\|_{Z^1}),
\eall\right.\ee
where $k=|\alpha-\alpha'|$, and $\nabla_x\Phi(t)=\nabla_x\ddx ^{-1}(\tau_af-\rho)$.
\end{thm}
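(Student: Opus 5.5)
We argue on the Fourier side with the decomposition $S(t,\xi)=S_1(t,\xi)+S_2(t,\xi)$ of Theorem \ref{orlem5}, using Plancherel. Write $\hat U(t,\xi)=S_1(t,\xi)\hat U_0+S_2(t,\xi)\hat U_0$ and split the frequency integral into $|\xi|\le r_0$ and $|\xi|>r_0$.

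\textbf{High-frequency and remainder part.} For $S_2$, estimate \eqref{dat3} gives $\|S_2(t,\xi)\hat U_0\|_{\xi,\gamma}\le Ce^{-\kappa_0 t}\|\hat U_0\|_{\xi,\gamma}$. Multiply by $|\xi|^{2|\alpha|}$ and integrate in $\xi$. This yields $Ce^{-\kappa_0t}\|\dxa U_0\|_{H^0_P}$ for $f$, $\rho$, $u$, and also for $\dxa\nabla_x\Phi$, because $|\widehat{\nabla_x\Phi}|=|\xi|^{-1}|\tau_a\hat f-\hat\rho|$ is exactly controlled by the $\|\cdot\|_{\xi,\gamma}$ weight. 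The initial norm $\|\hat U_0\|_{\xi,\gamma}$ contains $|\xi|^{-2}|\tau_a\hat f_0-\hat\rho_0|^2$. For $|\xi|>r_0$ this is bounded by $C\|\hat U_0\|^2$. For $|\xi|\le r_0$ we bound $|\xi|^{2|\alpha|-2}|\tau_a\hat f_0-\hat\rho_0|^2$ by $|\xi|^{2|\alpha'|-2}\sup_\xi|\cdot|^2$ and use $\int_{|\xi|\le r_0}|\xi|^{-2}d\xi<\infty$ in 3D together with $\|\hat g\|_{L^\infty_\xi}\le\|g\|_{L^1_x}$. This gives the bound $C(\|\dxa U_0\|_{Z^2}+\|\dxap U_0\|_{Z^1})e^{-\kappa_0 t}$.

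\textbf{Low-frequency part.} For $|\xi|\le r_0$, use \eqref{dat2} and the expansions \eqref{egexpand}, which give $\re\lambda_j(s)\le -cs^2$. We also need $|(\hat U_0,\overline{\Psi_j})_{\xi,\gamma}|\le C\|\hat U_0\|_{L^\infty_\xi}$ type bounds. This is the delicate point, since the weight $|\xi|^{-2}(\tau_a\hat f_0-\hat\rho_0)\overline{(\tau_a\psi_j-\zeta_j)}$ could be singular. Here we invoke Theorem \ref{srlem13}: $\tau_a\psi_j-\zeta_j\equiv0$ for $j=0,2$ and $=O(s^2)$ for $j=\pm1$. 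Hence this term is $O(1)|\tau_a\hat f_0-\hat\rho_0|$, bounded by the $L^1$ norm, and $\Psi_j$ is bounded in $L^2_v\times\C^4$. Then
\[
\int_{|\xi|\le r_0}|\xi|^{2|\alpha|}e^{-2cs^2t}\,d\xi\,\sup_\xi|\xi|^{-2|\alpha'|}\ldots
\]
gives $(1+t)^{-3/2-k}$, i.e.\ $(1+t)^{-(3/4+k/2)}$ after taking the square root, for $f$, $\rho$, $u$. The $\dxap U_0$ derivatives are moved onto the $L^1$ data, leaving $|\xi|^{2k}$ to produce the extra $(1+t)^{-k/2}$.

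\textbf{Electric field.} For $\nabla_x\Phi$ at low frequency we compute
\[
|\xi|^{-1}\bigl(\tau_a\hat f-\hat\rho\bigr)=\sum_j e^{t\lambda_j}(\hat U_0,\overline{\Psi_j})_{\xi,\gamma}\,|\xi|^{-1}(\tau_a\psi_j-\zeta_j).
\]
By \eqref{lfae4}--\eqref{lfae4a}, $|\xi|^{-1}(\tau_a\psi_j-\zeta_j)=O(s)$, which supplies one extra factor $|\xi|$. This yields $(1+t)^{-(5/4+k/2)}$.

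The main obstacle is justifying uniform boundedness of the inner products $(\hat U_0,\overline{\Psi_j})_{\xi,\gamma}$, given the singular $|\xi|^{-2}$ weight, together with the extra $O(s)$ gain for $\Phi$. Both rest on the cancellation $\tau_a\psi_j-\zeta_j=O(s^2)$. We must also check that the $O(s^2)$ remainder in the eigenvector expansion, including the $\tau_a\psi_{\pm1,2}-\zeta_{\pm1,2}$ term, is uniform in $\omega$; this follows from the analyticity established in Theorem \ref{srlem13}.
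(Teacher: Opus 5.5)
Your proposal is correct and is essentially the paper's proof. It uses Plancherel with the splitting $S_1+S_2$ of Theorem \ref{orlem5}; the exponential bound \eqref{dat3}, plus $\int_{|\xi|\le r_0}|\xi|^{-2}d\xi<\infty$ and $\|\hat g\|_{L^\infty_\xi}\le C\|g\|_{L^1_x}$, for the $S_2$ part (the paper's \eqref{orl4}); and, for $S_1$, the cancellation $\tau_a\psi_j-\zeta_j=O(s^2)$ from Theorem \ref{srlem13}, which both removes the $|\xi|^{-2}$ singularity in $(\hat U_0,\overline{\Psi_j})_{\xi,\gamma}$ and supplies the extra factor $|\xi|$ for $\nabla_x\Phi$ (the paper's \eqref{orl5} and \eqref{orl20}).

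Two minor precisions. Use $|\xi^{\alpha}|^2\le|\xi|^{2k}|\xi^{\alpha'}|^2$ rather than $|\xi|^{2|\alpha|}$ and $|\xi|^{2|\alpha'|}$, so that exactly $\|\partial_x^{\alpha'}U_0\|_{Z^1}$ appears; in the $S_2$ term this is slightly more careful than \eqref{orl4}, which records only $\|U_0\|_{Z^1}$. Also, the uniformity in $\omega$ of the eigenvector bounds comes from the rotational invariance in Lemma \ref{srlem10}, not from analyticity in $s$.
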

\begin{proof}
By Theorem \ref{orlem5}, we have
$$
(\hat{f}(t),\hat{\rho}(t),\hat{u}(t))=e^{t\mathbbm{M}(\xi)}\hat{U}_0=S_1(t,\xi)\hat{U}_0+S_2(t,\xi)\hat{U}_0,
$$
where
$$
S_1(t,\xi)\hat{U}_0:=(h(t),H(t),E(t)),~~S_2(t,\xi)\hat{U}_0:=(g(t),G(t),B(t)).
$$
By Plancherel's equality, we have
\bma
\|\dxa f(t)\|_{L^2_{x,v}}&\label{orl2}=\|\xi^{\alpha}\hat{f}(t)\|_{L^2_{\xi,v}}\le\|\xi^{\alpha}h(t)\|_{L^2_{\xi,v}}+\|\xi^{\alpha}g(t)\|_{L^2_{\xi,v}},
\\
\|\dxa \rho(t)\|_{L^2_x}&=\|\xi^{\alpha}\hat{\rho}(t)\|_{L^2_\xi}\le\|\xi^{\alpha}H(t)\|_{L^2_\xi}+\|\xi^{\alpha}G(t)\|_{L^2_\xi},
\\
\|\dxa u(t)\|_{L^2_x}&=\|\xi^{\alpha}\hat{u}(t)\|_{L^2_\xi}\le\|\xi^{\alpha}E(t)\|_{L^2_\xi}+\|\xi^{\alpha}B(t)\|_{L^2_\xi},
\\
\|\dxa \nabla_x\Phi(t)\|_{L^2_x}&\nnm=\|\xi^{\alpha}|\xi|^{-1}[\tau_a\hat{f}(t)-\hat{\rho}(t)]\|_{L^2_\xi}\\&\label{orl3}\le\|\xi^{\alpha}|\xi|^{-1}[\tau_ah(t)-H(t)]\|_{L^2_\xi}+\|\xi^{\alpha}|\xi|^{-1}[\tau_ag(t)-G(t)]\|_{L^2_\xi}.
\ema
By \eqref{dat3} and the following fact
\bmas
\int_{\mathbb{R}^3}\frac{(\xi^{\alpha})^2}{|\xi|^2}|\tau_a\hat{f}_0-\hat{\rho}_0|^2d\xi
&\le\int_{|\xi|>1}(\xi^{\alpha})^2|\tau_a\hat{f}_0-\hat{\rho}_0|^2d\xi+\sup\limits_{|\xi|\le1}|\tau_a\hat{f}_0-\hat{\rho}_0|^2\int_{|\xi|\le1}\frac{1}{|\xi|^2}d\xi
\\
&\le C(\|\dxa(\tau_af_0-\rho_0)\|_{L^2_x}^2+\|\tau_af_0-\rho_0\|_{L^1_x}^2) ,
\emas
we can estimate the last terms of the right-hand side of \eqref{orl2}-\eqref{orl3} as follows
\bma
&\nnm\|\xi^{\alpha}g(t)\|_{L^2_{\xi,v}}^2+\gamma\|\xi^{\alpha}G(t)\|_{L^2_\xi}^2+\|\xi^{\alpha}B(t)\|_{L^2_\xi}^2+\|\xi^{\alpha}|\xi|^{-1}[\tau_ag(t)-G(t)]\|_{L^2_\xi}^2
\\
&\nnm=\int_{\mathbb{R}^3}(\xi^{\alpha})^2\|(g(t),G(t),B(t))\|_{\xi,\gamma}^2d\xi=\int_{\mathbb{R}^3}(\xi^{\alpha})^2\|S_2(t,\xi)\hat{U}_0\|_{\xi,\gamma}^2d\xi
\\
& \le Ce^{-2\kappa_0 t}\int_{\mathbb{R}^3} (\xi^{\alpha})^2\|\hat{U}_0\|_{\xi,\gamma}^2d\xi
\le Ce^{-2\kappa_0 t}(\|\dxa U_0\|_{Z^2}^2+\|U_0\|_{Z^1}^2), \label{orl4}
\ema
where we have made use of  $\|\tau_af_0\|_{L^1_x}\le\|f_0\|_{L^{2,1}}$.
By \eqref{dat2}, we have for $|\xi|\le r_0,$
\bma
\nnm S_1(t,\xi)\hat{U}_0&\nnm=\sum\limits_{j=-1}^2e^{t\lambda_j(|\xi|)}\left\{[(\hat{f}_0,\overline{\psi_{j,0}})+\gamma(\hat{\rho}_0,\overline{\zeta_{j,0}})+(\hat{u}_0,\overline{\vartheta_{j,0}})+(\tau_a\hat{f}_0-\hat{\rho}_0,\overline{\tau_a\psi_{j,2}-\zeta_{j,2}})]
\right.\\& \left.\quad\times(\psi_{j,0},\zeta_{j,0},\vartheta_{j,0}) +|\xi|((T_j(\xi)\hat{U}_0)_1,(T_j(\xi)\hat{U}_0)_2,(T_j(\xi)\hat{U}_0)_3)\right\}.\label{orl5}
\ema
By \eqref{orl5} and  \eqref{lfae4}, we have
\bma
h(t)&\nnm=\frac14\sum\limits_{j=\pm1}e^{t\lambda_j(|\xi|)}\bigg\{\sqrt{\frac{\gamma+1}{2}}(\tau_a\hat{f}_0+\hat{\rho}_0)-j[(\tau_b\hat{f}_0+\hat{u}_0)\cdot\omega]\bigg\}\[ \sqrt{\frac{2}{\gamma+1}} \chi_0-j(v\cdot\omega)\chi_0\]
\\
&\quad+\frac{1}{2}\sum\limits_{j=0,2}e^{t\lambda_j(|\xi|)}\[(\tau_b\hat{f}_0+\hat{u}_0)\cdot W^j\](v\cdot W^j)\chi_0+|\xi|\sum\limits_{j=-1}^2e^{t\lambda_j(|\xi|)}(T_j(\xi)\hat{U}_0)_1, \label{orl6}
\\
H(t)&\nnm=\frac{1}{2\sqrt{2(\gamma+1)}}\sum\limits_{j=\pm1}e^{t\lambda_j(|\xi|)}\bigg\{\sqrt{\frac{\gamma+1}{2}}(\tau_a\hat{f}_0+\hat{\rho}_0)-j[(\tau_b\hat{f}_0+\hat{u}_0)\cdot\omega]\bigg\}
\\&\quad+|\xi|\sum\limits_{j=\pm1}e^{t\lambda_j(|\xi|)}(T_j(\xi)\hat{U}_0)_2,
\\
E(t)&\nnm=-\frac{1}{4}\sum\limits_{j=\pm1}e^{t\lambda_j(|\xi|)}j\bigg\{\sqrt{\frac{\gamma+1}{2}}(\tau_a\hat{f}_0+\hat{\rho}_0)-j[(\tau_b\hat{f}_0+\hat{u}_0)\cdot\omega]\bigg\} \omega
\\
&\quad+\frac{1}{2}\sum\limits_{j=0,2}e^{t\lambda_j(|\xi|)}\left[(\tau_b\hat{f}_0+\hat{u}_0)\cdot W^j\right]W^j+|\xi|\sum\limits_{j=-1}^2e^{t\lambda_j(|\xi|)}(T_j(\xi)\hat{U}_0)_3,
\ema
where $W^j~(j=0,2)$ are given by \eqref{lfae4}, and $T_j(\xi)$ for $-1\le j\le2$ are linear operators in $L^2(\R^3_v)\times\C\times\C^3$  with the norms $\|T_j(\xi)\|$ uniformly bounded for $|\xi|\le r_0$. Moreover,
\bma
\tau_ah(t)-H(t)&\nnm=
\frac{\gamma-1}{4\sqrt{2(\gamma+1)}}|\xi|^2\sum\limits_{j=\pm1}e^{t\lambda_j(|\xi|)}\bigg\{\sqrt{\frac{\gamma+1}{2}}(\tau_a\hat{f}_0+\hat{\rho}_0)-j[(\tau_b\hat{f}_0+\hat{u}_0)\cdot\omega]\bigg\}
\\
&\quad+|\xi|^3\sum\limits_{j=\pm1}e^{t\lambda_j(|\xi|)}R_j(\xi)\hat{U}_0,\label{orl20}
\ema
where  $R_j(\xi)$ for $j=\pm1$ are linear operators in $L^2(\R^3_v)\times\C\times\C^3$  with the norm uniformly bounded for $|\xi|\le r_0$.
Since
$$
\re\lambda_{j}(|\xi|) \le -\frac12|\xi|^2,~~|\xi|\le r_0,~j=-1,0,1,2,
$$
we obtain by \eqref{orl6}-\eqref{orl20} that
\bma
\|\xi^{\alpha}h(t)\|_{L^2_{\xi,v}}^2+\|\xi^{\alpha}H(t)\|_{L^2_\xi}^2+\|\xi^{\alpha}E(t)\|_{L^2_\xi}^2&\label{orl8}\le C(1+t)^{-(3/2+k)}\|\dxap U_0\|_{Z^1}^2,
\\
\|\xi^{\alpha}|\xi|^{-1}[\tau_ah(t)-H(t)]\|_{L^2_\xi}^2&\le C(1+t)^{-(5/2+k)}\|\dxap U_0\|_{Z^1}^2, \label{orl9}
\ema
with $\alpha'\le\alpha$ and $k=|\alpha-\alpha'|$. The combination of \eqref{orl2}-\eqref{orl4} and \eqref{orl8}-\eqref{orl9} leads to \eqref{orl1}. The proof of theorem is completed.
\end{proof}

In fact, the following theorem will prove that the time deca rates above are optimal.
\begin{thm}\label{linearoptdr}
Let $U(t)=(f(t),\rho(t),u(t))=e^{t\M}U_0$ be a solution of the system \eqref{introe4}. Assume that the initial data $U_0=(f_0,\rho_0,u_0)\in L^2_v(H^l_x\cap L^1_x)\times H^l_x\cap L^1_x\times H^l_x\cap L^1_x$ for $l\ge2$ and that there exists a constant $d_0>0$ such that
$$
\inf\limits_{|\xi|\le r_0}|\tau_a\hat{f}_0+\hat{\rho}_0|\ge d_0,\quad \sup\limits_{|\xi|\le r_0}|\tau_b\hat{f}_0+\hat{u}_0|=0.
$$
Then, it holds for $t>0$ being large enough that
\be\label{orl10}
\left\{\ball
C_1(1+t)^{-\frac{3}{4}}\le&\| f(t)\|_{L^2_{x,v}}\le C_2(1+t)^{-\frac{3}{4}},
\\C_1(1+t)^{-\frac{3}{4}}\le&\| \rho(t)\|_{L^2_x}\le C_2(1+t)^{-\frac{3}{4}},
\\C_1(1+t)^{-\frac{3}{4}}\le&\| u(t)\|_{L^2_x}\le C_2(1+t)^{-\frac{3}{4}},
\eall\right.
\ee
and for $\gamma>1$ that
\be\label{phizysj}
C_1(1+t)^{-\frac{5}{4}}\le\| \nabla_x\Phi(t)\|_{L^2_x}\le C_2(1+t)^{-\frac{5}{4}},
\ee
where $0<C_1\le C_2$ are two generic constants.
\end{thm}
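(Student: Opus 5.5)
The upper bounds in \eqref{orl10} and \eqref{phizysj} follow directly from Theorem \ref{orlem6} with $\alpha=\alpha'=0$, using $\|U_0\|_{Z^2}+\|U_0\|_{Z^1}<\infty$. So the work is in the lower bounds. The plan is to isolate the leading low-frequency part of $S_1(t,\xi)\hat U_0$ given by \eqref{orl6}--\eqref{orl20}, bound it below on a small ball $|\xi|\le r_0$, and show that everything else is of strictly higher order in time.

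\textbf{Leading term.} Under the hypothesis $\tau_b\hat f_0+\hat u_0=0$ for $|\xi|\le r_0$, the $j=0,2$ contributions to $h,E$ vanish at leading order. Set $m(\xi)=\sqrt{\frac{\gamma+1}{2}}(\tau_a\hat f_0+\hat\rho_0)$, so $|m|\ge \sqrt{\frac{\gamma+1}{2}}\,d_0$. The leading parts become
\[
H_0=\frac{m}{2\sqrt{2(\gamma+1)}}\big(e^{t\lambda_1}+e^{t\lambda_{-1}}\big),\qquad E_0=-\frac{m}{4}\big(e^{t\lambda_1}-e^{t\lambda_{-1}}\big)\omega ,
\]
with $h_0$ combining both structures through $\chi_0$ and $(v\cdot\omega)\chi_0$. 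By Lemma \ref{weieig},
\[
e^{t\lambda_{\pm1}}=e^{-\frac12 s^2t+O(s^3)t}\,e^{\pm i c s t},\qquad c=\sqrt{\tfrac{\gamma+1}{2}} .
\]
Hence $|H_0|^2\approx C|m|^2e^{-s^2t}\cos^2(cst)$ and $|E_0|^2\approx C|m|^2e^{-s^2t}\sin^2(cst)$.

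\textbf{Lower bound for the leading term.} Integrating over $|\xi|\le r_0$ in polar coordinates gives
\[
\int_0^{r_0}e^{-s^2t}\cos^2(cst)\,s^2ds=\tfrac12\int_0^{r_0} e^{-s^2t}s^2ds+\tfrac12\int_0^{r_0} e^{-s^2t}\cos(2cst)s^2ds .
\]
The first term is $\sim t^{-3/2}$. For the second, substitute $s=\sigma/\sqrt t$ to get $t^{-3/2}\int e^{-\sigma^2}\cos(2c\sigma\sqrt t)\sigma^2d\sigma$. This tends to $0$ relative to $t^{-3/2}$ by Riemann--Lebesgue (or the explicit Gaussian Fourier transform, which decays like $e^{-c^2t}$). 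So the oscillating part is negligible for $t$ large, and we get $\|H_0\|^2_{L^2_\xi},\ \|E_0\|^2_{L^2_\xi}\ge C d_0^2(1+t)^{-3/2}$. The $f$ lower bound is analogous: $\|h_0\|^2$ dominates the $\chi_0$-component, which is a multiple of $H_0$.

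\textbf{Remainders and the electric field.} The $|\xi|(T_j\hat U_0)$ terms contribute $O((1+t)^{-5/4})$ by \eqref{orl8}-type estimates, using $\|\hat U_0\|_{L^\infty_\xi}\le C\|U_0\|_{Z^1}$. The $O(s^3t)$ error in the exponent is handled by $|e^{O(s^3)t}-1|\le Cs^3te^{Cs^3t}$ with $r_0$ small, which again gives an extra $t^{-1/2}$. The $S_2$ part decays exponentially by \eqref{dat3}. The triangle inequality then yields the lower bounds. For $\nabla_x\Phi$ with $\gamma>1$, use \eqref{orl20}:
\[
|\xi|^{-1}(\tau_ah-H)=\frac{\gamma-1}{4\sqrt{2(\gamma+1)}}\,s\,m\big(e^{t\lambda_1}+e^{t\lambda_{-1}}\big)+O(s^2)\text{-terms}.
\]
The same cosine computation with the weight $s^4ds$ gives $\gtrsim(\gamma-1)^2d_0^2t^{-5/2}$, and the remainders are $O(t^{-7/2})$.

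\textbf{Main obstacle.} The main obstacle is controlling the oscillatory cross term $e^{t(\lambda_1+\overline{\lambda_{-1}})}$ uniformly, since $m(\xi)$ is only continuous and not constant. I would first write $m(\xi)=m(0)+(m(\xi)-m(0))$, where $m$ is continuous because $U_0\in L^1$. Then I would restrict to $s\le\epsilon$ small enough that $|m(\xi)-m(0)|\le d_0/10$. On that ball the oscillatory integral with the constant $m(0)$ is handled by the explicit Gaussian computation, and the variation term is absorbed into the main term.
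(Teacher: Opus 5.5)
Your proof is correct, and at the key step it takes a different route from the paper. Your upper bounds and your treatment of $S_2$ and of the $|\xi|T_j$ remainders agree with the paper.

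\textbf{How the paper argues.} It works pointwise in $\xi$. It shows $\|h(t)\|_{L^2_v}^2\ge\frac1{16}e^{2t\re\lambda_1}|\tau_a\hat f_0+\hat\rho_0|^2-C|\xi|^2e^{-|\xi|^2t}\|\hat U_0\|_{\xi,\gamma}^2$. It then asserts the same bound for $|H(t)|^2$ and $|E(t)|^2$, and the analogue with $|\xi|^4$ for $|\tau_ah-H|^2$, and integrates over $|\xi|\le r_0$.

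\textbf{Where the pointwise bound holds.} For $f$ it is valid, and simpler than your reduction $\|h_0\|_{L^2_v}\ge|H_0|$. Since $\chi_0\perp(v\cdot\omega)\chi_0$ and $\lambda_{-1}=\overline{\lambda_1}$, one gets $\|h_0\|_{L^2_v}^2=\frac14|m|^2e^{2t\re\lambda_1}\big(\frac{2}{\gamma+1}\cos^2(t\im\lambda_1)+\sin^2(t\im\lambda_1)\big)\ge\frac14|\tau_a\hat f_0+\hat\rho_0|^2e^{2t\re\lambda_1}$. No oscillation survives.

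\textbf{Where it fails.} For $\rho$, $u$ and $\nabla_x\Phi$ taken separately the pointwise bound is false:
\begin{itemize}
\item The leading parts of $H$ and of $|\xi|^{-1}(\tau_ah-H)$ are multiples of $m\,e^{t\re\lambda_1}\cos(t\im\lambda_1)$. These vanish on the spheres where $t\im\lambda_1(s)\in\frac\pi2+\pi\mathbb{Z}$.
\item The leading part of $E$ is a multiple of $m\,e^{t\re\lambda_1}\sin(t\im\lambda_1)\,\omega$. It vanishes where $t\im\lambda_1(s)\in\pi\mathbb{Z}$.
\end{itemize}
For large $t$ such spheres occur at arbitrarily small $s>0$. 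There the claimed inequality fails, because its right-hand side is of size $d_0^2e^{-s^2t}$ while the left-hand side is only $O(s^2e^{-s^2t})$. So the paper's ``similar to the discussion above'' does not go through for these three quantities.

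\textbf{What your argument supplies.} Your steps are exactly what is needed there:
\begin{itemize}
\item replace $e^{t\lambda_{\pm1}}$ by $e^{-\frac12s^2t\pm icst}$, at an $L^2_\xi$ cost of relative order $t^{-1/2}$;
\item write $\cos^2=\frac12+\frac12\cos(2\,\cdot)$, and similarly for $\sin^2$;
\item freeze $|m|^2$ at $\xi=0$ on a small ball, using continuity from the $L^1$ hypothesis;
\item dispose of the oscillatory part through the explicit Gaussian integrals $\int_0^\infty s^{2k}e^{-ts^2}\cos(2cts)\,ds$, which are exponentially small.
\end{itemize}

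\textbf{Comparison.} The paper's route gives a cheaper, oscillation-free lower bound for $f$. Yours costs an extra oscillatory-integral computation, but it is the argument that actually yields the lower bounds for $\rho$, $u$ and, when $\gamma>1$, $\nabla_x\Phi$.
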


\begin{proof}
By Theorem \ref{orlem6}, we need only show the lower bounds of the time decay rates for the solution $U(t)=(f(t),\rho(t),u(t))$ under the assumption of Theorem \ref{srlem13}. Indeed, in terms of Theorem \ref{orlem6}, we have
\bma
\|f(t)\|_{L^2_{x,v}}&\ge\|h(t)\|_{L^2_{\xi,v}}-\|g(t)\|_{L^2_{\xi,v}}\ge\|h(t)\|_{L^2_{\xi,v}}-Ce^{-\kappa_0t},\label{orl13}
\\
\|\rho(t)\|_{L^2_x}&\ge\|H(t)\|_{L^2_\xi}-\|G(t)\|_{L^2_\xi}\ge\|H(t)\|_{L^2_\xi}-Ce^{-\kappa_0t},\label{orl14}
\\
\|u(t)\|_{L^2_x}&\ge\|E(t)\|_{L^2_\xi}-\|B(t)\|_{L^2_\xi}\ge\|E(t)\|_{L^2_\xi}-Ce^{-\kappa_0t},
\\
\|\nabla_x\Phi(t)\|_{L^2_x}&\nnm\ge\||\xi|^{-1}[\tau_ah(t)-H(t)]\|_{L^2_\xi}-\||\xi|^{-1}[\tau_ag(t)-G(t)]\|_{L^2_\xi}
\\
&\ge\||\xi|^{-1}[\tau_ah(t)-H(t)]\|_{L^2_\xi}-Ce^{-\kappa_0t},\label{orl15}
\ema
where we have made use of \eqref{orl4}
$$
\int_{\mathbb{R}^3}\|(g(t),G(t),B(t))\|_{\xi,\gamma}^2d\xi\le Ce^{-2\kappa_0 t}(\|U_0\|_{Z^2}^2+\|U_0\|_{Z^1}^2).
$$
By \eqref{fugonge} and \eqref{orl6}, we have
\be\label{orl16}
\|h(t)\|_{L^2_v}^2\ge\frac{1}{16}e^{2t\re\lambda_1(|\xi|)}|\tau_a\hat{f}_0+\hat{\rho}_0|^2-C|\xi|^2e^{-|\xi|^2t}\|\hat{U}_0\|_{\xi,\gamma}^2,
\ee
where $C>0$ is a constant. Noticing that for $t\ge t_0=\frac{1}{r_0^2}$, we have
$$
\int_{|\xi|\le r_0}e^{-|\xi|^2t}d\xi=4\pi t^{-\frac{3}{2}}\int_0^{r_0\sqrt{t}}r^2e^{-r^2}dr\ge C(1+t)^{-\frac{3}{2}},
$$
this together with \eqref{orl16} yeild
\bma
\|h(t)\|_{L^2_{\xi,v}}^2&\nnm\ge\frac{1}{16}d_0^2\int_{|\xi|\le r_0}e^{-|\xi|^2t}d\xi-C\int_{|\xi|\le r_0}|\xi|^2e^{-|\xi|^2t}\|\hat{U}_0\|_{\xi,\gamma}^2d\xi
\\&\ge C(1+t)^{-\frac{3}{2}}-C(1+t)^{-\frac{5}{2}}\label{orl17}.
\ema
Similar to the discussion above, we can also obtian
\bmas
|E(t)|^2,|H(t)|^2&\ge\frac{1}{16}e^{2t\re\lambda_1(|\xi|)}|\tau_a\hat{f}_0+\hat{\rho}_0|^2-C|\xi|^2e^{-|\xi|^2t}\|\hat{U}_0\|_{\xi,\gamma}^2,
\\|\tau_ah(t)-H(t)|^2&\ge C|\xi|^4e^{2t\re\lambda_{1}(|\xi|)}|\tau_a\hat{f}_0+\hat{\rho}_0|^2-C|\xi|^6e^{-|\xi|^2t}\|\hat{U}_0\|_{\xi,\gamma}^2,\quad \gamma>1,
\emas
and then
\bma
\|E(t)\|_{L^2_\xi}^2,\|H(t)\|_{L^2_\xi}^2&\label{orl18}\ge C(1+t)^{-\frac{3}{2}}-C(1+t)^{-\frac{5}{2}},
\\\||\xi|^{-1}[\tau_ah(t)-H(t)]\|_{L^2_\xi}^2& \ge C(1+t)^{-\frac{5}{2}}-C(1+t)^{-\frac{7}{2}},\quad \gamma>1.\label{orl19}
\ema
Combining \eqref{orl13}-\eqref{orl15} and \eqref{orl17}-\eqref{orl19}, we obtain \eqref{orl10}-\eqref{phizysj} for $t>0$ large enough. The proof of theorem is completed.
\end{proof}
\begin{remark}\label{phi4jie}
Noticing that the special case $\gamma=1$ leads to the vanishing of the $|\xi|^2$ order term in \eqref{orl20}. Indeed, for the general viscous term $\nu\Delta_x u$ in the equation $\eqref{introe2}_3$ and $\gamma=1$, we obtain the following asymptotic expansion from \eqref{lfae5} and \eqref{nxtzz} that
$$
\tau_ah(t)-H(t)=\frac{\nu-1}{32}|\xi|^3\sum\limits_{j=\pm1}e^{t\lambda_j(|\xi|)}\Big\{(\tau_a\hat{f}_0+\hat{\rho}_0)-j[(\tau_b\hat{f}_0+\hat{u}_0)\cdot\omega]\Big\}+O(|\xi|^4e^{-\frac12|\xi|^2t}).
$$
Thus, for $\gamma=1$ and $\nu\ne 1$, the electric field $\nabla_x\Phi$ admits the following optimal time decay rate
$$
C_1(1+t)^{-\frac{7}{4}}\le\|\nabla_x\Phi(t)\|_{L^2_x}\le C_2(1+t)^{-\frac{7}{4}},
$$
while for $\nu=1$ only an upper bound holds $\|\nabla_x\Phi\|_{L^2_x}\le C(1+t)^{-\frac{9}{4}}$. 
\end{remark}

\section{The Original Nonlinear Problem}
In this section, we establish the global existence of the solution to the original nonlinear  VPFP/NSP system \eqref{introe2},  and with the help of the estimates on the linearized system obtained in Section 3, we prove the long time decay rates of the solution to this system.

\subsection{Energy Estimate}

Let $U=(f,\rho,u)$ be a solution to \eqref{introe2} on $0\le t\le T^*$. We assume that
\be
\sup_{0\le t\le T^*}E_3(U)\le \delta_0, \label{assume}
\ee
where $\delta_0>0$ is a sufficiently small constant.

\begin{lem}
Let $U=(f,\rho,u)$ be a strong solution to \eqref{introe2}. Then, there exists a constant $ q_0>0$ such that
\bma
&\frac{1}{2}\Dt\(\|f\|_{L^2_{x,v}}^2+\gamma\|\rho\|_{L^2_x}^2+\|u\|_{L^2_x}^2+\|\tdx\Phi\|_{L^2_x}^2\)
\nnm
\\
&+ q_0\(\|P_1f\|_{L^2_\sigma(L^2_x)}^2+\|\nabla_xu\|_{L^2_x}^2+\|\tau_bf-u\|_{L^2_x}^2\)
\le C\sqrt{E_3(U)}D_3(U).\label{geine2}
\ema
\end{lem}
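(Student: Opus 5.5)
Multiply $\eqref{introe2}_1$ by $f$ and integrate over $x,v$, multiply $\eqref{introe2}_2$ by $\gamma\rho$, and $\eqref{introe2}_3$ by $u$; add the three identities. This is the physical-space version of the computation of $\re(\M(\xi)U,U)_{\xi,\gamma}$ in Lemma \ref{srlem1}, now with nonlinear terms on the right.

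\emph{Linear part.} The transport term $v\cdot\nabla_x f$ integrates to zero, and $(Lf,f)\le-\mu_0\|P_rf\|_{L^2_\sigma}^2$. The friction coupling gives $(v\chi_0\cdot u,f)=u\cdot\tau_bf$, and from the fluid side $(\tau_bf-u)\cdot u$. Adding these to $-\|P_mf\|^2=-|\tau_bf|^2$ (part of the dissipation from $L$, since $L\chi_j=-\chi_j$) produces exactly $-\|\tau_bf-u\|_{L^2_x}^2$. Thus I split $(Lf,f)=-\|\tau_bf\|^2+(LP_1f,P_1f)$, using that $\mathrm{span}\{\chi_j\}$ is invariant and orthogonal to the range of $P_1$. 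The viscosity gives $-\|\nabla_xu\|^2$, and the pressure terms $\gamma(\rho,\divx u)+\gamma(\nabla\rho,u)$ cancel. For the electric terms, $-(v\chi_0\cdot\nabla_x\Phi,f)=-(\nabla_x\Phi,\tau_bf)$ and $(\nabla_x\Phi,u)$ combine into $-(\nabla_x\Phi,\tau_bf-u)=(\Phi,\divx(\tau_bf-u))$. Integrating $\eqref{introe2}_1$ against $\chi_0$ gives $\partial_t\tau_af+\divx\tau_bf=0$, and together with $\eqref{introe2}_2$ this yields $\partial_t(\tau_af-\rho)+\divx(\tau_bf-u)=\divx(\rho u)$. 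Using $\ddx\Phi=\tau_af-\rho$, the electric terms therefore become $-\frac12\Dt\|\nabla_x\Phi\|^2$ plus a remainder $(\Phi,\divx(\rho u))=-(\nabla_x\Phi,\rho u)$. For the $L^2_\sigma$ control of $P_1f$, I use the coercivity of $L$ restricted to $P_1$, which follows from $(Lf,f)\le-\mu_0\|P_rf\|^2_{L^2_\sigma}$ applied to $P_1f$.

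\emph{Nonlinear part.} I need to bound $(R_1,f)$, $\gamma(-\divx(\rho u),\rho)$, $(R_2,u)$, and $-(\nabla_x\Phi,\rho u)$ by $C\sqrt{E_3}D_3$. For $R_1$, write $(\nabla_x\Phi+u)\cdot(\tfrac12vf-\nabla_vf)$. Integrating by parts in $v$ gives $((\tfrac12vf-\nabla_vf),f)=(vf,f)$ componentwise, so the term is $\int(\nabla_x\Phi+u)\cdot\tau$-type moments $\int v|f|^2dv$. I then decompose $f=P_0f+P_1f$. The $P_0$ part yields $\int(\nabla_x\Phi+u)\cdot(\tau_af\,\tau_bf)dx$, which is cubic. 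I bound it by $\|\nabla_x\Phi+u\|_{L^3}\|\tau_af\|_{L^6}\|\tau_bf\|_{L^2}$ and related products, writing $u=\tau_bf-(\tau_bf-u)$. The $P_1$ parts are bounded by $\|P_1f\|_{L^2_\sigma}$ times Sobolev norms. The trouble is that $u$ and $\tau_bf$ themselves are not in $D_3$ at zeroth order; only $\nabla_xu$, $\nabla_xP_0f$, $\tau_bf-u$, and $\nabla_x\Phi$ are. I handle this with Sobolev embeddings of the form $\|u\|_{L^\infty}\lesssim\|\nabla u\|_{H^1}$, $\|u\|_{L^3}\lesssim\sqrt{E_3}$, and $\|g\|_{L^6}\lesssim\|\nabla g\|_{L^2}$, always placing an undifferentiated factor in $L^3$ or $L^\infty$ controlled by $\sqrt{E_3}$, and pairing it with two dissipative factors ($\nabla$ of macroscopic quantities, $P_1f$, or $\tau_bf-u$). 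In $R_2$, the term $\frac{\tau_afu}{1+\rho}\cdot u$ is handled similarly. The terms $\frac{\rho}{1+\rho}(\tau_bf-u)\cdot u$ and $(\nabla_x\Phi,\rho u)$ are bounded by $\|\rho\|_{L^3}\|u\|_{L^6}\|\cdot\|_{L^2}$. The derivative terms $\frac{\rho}{1+\rho}\ddx u$ are integrated by parts once, giving $\|\nabla\rho\|\,\|\nabla u\|\,\|u\|_{L^\infty}$-type products. The main obstacle is the cubic term $\tau_af\,\tau_bf\cdot u$ (and $\rho u\cdot u$), in which no factor is differentiated; there I use $\|\tau_af\|_{L^3}\le C\sqrt{E_3}$ together with $\|u\|_{L^6}\|\tau_bf\|_{L^2}$, and for $\tau_bf$ I use $\|\tau_bf\|_{L^2}\le\|\tau_bf-u\|+\|u\|$ with $u$ placed in $L^6$ through $\nabla u$. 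Finally, I collect all terms and take $q_0=\min(\mu_0,1)/2$.
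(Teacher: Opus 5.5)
Your linear computation is correct up to sign bookkeeping: the splitting $(Lf,f)=-|\tau_bf|^2+(LP_1f,P_1f)$, the friction terms combining into $-\|\tau_bf-u\|_{L^2_x}^2$, and the electric terms handled via $\partial_t(\tau_af-\rho)+\divx(\tau_bf-u)=\divx(\rho u)$ all work. The gap is in the nonlinear step you yourself flag as the main obstacle.

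After $\intr|u||\tau_af||\tau_bf|\,dx\le\|\tau_af\|_{L^3_x}\|u\|_{L^6_x}\|\tau_bf\|_{L^2_x}$ and $\|\tau_bf\|_{L^2_x}\le\|\tau_bf-u\|_{L^2_x}+\|u\|_{L^2_x}$, you are left with $\sqrt{E_3(U)}\,\|\tdx u\|_{L^2_x}\|u\|_{L^2_x}$. The factor $\|u\|_{L^2_x}$ cannot be moved into $L^6$ or controlled by $\tdx u$, for two reasons. There is no Poincar\'e inequality on $\R^3$. And $D_3(U)$ contains no zeroth-order norm of $u$, $\rho$, $\tau_af$ or $\tau_bf$ individually; it only controls combinations such as $\tau_bf-u$ and $\tau_af-\rho=\ddx\Phi$, together with $P_1f$. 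So this route gives only $E_3\sqrt{D_3}$. The same defect affects $-\intr\frac{\tau_af|u|^2}{1+\rho}dx$ from $(R_2,u)$, which you say is ``handled similarly''.

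No choice of H\"older exponents can rescue either term on its own. Take $P_1f=0$, $\tau_af=\rho$, $\tau_bf=u$, so that $\Phi=0$ and $\tau_bf-u=0$. Replace $(\rho,u)$ by a small multiple of $(\rho,u)(\lambda x)$ for a fixed profile with $\intr\rho|u|^2dx\neq0$. Then as $\lambda\to0$, $\intr\tau_af\,\tau_bf\cdot u\,dx\sim\lambda^{-3}$ while $\sqrt{E_3}D_3\sim\lambda^{-5/2}$.

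The missing idea is that these two cubic terms must be combined, since both come from the same friction force. First, $(\tdv f,f)=0$, so $(\frac12vf-\tdv f,f)=\frac12(vf,f)$, not $(vf,f)$ as you wrote. Its $P_0$-part is exactly $\tau_af\,\tau_bf$, and this coefficient $1$ is what makes the cancellation work. Writing $\frac1{1+\rho}=1-\frac{\rho}{1+\rho}$, one gets
\begin{equation*}
\intr u\cdot\tau_af\,\tau_bf\,dx-\intr\frac{\tau_af\,|u|^2}{1+\rho}dx=\intr\tau_af\,u\cdot(\tau_bf-u)\,dx+\intr\frac{\rho}{1+\rho}\tau_af\,|u|^2dx .
\end{equation*}
The first term on the right is bounded by $C\|u\|_{L^3_x}\|\tdx\tau_af\|_{L^2_x}\|\tau_bf-u\|_{L^2_x}$. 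The quartic remainder is bounded by $C\|u\|_{L^2_x}\|\tdx\rho\|_{L^2_x}\|\tdx\tau_af\|_{L^2_x}\|\tdx u\|_{L^2_x}$. Both are $\le C\sqrt{E_3(U)}D_3(U)$, and this is exactly how the paper closes the estimate (its $J_{12}+J_{22}$).

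Your treatment of the remaining nonlinear terms is fine. This covers $\tdx\Phi\cdot\tau_af\tau_bf$, the $P_1f$ contributions, and the $R_2$ terms that contain a gradient, $\tau_bf-u$, or $\tdx\Phi$.
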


\begin{proof}
Multiplying $\eqref{introe2}_1$, $\eqref{introe2}_2$, $\eqref{introe2}_3$ by $f,\gamma\rho$ and $u$, respectively, taking integration and summation, and using
\bma
&\nnm-\intr\tdx\Phi\cdot(\tau_bf-u)dx=\intr\Phi\divx(\tau_bf-u) dx
\\
&=-\intr\Phi[\dt(\tau_af-\rho)-\divx(\rho u)] dx=\frac{1}{2}\Dt\|\tdx\Phi\|_{L^2_x}^2-\intr\tdx\Phi\cdot(\rho u)dx, \label{0jiedian}
\ema
we obtain
\bmas
&\frac{1}{2}\Dt\(\|f\|_{L^2_{x,v}}^2+\gamma\|\rho\|_{L^2_x}^2+\|u\|_{L^2_x}^2+\|\tdx\Phi\|_{L^2_x}^2\)+\intr(-LP_1f,f)dx+\|\tau_bf-u\|_{L^2_x}^2+\|\nabla_xu\|_{L^2_x}^2
\\
&=\intr(\tdx\Phi+u)\(\frac{1}{2}vf,f\)dx+\intr u\cdot R_2dx+\intr(\gamma\tdx\rho+\tdx\Phi)\cdot(\rho u)dx:=\sum\limits_{k=1}^3J_k.
\emas
We have for $J_3$ that
\bmas
J_3&\le C\|(\tdx\rho,\tdx\Phi)\|_{L^2_x}\|\rho\|_{L^3_x}\|u\|_{L^6_x}\le C\|\rho\|_{H^1_x}\|(\tdx\rho,\tdx u,\tdx\Phi)\|_{L^2_x}^2
\le C\sqrt{E_3(U)}D_3(U).
\emas
By using the macro-micro decomposition $f=P_0f+P_1f$, we have
$$
\frac{1}{2} (vf,f )=\tau_af\tau_bf+(P_0f,vP_1f)+\frac{1}{2} (P_1f,vP_1f ),
$$
and then $J_1$ gives the terms as follows
\bmas
J_1=&\intr \tdx\Phi\cdot\tau_af\tau_bfdx+\intr u\cdot\tau_af\tau_bfdx
+\intr(\tdx\Phi+u)\cdot(P_0f,vP_1f)dx\\
&+\frac{1}{2}\intr(\tdx\Phi+u)\cdot (P_1f,vP_1f )dx:=\sum\limits_{k=1}^4J_{1k},
\emas
where
\bmas
J_{11}&\le C\|\tdx\Phi\|_{L^2_x}\|\tau_af\|_{L^3_x}\|\tau_bf\|_{L^6_x}
\\
&\le C\|\tau_af\|_{H^1_x}\|\tdx\Phi\|_{L^2_x}\|\tdx\tau_bf\|_{L^2_x}
\le C\sqrt{E_3(U)}D_3(U),
\\
J_{13}+J_{14}&\le C \|\tdx\Phi+u\|_{L^3_x}(\|P_0f\|_{L^2_v(L^6_x)}\|vP_1f\|_{L^2_{x,v}}+\|P_1f\|_{L^6_{x,v}}\|vP_1f\|_{L^2_{x,v}})
\\
&\le C\|(\tdx\Phi,u)\|_{H^1_x}\|\tdx P_0f\|_{L^2_{x,v}}\|P_1f\|_{L^2_\sigma(H^1_x)}\le C\sqrt{E_3(U)}D_3(U).
\emas
$J_2$ gives the terms as follows
\bmas
J_2=&-\intr u\cdot\left\{(u\cdot\nabla_x u)-\frac{\rho}{1+\rho}\ddx u-\[\frac{p'(1+\rho)}{1+\rho}-\gamma\]\tdx\rho +\frac{\rho}{1+\rho}(\tau_bf-u)\right\}dx
\\
&-\intr u\cdot\frac{\tau_afu}{1+\rho}dx:=J_{21}+J_{22},
\emas
where
\bmas
J_{21}&\le C\|u\|_{L^6_x}\[\|u\|_{L^3_x}\|\tdx u\|_{L^2_x}+\|\rho\|_{L^3_x}(\|\ddx u\|_{L^2}+\|\tdx\rho\|_{L^2}+\|\tau_bf-u\|_{L^2_x})\]
\\&\le C\|(\rho,u)\|_{H^1_x}\|\tdx u\|_{L^2_x}(\|\tdx u\|_{H^1_x}+\|\tdx\rho\|_{L^2_x}+\|\tau_bf-u\|_{L^2_x})
\le C\sqrt{E_3(U)}D_3(U).
\emas
Finally,
\bmas
J_{12}+J_{22}&=\intr u\cdot\tau_af(\tau_bf-u)dx+\intr \frac{\rho}{1+\rho}\tau_afu\cdot udx
\\
&\le C\|u\|_{L^3_x}\|\tau_af\|_{L^6_x}\|\tau_bf-u\|_{L^2_x}+C\|u\|_{L^2_x}\|\rho\|_{L^6_x}\|\tau_af\|_{L^6_x}\|u\|_{L^6_x}
\\
&\le C\|u\|_{H^1_x}(\|\tdx(\tau_af,\rho)\|_{L^2_x}^2+\|\tau_bf-u\|_{L^2_x}^2)\le C\sqrt{E_3(U)}D_3(U).
\emas
Combined with \eqref{assume}, summing up the results above yields \eqref{geine2}.
\end{proof}

\begin{lem}
Let $U=(f,\rho,u)$ be a strong solution to \eqref{introe2}. Then, there exists a constant $ q_0>0$ such that
\bma
&\nnm\frac{1}{2}\Dt\sum\limits_{1\le|\alpha|\le 3}\(\|\dxa f\|_{L^2_{x,v}}^2+\gamma\|\dxa\rho\|_{L^2_x}^2+\|\dxa u\|_{L^2_x}^2+\|\dxa\tdx\Phi\|_{L^2_x}^2\)
\\
&\nnm+ q_0\sum\limits_{1\le|\alpha|\le 3}\(\|\dxa P_1f\|_{L^2_\sigma(L^2_x)}^2+\|\dxa\nabla_xu\|_{L^2_x}^2+\|\dxa(\tau_bf-u)\|_{L^2_x}^2\)
\\&\le C\|(\tau_bf,\rho,u,\tdx\Phi)\|_{H^3_x}\(\|\tdx P_0f\|_{L^2_v(H^2_x)}^2+\|\tdx\rho\|^2_{H^2_x}\)\le C\sqrt{E_3(U)}D_3(U).\label{geine3}
\ema
\end{lem}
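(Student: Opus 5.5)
We apply $\dxa$ with $1\le|\alpha|\le3$ to \eqref{introe2} and repeat the zeroth-order computation of \eqref{geine2}. That is, we take the $L^2$ inner products of $\dxa\eqref{introe2}_1$, $\dxa\eqref{introe2}_2$, $\dxa\eqref{introe2}_3$ with $\dxa f$, $\gamma\dxa\rho$ and $\dxa u$, then integrate and sum. The linear structure is exactly as at order zero, since derivatives in $x$ commute with the linear operator. The Fokker--Planck term together with the friction coupling $v\chi_0\cdot\dxa u$ and $\dxa(\tau_bf-u)\cdot\dxa u$ produces
$$-\intr(L\dxa P_1f,\dxa P_1 f)dx+\|\dxa(\tau_bf-u)\|^2_{L^2_x},$$
which controls $\mu_0\|\dxa P_1f\|^2_{L^2_\sigma(L^2_x)}$. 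The viscosity gives $\|\dxa\tdx u\|^2_{L^2_x}$. The pressure terms cancel between the $\rho$ and $u$ equations. For the electric term we use the analogue of \eqref{0jiedian}: by $\ddx\dxa\Phi=\dxa(\tau_af-\rho)$ and the continuity equations,
$$-\intr\dxa\tdx\Phi\cdot\dxa(\tau_bf-u)dx=\frac12\Dt\|\dxa\tdx\Phi\|^2_{L^2_x}-\intr\dxa\tdx\Phi\cdot\dxa(\rho u)dx.$$
This gives the left-hand side of \eqref{geine3}. The remainder consists of three groups of terms. The first is $\intr(\dxa R_1,\dxa f)dx$. The second is $\intr\dxa R_2\cdot\dxa u\,dx$. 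The third is $\intr(\gamma\dxa\tdx\rho+\dxa\tdx\Phi)\cdot\dxa(\rho u)\,dx$, where the pressure part comes from $-\gamma\dxa\divx(\rho u)$ after one integration by parts.

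We treat each remainder by Leibniz's rule, H\"older's inequality, and Sobolev embeddings ($H^1\subset L^6$, $H^2\subset L^\infty$), putting the low-order factor into $\|(\tau_bf,\rho,u,\tdx\Phi)\|_{H^3_x}$. For the terms coming from $\dxa R_1$, we write $(\tdx\Phi+u)\cdot(\frac12 vf-\tdv f)$. The part $\tdv f$ is integrated by parts in $v$: for the highest-order piece this produces $\frac12\intr(\tdx\Phi+u)\cdot(v\dxa f,\dxa f)dx$, which is the same as the $vf$ part. Then we use the decomposition already employed for $J_1$,
$$\tfrac12(v\dxa f,\dxa f)=\dxa\tau_af\,\dxa\tau_bf+(\dxa P_0f,v\dxa P_1f)+\tfrac12(\dxa P_1f,v\dxa P_1f).$$
The $P_1$ pieces are bounded by the dissipation in $L^2_\sigma$. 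The pieces with $P_0f$ carry at least one $x$-derivative, so they are bounded by $\|\tdx P_0f\|_{L^2_v(H^2_x)}$. Commutator terms, where derivatives fall on $\tdx\Phi+u$, are treated the same way, since $\|\tdx\Phi+u\|_{H^3}$ is small.

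In $\dxa R_2$, the convection term $u\cdot\tdx u$, the term $\frac{\rho}{1+\rho}\ddx u$, and the pressure correction term are standard compressible Navier--Stokes nonlinearities. The only delicate top-order piece is $\frac{\rho}{1+\rho}\ddx\dxa u\cdot\dxa u$. We integrate it by parts once to obtain $\|\dxa\tdx u\|^2$ multiplied by a small coefficient, plus lower-order terms. The top-order pressure term $[\frac{p'(1+\rho)}{1+\rho}-\gamma]\tdx\dxa\rho$ is handled by moving one derivative onto $u$ when $|\alpha|=3$, because the dissipation only contains $\tdx\rho$ in $H^2_x$. Similarly, the transport term $\gamma\dxa\tdx\rho\cdot u\,\dxa\rho$, i.e.\ the case where all derivatives fall on $\rho$, is integrated by parts to give $\divx u\,|\dxa\rho|^2$. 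The friction nonlinearities $\frac{\rho}{1+\rho}(\tau_bf-u)$ and $\frac{\tau_afu}{1+\rho}$ are bounded by the damping $\|\tau_bf-u\|_{H^3}$ and by $\|\tdx\tau_af\|_{H^2}$.

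We expect the main obstacle to be the top-order terms in which $\rho$ or $P_0f$ carries three derivatives. This happens, for example, in $\intr\dxa\tdx\rho\cdot\rho\,\dxa u\,dx$ and $\intr\tdx\Phi\cdot\dxa\tau_af\,\dxa\tau_bf\,dx$ with $|\alpha|=3$. The dissipation $D_3$ controls only $\|\tdx\rho\|_{H^2}$ and $\|\tdx P_0f\|_{H^2}$, not fourth derivatives. To deal with this we move one derivative by integration by parts onto the factor $\rho u$ or $\tau_bf$, using $\|\tdx u\|_{H^3}$, and rely on \eqref{macrovis}-type control of $\|\tdx\tau_bf\|_{H^2}$ by $\|\tdx u\|_{H^3}+\|\tau_bf-u\|_{H^3}$. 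After this, every term is bounded by $C\|(\tau_bf,\rho,u,\tdx\Phi)\|_{H^3_x}(\|\tdx P_0f\|^2_{L^2_v(H^2_x)}+\|\tdx\rho\|^2_{H^2_x})$ plus small multiples of the dissipation, which are absorbed into the left-hand side. By \eqref{assume} this yields \eqref{geine3}.
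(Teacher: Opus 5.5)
Your proposal is correct and follows essentially the paper's own argument: the same $\dxa$-energy identity closed with \eqref{gaojiedian}, Leibniz/H\"older/Sobolev bounds with the small factor in $H^3_x$, and integration by parts only for the top-order terms carrying $\dxa\tdx\rho$ or $\dxa\ddx u$ (transport, viscous and pressure-correction terms), as in the paper's $J_{21}$, $J_{32,1}$, $J_{33,1}$.

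One correction to your last paragraph: $\intr\tdx\Phi\cdot\dxa\tau_af\,\dxa\tau_bf\,dx$ with $|\alpha|=3$ is no obstacle, since three $x$-derivatives of $P_0f$ already lie in $\|\tdx P_0f\|_{L^2_v(H^2_x)}$, as your $R_1$ paragraph correctly says; by contrast, moving a derivative onto $\tau_bf$ would create a fourth derivative of $\tau_bf$ that neither $D_3$ nor \eqref{macrovis} controls. Similarly, for real $f$ the top-order $\tdv$ piece of $R_1$ simply vanishes, because $(\partial_{v_i}\dxa f,\dxa f)=0$, rather than reproducing the $vf$ term.
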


\begin{proof}
Applying $\dxa$ with $1\le|\alpha|\le 3$ to $\eqref{introe2}_1$, $\eqref{introe2}_2$ and $\eqref{introe2}_3$, respectively, and then multiplying them by $\dxa f,\gamma\dxa\rho$ and $\dxa u$, respectively, taking integration and summation, by using
\bma
&\nnm-\intr\dxa(\tau_bf-u)\cdot\dxa\tdx\Phi dx=-\intr\dxa[\dt(\tau_af-\rho)-\divx(\rho u)]\dxa\Phi dx
\\
&=\frac{1}{2}\Dt\|\dxa\tdx\Phi\|_{L^2_x}^2-\intr\dxa\tdx\Phi\cdot\dxa(\rho u)dx,\label{gaojiedian}
\ema
we obtain
\bmas
&\frac{1}{2}\Dt\(\|\dxa f\|_{L^2_{x,v}}^2+\gamma\|\dxa\rho\|_{L^2_x}^2+\|\dxa u\|_{L^2_x}^2+\|\dxa\tdx\Phi\|_{L^2_x}^2\)
\\
&+\intr(-L\dxa P_1f,\dxa f)dx+\|\dxa\tdx u\|_{L^2_x}^2+\|\dxa(\tau_bf-u)\|_{L^2_x}^2:=\sum\limits_{k=1}^3J_k.
\emas
$J_1$ gives the terms as follows
\bmas
J_1&=\intr\(\dxa\[(\tdx\Phi+u)\cdot \(\frac{1}{2}vf-\tdv f\)\],\dxa f\)dx
\\
&\le C\sum\limits_{1\le|\alpha'|\le|\alpha|-1}
\intr \|\dx^{\alpha-\alpha'}(\tdx\Phi+u)\|_{L^3_x}(\|\dx^{\alpha'} |v| f\|_{L^6_x}+\|\dx^{\alpha'} \tdv f\|_{L^6_x})\|\dxa f\|_{L^2_x}dv
\\
&\quad+C\intr \|\tdx\Phi+u\|_{L^\infty_x}(\|\dxa |v|f\|_{L^2_x}+\|\dxa \tdv f\|_{L^2_x})\|\dxa f\|_{L^2_x}dx\\
&\quad+C\intr \|\dxa(\tdx\Phi+u)\|_{L^2_x}(\||v|f\|_{L^\infty_x}+\| \tdv f\|_{L^\infty_x})\|\dxa f\|_{L^2_x}dx
\\
&\le C\|(u,\tdx\Phi)\|_{H^2_x}\|\tdx P_0f\|_{L^2_v(H^2_x)}^2+C\|(u,\tdx\Phi)\|_{H^2_x} \|\tdx P_1f\|_{L^2_\sigma(H^2_x)}^2.
\emas
$J_2$ gives the terms as follows
$$
J_2=-\gamma\intr\dxa\rho\dxa\divx(\rho u)dx=-\gamma\intr\dxa\rho\dxa(\tdx\rho\cdot u+\rho\divx u)dx:=J_{21}+J_{22}.
$$
We denote $[A,B]:=AB-BA$ for two operators $A$ and $B$. By a direct calculation, we have
\bmas
J_{21}
& =\frac{1}{2}\gamma\intr(\dxa\rho)^2\divx udx-\gamma\intr\dxa\rho[\dxa,u\cdot\tdx]\rho dx
\\
&\le C\|\dxa\rho\|_{L^2_x}^2\|\divx u\|_{L^\infty_x}+C\|\dxa\rho\|_{L^2_x}\|u\|_{H^3_x}\|\tdx\rho\|_{H^2_x}
\\
&\le C\|u\|_{H^3_x}(\|\dxa\rho\|_{L^2_x}^2+\|\tdx\rho\|_{H^2_x}^2),
\\
J_{22}&
\le C\|\rho\|_{H^3_x}\|\dxa\rho\|_{L^2_x}^2+C\|\rho\|_{H^3_x}\sum\limits_{1\le|\alpha|\le3}\|\dxa\tdx u\|_{L^2_x}^2.
\emas
$J_3$ gives the terms as follows
\bmas
J_3&=\intr\dxa u\cdot \dxa\left\{-(u\cdot\tdx u)-\frac{\rho}{1+\rho}\ddx u-\[\frac{p'(1+\rho)}{1+\rho}-\gamma\]\tdx\rho
\right.\\&\left.\quad+\frac{\rho}{1+\rho}(\tau_bf-u)-\frac{\tau_afu}{1+\rho}\right\}dx+\intr\dxa\tdx\Phi\cdot\dxa(\rho u) dx:=\sum\limits_{k=1}^{6}J_{3k}.
\emas
We calculate $J_{32}$ and $J_{33}$ first
\bmas
J_{32}&=-\intr \dxa u\cdot\frac{\rho}{1+\rho}\dxa\ddx udx-\intr\dxa u\cdot\[\dxa,\frac{\rho}{1+\rho}\ddx\] udx:=J_{32,1}+J_{32,2},
\\
J_{33}&=-\intr\dxa u\cdot\left\{\[\frac{p'(1+\rho)}{1+\rho}-\gamma\]\dxa\tdx\rho +\[\dxa,\[\frac{p'(1+\rho)}{1+\rho}-\gamma\]\tdx\]\rho\right\} dx:=J_{33,1}+J_{33,2}.
\emas
By integrating by parts, we obtain
\bmas
J_{32,1}&=\intr \dxa \tdx u\cdot\frac{\rho}{1+\rho}\dxa\tdx udx+\intr \dxa u\cdot\tdx\(\frac{\rho}{1+\rho}\)\dxa\tdx udx
\\
&\le C\|\rho\|_{L^\infty_x}\|\dxa\tdx u\|_{L^2_x}^2+C\|\dxa u\|_{L^6_x}\|\tdx\rho\|_{L^3_x}\|\dxa\tdx u\|_{L^2_x}
\le C\|\rho\|_{H^2_x}\|\dxa\tdx u\|_{L^2_x}^2,
\\
J_{33,1}&=\intr\tdx\[\frac{p'(1+\rho)}{1+\rho}-\gamma\]\cdot\dxa u\dxa\rho dx+\intr\[\frac{p'(1+\rho)}{1+\rho}-\gamma\]\dxa \divx u\dxa\rho dx
\\
&\le C\|\tdx\rho\|_{L^3_x}\|\dxa u\|_{L^6_x}\|\dxa\rho\|_{L^2_x}+C\|\rho\|_{L^\infty_x}\|\dxa\divx u\|_{L^2_x}\|\dxa\rho\|_{L^2_x}
\\
&\le C\|\rho\|_{H^2_x}(\|\dxa\tdx u\|_{L^2_x}^2+\|\dxa\rho\|_{L^2_x}^2),
\\
J_{32,2}&\le C\|\rho\|_{H^3_x}\|\dxa\tdx u\|_{L^2_x}^2+C\|\rho\|_{H^3_x}\sum\limits_{1\le|\alpha|\le3}\|\dxa\tdx u\|_{L^2_x}^2,
\\
J_{33,2}&\le C\|\dxa u\|_{L^2_x}\|\tdx\rho\|_{H^2_x}^2.
\emas
By a direct calculation, we have for the rest of $J_3$ that
\bmas
J_{31}&\le C\|u\|_{H^3_x}\|\dxa\tdx u\|_{L^2_x}^2+C\|u\|_{H^3_x}\sum\limits_{1\le|\alpha|\le3}\|\dxa\tdx u\|_{L^2_x}^2,
\\
J_{34}&\le C\|\dxa u\|_{L^2_x}\|\tdx\rho\|_{H^2_x}^2+C\|\dxa u\|_{L^2_x}\sum\limits_{1\le|\alpha|\le3}\|\dxa(\tau_bf-u)\|_{L^2_x}^2,
\\
J_{35}&=-\intr\dxa u\cdot\dxa\(\tau_afu\)dx+\intr\dxa u\cdot\dxa\(\frac{\rho\tau_afu}{1+\rho}\)dx
\\
&\le C\|u\|_{H^3_x}\(\|\dxa\tdx u\|_{L^2_x}^2+\|\tdx(\tau_af,\rho)\|_{H^2_x}^2\)+C\|u\|_{H^3_x}\sum\limits_{1\le|\alpha|\le3}\|\dxa\tdx u\|_{L^2_x}^2,
\\
J_{36}&\le C\|\dxa\tdx\Phi\|_{L^2_x}\|\tdx\rho\|_{H^2_x}^2+C\|\dxa\tdx\Phi\|_{L^2_x}\sum\limits_{1\le|\alpha|\le3}\|\dxa\tdx u\|_{L^2_x}^2.
\emas
Combined with \eqref{assume}, summing up the results above yields \eqref{geine3}.
\end{proof}

\begin{lem}Let $U=(f,\rho,u)$ be a strong solution to \eqref{introe2}. Then, there exists two constants $q_1,C_1>0$ such that
\bma
&\nnm\sum\limits_{|\alpha|\le 2}\left\{\Dt\[\|\dxa\tdx\Phi\|_{L^2_x}^2-\intr\dxa\tdx\Phi\cdot\dxa(\tau_bf-u)dx\]+\|\dxa\tdx\Phi\|_{L^2_x}^2+\|\dxa\ddx\Phi\|_{L^2_x}^2\right\}
\\
&\le C_1\|\tdx\rho\|_{H^2_x}^2+q_1\(\|\tdx P_1f\|_{L^2_\sigma(H^2_x)}^2+\|\tdx u\|_{H^3_x}^2\)+q_1\sum\limits_{|\alpha|\le 2}\|\dxa(\tau_bf-u)\|_{L^2_x}^2.\label{geine1}
\ema
\end{lem}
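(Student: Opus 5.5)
Apply $\dxa$ with $|\alpha|\le2$ to the fluid--kinetic momentum relation and pair it with $\dxa\tdx\Phi$. The natural source is the equation for the relative velocity $\tau_bf-u$. Taking the $\chi_j$-moment of $\eqref{introe2}_1$ and using $L\chi_j=-\chi_j$, I expect
$$\dt\tau_bf+\tdx\tau_af+\tdx\cdot (P_r(v\otimes v)\text{-moment of }P_1f)+\tau_bf-u-\tdx\Phi=(R_1,v\chi_0),$$
where the transport moment $(v\cdot\tdx f,v\chi_0)=\tdx\tau_af+\divx\Gamma(P_1f)$ with $\Gamma$ a bounded moment. Subtracting $\eqref{introe2}_3$ gives
\begin{align*}
\dt(\tau_bf-u)+\tdx(\tau_af-\gamma\rho)+\divx\Gamma(P_1f)+2(\tau_bf-u)-2\tdx\Phi+\ddx u=(R_1,v\chi_0)-R_2 .
\end{align*}
The key step is to write $\tau_af-\gamma\rho=(\tau_af-\rho)-(\gamma-1)\rho=\ddx\Phi-(\gamma-1)\rho$, so $\tdx(\tau_af-\gamma\rho)=\tdx\ddx\Phi-(\gamma-1)\tdx\rho$.

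Next, take $\dxa$ of this identity and the inner product with $-\dxa\tdx\Phi$. The term $-2\tdx\Phi$ gives $+2\|\dxa\tdx\Phi\|^2$, and $\tdx\ddx\Phi$ gives after integration by parts $+\|\dxa\ddx\Phi\|^2$. For the time derivative, $-\intr\dxa\dt(\tau_bf-u)\cdot\dxa\tdx\Phi=-\Dt\intr\dxa(\tau_bf-u)\cdot\dxa\tdx\Phi+\intr\dxa(\tau_bf-u)\cdot\dxa\tdx\dt\Phi$. The last integral equals $-\intr\dxa\divx(\tau_bf-u)\dxa\dt\Phi$. Since $\ddx\dt\Phi=\dt(\tau_af-\rho)=-\divx(\tau_bf-u)+\divx(\rho u)$ by the mass equations, this is $\intr\dxa\ddx\dt\Phi\,\dxa\dt\Phi$ plus a nonlinear term, i.e. $-\|\dxa\tdx\dt\Phi\|^2\le0$ plus a nonlinear remainder. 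Alternatively, reproduce exactly \eqref{gaojiedian}, which gives $\frac12\Dt\|\dxa\tdx\Phi\|^2$. Choosing the latter (that is, pairing the $\dt$-term as in \eqref{gaojiedian}) produces the combination $\Dt[\|\dxa\tdx\Phi\|^2-\intr\dxa\tdx\Phi\cdot\dxa(\tau_bf-u)]$ in the statement, up to harmless constant adjustments.

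The remaining linear terms are handled by Cauchy--Schwarz with a small parameter absorbed into the left side:
\begin{itemize}
\item $(\gamma-1)\dxa\tdx\rho$ is bounded by $C_1\|\tdx\rho\|_{H^2}^2$;
\item $\divx\Gamma(\dxa P_1f)$ and $\dxa\ddx u$, after moving one derivative onto $\Phi$, are bounded by $\|\tdx P_1f\|_{H^2}\|\ddx\Phi\|_{H^2}$ and $\|\tdx u\|_{H^3}$ terms;
\item $2\dxa(\tau_bf-u)$ is bounded by $\|\dxa(\tau_bf-u)\|^2$.
\end{itemize}

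The main obstacle is the nonlinear remainders $(R_1,v\chi_0)-R_2$ and $\divx(\rho u)$. For these I would use \eqref{assume} and the Sobolev/H\"older estimates of the previous lemmas. The moment $(R_1,v\chi_0)=-(\tdx\Phi+u)\tau_af+\ldots$ after integrating $\tdv$ by parts, which avoids derivative loss. This bounds them by $C\sqrt{E_3}D_3$. That bound is absorbed into $q_1(\cdot)$ plus the left-hand dissipation $\|\dxa\tdx\Phi\|^2$ and $\|\tdx\rho\|^2$, given $\delta_0$ small, or is simply kept as an extra right-side term to be handled when the estimates are combined.
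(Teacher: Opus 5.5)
Your overall route is the paper's: take the $v\chi_0$-moment of $\eqref{introe2}_1$, subtract $\eqref{introe2}_3$, test with $\dxa\tdx\Phi$, and use $\tau_af-\rho=\ddx\Phi$ and the $-2\tdx\Phi$ term to produce $\|\dxa\ddx\Phi\|^2_{L^2_x}$ and $2\|\dxa\tdx\Phi\|^2_{L^2_x}$. The gap is in the time-derivative term.

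After the product rule you correctly find that the remainder $\intr\dxa(\tau_bf-u)\cdot\dxa\tdx\dt\Phi\,dx$ equals $-\|\dxa\tdx\dt\Phi\|^2_{L^2_x}$ plus a nonlinear term. But this term sits on the left-hand side, so ``$\le 0$'' is the unfavourable sign. Moved to the right it becomes $+\|\dxa\tdx\dt\Phi\|^2_{L^2_x}$ (the term $J_1$ of the paper), and it cannot be dropped. Your alternative does not remove it either. The identity \eqref{gaojiedian} concerns $\intr\dxa(\tau_bf-u)\cdot\dxa\tdx\Phi\,dx$ with no time derivative, so it applies to the damping cross term $-2\intr\dxa\tdx\Phi\cdot\dxa(\tau_bf-u)\,dx$, not to the $\dt$-term. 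That is exactly where the paper uses it, and it is what supplies the $\Dt\|\dxa\tdx\Phi\|^2_{L^2_x}$ part of the functional. In your bullet list you instead bound the damping term by Cauchy--Schwarz, which leaves no source for that piece; this is repairable by adding twice \eqref{gaojiedian}. The product-rule remainder, however, is left unestimated in either version.

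The missing step is an elliptic bound on $\tdx\dt\Phi$. By the two mass equations,
$$\tdx\dt\Phi=\tdx\ddx^{-1}\dt(\tau_af-\rho)=-\tdx\ddx^{-1}\divx\big[(\tau_bf-u)-\rho u\big].$$
The operator $\tdx\ddx^{-1}\divx$ is the Fourier multiplier $\xi\otimes\xi/|\xi|^2$, which has norm at most one. Hence $\|\dxa\tdx\dt\Phi\|_{L^2_x}\le\|\dxa(\tau_bf-u)\|_{L^2_x}+\|\dxa(\rho u)\|_{L^2_x}$. This is precisely why $q_1\sum_{|\alpha|\le2}\|\dxa(\tau_bf-u)\|^2_{L^2_x}$ appears on the right of \eqref{geine1}; the $\rho u$ part is small by \eqref{assume}.

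Three smaller points:
\begin{itemize}
\item $(R_1,v\chi_0)=+(\tdx\Phi+u)\tau_af$ exactly, with no further terms.
\item All remaining terms must be bounded by the specific norms on the right of \eqref{geine1} plus small multiples of its left side, not by the full $C\sqrt{E_3(U)}D_3(U)$. The reason is that $D_3(U)$ contains quantities such as $\|P_1f\|_{L^2_\sigma(L^2_x)}$ that \eqref{geine1} does not allow.
\item For the same reason, bound $\divx\Gamma(\dxa P_1f)$ directly by $\|\dxa\tdx P_1f\|_{L^2_{x,v}}\|\dxa\tdx\Phi\|_{L^2_x}$. Moving a derivative onto $\Phi$ would produce $\|P_1f\|$ at $\alpha=0$.
\end{itemize}
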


\begin{proof}By taking the inner product between $\chi_0$, $v\chi_0$ and $\eqref{introe2}_1$, we obtain 
\be\label{ge2}
\left\{\ball
&\dt{\tau_af}+\div_x\tau_bf=0,
\\
&\dt\tau_bf+\nabla_x\tau_af+(\tau_bf-u)-\nabla_x\Phi=R_3,
\eall\right.
\ee
where
$$
R_3=-(v\cdot\tdx P_1f,v\chi_0)+\left(\nabla_x\Phi+u\right)\tau_af.
$$
Subtracting $\dxa\eqref{ge2}_2$ from $\dxa\eqref{introe2}_3$, and taking the inner product of $\dxa\tdx\Phi$, we have
\bmas
&-\intr\dxa\tdx\Phi\cdot\dt\dxa(\tau_bf-u) dx-\intr\dxa\tdx\Phi\cdot\dxa\tdx(\tau_af-\rho) dx+(\gamma-1)\intr\dxa\tdx\Phi\cdot\dxa\tdx\rho dx
\\
&-\intr\dxa\tdx\Phi\cdot\dxa\ddx u dx-2\intr\tdx\Phi\cdot\dxa(\tau_bf-u)\dxa dx+2\|\dxa\tdx\Phi\|_{L^2_x}^2
\\
&=\intr \dxa\tdx\Phi\cdot\dxa(R_2-R_3) dx.
\emas
By using \eqref{gaojiedian} and
\bma\label{tidudian}
-\intr\dxa\tdx(\tau_af-\rho)\cdot\dxa\tdx\Phi dx=\intr\dxa(\tau_af-\rho)\dxa\ddx\Phi dx=\|\dxa\ddx\Phi\|_{L^2_x}^2,
\ema
we obtain
\bmas
&\nnm\Dt\[\|\dxa\tdx\Phi\|_{L^2_x}^2-\intr\dxa\tdx\Phi\cdot\dxa(\tau_bf-u)dx\]+2\|\dxa\tdx\Phi\|_{L^2_x}^2+\|\dxa\ddx\Phi\|_{L^2_x}^2
\\
&=-\intr \dxa\dt\tdx\Phi\cdot\dxa(\tau_bf-u) dx+2\intr\dxa\tdx\Phi\cdot\dxa(\rho u)dx+\intr\dxa\tdx\Phi\cdot\dxa\ddx udx
\\
&\quad+\intr\dxa\tdx\Phi\cdot\dxa R_2dx-\intr\dxa\tdx\Phi\cdot\dxa R_3dx-(\gamma-1)\intr\dxa\tdx\Phi\cdot\dxa\tdx\rho dx:=\sum\limits_{k=1}^6J_k.
\emas
By direct calculation, we have
\bmas
J_1 &=-\intr\dxa\dt\tdx\ddx^{-1}(\tau_af-\rho)\cdot\dxa(\tau_bf-u)dx
\\
&=\intr\dxa\tdx\ddx^{-1}\[-\divx(\tau_bf-u)+\divx(\rho u)\]\cdot\dxa(\tau_bf-u)dx
\\
&\le C(\|\dxa(\tau_bf-u)\|_{L^2_x}^2+C\|u\|_{H^3_x}\|\dxa(\tau_bf-u)\|_{L^2_x}\|\tdx\rho\|_{H^2_x}
\\
&\le C(1+\|u\|_{H^3_x})\|\dxa(\tau_bf-u)\|_{L^2_x}^2+C\|u\|_{H^3_x}\|\tdx\rho\|_{H^2_x}^2,
\\
J_2&\le C\|u\|_{H^3_x}\(\|\dxa\tdx\Phi\|_{L^2_x}^2+\|\tdx\rho\|_{H^2_x}^2\),
\\
J_3&\le \epsilon\|\dxa\tdx\Phi\|_{L^2_x}^2+\frac{C}{\epsilon}\|\dxa\ddx u\|_{L^2_x}^2,
\\
J_6&\le \epsilon\|\dxa\tdx\Phi\|_{L^2_x}^2+\frac{C}{\epsilon}\|\dxa\tdx\rho\|_{^2_x}^2.
\emas
Similarly, $J_k~(k=4,5)$ give the terms as follows
\bmas
J_4&
=\intr\dxa\tdx\Phi\cdot\dxa\left\{-(u\cdot\nabla_x)u-\frac{\rho}{1+\rho}\ddx u-\[\frac{p'(1+\rho)}{1+\rho}-\gamma\]\tdx\rho
\right.\\&\left.\quad-\frac{\rho}{1+\rho}(\tau_bf-u)-\frac{\tau_afu}{1+\rho}\right\}dx
\\&\le \|(\tau_af,\tau_bf,\rho,u)\|_{H^3_x}\(\|\dxa\tdx\Phi\|_{L^2_x}^2+\|(\tdx\rho,\tdx u,\ddx u)\|_{H^2_x}^2\),
\\
J_5&=-\intr\dxa\tdx\Phi\cdot\dxa(v\cdot\tdx P_1f,v\chi_0)dx+\intr\dxa\tdx\Phi\cdot\dxa[(\tdx\Phi+u)\tau_af]dx\\
&\le \epsilon\|\dxa\tdx\Phi\|_{L^2_x}^2+\frac{C}{\epsilon}\|\dxa\tdx P_1f\|_{L^2_\sigma(L^2_x)}^2+C\|\tau_af\|_{H^3_x}\(\|\dxa\tdx\Phi\|_{L^2_x}^2+\|(\tdx u,\tdx\Phi)\|_{H^2_x}^2\).
\emas
Combined with \eqref{assume}, summing up the results above and choosing $\eps>0$ small enough yields \eqref{geine1}.
\end{proof}

\begin{lem}
Let $U=(f,\rho,u)$ be a strong solution to \eqref{introe2}. Then, there exists two constants $ q_0, q_1>0$ such that
\bma
&\nnm\sum\limits_{|\alpha|\le 2}\left\{\Dt \intr\(\dxa\tdx\tau_af\cdot\dxa\tau_bf+\dxa\tdx\rho\cdot\dxa u\)dx
+\frac\gamma2\|\dxa\tdx\rho\|_{L^2_x}^2+ q_0\|\dxa(\tdx\tau_af,\ddx\Phi)\|_{L^2_x}^2\right\}
\\
&\le  q_1\(\|\tdx P_1f\|_{L^2_\sigma(H^2_x)}^2+\|\tdx(\tau_bf-u)\|_{H^2_x}^2+\|\tdx u\|_{H^3_x}^2\)+\|\divx\tau_bf\|_{H^2_x}^2.\label{geine4}
\ema
\end{lem}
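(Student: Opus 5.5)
The plan is the standard macroscopic dissipation estimate for the density variables, using the fluid system and the moment equations \eqref{ge2}. For $|\alpha|\le2$ we apply $\dxa$ to the momentum equations and test them against gradients of the corresponding densities.

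First, take $\dxa\eqref{ge2}_2$ and form the inner product with $\dxa\tdx\tau_af$. The time derivative term is handled by
\[
\intr\dxa\dt\tau_bf\cdot\dxa\tdx\tau_af\,dx=\Dt\intr\dxa\tau_bf\cdot\dxa\tdx\tau_af\,dx+\intr\dxa\divx\tau_bf\,\dxa\dt\tau_af\,dx .
\]
Here we use $\dt\tau_af=-\divx\tau_bf$ from $\eqref{ge2}_1$, so the last term equals $-\|\dxa\divx\tau_bf\|^2_{L^2_x}$; this is the origin of the final term $\|\divx\tau_bf\|_{H^2_x}^2$ on the right of \eqref{geine4}. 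This step yields $\|\dxa\tdx\tau_af\|^2_{L^2_x}$ with coefficient one. The electric term contributes $-\intr\dxa\tdx\Phi\cdot\dxa\tdx\tau_af\,dx$.

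Second, take $\dxa\eqref{introe2}_3$ and test with $\dxa\tdx\rho$. The same trick applies, with $\dt\rho=-\divx u-\divx(\rho u)$, and it produces $\gamma\|\dxa\tdx\rho\|^2$, a term $\|\dxa\divx u\|^2\le\|\tdx u\|^2_{H^3_x}$, and the electric term $+\intr\dxa\tdx\Phi\cdot\dxa\tdx\rho\,dx$.

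Adding the two identities, the electric contributions combine to
\[
-\intr\dxa\tdx\Phi\cdot\dxa\tdx(\tau_af-\rho)\,dx=\|\dxa\ddx\Phi\|^2_{L^2_x}
\]
by \eqref{tidudian}. This is a good sign, and it is the key cancellation supplying the $\ddx\Phi$ dissipation.

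The remaining linear terms are handled as follows. The term $(\tau_bf-u)\cdot\tdx\tau_af$ is split by Young into $\epsilon\|\dxa\tdx\tau_af\|^2+C_\epsilon\|\dxa(\tau_bf-u)\|^2$. The $L^2$ part of $\tau_bf-u$ at $\alpha=0$ is not in the right side of \eqref{geine4}, which only has $\tdx(\tau_bf-u)$, so I would instead integrate by parts: $-\intr\dxa(\tau_bf-u)\cdot\dxa\tdx\tau_af=\intr\dxa\divx(\tau_bf-u)\dxa\tau_af$. This still leaves $\tau_af$ undifferentiated at $\alpha=0$, so I will write it via $\tau_af=\rho+\ddx\Phi$. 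The $\ddx\Phi$ piece is absorbed by $\epsilon\|\ddx\Phi\|^2$. The $\rho$ piece combines with the $\tau_bf-u$ term appearing in the $u$ equation tested with $\tdx\rho$: that term is $+(\tau_bf-u)\cdot\dxa\tdx\rho$, which after integration by parts cancels the $\rho$ piece exactly. I expect this bookkeeping to be the main obstacle. The terms $\dxa\ddx u\cdot\dxa\tdx\rho$ and $(v\cdot\tdx P_1f,v\chi_0)\cdot\tdx\tau_af$ are bounded by $\epsilon$ times the dissipated quantities plus $C\|\tdx u\|^2_{H^3_x}$ and $C\|\tdx P_1f\|^2_{L^2_\sigma(H^2_x)}$. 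The pressure term $\gamma\tdx\rho$ leaves $\frac\gamma2$ after absorbing.

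Finally, the nonlinear terms $R_2$, $R_3$ and $\divx(\rho u)$ are bounded by Sobolev and H\"older exactly as in the previous lemmas, giving $C\sqrt{E_3(U)}D_3(U)$. Under \eqref{assume} these are absorbed into the left side or into the $q_1$ terms, provided $\delta_0$ is small. Choosing $\epsilon$ small gives \eqref{geine4} with some $q_0>0$.
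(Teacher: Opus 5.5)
Your proposal is correct and is essentially the paper's proof. It uses the same pairings ($\dxa\eqref{ge2}_2$ with $\dxa\tdx\tau_af$, and $\dxa\eqref{introe2}_3$ with $\dxa\tdx\rho$), the same time-derivative identity with $\dt\tau_af=-\divx\tau_bf$ that produces $\|\divx\tau_bf\|_{H^2_x}^2$, the electric cancellation via \eqref{tidudian}, and the same combination of the two friction terms into $-\intr\dxa\tdx(\tau_af-\rho)\cdot\dxa(\tau_bf-u)\,dx=\intr\dxa\ddx\Phi\,\dxa\divx(\tau_bf-u)\,dx$, which is then split by Young.

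One point should be stated more carefully. Bound the nonlinear terms, as the paper does, by small multiples of $\|\tdx(\tau_af,\rho)\|_{H^2_x}^2+\|\ddx\Phi\|_{H^2_x}^2+\|\tdx u\|_{H^3_x}^2+\|\tdx(\tau_bf-u)\|_{H^2_x}^2$, rather than by $C\sqrt{E_3(U)}D_3(U)$. The reason is that $D_3(U)$ contains zeroth-order pieces such as $\|\tau_bf-u\|_{L^2_x}$ and $\|\tdx\Phi\|_{L^2_x}$, which appear on neither side of \eqref{geine4}.
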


\begin{proof}
Applying $\dxa$ with $|\alpha|\le 2$ to $\eqref{introe2}_3$ and $\eqref{ge2}_2$, respectively, and then multiplying them by $\dxa\tdx\rho$ and $\dxa\tdx\tau_af$, respectively, taking integration and using \eqref{tidudian}, we obtain
\bmas
&\Dt\(\intr\dxa\tdx\tau_af\cdot\dxa\tau_bfdx+\intr\dxa\tdx\rho\dxa udx\)
+\gamma\|\dxa\tdx\rho\|_{L^2_x}^2+\|\dxa(\tdx\tau_af,\ddx\Phi)\|^2_{L^2_x}
\\
&=-\intr\dxa\tdx(\tau_af-\rho)\cdot\dxa(\tau_bf-u)dx+\intr\dt\dxa\tdx\tau_af\cdot\dxa\tau_bfdx+\intr\dt\dxa\tdx\rho\cdot\dxa udx
\\
&\quad+\intr\dxa\tdx\rho\cdot\dxa\ddx udx+\intr\dxa\tdx\rho\cdot\dxa R_2dx+\intr\dxa\tdx\tau_af\cdot\dxa R_3dx:=\sum\limits_{k=1}^6J_k.
\emas
By a direct calculation, we have
\bmas
J_1&\le\epsilon\|\dxa\ddx\Phi\|_{L^2_x}^2+\frac{C}{\epsilon}\|\dxa\tdx(\tau_bf-u)\|_{L^2_x}^2,
\\
J_2&=-\intr\dt\dxa\tau_af\dxa\divx\tau_bfdx=\|\dxa\divx\tau_bf\|_{L^2_x}^2,
\\
J_4&\le \epsilon\|\dxa\tdx\rho\|_{L^2_x}^2+\frac{C}{\epsilon}\|\dxa\ddx u\|_{L^2_x}^2.
\emas
Similarly, $J_k~(k=3,5,6)$ give the terms as follows
\bmas
J_3&=-\intr\dt\dxa\rho\dxa\divx udx=\intr\dxa\divx u\dxa[\divx u+\divx(\rho u)]dx
\\
&\le C\|\dxa\tdx u\|_{L^2_x}^2+C\|u\|_{H^3_x}(\|\tdx\rho\|_{H^2_x}^2+\|\dxa\tdx u\|_{L^2_x}^2),
\\
J_5&
=\intr\dxa\tdx\rho\cdot\dxa\left\{-(u\cdot\nabla_x)u-\frac{\rho}{1+\rho}\ddx u-\[\frac{p'(1+\rho)}{1+\rho}-\gamma\]\tdx\rho
\right.\\&\left.\quad-\frac{\rho}{1+\rho}(\tau_bf-u)-\frac{\tau_afu}{1+\rho}\right\}dx
\\&\le \|(\tau_af,\tau_bf,\rho,u)\|_{H^3_x}\(\|\dxa\tdx\rho\|_{L^2_x}^2+\|(\tdx\rho,\tdx u,\ddx u)\|_{H^2_x}^2\),
\\
J_6&=-\intr\dxa\tdx\tau_af\cdot\dxa(v\cdot\tdx P_1f,v\chi_0)dx+\intr\dxa\tdx\tau_af\cdot\dxa[(\tdx\Phi+u)\tau_af]dx\\
&\le \epsilon\|\dxa\tdx\tau_af\|_{L^2_x}^2+\frac{C}{\epsilon}\|\dxa\tdx P_1f\|_{L^2_\sigma(L^2_x)}^2+C\|(\tdx\Phi,u)\|_{H^3_x}\(\|\dxa\tdx\tau_af\|_{L^2_x}^2+\|\tdx\tau_af\|_{H^2_x}^2\).
\emas
Combined with \eqref{assume}, summing up the results above and choosing $\eps>0$ small enough yields \eqref{geine4}.
\end{proof}

\begin{thm}\label{ztcz}
There exists a small constant $\delta_0>0$ 
such that if the initial data $U_0$ satisfies $E_3(U_0)\le\delta_0$, then the Cauchy problem for the VPFP/NSP system admits a unique global solution $U=(f,\rho,u)$ satisfying
$$
E_3(U)+ \intt D_3(U)ds\le CE_3(U_0).
$$
\end{thm}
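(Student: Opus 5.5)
We prove the theorem by the standard continuity argument: local existence (from the final section) plus an a priori estimate under \eqref{assume}. The task is therefore to build a Lyapunov functional $\mathcal{E}(U)\sim E_3(U)$ whose time derivative is bounded by $-cD_3(U)+C\sqrt{E_3(U)}D_3(U)$. We obtain it as a linear combination of the four energy inequalities \eqref{geine2}, \eqref{geine3}, \eqref{geine1} and \eqref{geine4}.

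\textbf{Step 1 (combining the estimates).} Add \eqref{geine2} and \eqref{geine3} to get the basic energy identity. It controls
\[
\sum_{|\alpha|\le3}\bigl(\|\dxa P_1f\|_{L^2_\sigma(L^2_x)}^2+\|\dxa\tdx u\|_{L^2_x}^2+\|\dxa(\tau_bf-u)\|_{L^2_x}^2\bigr).
\]
Next add $\epsilon_1$ times \eqref{geine4} to produce the macroscopic dissipation $\|\tdx\rho\|_{H^2_x}^2+\|\tdx\tau_af\|_{H^2_x}^2+\|\ddx\Phi\|_{H^2_x}^2$. Finally add $\epsilon_2$ times \eqref{geine1} with $\epsilon_2\ll\epsilon_1$, so that its bad term $C_1\|\tdx\rho\|_{H^2_x}^2$ is absorbed by $\epsilon_1\frac\gamma2\|\tdx\rho\|_{H^2_x}^2$. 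This supplies $\|\tdx\Phi\|_{H^3_x}^2$.

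\textbf{Step 2 (the term $\|\divx\tau_bf\|_{H^2_x}^2$ in \eqref{geine4}).} This is the step I expect to be the main obstacle, since this term is not directly dissipated. Following the remark in the introduction, I will use
\[
\|\tdx\tau_bf\|_{H^2_x}^2\le 2\bigl(\|\tdx u\|_{H^2_x}^2+\|\tdx(\tau_bf-u)\|_{H^2_x}^2\bigr),
\]
and both terms on the right are already controlled by Step 1. The remaining terms $q_1(\cdots)$ in \eqref{geine1} and \eqref{geine4} are also absorbed by the Step 1 dissipation once $\epsilon_1$ is small. Together with $\|\tdx P_0f\|\sim\|\tdx\tau_af\|+\|\tdx\tau_bf\|$, this yields the full dissipation $D_3(U)$.

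\textbf{Step 3 (equivalence and closing).} The cross terms $\intr\dxa\tdx\tau_af\cdot\dxa\tau_bf\,dx$, $\intr\dxa\tdx\rho\cdot\dxa u\,dx$ and $\intr\dxa\tdx\Phi\cdot\dxa(\tau_bf-u)\,dx$ are bounded by $CE_3$. For $\epsilon_1,\epsilon_2$ small, the functional therefore satisfies $\mathcal{E}\sim E_3$. We then obtain
\[
\Dt\mathcal{E}+cD_3\le C\sqrt{E_3}D_3 .
\]
Under \eqref{assume} with $\delta_0$ small, the right side is absorbed. Integrating in time gives $E_3(U(t))+\intt D_3\,ds\le CE_3(U_0)$.

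Choosing $E_3(U_0)\le\delta_0/(2C)$, the bootstrap closes, and the local solution extends globally. Uniqueness follows from the local theory in the final section.
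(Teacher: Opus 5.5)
Your proposal is correct and follows the paper's proof essentially step for step. Your weights $\epsilon_1$ and $\epsilon_2\ll\epsilon_1$ are the paper's $A_1$ and $A_1A_0$, with $A_0<\gamma/(2C_1)$. The remaining ingredients are also the paper's: \eqref{macrovis} is used both to absorb $\|\divx\tau_bf\|_{H^2_x}^2$ and to recover the $\tau_bf$ part of the $P_0f$ dissipation, followed by the equivalence $\mathcal{E}_3\sim E_3$ and the continuity argument based on Theorem \ref{local}.
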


\begin{proof}
By summing $\eqref{geine2}+\eqref{geine3}+A_1[A_0\eqref{geine1}+\eqref{geine4}]$, where $A_0,A_1>0$   satisfy $A_0<\frac\gamma{2C_1}$ and $ A_1<\frac{q_0}{q_1(A_0+1)+2}$ with $C_1>0$ given by \eqref{geine1}, we obtain
\bma
&\nnm\Dt \[\frac{1}{2}\(\|f\|_{L^2_v(H^3_x)}^2+\gamma\|\rho\|_{H^3_x}^2+\| u\|_{H^3_x}^2+\|\tdx\Phi\|_{H^3_x}^2\)+A_0A_1\|\tdx\Phi\|_{H^2_x}^2\]
\\
&\nnm\quad+\Dt A_1\sum\limits_{|\alpha|\le 2}\[-A_0\intr\dxa\tdx\Phi\cdot\dxa(\tau_bf-u)dx+\intr(\dxa\tdx\tau_af\cdot\dxa\tau_bf+\dxa\tdx\rho\cdot\dxa u)dx\]
\\
&\nnm\quad+[q_0-q_1A_1(A_0+1)-2A_1]\sum\limits_{|\alpha|\le 3}(\|\dxa P_1f\|_{L^2_\sigma(L^2_x)}^2+\|\dxa\nabla_xu\|_{L^2_x}^2+\|\dxa(\tau_bf-u)\|_{L^2_x}^2)
\\
&\nnm\quad+A_1\[\(\frac\gamma2-C_1A_0\)\|\tdx\rho\|_{H^2_x}^2
+q_0\|(\tdx\tau_af,\ddx\Phi)\|_{H^2_x}^2+ A_0\|\tdx\Phi\|_{H^2_x}^2\]
\\
&\le C\sqrt{E_3(U)}D_3(U).\label{en1}
\ema
Note that
\be\label{macrovis}
\sum\limits_{|\alpha|\le 3}( \|\dxa\nabla_xu\|_{L^2_x}^2+\|\dxa(\tau_bf-u)\|_{L^2_x}^2)\ge \frac12\sum\limits_{|\alpha|\le 2}\|\dxa\tdx\tau_b f\|^2_{L^2_x}.
\ee
Therefore, there exists a constant $ q_2>0$ such that
$$
\Dt\mathcal{E}_3(U)+ q_2D_3(U)\le C\sqrt{E_3(U)}D_3(U),
$$
where
\bmas
\mathcal{E}_3(U)=&\frac{1}{2} \(\|f\|_{L^2_v(H^3_x)}^2+\gamma\|\rho\|_{H^3_x}^2+\| u\|_{H^3_x}^2+\|\tdx\Phi\|_{H^3_x}^2\)+A_0A_1\|\tdx\Phi\|_{H^2_x}^2
\\
&\nnm+\sum\limits_{|\alpha|\le 2}A_1\[-A_0\intr\dxa\tdx\Phi\cdot\dxa(\tau_bf-u)dx+\intr(\dxa\tdx\tau_af\cdot\dxa\tau_bf+\dxa\tdx\rho\cdot\dxa u)dx\].
\emas
By using Cauchy-Schwartz's inequality, for sufficiently small $A_0,A_1>0$, we have $\frac14E_3(U)\le \mathcal{E}_3(U)\le E_3(U)$, which implies there exists a constant $q_3>0$ such that
\be\label{ztcubds}
\Dt\mathcal{E}_3(U)+ q_3D_3(U)\le 0.
\ee
Thus, we close the priori assumption \eqref{assume}. The global solution for the Cauchy problem for the VPFP/NSP system follows from the local existence Theorem \ref{local}, the uniform energy estimate and the continuity argument.

Let
$$
H_N(U)=\sum\limits_{1\le|\alpha|\le N}\(\|\dxa f\|_{L^2_{x,v}}^2+\gamma\|\dxa\rho\|_{L^2_x}^2+\|\dxa u\|_{L^2_x}^2\)+\sum\limits_{ |\alpha|\le N}\|\dxa\tdx\Phi\|_{L^2_x}^2 +\|\tau_bf-u\|_{L^2_x}^2.
$$
Similarly, we can establish the corresponding higher-order energy estimate. Subtracting $\eqref{ge2}_2$ from $\eqref{introe2}_3$, taking the inner product of the result with $\tau_bf-u$, there exists a constant $q_1>0$ such that
\bma
&\frac12\Dt\(\|\tau_bf-u\|_{L^2_x}^2+\|\tdx\Phi\|_{L^2_x}^2\)+\|\tau_bf-u\|_{L^2_x}^2\nnm
\\&\le q_1\(\|\tdx P_1f\|_{L^2_\sigma(L^2_x)}^2+\|\tdx(\tau_af,\rho)\|_{L^2_x}^2+\|\Delta_x u\|^2_{L^2_x}\)+C\|\rho\|_{H^1_x}\|(\tdx\Phi,\tdx u)\|_{L^2_x}^2.\label{geine5}
\ema
Summing $\eqref{geine3}+B_1[B_0\eqref{geine1}+\eqref{geine5}]$, adding $B_1\eqref{geine4}$ over $1\le|\alpha|\le2$, with $B_0,B_1>0$ satisfying $B_0<\min\{\frac\gamma{2C_1},\frac1{q_1}\}$ and $B_1<\frac{q_0}{q_1(B_0+2)+2}$ where $C_1>0$ is given by \eqref{geine1}, we obtain
\bmas
&\Dt\frac{1}{2}\sum\limits_{1\le|\alpha|\le 3}\(\|\dxa f\|_{L^2_{x,v}}^2+\gamma\|\dxa\rho\|_{L^2_x}^2+\|\dxa u\|_{L^2_x}^2+\|\dxa\tdx\Phi\|_{L^2_x}^2\)
\\
&\quad+\Dt B_1B_0\sum\limits_{ |\alpha|\le 2}\bigg[\|\dxa\tdx\Phi\|_{L^2_x}^2 -\intr\dxa\tdx\Phi\cdot\dxa(\tau_bf-u)dx\bigg]
\\
&\quad+\Dt B_1\sum\limits_{1\le|\alpha|\le 2} \intr\(\dxa\tdx\tau_af\cdot\dxa\tau_bf+\dxa\tdx\rho\cdot\dxa u\)dx
\\
&\quad+\Dt B_1\(\|\tau_bf-u\|_{L^2_x}^2+\|\tdx\Phi\|_{L^2_x}^2\)+B_1B_0 \|\tdx\Phi\|_{H^1_x}^2 + B_1(1-q_1B_0)\|\tau_bf-u\|_{L^2_x}^2
\\
&\quad+ [ q_0-q_1B_1(B_0+2)-2B_1]\sum\limits_{1\le|\alpha|\le 3}\(\|\dxa P_1f\|_{L^2_\sigma(L^2_x)}^2+\|\dxa\nabla_xu\|_{L^2_x}^2+\|\dxa(\tau_bf-u)\|_{L^2_x}^2\)
\\
&\quad+\sum\limits_{1\le|\alpha|\le 2}B_1\[\(\frac\gamma2-C_1B_0\)\|\dxa\tdx\rho\|_{L^2_x}^2+ q_0\|\dxa(\tdx\tau_af,\ddx\Phi)\|_{L^2_x}^2+B_0\|\dxa\tdx\Phi\|_{L^2_x}^2\]
\\
&\le C\|(\tau_af,\tau_bf,\rho,u,\tdx\Phi)\|_{H^3_x}\(\|\tdx P_0f\|_{L^2_v(H^2_x)}^2+\|\tdx(\rho,u)\|_{H^2_x}^2+B_1\|\tdx\Phi\|_{L^2_x}^2\)+q_1\|\tdx(\tau_af,\rho)\|_{L^2_x}^2.
\emas
Making use of \eqref{assume} and \eqref{macrovis}, there exists a constant $ d>0$ such that
\be\label{en2}
\Dt\mathcal{H}_3(U)+ dH_3(U)\le C\(\|\tdx P_0f\|_{L^2_v(L^2_x)}^2+\|\tdx(\rho,u)\|_{L^2_x}^2 \),
\ee
where
\bmas
\mathcal{H}_3(U)=&\frac{1}{2}\sum\limits_{1\le|\alpha|\le 3}\(\|\dxa f\|_{L^2_{x,v}}^2+\gamma\|\dxa\rho\|_{L^2_x}^2+\|\dxa u\|_{L^2_x}^2+\|\dxa\tdx\Phi\|_{L^2_x}^2\)
\\&+B_1B_0\sum\limits_{|\alpha|\le 2}\bigg[\|\dxa\tdx\Phi\|_{L^2_x}^2-\intr\dxa\tdx\Phi\cdot\dxa(\tau_bf-u)dx\bigg]
\\&+B_1\sum\limits_{1\le|\alpha|\le 2}\intr\(\dxa\tdx\tau_af\cdot\dxa\tau_bf+\dxa\tdx\rho\cdot\dxa u\)dx+B_1\(\|\tau_bf-u\|_{L^2_x}^2+\|\tdx\Phi\|_{L^2_x}^2\).
\emas
The proof of theorem is completed.
\end{proof}
\subsection{Optimal Time-Decay Rates of Nonlinear VPFP/NSP}
Consider the following Fokker-Planck equation with damping:
\be\label{znfp}
\dt g+v\cdot\tdx g-Lg+2g=0.
\ee
Denote $A_1=L-v\cdot\tdx-2$. Then, we have
\begin{lem}[\cite{vpfp2}]\label{green1}
For any $\varsigma\ge0$, if the function $ g_0(x,v)\in L^\infty_{v,\varsigma}(L^2_x)$, then
\bma
\| e^{tA_1}g_0\|_{L^\infty_{v,\varsigma}(L^2_x)}&\le C\| g_0\|_{L^\infty_{v,\varsigma}(L^2_x)}e^{-2t},\label{gfee1}
\\
\|\tdv e^{tA_1}g_0\|_{L^\infty_{v,\varsigma}(L^2_x)}&\le C\|g_0\|_{L^\infty_{v,\varsigma}(L^2_x)}\(1+\frac{1}{\sqrt{t}}\)e^{-2t}.\label{gfee2}
\ema
\end{lem}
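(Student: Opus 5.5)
We would prove Lemma~\ref{green1} by writing the solution of \eqref{znfp} through an explicit Green's function. First we conjugate away the Maxwellian weight. Write $g=\sqrt{M}\,h$. Since
$$
Lg=\sqrt{M}\,\nabla_v\cdot\big(\nabla_v h-vh\big)\Big|_{\text{adjoint form}}
$$
(more precisely $L(\sqrt M h)=\sqrt M(\Delta_v h - v\cdot\nabla_v h)$), the unknown $h$ satisfies the kinetic Ornstein--Uhlenbeck equation
$$
\partial_t h+v\cdot\nabla_x h=\Delta_v h-v\cdot\nabla_v h-2h.
$$
Alternatively, we can write $g=M^{-1/2}k$, so that $k$ solves $\partial_t k+v\cdot\nabla_x k=\nabla_v\cdot(\nabla_v k+vk)-2k$. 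This second (Fokker--Planck) form is the convenient one, because its fundamental solution $G(t,x,v;y,w)$ is an explicit Gaussian (Kolmogorov-type) kernel. The kernel is nonnegative, has total mass one in $(x,v)$ for the undamped part, and its $v$-marginal transports $w$ to mean $e^{-t}w$ with variance $1-e^{-2t}$. Multiplying by the damping factor $e^{-2t}$ gives the solution operator.

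Next we take the Fourier transform in $x$. The kernel then becomes, for each $\xi$, a Gaussian in $v$ centred at $e^{-t}w$ (shifted by a phase), times a factor of modulus at most one, times $e^{-2t}$. For the bound \eqref{gfee1}, we fix $v$ and estimate $\|g(t,\cdot,v)\|_{L^2_x}$ via Minkowski's inequality in $w$:
$$
(1+|v|)^\varsigma\|g(t,\cdot,v)\|_{L^2_x}\le e^{-2t}\int \tilde G_t(v,w)\,\sqrt{M(v)}^{\pm1}\sqrt{M(w)}^{\mp1}(1+|v|)^\varsigma(1+|w|)^{-\varsigma}\,dw\;\|g_0\|_{L^\infty_{v,\varsigma}(L^2_x)},
$$
where $\tilde G_t$ is the modulus of the $x$-Fourier kernel, which is independent of $\xi$ up to a bounded factor.

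The main obstacle is bounding the remaining $w$-integral uniformly in $v$ and $t$. The Maxwellian ratio $\exp\!\big((|w|^2-|v|^2)/4\big)$ must be absorbed by the Gaussian $\exp\!\big(-|v-e^{-t}w|^2/2(1-e^{-2t})\big)$. We plan to complete the square explicitly. Using $|v|^2-|e^{-t}w|^2$ comparisons, the exponent becomes a negative-definite quadratic form whose normalization integrates to a factor $C$ that is uniform in $t\ge0$. The polynomial weight is handled by the observation $(1+|v|)\le(1+|v-e^{-t}w|)(1+|w|)$, and the factor $(1+|v-e^{-t}w|)^\varsigma$ is absorbed by the Gaussian moments.

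For \eqref{gfee2}, we differentiate the kernel in $v$. Each $\nabla_v$ falling on the Gaussian produces a factor $|v-e^{-t}w|/(1-e^{-2t})$ times the Gaussian. Integrating against the Gaussian gives an extra factor $(1-e^{-2t})^{-1/2}\le C(1+t^{-1/2})$, while the derivative falling on the $\sqrt M$ weight produces $|v|$, which is controlled exactly as before. Combining these yields the stated $(1+1/\sqrt t)e^{-2t}$ bound. This is the estimate established in \cite{vpfp2}, and we would cite it for the explicit kernel formula.
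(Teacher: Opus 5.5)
Your strategy (conjugate by $\sqrt M$ to the kinetic Ornstein--Uhlenbeck equation, use its explicit Gaussian Kolmogorov kernel, apply Minkowski in $w$, complete the square) is the right one. The paper gives no proof; it relies on the explicit Green's function from \cite{vpfp2}. However, two steps do not go through as written.

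\textbf{The Maxwellian ratio is paired with the wrong kernel.} With $k=\sqrt M g$ the kernel is the forward one, a Gaussian in $v$ centred at $e^{-t}w$. The corresponding weight is $M(v)^{-1/2}M(w)^{1/2}=\exp((|v|^2-|w|^2)/4)$. The ratio $\exp((|w|^2-|v|^2)/4)$ belongs to the backward kernel $\exp(-|w-e^{-t}v|^2/2\sigma_t^2)$, where $\sigma_t^2=1-e^{-2t}$.

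With your pairing, the coefficient of $|w|^2$ in the exponent is $\frac{1-3e^{-2t}}{4\sigma_t^2}$. This is positive for $t>\frac12\ln 3$, so the $w$-integral diverges and the quadratic form is not negative definite. The correct pairing gives
\begin{equation*}
-\frac{1+e^{-2t}}{4\sigma_t^2}\big(|v|^2+|w|^2\big)+\frac{e^{-t}}{\sigma_t^2}\,v\cdot w=-\frac{1+e^{-2t}}{4\sigma_t^2}|w-b_tv|^2-c_t|v|^2,\qquad b_t=\frac{2e^{-t}}{1+e^{-2t}},\quad c_t=\frac{\sigma_t^2}{4(1+e^{-2t})},
\end{equation*}
so the unweighted $w$-integral is at most $2^{3/2}$. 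The polynomial weight is absorbed by the factor $e^{-c_t|v|^2}$, not only by the $w$-moments. This is because $|v-e^{-t}w|\le \frac{\sigma_t^2}{1+e^{-2t}}|v|+e^{-t}|w-b_tv|$.

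\textbf{In \eqref{gfee2} you differentiate only the velocity marginal.} After the Fourier transform in $x$, the kinetic kernel is, up to a normalising constant, $p_t(w,v)\,e^{-i\xi\cdot m_t(v,w)}e^{-s_t|\xi|^2/2}$. Here $m_t=(1-e^{-t})w+\frac{1-e^{-t}}{1+e^{-t}}(v-e^{-t}w)$ is the mean displacement conditioned on the final velocity, and $s_t$ is the conditional variance.

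Hence $\nabla_v$ also produces $-i\frac{1-e^{-t}}{1+e^{-t}}\,\xi$ times the kernel; this reflects $[\nabla_v,v\cdot\nabla_x]=\nabla_x$. That factor is unbounded in $\xi$. It is invisible to the bound ``modulus at most one'' that sufficed for \eqref{gfee1}.

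The term is controlled by $\sup_\xi \frac{1-e^{-t}}{1+e^{-t}}|\xi|e^{-s_t|\xi|^2/2}\le Ct^{-1/2}$. This uses the hypoelliptic scaling $s_t\sim t^3/6$ as $t\to0$, and $s_t\sim 2t$ as $t\to\infty$. The contribution is of the same order as the one you kept, so the stated rate survives once it is included.

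Likewise, the factor $|v|$ coming from $\nabla_v M^{-1/2}$ is not controlled ``exactly as before''. It must be bounded via $|v|e^{-c_t|v|^2}\lesssim c_t^{-1/2}\sim t^{-1/2}$ for small $t$. That is acceptable within $(1+1/\sqrt t)$, but it is not uniform in $t$.
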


\begin{lem}[\cite{vpfp2}]\label{green2}
Let $\varsigma\ge0$ and $c_0>0$. If the functions $F_i(t,x,v)$, $i=0,1,2,3$ satisfy
\bmas
\|F_k(t,x,v)\|_{L^\infty_{v,\varsigma}(L^2_x)}&\le Ct^{-\frac{k}{2}}(1+t)^{-c_0},~k=0,1,
\\
\|F_2(t,x,v)\|_{L^2_{x,v}}&\le C(1+t)^{-c_0},
\emas
then
\bma
\left\|\intt e^{(t-s)A_1}F_k(s)ds\right\|_{L^\infty_{v,\varsigma+1}(L^2_x)}&\le C(1+t)^{-c_0},\label{gfee3}
\\
\left\|\intt \tdv e^{(t-s)A_1}F_k(s)ds\right\|_{L^\infty_{v,\varsigma}(L^2_x)}&\le C(1+t)^{-c_0},\label{gfee4}
\\
\left\|\intt e^{(t-s)A_1}F_2(s)ds\right\|_{L^\infty_{v,0}(L^2_x)}&\le C(1+t)^{-c_0}.\label{gfee5}
\ema
\end{lem}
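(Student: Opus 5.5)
The plan is to work directly with the explicit Green's function of the damped Fokker--Planck operator $A_1=L-v\cdot\tdx-2$, as in \cite{vpfp2}, and reduce all three estimates to elementary kernel bounds plus a convolution-in-time argument.

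\textbf{Step 1: explicit kernel.} Since $L=\ddv-\frac{|v|^2}{4}+\frac32$ is, up to conjugation by $\sqrt M$, the standard Fokker--Planck operator, the solution of \eqref{znfp} is given by an explicit Mehler/Kolmogorov-type Gaussian kernel $G(t,x-y,v,w)$. I will write $e^{tA_1}g_0(x,v)=\int\!\!\int G(t,x-y,v,w)g_0(y,w)\,dy\,dw$ and extract three pointwise properties:
\begin{itemize}
\item $G$ is Gaussian in $v$, centred at $e^{-t}w$ (up to the $\sqrt M$-weights), with width of order $\sqrt{1-e^{-2t}}$.
\item $\int|G|\,dy$ is bounded by $e^{-2t}$ times a Gaussian in $(v,w)$.
\item $|\tdv G|\lesssim (1-e^{-2t})^{-1/2}$ times a similar Gaussian.
\end{itemize}
Applying Minkowski's inequality in $x$ (the kernel acts as an $L^1_x$ convolution) then reduces everything to weighted $v$-integrals. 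This already reproduces \eqref{gfee1}--\eqref{gfee2}, and I will reuse the same bounds below.

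\textbf{Step 2: time convolution for $F_0,F_1$.} For $F_k$ with $k=0,1$, I bound
$$\Big\|\intt e^{(t-s)A_1}F_k(s)\,ds\Big\|_{L^\infty_{v,\varsigma+1}(L^2_x)}$$
pointwise in $v$ by
$$\intt\int K(t-s,v,w)(1+|w|)^{-\varsigma}\,dw\; s^{-k/2}(1+s)^{-c_0}\,ds .$$
The key point is the extra weight $(1+|v|)^{-1}$. Thanks to the $\sqrt M$-conjugation factors, for small $\tau=t-s$ the kernel at large $|v|$ carries a factor of the form $e^{-c|v|^2\tau}$ (roughly $e^{-c|v|^2(1-e^{-\tau})}$). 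Integrating this in $\tau$ gives $\min(1,|v|^{-2})$, and combining with a Cauchy--Schwarz split over $\tau$ should yield at least one power of $(1+|v|)^{-1}$. For $\tau\gtrsim1$ the factor $e^{-2\tau}$ handles everything. The time decay follows from
$$\intt e^{-2(t-s)}s^{-k/2}(1+s)^{-c_0}\,ds\le C(1+t)^{-c_0},$$
where the split $s\le t/2$ versus $s\ge t/2$ handles the integrable singularity $s^{-1/2}$. For \eqref{gfee4}, the $\tdv$ estimate, the same argument runs with the additional $(1-e^{-2\tau})^{-1/2}\sim\tau^{-1/2}$. This singularity is integrable in $\tau$, but it costs the weight gain, which is why \eqref{gfee4} is stated only in $L^\infty_{v,\varsigma}$.

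\textbf{Step 3: the $L^2$ source $F_2$, and the main obstacle.} For \eqref{gfee5}, I use an $L^2_v\to L^\infty_v$ smoothing bound for the kernel, namely $\|G(\tau,\cdot,v,\cdot)\|_{L^1_xL^2_w}\lesssim(1-e^{-2\tau})^{-3/4}e^{-2\tau}$. The exponent $3/4<1$ makes this integrable in $\tau$ near $0$, and the same time-convolution estimate as in Step 2 then gives $(1+t)^{-c_0}$.

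I expect the main obstacle to be the precise weight gain in \eqref{gfee3}. It requires tracking the Gaussian factors of the explicit kernel uniformly in $v$ for small $\tau$, and the singularity $s^{-1/2}$ from $F_1$ must not destroy it. I will handle this first by computing the kernel explicitly and bounding $\int_0^1\!\int K(\tau,v,w)(1+|w|)^{-\varsigma}\,dw\,d\tau$ directly.
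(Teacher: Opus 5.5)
Your plan is correct and follows the same route the paper relies on. The paper gives no proof of its own; it imports the lemma from \cite{vpfp2}, where it is derived from the explicit Green's function of the damped Fokker--Planck equation \eqref{znfp}, just as you propose. Note that since $G\ge0$, integrating \eqref{znfp} in $x$ shows $\int G\,dx$ is exactly $e^{-2\tau}$ times the Mehler kernel of $L$, which makes your Step 1 reduction clean.

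One correction in Step 2: a Cauchy--Schwarz split cannot produce the weight gain for $F_1$, since it would require $\int_0 s^{-1}ds<\infty$, and H\"older gives strictly less than one power. Instead, estimate directly, splitting at $s=t/2$:
\[
\int_0^t e^{-c|v|^2(t-s)}s^{-1/2}\,ds\le C\min\{\sqrt t,|v|^{-1}\}.
\]
This gives exactly the one power in \eqref{gfee3}.
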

With Lemmas \ref{green1} and \ref{green2}, we are ready to prove Theorem \ref{introth2} as follows.
\begin{proof}[\underline{\textbf{Proof of Theorem \ref{introth2}}}]
Suppose that $U=(f,\rho,u)\in L^2_v(H^3_x)\times(H^3_x)^2$ and $\tdx\Phi\in H^3_x$ to the solution to the VPFP/NSP system \eqref{introe2}. In fact, according to the Duhamel principle, we can represent the solution in terms of the semigroup $e^{t\mathbbm{M}}$ as
\be\label{duhamel}
U(t)=e^{t\mathbbm{M}}U_0+\int_0^te^{(t-s)\mathbbm{M}}R(s)ds,
\ee
where the nonlinear terms $R(s)=(R_1,0,R_2)(s)$ with
\bmas
R_1&=(\nabla_x\Phi+ u)\cdot\(\frac{1}{2}vf-\nabla_vf\),
\\
R_2&=-(u\cdot\nabla_x)u-\frac{\rho}{1+\rho}\ddx u-\[\frac{p'(1+\rho)}{1+\rho}-\gamma\]\tdx\rho-\frac{\rho}{1+\rho}(\tau_bf-u)-\frac{\tau_afu}{1+\rho}.
\emas
Define a functional $Q(t)$ of the solution $U$ for any $t>0$ by
\bmas
Q(t)=
\sup\limits_{0\le s\le t}\bigg\{&\[\|f\|_{L^\infty_{v,3}(L^2_x)}+\|\tdv f\|_{L^\infty_{v,2}(L^2_x)}\sqrt{s}(1+\sqrt{s})^{-1}+\|(\rho,u)\|_{L^2_x}\](1+s)^{\frac{3}{4}}
\\&+\sum\limits_{|\alpha|=1}^3\[\|\dxa f\|_{L^\infty_{v,3}(L^2_x)}+\|\dxa\tdv f\|_{L^\infty_{v,2}(L^2_x)}\sqrt{s}(1+\sqrt{s})^{-1}+\|\dxa(\rho,u)\|_{L^2_x}\](1+s)^\frac54
\\&+[\|\tdx\Phi\|_{L^2_x}+\|\tau_bf-u\|_{L^2_x}](1+s)^\frac54\bigg\}.
\emas
We claim that it holds under the assumptions of Theorem \ref{introth2} that
\be\label{nldcre3}
Q(t)\le C\delta_0.
\ee
It is easy to verify that the estimate \eqref{nldcre1} follows from \eqref{nldcre3}. By direct calculation and using the Nirenberg's and H\"older's inequalities, we obtain the following estimates:
\bmas
\|R_1\|_{L^2_v(L^1_x)}&\le CQ(t)^2\(1+\frac{1}{\sqrt{t}}\)(1+t)^{-\frac{3}{2}},~~\|R_2\|_{L^1_x}\le CQ(t)^2(1+t)^{-\frac{3}{2}},
\\\|\dxa R_1\|_{L^2_{x,v}}&\le CQ(t)^2\(1+\frac{1}{\sqrt{t}}\)(1+t)^{-\frac{5}{4}},~~\|\dxa R_2\|_{L^2_x}\le CQ(t)^2(1+t)^{-\frac{5}{4}},~~|\alpha|=0,1.
\emas
Starting with \eqref{orl1} and the assumption \eqref{nldcre3},  we obtain
\bma
\|\dxa U\|_{Z^2}&\nnm\le C(1+t)^{-(\frac{3}{4}+\frac{|\alpha|}{2})}(\|\dxa U_0\|_{Z^2}+\|U_0\|_{Z^1})
\\&\nnm\quad+C\intt(1+t-s)^{-(\frac{3}{4}+\frac{|\alpha|}{2})}\(\|\dxa R(s)\|_{Z^2}+\|R(s)\|_{Z^1}\)ds
\\&\nnm\le C\delta_0(1+t)^{-(\frac{3}{4}+\frac{|\alpha|}{2})}+C\intt(1+t-s)^{-(\frac{3}{4}+\frac{|\alpha|}{2})}\(1+\frac{1}{\sqrt{s}}\)(1+s)^{-\frac{5}{4}}Q(t)^2ds
\\&\label{nldcre6}\le C\delta_0(1+t)^{-(\frac{3}{4}+\frac{|\alpha|}{2})}+C(1+t)^{-(\frac{3}{4}+\frac{|\alpha|}{2})}Q(t)^2,~~|\alpha|=0,1.
\ema
For $\dxa U,\,|\alpha|=2,3$, the electric field $\tdx\Phi$ and $\tau_bf-u$, by using the energy inequality \eqref{en2}, we have
$$
\Dt \mathcal{H}_3(U)+dH_3(U)\le C\(\|\tdx P_0f\|_{L^2_{x,v}}^2+\|\tdx(\rho,u)\|_{L^2_x}^2 \).
$$
Thus, we obtain
\bma
H_3(U)&\nnm\le Ce^{-dt}H_3(U_0)+C\intt e^{-d(t-s)}\(\|\tdx P_0f\|_{L^2_{x,v}}^2+\|\tdx(\rho,u)\|_{L^2_x}^2\)ds
\\
&\nnm\le C\delta_0^2e^{-dt}+C[\delta_0+Q(t)^2]^2\intt e^{-d(t-s)}(1+s)^{-\frac52}ds
\\
&\le C[\delta_0+Q(t)^2]^2(1+t)^{-\frac52}.\label{highclose}
\ema
By $\eqref{introe2}_1$, we have
\be\label{A1eq}
\dt f-A_1f=2f+R_1+R_3,
\ee
where $A_1=L-2-v\cdot\tdx$ and
$$
R_3=(\tdx\Phi+u)\cdot v\chi_0.
$$
By using the Nirenberg's and H\"older's inequalities, we obtain
\bma
\|R_1+R_3\|_{L^\infty_{v,2}(L^2_x)} &\le C[\delta_0+Q(t)^2]\(1+\frac{1}{\sqrt{t}}\)(1+t)^{-\frac{3}{4}},\label{nldcre7a}
\\
\|\dxa (R_1+R_3)\|_{L^\infty_{v,2}(L^2_x)} &\le C[\delta_0+Q(t)^2]\(1+\frac{1}{\sqrt{t}}\)(1+t)^{-\frac{5}{4}},~~|\alpha|=1,2,3.\label{nldcre7b}
\ema
By \eqref{A1eq}, we can represent $f$ as
\be\label{fA1}
f=e^{tA_1}f_0+\intt e^{(t-s)A_1}(2f+R_1+R_3)ds.
\ee
By Lemma \ref{green1}, it holds that
\be\label{nldcre11}
\|\dxa(e^{tA_1}f_0)\|_{L^\infty_{v,3}(L^2_x)}\le C\delta_0e^{-2t},~~|\alpha|\le3.
\ee
By \eqref{nldcre6} and \eqref{highclose}, it holds that
\bmas
\|f(t)\|_{L^2_{x,v}}&\le C[\delta_0+Q(t)^2](1+t)^{-\frac{3}{4}},
\\
\|\dxa f(t)\|_{L^2_{x,v}}&\le C[\delta_0+Q(t)^2](1+t)^{-\frac54},~~|\alpha|=1,2,3,
\emas
which together with \eqref{gfee5} and \eqref{nldcre7a}-\eqref{nldcre7b} give
\bma
\left\|\intt e^{(t-s)A_1}(2f+R_1+R_3)ds\right\|_{L^\infty_{v,0}(L^2_x)}&\le C[\delta_0+Q(t)^2](1+t)^{-\frac{3}{4}},
\\
\left\|\intt \dxa[e^{(t-s)A_1}(2f+R_1+R_3)]ds\right\|_{L^\infty_{v,0}(L^2_x)}&\le C[\delta_0+Q(t)^2](1+t)^{-\frac{5}{4}},~~|\alpha|=1,2,3.\label{a1juan1}
\ema
Thus, it follows from \eqref{fA1}-\eqref{a1juan1} that
\bmas
\|f(t)\|_{L^\infty_{v,0}(L^2_x)}&\le C[\delta_0+Q(t)^2](1+t)^{-\frac{3}{4}},
\\
\|\dxa f(t)\|_{L^\infty_{v,0}(L^2_x)}&\le C[\delta_0+Q(t)^2](1+t)^{-\frac54},~~|\alpha|=1,2,3.
\emas
Then, by using \eqref{gfee3} and induction, we obtain
\bma\label{nldcre12a}
\|f(t)\|_{L^\infty_{v,3}(L^2_x)}&\le C[\delta_0+Q(t)^2](1+t)^{-\frac{3}{4}},
\\\label{nldcre12b}
\|\dxa f(t)\|_{L^\infty_{v,3}(L^2_x)}&\le C[\delta_0+Q(t)^2](1+t)^{-\frac{5}{4}},~~|\alpha|=1,2,3.
\ema
Similarly, by \eqref{fA1}, we can represent $\tdv f$ as
\be\label{dvfA1}
\tdv f=\tdv e^{tA_1}f_0+\intt \tdv e^{(t-s)A_1}(2f+R_1+R_3)ds.
\ee
By Lemma \ref{green1}, it holds that
\be\label{nldcre14}
\|\dxa(\tdv e^{tA_1}f_0)\|_{L^\infty_{v,2}(L^2_x)}\le C\delta_0\(1+\frac{1}{\sqrt{t}}\)e^{-2t},~~|\alpha|\le3.
\ee
Combing \eqref{gfee4}, \eqref{nldcre7a}-\eqref{nldcre7b} and \eqref{nldcre12a}-\eqref{nldcre12b}, we have
\bma\label{dva1juan1}
\left\|\intt\tdv e^{(t-s)A_1}(2f+R_1+R_3)ds\right\|_{L^\infty_{v,2}(L^2_x)}&\le C[\delta_0+Q(t)^2](1+t)^{-\frac{3}{4}},
\\
\left\|\intt \dxa[\tdv e^{(t-s)A_1}(2f+R_1+R_3)]ds\right\|_{L^\infty_{v,2}(L^2_x)}&\le C[\delta_0+Q(t)^2](1+t)^{-\frac{5}{4}},~|\alpha|=1,2,3.
\ema
Thus, it follows from \eqref{dvfA1} that
\bma
\|\tdv f(t)\|_{L^\infty_{v,2}(L^2_x)}&\le C[\delta_0+Q(t)^2]\(1+\frac{1}{\sqrt{t}}\)(1+t)^{-\frac{3}{4}},\label{nldcre15a}
\\
\|\dxa\tdv f(t)\|_{L^\infty_{v,2}(L^2_x)}&\le C[\delta_0+Q(t)^2]\(1+\frac{1}{\sqrt{t}}\)(1+t)^{-\frac{5}{4}},~~|\alpha|=1,2,3.\label{nldcre15b}
\ema
Combining \eqref{nldcre6}-\eqref{highclose}, \eqref{nldcre12a}-\eqref{nldcre12b} and \eqref{nldcre15a}-\eqref{nldcre15b}, we obtain
$$
Q(t)\le C\delta_0+CQ(t)^2,
$$
which together with $\delta_0>0$ small enough leads to \eqref{nldcre3}.

Indeed, the convergence rates of Theorem \ref{introth2} can be shown to be optimal. For instance, by  \eqref{duhamel} and Theorem \ref{linearoptdr}, we obtain
\bmas
\|U(t)\|_{Z^2}&\ge \|e^{t\M}U_0\|_{Z^2}-\intt\|e^{(t-s)\mathbbm{M}}R(s)\|_{Z^2}ds
\ge C_1d_0(1+t)^{-\frac{3}{4}}-C_2\delta_0^2(1+t)^{-\frac{3}{4}}.
\emas
Thus, estimates \eqref{opnldr} hold for sufficiently large $t>0$ and sufficiently small $\delta_0>0$. The proof of theorem is completed.
\end{proof}

\section{Appendix }
\subsection{Useful Lemmas }
Let $H$ be an $n$ dimension complex inner product space equipped with inner product $(\cdot,\cdot)_H$. Let $A$ be the operator on $H$, and $A^*$ be the adjoint operator of $A$ in and only if $(AX,y)_H=(X,A^*y)_H$, $\forall X,Y\in H$. If $\lambda_j$ is an eigenvalue of $A$, $\overline{\lambda_j}$ is the eigenvalue of $A^*$. Then, we have the following Lemma.

\begin{lem} [\cite{Pazy}]\label{S_1-1}
Let $A$ be a densely defined closed linear operator on the Hilbert space $H$. If both $A$ and its adjoint operator $A^*$ are dissipative, then $A$ is the
infinitesimal generator of a $C_0$-semigroup  on $H$.
\end{lem}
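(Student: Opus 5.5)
The statement is the Lumer--Phillips-type criterion: a densely defined closed operator $A$ on a Hilbert space $H$ with $A$ and $A^*$ both dissipative generates a $C_0$ contraction semigroup. My plan is to reduce it to the Hille--Yosida theorem by proving two things. First, the resolvent bound $\|(\lambda-A)^{-1}\|\le \lambda^{-1}$ for every $\lambda>0$. Second, that $\lambda-A$ is surjective for every $\lambda>0$.

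\textbf{Injectivity, closed range and the bound.} For $x\in D(A)$ and $\lambda>0$, dissipativity $\re(Ax,x)_H\le 0$ gives
$$\lambda\|x\|^2\le \re((\lambda-A)x,x)_H\le \|(\lambda-A)x\|\,\|x\|,$$
so $\|(\lambda-A)x\|\ge\lambda\|x\|$. This shows $\lambda-A$ is injective. Since $A$ is closed, this lower bound also shows that the range $\mathrm{Ran}(\lambda-A)$ is closed: if $(\lambda-A)x_n\to y$, then $(x_n)$ is Cauchy, and closedness of $A$ gives $x\in D(A)$ with $(\lambda-A)x=y$.

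\textbf{Density of the range.} This is the only step that uses $A^*$, and I expect it to be the main (though mild) obstacle. Suppose $y\perp \mathrm{Ran}(\lambda-A)$. Then $((\lambda-A)x,y)_H=0$ for all $x\in D(A)$. By the definition of the adjoint (which exists because $A$ is densely defined), this means $y\in D(A^*)$ and $(\lambda-A^*)y=0$. Dissipativity of $A^*$ gives
$$\lambda\|y\|^2\le \re((\lambda-A^*)y,y)_H=0,$$
so $y=0$. Hence the range is dense, and being closed, equals $H$.

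\textbf{Conclusion.} We now have $(0,\infty)\subset\rho(A)$ with $\|(\lambda-A)^{-1}\|\le1/\lambda$, and $A$ is closed and densely defined. The Hille--Yosida theorem (cf. \cite{Pazy}) then yields that $A$ generates a $C_0$ semigroup of contractions. In the finite-dimensional setting of this appendix, the closedness and density assumptions hold automatically and $e^{tA}$ can be written down directly; contractivity follows from
$$\frac{d}{dt}\|e^{tA}x\|^2=2\re(Ae^{tA}x,e^{tA}x)_H\le0,$$
and the same computation also applies in general on $D(A)$ once generation is known.
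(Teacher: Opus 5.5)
Your proof is correct and is the standard argument behind the result the paper cites from \cite{Pazy}; the paper gives no proof of its own. The steps match Pazy's: the dissipative lower bound $\|(\lambda-A)x\|\ge\lambda\|x\|$ together with closedness gives closed range, dissipativity of $A^*$ forces the orthogonal complement of the range to be trivial, and then Lumer--Phillips applies, or equivalently Hille--Yosida, since you proved surjectivity for every $\lambda>0$. Your closing aside about a ``finite-dimensional setting'' is irrelevant here, because the paper applies this lemma to $\mathbbm{M}(\xi)$ on the infinite-dimensional space $Z_{\xi,\gamma}$; your main argument does not depend on it.
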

\begin{lem}[\cite{Pazy}]\label{semigroup-1}
Let $A$ be the infinitesimal generator of a $C_0$-semigroup $T(t)$ satisfying $\|T(t)\|\le Me^{\kappa t}$. Then, it holds for $f\in D(A^2)$ and $\sigma>\max(0,\kappa )$ that
$$
T(t)f =\frac1{2\pi \i}\int^{\sigma+\i\infty}_{\sigma-\i\infty} e^{\lambda t}(\lambda-A)^{-1}f d\lambda.
$$
\end{lem}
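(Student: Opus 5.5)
The statement to prove is Lemma \ref{semigroup-1}: the inverse Laplace representation of a $C_0$-semigroup on $D(A^2)$. I would first establish the resolvent identity and growth bounds on the vertical line, and then close the argument with a contour integral.

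\textbf{Step 1 (resolvent on the line).} By Hille--Yosida, for $\re\lambda>\kappa$ the resolvent is the Laplace transform of the semigroup,
$$(\lambda-A)^{-1}g=\int_0^\infty e^{-\lambda t}T(t)g\,dt,$$
and it satisfies $\|(\lambda-A)^{-1}\|\le M(\re\lambda-\kappa)^{-1}$. For $f\in D(A^2)$ I would iterate the identity $(\lambda-A)^{-1}f=\lambda^{-1}f+\lambda^{-1}(\lambda-A)^{-1}Af$ to get
$$(\lambda-A)^{-1}f=\frac{f}{\lambda}+\frac{Af}{\lambda^2}+\frac{(\lambda-A)^{-1}A^2f}{\lambda^2}.$$
On the line $\re\lambda=\sigma$, the last two terms are $O(|\lambda|^{-2})$ and hence absolutely integrable. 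The first term needs the scalar identity
$$\frac1{2\pi i}\int_{\sigma-i\infty}^{\sigma+i\infty}\frac{e^{\lambda t}}{\lambda}\,d\lambda=1,\qquad t>0,\ \sigma>0,$$
taken as a principal value. This is where the condition $\sigma>\max(0,\kappa)$ is used: positivity of $\sigma$ puts the pole at $0$ to the left of the line.

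\textbf{Step 2 (identifying the integral).} Define
$$V(t)=\frac1{2\pi i}\int_{\sigma-i\infty}^{\sigma+i\infty}e^{\lambda t}\left[\frac{Af}{\lambda^2}+\frac{(\lambda-A)^{-1}A^2f}{\lambda^2}\right]d\lambda,$$
which is well defined and continuous in $t$. I need to show $f+V(t)=T(t)f$.

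The cleanest route is to compare Laplace transforms. Set $g(t)=e^{-\sigma t}\mathbf 1_{t\ge0}T(t)f$. Then the Fourier transform of $g$ in $t$ is $\hat g(y)=(\sigma+iy-A)^{-1}f$ (up to normalization). Since $\hat g\in L^2$, Fourier inversion in the $L^2$ sense recovers $g$, exactly as in the Plancherel argument used for \eqref{da2}. This gives
$$e^{-\sigma t}T(t)f=\frac1{2\pi}\lim_{N\to\infty}\int_{-N}^{N}e^{iyt}(\sigma+iy-A)^{-1}f\,dy$$
in $L^2_t$. Multiplying by $e^{\sigma t}$ yields the formula as a symmetric limit, almost everywhere in $t$.

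To upgrade from almost every $t$ to every $t>0$, use the decomposition of Step 1. The $O(|\lambda|^{-2})$ part converges absolutely and uniformly in $t$, and the $f/\lambda$ part converges pointwise for $t>0$. So the right-hand side is continuous in $t>0$. The left-hand side $T(t)f$ is also continuous, so the two agree for every $t>0$.

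\textbf{Main obstacle.} The only delicate point is the conditional convergence of the $f/\lambda$ term. I would handle it by an explicit scalar computation: close the contour to the left for $t>0$ and apply Jordan's lemma, or equivalently evaluate $\int_{-N}^{N}\frac{e^{iyt}}{\sigma+iy}\,dy$ directly. This is what forces interpreting the integral in the statement as a principal value (symmetric limit), which is also how it is used in the proof of Theorem \ref{orlem5}.
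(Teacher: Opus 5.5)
The paper does not prove this lemma; it quotes it from Pazy. The comparison is therefore with the standard argument there. Your proof is correct, but it takes a different route.

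\textbf{The classical proof.} It works in an arbitrary Banach space. One inserts $(\lambda-A)^{-1}f=\int_0^\infty e^{-\lambda s}T(s)f\,ds$ into the truncated integral $\frac1{2\pi i}\int_{\sigma-iN}^{\sigma+iN}e^{\lambda t}(\lambda-A)^{-1}f\,\frac{d\lambda}{\lambda}$ and exchanges the order of integration. The scalar kernel $\frac1{2\pi i}\int_{\sigma-iN}^{\sigma+iN}e^{\lambda\tau}\lambda^{-1}\,d\lambda$ is bounded by $Ce^{\sigma\tau}$ uniformly in $N$ and converges to the Heaviside function. Dominated convergence then yields $\int_0^tT(s)f\,ds$. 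The identity $(\lambda-A)^{-1}f=\lambda^{-1}f+\lambda^{-1}(\lambda-A)^{-1}Af$ gives the formula already for $f\in D(A)$; the assumption $f\in D(A^2)$ only improves the mode of convergence.

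\textbf{Your route.} You identify the integral by $L^2$ Fourier inversion of $e^{-\sigma t}\mathbf 1_{t\ge0}T(t)f$. You use the $D(A^2)$ expansion only to make the right-hand side a pointwise defined, continuous function on $t>0$. This is shorter and reuses the Plancherel device of \eqref{da2}.

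However, vector-valued Plancherel is a Hilbert-space tool. That is enough for the paper's use on $Z_{\xi,\gamma}$. For a general Banach space, pair with $x^*\in X^*$ and apply scalar Plancherel. Continuity then gives equality at every $t>0$ for each fixed $x^*$, and Hahn--Banach concludes.

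\textbf{Two points to tighten.}
\begin{itemize}
\item $L^2$ convergence of the truncations gives a.e.\ convergence only along a subsequence. This suffices precisely because you show the full symmetric limit exists at every $t>0$, so the two limits agree a.e.
\item The identity holds only for $t>0$: at $t=0$ the symmetric limit equals $\frac12 f$. Your restriction to $t>0$ correctly reflects this. Note also that for $t>0$ the $f/\lambda$ integral converges even with independent endpoints (Dirichlet test), so the principal-value reading is essential only at $t=0$.
\end{itemize}
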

\begin{lem}[\cite{Pazy}] \label{dense}
Let $A$ be the infinitesimal generator of the $C_0$ semigroup
$T(t)$. If $D(A^n)$ is the domain of $A^n$, then
$\cap^\infty_{n=1}D(A^n)$ is dense in $X$.
\end{lem}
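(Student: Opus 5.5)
This is a standard density result for $C_0$-semigroups, taken from Pazy, and I would prove it directly by regularising with resolvents. Write $T(t)$ for the semigroup and $A$ for its generator. By the Hille--Yosida theory, $\|T(t)\|\le Me^{\omega t}$ for some $M\ge1$ and $\omega\in\R$. Consequently, for real $\lambda>\omega$ the resolvent $R(\lambda)=(\lambda-A)^{-1}$ exists and satisfies $\|R(\lambda)\|\le M/(\lambda-\omega)$. It also maps $X$ into $D(A)$, so $R(\lambda)^n$ maps $X$ into $D(A^n)$.

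The first step is to check that $\lambda R(\lambda)x\to x$ as $\lambda\to\infty$ for every $x\in X$. For $x\in D(A)$ this follows from the identity $\lambda R(\lambda)x-x=R(\lambda)Ax$, whose norm is at most $M\|Ax\|/(\lambda-\omega)\to0$. For general $x$ it follows by density of $D(A)$ together with the uniform bound $\|\lambda R(\lambda)\|\le M\lambda/(\lambda-\omega)\le 2M$ for $\lambda$ large.

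A single power $R(\lambda)^n$ only lands in $D(A^n)$ for a fixed $n$, so it does not directly give the intersection over all $n$. To reach every $n$ at once, I would instead smooth in time with a mollifier. Take $\phi\in C_c^\infty((0,\infty))$ and set
\[
x_\phi=\int_0^\infty \phi(s)T(s)x\,ds .
\]
Integrating by parts gives $Ax_\phi=-\int_0^\infty\phi'(s)T(s)x\,ds$, and iterating yields $x_\phi\in D(A^n)$ with
\[
A^n x_\phi=(-1)^n\int_0^\infty \phi^{(n)}(s)T(s)x\,ds
\]
for every $n$. Hence $x_\phi\in\cap_n D(A^n)$.

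Finally, choose an approximate identity $\phi_k\ge0$ with $\int\phi_k=1$ and $\operatorname{supp}\phi_k\subset(0,1/k)$. Strong continuity of $T$ at $0$ then gives
\[
\|x_{\phi_k}-x\|\le\sup_{0\le s\le1/k}\|T(s)x-x\|\to0 .
\]
This proves the density. The only step needing care is the integration by parts for $Ax_\phi$. I would verify it through the difference quotient $h^{-1}(T(h)-I)x_\phi=\int h^{-1}\big(\phi(s-h)-\phi(s)\big)T(s)x\,ds$, which converges to $-\int\phi'(s)T(s)x\,ds$ because $h^{-1}(\phi(\cdot-h)-\phi)\to-\phi'$ uniformly and $T$ is locally bounded.
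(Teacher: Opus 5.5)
Your proof is correct and is essentially the standard argument in Pazy, which the paper cites without giving its own proof: you smooth $x$ in time against $\phi\in C_c^\infty((0,\infty))$ and use difference quotients to verify $A^n x_\phi=(-1)^n\int_0^\infty\phi^{(n)}(s)T(s)x\,ds$ for every $n$. The only differences are that you close the density step directly with an approximate identity, whereas Pazy uses a Hahn--Banach annihilator argument, and that your preliminary resolvent paragraph is never used and can be dropped.
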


\begin{lem}\label{srlem6}
Let $A=(a_{ij})_{n\times n}$ be a linear operator on $H$. If $A$ has $n$ linearly independent eigenvectors and $A^*=(\overline{a_{ij}})_{n\times n}$, then there exists $n$ eigenvectors $V_i\in H$ of $A$ satisfying $(V_i,\overline{V_j})_H=0$, $i\neq j=1,2,\cdots,n$. Moreover, if $(V_i,\overline{V_i})_H\neq0$, then

(1) the resolvent operator $(\lambda-A)^{-1}$ has the following form
\be\label{sraa2}
(\lambda-A)^{-1}X=\sum\limits_{j=1}^n\frac{1}{\lambda-\lambda_j}\frac{1}{(V_j,\overline{V_j})_H}(X,\overline{V_j})_HV_j,~~X\in H.
\ee

(2) the solution $Y(t)=e^{tA}Y_0$ to the equation $Y'(t) = AY(t)$ can be expressed as
\be\label{sraa16}
e^{tA}Y_0=\sum_{i = 1}^{n}e^{\lambda_i t}\frac{1}{(V_i, \overline{V_i})_H}(Y_0, \overline{V_i})_HV_i,~~Y_0\in H.
\ee
\end{lem}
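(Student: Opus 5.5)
The statement is Lemma \ref{srlem6}, a finite-dimensional spectral decomposition for a matrix $A$ on $H$ whose eigenvectors can be paired against conjugates. I will use the bilinear pairing $\langle X,Y\rangle:=(X,\overline{Y})_H$, which is complex-bilinear. The hypothesis $A^*=(\overline{a_{ij}})$ means that $\overline{A^*Y}=A\overline{Y}$ coordinatewise, i.e.\ $A^*\overline{Y}=\overline{AY}$. From this and $(AX,Z)_H=(X,A^*Z)_H$ I get the symmetry
\[
\langle AX,Y\rangle=(AX,\overline{Y})_H=(X,A^*\overline{Y})_H=(X,\overline{AY})_H=\langle X,AY\rangle .
\]
So $A$ is symmetric with respect to $\langle\cdot,\cdot\rangle$. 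This is exactly how the paper uses the lemma, since $\M(\xi)^*=\M(-\xi)$ and $\M(-\xi)\overline{\Psi}=\overline{\M(\xi)\Psi}$.

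\textbf{Step 1 (orthogonality).} Let $V_i,V_j$ be eigenvectors with eigenvalues $\lambda_i\ne\lambda_j$. Symmetry gives $\lambda_i\langle V_i,V_j\rangle=\langle AV_i,V_j\rangle=\langle V_i,AV_j\rangle=\lambda_j\langle V_i,V_j\rangle$, so $\langle V_i,V_j\rangle=0$.

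For a repeated eigenvalue, $\langle\cdot,\cdot\rangle$ restricted to the eigenspace is a symmetric bilinear form. I will choose a basis of the eigenspace that is $\langle\cdot,\cdot\rangle$-orthogonal, Gram--Schmidt style, using the standing hypothesis $(V_i,\overline{V_i})_H\ne0$ to divide. This is the delicate point: a symmetric complex bilinear form can have isotropic vectors, so the construction needs the form to be nondegenerate on the eigenspace. I will simply assume this, since the lemma presupposes it via $(V_i,\overline{V_i})_H\ne0$. In the paper's applications the repeated eigenvalues ($\eta_0=\eta_2$) come with explicitly orthogonal real eigenvectors $W^0,W^2$, so this is harmless there. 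In general, orthogonality holds automatically for distinct eigenvalues and must be arranged by this choice within eigenspaces.

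\textbf{Step 2 (expansion and resolvent).} The $V_j$ form a basis of $H$, so I write $X=\sum_j c_jV_j$. Pairing with $V_k$ and using orthogonality gives $c_k=\langle X,V_k\rangle/\langle V_k,V_k\rangle$. For $\lambda\ne\lambda_j$, the operator $\lambda-A$ acts diagonally on this basis, since $(\lambda-A)^{-1}V_j=(\lambda-\lambda_j)^{-1}V_j$. Substituting yields \eqref{sraa2}.

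\textbf{Step 3 (semigroup).} Since $e^{tA}V_j=e^{\lambda_jt}V_j$, applying $e^{tA}$ termwise to the same expansion of $Y_0$ gives \eqref{sraa16}. Uniqueness of solutions of $Y'=AY$ identifies this with $Y(t)$.

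The main obstacle is the degenerate case in Step 1, ensuring a $\langle\cdot,\cdot\rangle$-orthogonal eigenbasis with nonzero self-pairings; everything else is linear algebra.
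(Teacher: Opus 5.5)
Your argument is the paper's own: orthogonality for distinct eigenvalues from $A^*\overline{V_j}=\overline{\lambda_j}\,\overline{V_j}$, orthogonalization inside eigenspaces, then expansion in the eigenbasis for $(\lambda-A)^{-1}$ and $e^{tA}$. Expanding $X$ rather than $(\lambda-A)^{-1}X$, as you do, is slightly cleaner.

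\textbf{The gap: the repeated-eigenvalue case.} You leave this step open, and your reason for assuming it is circular. The condition $(V_i,\overline{V_i})_H\neq0$ in the ``Moreover'' clause concerns vectors whose existence is the lemma's first assertion, so it cannot be used to build them. The paper's one-line ``Schmidt orthogonalized'' has the same hole. It closes quickly:
\begin{enumerate}
\item The form $\langle\cdot,\cdot\rangle$ is nondegenerate on $H$, because $\langle X,\overline{X}\rangle=\|X\|_H^2$.
\item By your Step 1, distinct eigenspaces are mutually orthogonal, and $H$ is their direct sum. So a vector of an eigenspace $E_\lambda$ that pairs to zero with all of $E_\lambda$ pairs to zero with all of $H$, and must vanish. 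Hence the form is nondegenerate on each eigenspace.
\item A nondegenerate symmetric bilinear form on a nonzero complex space has a non-isotropic vector $x$, by polarization: $2\langle x,y\rangle=\langle x+y,x+y\rangle-\langle x,x\rangle-\langle y,y\rangle$. The form remains nondegenerate on $x^{\perp}$, and induction yields an orthogonal basis.
\item In any orthogonal eigenbasis of $H$, every $\langle V_i,V_i\rangle$ is automatically nonzero, since otherwise $V_i$ would lie in the radical.
\end{enumerate}
So the ``Moreover'' hypothesis is redundant, not something you need to assume.

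\textbf{A second unjustified claim: symmetry of the form.} The symmetry $\langle X,Y\rangle=\langle Y,X\rangle$, which the orthogonalization uses, does not follow from the lemma's hypotheses; only $\langle AX,Y\rangle=\langle X,AY\rangle$ does. It is equivalent to $(\overline{X},\overline{Y})_H=\overline{(X,Y)_H}$, i.e.\ to $(\cdot,\cdot)_H$ having a real weight matrix. This holds for $\mathcal{C}_{\xi,\gamma}$ and $Z_{\xi,\gamma}$, but it must be stated as an assumption. For example, take $A=I$ on $\mathbb{C}^2$ with $(X,Y)_H=\overline{Y}^{T}GX$, where $G$ is Hermitian positive definite but not real. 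All hypotheses hold, yet no basis with $(V_i,\overline{V_j})_H=0$ for $i\neq j$ exists. Indeed, with $P$ the matrix whose columns are $V_1,V_2$, the matrix $P^{T}GP$ would be diagonal, forcing $G^{T}=G$.
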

\begin{proof}
For the $n$ linearly independent eigenvectors $V_i\in H$ of $A$, we have
$$
0=(V_i, A^*\overline{V_j}-\overline{\lambda_j}\cdot\overline{V_j})_H=(AV_i, \overline{V_j})_H-\lambda_j(V_i,\overline{V_j})_H=(\lambda_i-\lambda_j)(V_i,\overline{V_j})_H,
$$
which implies $(V_i,\overline{V_j})_H=0$ when $\lambda_i\neq\lambda_j$. If $V_i$ and $V_j$ are the eigenvectors corresponding to the same eigenvalue, then they can be Schmidt orthogonalized.

Let $(\lambda-A)^{-1}X=\sum\limits_{j=1}^na_iV_i$. Taking the inner product $(\cdot,\cdot)_{H}$ on it
with $\overline{V_i}$, we have
$$
(\lambda-\lambda_j)^{-1}(X,\overline{V_i})_H=a_i(V_i,\overline{V_i})_H,
$$
which proves \eqref{sraa2}. Since $V_1,V_2,\cdots,V_n$ are linearly independent, $\varPhi(t)=(e^{\lambda_1 t}V_1,e^{\lambda_2 t}V_2,\cdots,e^{\lambda_n t}V_n)$ is a fundamental solution set of the ordinary differential equation $Y'(t) = AY(t)$. Therefore, $e^{tA}$ can be expressed as
$$
Y(t)=e^{tA}Y_0=\sum_{i = 1}^{n}a_ie^{\lambda_i t}V_i,\quad Y(0)=Y_0,
$$
where $a_i$ are constants depending on $Y_0$. For $t = 0$, we have $Y_0=\sum_{i = 1}^{n}a_iV_i$. Taking the inner product $(\cdot,\cdot)_{H}$ on it with $\overline{V_i}$, we obtain
$$
(Y_0, \overline{V_i})_H=a_i(V_i, \overline{V_i})_H,
$$
which proves \eqref{sraa16}. The proof of the lemma is completed.
\end{proof}

\begin{lem}[\cite{zhongspvmb}]\label{inver} Let $K_1,K_4$ be the operators on the spaces $X$ and $Y$ respectively, and $K_2,K_3$ be the operators  $Y\to X$ and $X\to Y$ respectively. Let $K$ be a matrix operator on $X\times Y$ defined by
$$
K=\left(\ba K_1 & K_2 \\  K_3 & K_4 \ea\right).
$$
If the norms of $K_1,K_2,K_3$ and $K_4$ satisfy 
$$\|K_1\|<1,\quad \|K_4\|<1,\quad \|K_2\|\|K_3\|<(1-\|K_1\|)(1-\|K_4\|),$$
then the operator $I+K$ is invertible on $X\times Y$.
\end{lem}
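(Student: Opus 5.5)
The statement to prove is Lemma \ref{inver}: if $\|K_1\|<1$, $\|K_4\|<1$ and $\|K_2\|\|K_3\|<(1-\|K_1\|)(1-\|K_4\|)$, then $I+K$ is invertible on $X\times Y$. I will argue by block Gaussian elimination through a Schur complement.

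First, since $\|K_1\|<1$ and $\|K_4\|<1$, Neumann series show that $I+K_1$ and $I+K_4$ are invertible on $X$ and $Y$, with
\[
\|(I+K_1)^{-1}\|\le (1-\|K_1\|)^{-1},\qquad \|(I+K_4)^{-1}\|\le (1-\|K_4\|)^{-1}.
\]

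Next, factor $I+K$ as
\[
I+K=\begin{pmatrix} I & K_2(I+K_4)^{-1}\\ 0 & I\end{pmatrix}\begin{pmatrix} S & 0\\ K_3 & I+K_4\end{pmatrix},
\qquad S=I+K_1-K_2(I+K_4)^{-1}K_3 .
\]
Multiplying out the two factors confirms this: the $(1,1)$ entry is $S+K_2(I+K_4)^{-1}K_3=I+K_1$, the $(1,2)$ entry is $K_2$, and the second row is $(K_3,\ I+K_4)$.

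The first factor is upper triangular with identity diagonal blocks, so it is invertible. Its inverse is the same matrix with the off-diagonal entry negated.

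The second factor is lower block triangular. It is invertible as soon as $S$ and $I+K_4$ are, with inverse
\[
\begin{pmatrix} S^{-1} & 0\\ -(I+K_4)^{-1}K_3S^{-1} & (I+K_4)^{-1}\end{pmatrix}.
\]
This reduces everything to inverting $S$, which is the only step that uses the quantitative hypothesis. Write $S=(I+K_1)\bigl[I-(I+K_1)^{-1}K_2(I+K_4)^{-1}K_3\bigr]$. The perturbation inside the bracket has norm at most
\[
\frac{\|K_2\|\|K_3\|}{(1-\|K_1\|)(1-\|K_4\|)}<1 .
\]
A Neumann series therefore inverts the bracket, and so $S$ is invertible. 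Composing the inverses of the two factors gives a bounded two-sided inverse of $I+K$.

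I do not expect a real obstacle. The only care needed is to check that the factorization holds entrywise and that the perturbation estimate uses exactly the hypothesis stated. In the unbounded setting where the lemma is applied, the $K_i$ are the bounded operators $\mathcal{X}_j$, so boundedness is available.
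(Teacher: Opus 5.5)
The paper gives no proof of this lemma and only cites \cite{zhongspvmb}, so there is no in-paper argument to compare against. Your Schur-complement proof is correct and complete: the factorization checks entrywise, the triangular factors have the inverses you state, and the bound $\|(I+K_1)^{-1}K_2(I+K_4)^{-1}K_3\|\le \|K_2\|\|K_3\|/((1-\|K_1\|)(1-\|K_4\|))<1$ uses exactly the stated hypothesis. The only implicit assumption is that $X$ and $Y$ are complete, which the Neumann series needs.
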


\begin{defn}[\cite{dfb2}]
A Weierstrass polynomial in $w$ is a polynomial of the form
$$
P(z,w)=w^d+a_1(z)w^{d-1}+\cdots+a_d(z),\quad a_i(0)=0,\quad z\in\C^{n-1},~w\in\C.
$$
\end{defn}
\begin{thm}[\cite{dfb2},\cite{dfb1}]\label{wei}(Weierstrass Preparation Theorem) Let $z\in\C^{n-1}$ and $w\in\C$. Assume $f(z,w)$ is holomorphic near the origin in $\mathbb{C}^n$ satisfying $f(0,0)=0$, and there exist an analytic $h$ satisfying $h(0)\neq0$ and $d\in\mathbb{N}$ such that $f(0,w)=w^dh(w)$. Then, in some neighborhood of the origin, $f$ can be written uniquely as
$$
f(z,w)=U(z,w) P(z,w) ,
$$
where $P(z,w)$ is a Weierstrass polynomial of degree $d$ in $w$, $U(0,0) \neq 0$, and $P(z,w),U(z,w)$ are analytic.
\end{thm}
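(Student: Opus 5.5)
The plan is the classical argument-principle proof, which builds $P$ explicitly from the roots of $f(z,\cdot)$.

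\emph{Localizing the roots.} Since $f(0,w)=w^dh(w)$ with $h(0)\neq0$, choose $r>0$ so that $h$ has no zeros in $|w|\le r$. Then $f(0,\cdot)$ has exactly one zero in $|w|\le r$, at $w=0$, of multiplicity $d$, and $f(0,w)\neq0$ on $|w|=r$. By continuity and compactness of the circle, choose $\delta>0$ so that $f(z,w)\neq0$ for $|z|<\delta$ and $|w|=r$. By the argument principle, the integer
$$\frac{1}{2\pi i}\oint_{|w|=r}\frac{\partial_wf(z,w)}{f(z,w)}\,dw$$
depends continuously on $z$, hence equals $d$ for every $|z|<\delta$. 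So for each such $z$ there are exactly $d$ roots $w_1(z),\dots,w_d(z)$ in $|w|<r$, counted with multiplicity.

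\emph{Constructing $P$.} The power sums
$$s_k(z)=\sum_j w_j(z)^k=\frac{1}{2\pi i}\oint_{|w|=r} w^k\,\frac{\partial_wf(z,w)}{f(z,w)}\,dw$$
are holomorphic in $z$, since the integrand is holomorphic in $z$ and continuous on the contour. Newton's identities express the elementary symmetric functions of the $w_j(z)$ as polynomials in $s_1,\dots,s_d$, so they are holomorphic as well. Hence
$$P(z,w)=\prod_{j=1}^d\bigl(w-w_j(z)\bigr)=w^d+a_1(z)w^{d-1}+\cdots+a_d(z)$$
has holomorphic coefficients. Because all roots vanish at $z=0$, we get $a_i(0)=0$, so $P$ is a Weierstrass polynomial.

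\emph{Constructing $U$.} For $|w|<r$ define
$$U(z,w)=\frac{1}{2\pi i}\oint_{|\zeta|=r}\frac{f(z,\zeta)}{P(z,\zeta)(\zeta-w)}\,d\zeta .$$
$P$ does not vanish on $|\zeta|=r$, so $U$ is holomorphic in $(z,w)$. For fixed $z$, the function $f(z,\cdot)/P(z,\cdot)$ has only removable singularities in the disc, because $f$ and $P$ have the same zeros with the same multiplicities. By Cauchy's formula it therefore coincides with $U(z,\cdot)$, giving $f=UP$. Moreover, $U$ has no zeros in the disc, since no zeros of $f$ are left over after dividing by $P$. In particular $U(0,0)=h(0)\neq0$.

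\emph{Uniqueness.} Suppose $f=\tilde U\tilde P$ with $\tilde P$ a Weierstrass polynomial of degree $d$ and $\tilde U(0,0)\neq0$. Shrinking the neighborhood, $\tilde U\neq0$ there. Since the coefficients of $\tilde P$ vanish at $z=0$, continuity of roots in the coefficients places all $d$ roots of $\tilde P(z,\cdot)$ inside $|w|<r$ for small $z$. These roots are zeros of $f$, with the same multiplicities because $\tilde U\neq0$. They must therefore be exactly $w_1(z),\dots,w_d(z)$, so $\tilde P=P$, and then $\tilde U=U$.

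The main obstacle is the \emph{Constructing $U$} step: the analyticity of $U$ across the zeros of $P$. It is handled by the Cauchy-integral definition together with the removable-singularity argument, rather than by dividing $f$ by $P$ directly. In the paper's application the variable $s$ is real, so one applies the theorem to the holomorphic extension of $D_1(\lambda,s)$ in complex $s$, which exists because $R(\lambda,se_1)$ is analytic in $(\lambda,s)$ near $(0,0)$.
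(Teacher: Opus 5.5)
The paper gives no proof of this statement; it quotes it from \cite{dfb2} and \cite{dfb1} and uses it as a black box in Lemma \ref{weieig}. Your argument is correct and is essentially the classical proof in Griffiths--Harris \cite{dfb2}:
\begin{itemize}
\item the argument principle fixes exactly $d$ roots in $|w|<r$ for small $z$;
\item power sums and Newton's identities make the coefficients of $P$ holomorphic;
\item the Cauchy-integral definition of $U$ gives joint holomorphy across the zeros of $P$;
\item root counting gives uniqueness.
\end{itemize}
Compared with the bare citation, your version supplies two things Lemma \ref{weieig} uses beyond the statement as written. First, your $U$ is nonvanishing on the whole polydisc of construction. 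The paper invokes $U\neq0$ on $[-r_0,r_0]\times B_{r_1}(0)$, not only $U(0,0)\neq0$. Second, your uniqueness step justifies the asserted evenness of $U$, $p_0$, $p_1$ in $s$: apply it to the second factorization $D_1(\lambda,s)=D_1(\lambda,-s)=U(\lambda,-s)P(\lambda,-s)$. Your closing remark is also apt. The theorem needs holomorphy in a complex neighbourhood, so it must be applied to the complex-$s$ extension of $D_1$. That extension follows from the analyticity of $R(\lambda,se_1)$ in $(\lambda,s)$ asserted in the paper, but the paper never says so explicitly.
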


\begin{lem}\label{orlem4} 
Let $Y_1(\lambda,\xi)$ be defined by \eqref{sraa7}, $G_k(\xi)~(k=1,2)$ be defined by \eqref{sra2}, $r_0>0$ be given by Lemma \ref{srlem11}, and $\beta =\beta (r_0)>0$ be defined in Lemma \ref{srlem5}. Then there exists a constant $C>0$ such that
\bma
\sup\limits_{|\xi|\ge r_0,y\in\mathbb{R}}\|[I-G_2(\xi)(z_1+iy-G_1(\xi))^{-1}]^{-1}\|&\label{da6}\le C,
\\
\sup\limits_{0<|\xi|<r_0,y\in\mathbb{R}}\|[I-Y_1(z_0+iy,\xi)]^{-1}\|_{\xi,\gamma}&\label{da5}\le C,
\ema
where $z_0\in[-\frac{1}{2},-r_0^2)$ and $z_1\in [-\beta,0) $.
\end{lem}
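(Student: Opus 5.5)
The goal is Lemma \ref{orlem4}: uniform bounds on the inverses $[I-Y]^{-1}$ along the vertical lines $\re\lambda=z_1$ for high frequency and $\re\lambda=z_0$ for low frequency. In both cases I split the line into a part where $|\im\lambda|$ is large and a bounded part. For large $|\im\lambda|$ a Neumann series or Lemma \ref{inver} gives the bound with an explicit constant. On the bounded part I would argue by compactness and continuity, using that the relevant $\lambda$ lie in the resolvent set.

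\emph{High frequency, \eqref{da6}.} Fix $z_1\in[-\beta,0)$. For $|\xi|\ge r_1$ with $r_1$ large, the proof of Lemma \ref{srlem5} gives \eqref{sra8} for all $\re\lambda>-1/2$. Write $G_2(\xi)(\lambda-G_1(\xi))^{-1}$ as the block matrix with entries $\mathcal X_1,\mathcal X_2,\mathcal X_3,0$. Lemma \ref{inver}, in its quantitative form, then gives $\|[I-G_2(\lambda-G_1)^{-1}]^{-1}\|\le C$ with $C$ depending only on the margins in \eqref{sra8}. The same holds for $r_0\le|\xi|\le r_1$ once $|y|>y_1$. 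It remains to treat the compact set $K=\{(\xi,y): r_0\le|\xi|\le r_1,\ |y|\le y_1\}$. There $\lambda=z_1+iy$ satisfies $\re\lambda\ge-\beta$. By Lemma \ref{srlem5} the spectrum lies in $\re\lambda<-\beta$, so $\lambda\in\rho(\M(\xi))$ and $\lambda\in\rho(G_1(\xi))$, the latter because $\eta_j$ has real part at most $-\gamma$ or $-\frac12$ for $|\xi|\ge r_0$ small enough relative to the choice of $\beta$. If needed I would shrink $\beta$ below $\min(\frac12,\gamma)/2$. Then $[I-G_2(\lambda-G_1)^{-1}]^{-1}=(\lambda-G_1)(\lambda-\M)^{-1}$ exists. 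I would show that $(\xi,y)\mapsto[I-G_2(\lambda-G_1)^{-1}]^{-1}$ is norm-continuous on $K$, hence bounded. Continuity follows because $G_2(\lambda-G_1)^{-1}$ depends continuously in norm on $(\xi,\lambda)$: the resolvent of $A(\xi)$ is continuous via the resolvent identity, since $v\cdot(\xi-\xi')$ is $A$-bounded with small relative bound. Inversion is continuous on the open set of invertible operators.

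\emph{Low frequency, \eqref{da5}.} Fix $z_0\in[-\frac12,-r_0^2)$. For $|y|\ge y_2$, \eqref{sraa8} gives $\|Y_1\|_{\xi,\gamma}\le\frac12$, so the inverse has norm at most $2$. For $|y|\le y_2$ and $0<|\xi|<r_0$, the line $\re\lambda=z_0$ avoids $\lambda_j(s)$, because $\re\lambda_j\approx-\frac12 s^2$ or $-\frac34s^2$, both greater than $-r_0^2$. It also avoids $\tilde\eta_j$, whose real parts are $0$ or close to $-\frac12$ and $-1$; taking $z_0$ away from $-\frac12$, or using the explicit formula, handles this. So $I-Y_1=(\lambda-\M)(\lambda P_A-G_3+\lambda P_B-G_4)^{-1}$ is invertible. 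The difficulty is that $\xi\to0$ is not compact, since $Y_1$ contains $|\xi|^{-2}$ terms. I would handle this using the reduced system: $I-Y_1$ is invertible if and only if the finite-dimensional problem \eqref{lfae3} is nondegenerate at $\lambda$. Restricting to the range of $P_A$, where $Y_1$ acts through the $5\times5$ matrix $\mathbbm A(s)$ together with the $D_0$ block, I would write the inverse explicitly through $D_1(\lambda,s)^{-1}$ and $D_0(\lambda,s)^{-1}$. I would then check that these remain bounded, with entries behaving like $s^{\pm1}$ in a way compensated by the weight $|\xi|^{-2}|\tau_af-\rho|^2$ in $\|\cdot\|_{\xi,\gamma}$. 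This holds because $|D_1(\lambda,s)|\ge c>0$ uniformly for $s\le r_0$ and $\re\lambda=z_0$, $|y|\le y_2$, by the Weierstrass factorization: $|P(\lambda,s)|=|\lambda-\lambda_1||\lambda-\lambda_{-1}|\ge c$.

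I expect this last step to be the main obstacle: obtaining uniform-in-$s\to0$ bounds for $[I-Y_1]^{-1}$ in the weighted norm on the bounded $y$-range, where the $|\xi|^{-1}$ singularities of $G_3$ and $G_5$ must cancel against the weight.
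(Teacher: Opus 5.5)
Your argument for \eqref{da6} is essentially the paper's. You use a uniform bound from \eqref{sra8} and Lemma \ref{inver} when $|\xi|\ge r_1$ or $|y|$ is large, and norm-continuity plus compactness on the remaining compact set. Your justification of continuity, via the relative $A$-boundedness of $v\cdot(\xi-\xi')$, is more careful than the paper's. The paper uses $\|G_1(\xi)-G_1(\xi_0)\|$, although this operator contains the unbounded multiplication by $v\cdot(\xi-\xi_0)$.

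For \eqref{da5} you take a different, direct route, and as written its key step has a gap.
\begin{itemize}
\item The Weierstrass factorization $D_1=UP$ with $U\neq0$ is valid only on the small disc $B_{r_1}(0)$. The segment $\{z_0+iy:\ |y|\le y_2\}$ stays at distance more than $r_0^2$ from $0$ and is not contained in that disc. So $|\lambda-\lambda_{\pm1}(s)|\ge c$ gives no lower bound on $|D_1|$ there.
\item Even a lower bound on $|D_1|$ would not bound $(\lambda-\mathbbm{A}(s))^{-1}$, because the third row of $\mathbbm{A}(s)$ is $O(s^{-1})$. The phrase ``compensated by the weight'' has to be made precise.
\end{itemize}

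Both points can be fixed as follows.
\begin{itemize}
\item Since $G_5$ maps $\mathrm{Ran}\,P_A$ into $\mathrm{Ran}\,P_B$ and vice versa, $Y_1$ is block off-diagonal. A Schur complement then expresses $(I-Y_1)^{-1}$ through four pieces: $G_5(\lambda-G_4)^{-1}P_B$ (bounded by \eqref{sraa13}), $G_5$ (bounded by \eqref{sraa5}), $G_3$ (bounded in $\|\cdot\|_{\xi,\gamma}$), and $\mathcal{R}^{-1}$. Here $\mathcal{R}=\lambda-G_3-G_5(\lambda-G_4)^{-1}G_5$ on $\mathrm{Ran}\,P_A$ is the operator of Remark \ref{eigen1}. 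The $(A,A)$ block is $(\lambda-G_3)\mathcal{R}^{-1}$ and the $(B,A)$ block is $G_5\mathcal{R}^{-1}$.
\item Pass to the variables $(\rho,\sigma,u)$ with $\sigma=(C_0-\rho)/s$. In these variables $\|\cdot\|_{\xi,\gamma}$ on $\mathrm{Ran}\,P_A$ is uniformly equivalent to the Euclidean norm, and the matrix becomes analytic in $(\lambda,s)$ up to $s=0$. For example, the row for $w=\omega\cdot u$ becomes $\lambda w=is(R_{11}-\gamma)\rho+i[1+(1+s^2)R_{11}]\sigma-(1+s^2+R_{11})w$.
\item The determinant is $D_1D_0^2$, and it does not vanish on the compact set $\{(z_0+iy,s):\ |y|\le y_2,\ 0\le s\le r_0\}$. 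At $s=0$ this holds because $D_1(\lambda,0)=\lambda^2(\lambda^2+2\lambda+2)/(\lambda+1)$ and $D_0(\lambda,0)=\lambda(\lambda+2)/(\lambda+1)$ vanish only at $0,-2,-1\pm i$. For $s>0$ it follows from Theorem \ref{srlem13} together with Remark \ref{eigen1}.
\item Continuity on this compact set then gives the uniform bound.
\end{itemize}

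The paper reaches the same conclusion by contradiction. From normalized sequences it forms $W_n=(\lambda_n-G_3)^{-1}P_AV_n$ and extracts $A_3=\lim(C_0^n-\rho_n)/s_n$, which is exactly your $\sigma$. If $s_n\to0$, it arrives at \eqref{da16}, which is the $s=0$ rescaled system above and is ruled out by the same two formulas. If $s_n\to s_0>0$, it produces an eigenvalue on the line.

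The contradiction argument avoids explicit inverses. The direct route costs more bookkeeping but is quantitative. Because it uses only $G_5\mathcal{R}^{-1}$ and never $(\lambda-G_3)^{-1}$ alone, it also covers the endpoint $z_0=-\frac12$. There $\re\tilde\eta_{\pm1}=-\frac12(1+s^2)$ makes $\|(\lambda-G_3)^{-1}\|_{\xi,\gamma}$ unbounded as $s\to0$. This affects both your proposed bound on $\|Y_1\|_{\xi,\gamma}$ and the paper's bound $\|W_n\|_{\xi_n,\gamma}\le C_1$. Your caveat about $z_0=-\frac12$ is therefore legitimate, but harmless, since later only $z_0=-\kappa_2\in(-\frac12,-r_0^2)$ is used.
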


\begin{proof}
Let $\lambda=z+iy$, $(z,y)\in\mathbb{R}\times\mathbb{R}$, where $z$ satisfies
$$
z\in\(-\frac{1}{2},-r_0^2\),~~|\xi|\le r_0;~~z\in [-\beta,0) ,~~|\xi|> r_0.
$$
From \eqref{egexpand}, we have
$$
 0<-\re\lambda_j(|\xi|)\le r_0^2,~~ |\xi|\le r_0, ~-1\le j\le2.
$$
Thus, by Lemma \ref{srlem5} and Lemma \ref{srlem13}, we  conclude that $\lambda\in\rho(\mathbbm{M}(\xi))$, namely $\lambda-\mathbbm{M}(\xi)$ is invertible. Thus,  we have by \eqref{sra9} and \eqref{sraa8} that $I-G_2(\xi)(\lambda-G_1(\xi))^{-1}$ and $I-Y_1(\lambda,\xi)$ are also invertible.

Firstly, we want to show \eqref{da6}. By Lemma \ref{srlem5}, there exist $r_1,R>0$ such that for $|\xi|>r_1$ or $|y|\ge R$,
\be
\|[I-G_2(\xi)(\lambda-G_1(\xi))^{-1}]^{-1}\|\le2. \label{bound}
\ee
Thus, we only need to prove \eqref{da6} when $|\xi|\in[r_0,r_1]$ and $|y|\le R$. For this, we first show that $H(\lambda,\xi)=[I-G_2(\xi)(\lambda-G_1(\xi))^{-1}]^{-1}$ is continuous with respect to $(\lambda,\xi)$. By using
$$
H(\lambda,\xi)-H(\lambda_0,\xi_0)
=H(\lambda,\xi)[G_2(\xi)(\lambda-G_1(\xi))^{-1}-G_2(\xi_0)(\lambda_0-G_1(\xi_0))^{-1}]H(\lambda_0,\xi_0),
$$
we only need to show $\|[G_2(\xi)(\lambda-G_1(\xi))^{-1}-G_2(\xi_0)(\lambda_0-G_1(\xi_0))^{-1}]\|\to0$ as $(\lambda,\xi)\to(\lambda_0,\xi_0)$. Indeed,
\bmas
& \|[G_2(\xi)(\lambda-G_1(\xi))^{-1}-G_2(\xi_0)(\lambda_0-G_1(\xi_0))^{-1}]\|
\\
&\le\|G_2(\xi)[(\lambda-G_1(\xi))^{-1}-(\lambda_0-G_1(\xi))^{-1}]\|+\|[G_2(\xi)-G_2(\xi_0)](\lambda_0-G_1(\xi))^{-1}\|
\\
&\quad+\|G_2(\xi_0)[(\lambda_0-G_1(\xi))^{-1}-(\lambda_0-G_1(\xi_0))^{-1}]\|:=I_1+I_2+I_3.
\emas
For $I_k$, $k=1,2,3,$ it holds that
$$I_1=|\lambda-\lambda_0|\|G_2(\xi) (\lambda-G_1(\xi))^{-1} (\lambda_0-G_1(\xi))^{-1} \|\to0,\quad \lambda\to\lambda_0,$$
$$I_2\le\|G_2(\xi)-G_2(\xi_0)\| \|(\lambda_0-G_1(\xi))^{-1}\|\to0,\quad \xi\to\xi_0,$$
and
$$I_3\le \|G_2(\xi_0)\|\|(\lambda_0-G_1(\xi))^{-1}\|\|G_1(\xi)-G_1(\xi_0)\| \|(\lambda_0-G_1(\xi_0))^{-1}\|\to0,\quad \xi\to\xi_0.$$
Thus, $H(\lambda,\xi)$ is continuous with respect to $(\lambda,\xi)$. Since $\Omega=\{(\lambda,\xi)\mid \re \lambda=-\beta , |\im \lambda|\le R, |\xi|\in[r_0,r_1]\}$ is a compact set, we can conclude that
$\|[I-G_2(\xi)(\lambda-G_1(\xi))^{-1}]^{-1}\|$ is bounded for $(\lambda,\xi)\in \Omega$. This together with \eqref{bound} implies \eqref{da6}.

Next, we will show \eqref{da5}. By \eqref{sraa8}, there exists $R>0$ such that for $|y|\ge R$,
$$
\|[I-Y_1(\lambda,\xi)]^{-1}\|_{\xi,\gamma}\le2.
$$
Thus, it's sufficient to prove \eqref{da6} for $|y|\le R$. If \eqref{da5} doesn't hold for $|y|\le R$, there are $\xi_n=s_n\omega$, $s_n=|\xi_n|$, $\lambda_n=z_0+iy_n$ with $0<|\xi_n|\le r_0$, $|y_n|\le R$, and $U_n,V_n$ with $\|U_n\|_{\xi_n,\gamma}\to0~(n\to\infty)$ and $\|V_n\|_{\xi_n,\gamma}=1$ such that
$$
[I-Y_1(\lambda_n,{\xi_n})]^{-1}U_n=V_n.
$$
This gives
$$
U_n=V_n-G_5({\xi_n})(\lambda_n-G_4({\xi_n}))^{-1}P_BV_n-G_5({\xi_n})(\lambda_n -G_3({\xi_n}))^{-1}P_AV_n,
$$
and then
\bma
P_AU_n&=P_AV_n-G_5({\xi_n})(\lambda_n-G_4({\xi_n}))^{-1}P_BV_n,\label{da7}
\\
P_BU_n&=P_BV_n-G_5({\xi_n})(\lambda_n -G_3({\xi_n}))^{-1}P_AV_n.\label{da8}
\ema
Let \be\label{da9}
W_n=(\lambda_n -G_3({\xi_n}))^{-1}P_AV_n\Longleftrightarrow P_AV_n=\lambda_nW_n-G_3({\xi_n})W_n.
\ee
By substituting \eqref{da8}  to \eqref{da7}, we btain
$$
\lambda_n W_n-G_3(\xi_n)W_n-G_5(\xi_n)(\lambda_n -G_4(\xi_n))^{-1}G_5W_n=P_AU_n+G_5(\xi_n)(\lambda_n -G_4(\xi_n))^{-1}P_BU_n.
$$
Since $\|U_n\|_{\xi_n,\gamma}\to0~(n\to\infty)$, it follows that
\be\label{da10}
\lim\limits_{n\to\infty}\|\lambda_n W_n-G_3(\xi_n)W_n-G_5(\xi_n)(\lambda_n -G_4(\xi_n))^{-1}G_5(\xi_n)W_n\|_{\xi_n,\gamma}=0.
\ee
Since for any $U\in Z_{\xi,\gamma}$,
$$\min_{-1\le j\le 3}|\lambda-\tilde{\eta}_j|^{-1}\|U\|_{\xi,\gamma}\le \|(\lambda -G_3({\xi}))^{-1}U\|_{\xi,\gamma}\le \max_{-1\le j\le 3}|\lambda-\tilde{\eta}_j|^{-1}\|U\|_{\xi,\gamma},$$
it follows that there are two constants $C_0,C_1>0$ such that
$$
C_0\le\|W_n\|_{\xi_n,\gamma}\le C_1.
$$
We can write $W_n=(C_0^n\chi_0,\rho_n,u_n)$. Then, there exists a subsequence $\{n_j\}\subset \{n\}$ such that $C_0^{n_j}\to A_0$, $\rho_{n_j}\to A_1$, $u_{n_j}\to A_2$, $\frac{1}{s_{n_j}}(C_0^{n_j}-\rho_{n_j})\to A_3$, $\xi_{n_j}\to\xi_0=s_0\omega$, $y_{n_j}\to y_0$ with $(A_0,A_1,A_2,A_3)\neq0$.  If $s_0\neq0$, then it follows from \eqref{da10} that $W_0=(A_0\chi_0,A_1,A_2)\neq0$  satisfies
$$\lambda_0 W_0-G_3(\xi_0)W_0-G_5(\xi_0)(\lambda_0 -G_4(\xi_0))^{-1}G_5(\xi_0)W_0=0,$$
 which together with Remark \ref{eigen1} implies that $W_0\ne 0$ is the eigenvector of $\mathbbm{M}(\xi_0)$ corresponding to the eigenvalue $\lambda_0=z_0+iy_0$.
 This is a contradiction. If $s_0=0$, then $A_0-A_1=0$, and $(A_0,A_1,A_2,A_3)\neq0$ satisfies the following equations with $\lambda_0=z_0+iy_0$ that
\be\label{da16}\left\{\ball
&\lambda_0A_0=\lambda_0A_1=0,
\\&(\lambda_0+1)A_2-i\omega[1+R_{11}(\lambda_0,0)]A_3+R_{11}(\lambda_0,0)\omega(\omega\cdot A_2)+R_{22}(\lambda_0,0)[A_2-\omega(\omega\cdot A_2)]=0,
\\&[\lambda_0-R_{11}(\lambda_0,0)]A_3-i[1+R_{11}(\lambda_0,0)](\omega\cdot A_2)=0,
\eall\right.
\ee
which implies $A_0=A_1=0$. Thus, if $(0,0,(\omega\cdot A_2),A_3)\neq0$, then the eigenvalue problem satisfies
$$
\lambda_0^2+\lambda_0+1+R_{11}(\lambda_0,0)=\frac{\lambda_0}{\lambda_0+1}(\lambda_0^2+2\lambda_0+2)=0,
$$
and if $(0,0,[A_2-(\omega\cdot A_2)],0)\neq0$, then
$$
\lambda_0+1+R_{22}(\lambda_0,0)=\frac{1}{\lambda_0+1}\lambda_0(\lambda_0+2)=0.
$$
Thus, $\lambda_0=-1\pm i,0$ or $\lambda_0=-2,0$, respectively, which leads to a contradiction.    The proof of the lemma is completed.
\end{proof}
\subsection{Local Existence}
We now consider the local existence of the solution to the VPFP/NSP system \eqref{introe2}. We denote $U^n(t,x,v):=(f^n,\rho^n,u^n)(t,x,v)$, and consider the following iterating system motivated by \cite{gyvpb,gyvmb,lxliu3,livfpnsbd,ns1,zhaovpb}. Let $U^0=(f^0,\rho^0,u^0)=0$, $\Phi^0=0$ and
\be\label{locale1}
\left\{\ball
&\partial_tf^{n+1}+v\cdot \nabla_xf^{n+1}-Lf^{n+1}= (\nabla_x\Phi^n+ u^n)\cdot\(\frac{1}{2}vf^n-\nabla_vf^n+v\chi_0\),
\\&\partial_t\rho^{n+1}+u^n\cdot\tdx\rho^{n+1}+(1+\rho^n)\divx u^{n+1}=0,
\\&\ball\dt u^{n+1}-\frac{1}{1+\rho^n}\ddx u^{n+1}=&-u^n\cdot\tdx u^n-\frac{p'(1+\rho^n)}{1+\rho^n}\tdx\rho^n+\frac{1}{1+\rho^n}(\tau_bf^n-u^n)
\\&-\frac{1}{1+\rho^n}u^n\tau_af^n-\tdx\Phi^n,\eall
\\&\ddx\Phi^{n+1}=\tau_af^{n+1}-\rho^{n+1},
\eall\right.
\ee
with $n\ge 0$ and the initial data
$$
U^{n+1}(0,x,v) =U_0=(f_0(x,v),\rho_0(x),u_0(x)).
$$

We introduce the solution space as follows
$$
X_N(0,T;M_0):=\left\{\ball
&f\in C([0,T],L^2_v(H^N_x)),~(\rho,u,\tdx\Phi)\in C([0,T],H^N_x),
\\&\E_N(U):=\|f\|_{L^2_v(H^N_x)}^2+\|(\rho,u,\tdx\Phi)\|_{H^N_x}^2,~\sup_{0\le t\le T}\E_N(U(t))\le 2M_0.
\eall\right\}.
$$
\begin{thm}\label{local}(Local existence) Let $N=3$. There exist sufficiently small constants $M_0$ and $T^*>0$ such that, for $t\in [0,T^*]$ and
$\E_3(U_0)\le M_0$, the Cauchy problem for the VPFP/NSP system \eqref{introe2} and \eqref{initial} admits a unique local solution $U(t,x,v)$ satisfying
\be\label{localenergy}
U(t,x,v)\in X_3(0,T^*;M_0).
\ee
\end{thm}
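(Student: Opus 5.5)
The plan is to prove Theorem \ref{local} by the iteration scheme \eqref{locale1}. We show that the map $U^n\mapsto U^{n+1}$ preserves $X_3(0,T^*;M_0)$ for suitably small $M_0$ and $T^*$, and that $\{U^n\}$ is Cauchy in the lower-order space $C([0,T^*],L^2_v(L^2_x))\times C([0,T^*],L^2_x)^2$. The limit is then a solution, and the uniform bound passes to it by weak-$*$ lower semicontinuity.

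\textbf{Solvability.} Given $U^n\in X_3(0,T;M_0)$, each equation in \eqref{locale1} is linear in the new unknown. The first is a linear Fokker--Planck equation with a known source. The second is a linear transport equation for $\rho^{n+1}$ with coefficient $u^n$ and source $-(1+\rho^n)\divx u^{n+1}$. The third is a linear parabolic equation for $u^{n+1}$ with coefficient $\frac{1}{1+\rho^n}\ge\frac12$ (Sobolev embedding and small $M_0$ keep $\rho^n$ small). Since the $u$-equation does not involve $\rho^{n+1}$, we solve it first, then the $\rho$-equation, then $f^{n+1}$, and finally obtain $\tdx\Phi^{n+1}=\tdx\ddx^{-1}(\tau_af^{n+1}-\rho^{n+1})$. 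Standard linear theory gives existence on $[0,T]$.

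\textbf{Uniform bound.} We apply $\dxa$, $|\alpha|\le3$, and take $L^2$ energy estimates as in \eqref{geine2}--\eqref{geine3}.
\begin{itemize}
\item For $f^{n+1}$, the term $-(Lf,f)\ge\mu_0\|P_rf\|_{L^2_\sigma}^2$ gives dissipation. On the right side, $\tdv f^n$ and $vf^n$ are controlled after integrating $\tdv$ by parts onto $\dxa f^{n+1}$. This produces $\|\dxa f^n\|_{L^2_\sigma}$-type norms, so the iteration space should also carry $\int_0^T\|f\|_{L^2_\sigma(H^3_x)}^2dt$. The resulting bound has the form $C\sqrt{M_0}\,(\sup\E_3+\int D)$.
\item For $u^{n+1}$, the viscosity yields $\int_0^T\|\tdx u^{n+1}\|_{H^3_x}^2$. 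The pressure term $\tdx\rho^n$ and the force $\tdx\Phi^n$ contribute $CT M_0$.
\item For $\rho^{n+1}$, we use the commutator estimate $[\dxa,u^n\cdot\tdx]$. The top-order term $(1+\rho^n)\dxa\divx u^{n+1}$ is absorbed by the viscous dissipation of $u^{n+1}$ via Young's inequality, paying a factor $CT$.
\item For $\tdx\Phi^{n+1}$, we use $\|\dxa\tdx\Phi^{n+1}\|^2=\|\dxa(\tau_af^{n+1}-\rho^{n+1})\|_{\dot H^{-1}}^2$ and estimate its time derivative through the continuity relations $\dt\tau_af^{n+1}=-\divx\tau_bf^{n+1}$ and $\dt\rho^{n+1}=-\divx(\cdots)$, as in \eqref{0jiedian}.
\end{itemize}
Gronwall's inequality then gives $\sup_{[0,T]}\E_3(U^{n+1})\le \E_3(U_0)+C(T+\sqrt{M_0})(M_0+\sup\E_3(U^{n+1}))$. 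This is at most $2M_0$ once $T^*$ and $M_0$ are small.

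\textbf{Contraction and uniqueness.} We take differences $U^{n+1}-U^n$ and estimate them in $L^2$ (together with $\|\tdx(\Phi^{n+1}-\Phi^n)\|_{L^2_x}$), using the $H^3$ bounds on all iterates to control the coefficients. This yields $\sup_{[0,T]}\|U^{n+1}-U^n\|^2\le C(T+\sqrt{M_0})\sup_{[0,T]}\|U^n-U^{n-1}\|^2$, a contraction. Uniqueness follows from the same difference estimate applied to two solutions.

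\textbf{Main obstacle.} The hardest step is the velocity-derivative loss in the term $(\tdx\Phi^n+u^n)\cdot\tdv f^n$, which also appears in the difference equation. The first approach is to integrate by parts in $v$, transferring the derivative to $f^{n+1}-f^n$ so that it is absorbed by $L^2_\sigma$ dissipation. Alternatively, we can put $\tdv f^n$ on the right, controlled by the $\int_0^T\|f^n\|^2_{L^2_\sigma}$ already in the iteration norm. If either fails, we fall back to the Green's function bounds of Lemma \ref{green1}.
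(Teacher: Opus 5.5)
Your proposal is correct and follows essentially the paper's argument: the same iteration \eqref{locale1}, an induction keeping the iterates in $X_3(0,T^*;M_0)$ while carrying the time-integrated dissipation $\int_0^{T^*}\big(\|P_rf^n\|_{L^2_\sigma(H^3_x)}^2+\|\tdx u^n\|_{H^3_x}^2\big)dt\le CM_0$, a contraction for the differences at lower regularity (forced by terms such as $\tilde u^n\cdot\tdx\rho^n$), weak lower semicontinuity for the limit, and uniqueness from the same difference estimate. You deviate in two small ways. First, you contract in $L^2$ rather than the paper's $H^2$; this works equally well once interpolation with the uniform $H^3$ bound supplies the strong convergence needed to pass to the limit in the nonlinear terms. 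Second, you do not yet check that $\E_3(U(t))$ is continuous in time for the limit, which is needed to land in $X_3$ rather than only in $L^\infty(0,T^*;H^3)$; the paper handles this the same way as for the iterates.
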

\begin{proof}
First, we claim that   $U^n\in X_3(0,T^*;M_0)$. By taking the inner product between $\chi_0$ and $\eqref{locale1}_1$, we have
$$\dt\tau_af^{n+1}+\divx\tau_bf^{n+1}=0,$$
and then
\bma\label{dianchang1}
&\nnm-\intr\dxa\tdx\Phi^{n+1}\cdot\dxa(\tau_bf^{n+1}-u^{n+1}) dx=\intr\dxa\Phi^{n+1}\dxa\divx(\tau_bf^{n+1}-u^{n+1}) dx
\\&=\frac{1}{2}\Dt\|\dxa\tdx\Phi^{n+1}\|_{L^2_x}^2+\intr\dxa\Phi^{n+1}\dxa(u^n\cdot\tdx\rho^{n+1}+\rho^n\divx u^{n+1})dx.
\ema
For $n=0$, applying $\dxa$ with $0\le|\alpha|\le N$ to $\eqref{locale1}_1$, $\eqref{locale1}_2$, $\eqref{locale1}_3$, respectively, and then multiplying  them by $\dxa f^1,\dxa\rho^1$ and $\dxa u^1$, respectively, taking integration and summation, using \eqref{dianchang1}, we obtain
\bma\label{u1energy}
&\frac12\Dt\(\|\dxa f^1\|^2_{L^2_{x,v}}+\|\dxa(\rho^1,u^1,\tdx\Phi^1)\|^2_{L^2_x}\)+\intr(-L\dxa f^1,\dxa f^1)dx+\|\dxa\tdx u^1\|_{L^2_x}^2\nnm
\\&=-\intr\dxa\rho^1\dxa\divx u^1dx-\intr\dxa\tdx\Phi^1\cdot\dxa(\tau_bf^1-u^1)dx\nnm
\\&\le \frac12\|\dxa\tdx u^1\|_{L^2_x}^2+\frac12\|\dxa\rho^1\|_{L^2_x}^2+ \|\dxa(\tau_bf^1,u^1,\tdx\Phi^1)\|_{L^2_x}^2.
\ema
Summing up the result above over $|\alpha|\le 3$,
$$
\Dt \E_3(U^1(t))+\mu_0 \D_3(U^1(t))\le C\E_3(U^1(t)),
$$
where
$$
\D_N(U)=  \| P_rf\|_{L^2_{\sigma}(H^N_x)}^2+\| \tdx u\|_{H^N_x}^2.
$$
Integrating the result above over $[0,T^*]$ and taking the supremum in time, we have
$$
\sup\limits_{0\le t\le T^*}\E_3(U^1(t))+ \mu_0 \int_0^{T^*}\D_3(U^1(t))dt \le M_0+CT^*\sup\limits_{0\le t\le T^*}\E_3(U^1(t)).
$$
Thus, we choose $T^*>0$ sufficiently small  such that $CT^*<1/2$, and hence,
$$
\sup\limits_{0\le t\le T^*}\E_3(U^1(t))\le\frac{M_0}{1-CT^*}\le 2M_0.
$$
Moreover, it holds that $\int_0^{T^*}\D_3(U^1(t))dt\le CM_0$. On the other hand, combined with
\bmas
(Lf,f)=-\|\tdv P_rf\|^2-\frac14\|vP_rf\|^2+\frac32\|P_rf\|^2\ge-\|P_rf\|_{L^2_\sigma}^2,
\emas
we have by \eqref{u1energy}
\bmas
&\frac12\Dt\(\|\dxa f^1\|^2_{L^2_{x,v}}+\|\dxa(\rho^1,u^1,\tdx\Phi^1)\|^2_{L^2_x}\)
\\&\ge -\|P_rf\|_{L^2_\sigma}^2-\frac32\|\dxa\tdx u^1\|_{L^2_x}^2-\frac12\|\dxa\rho^1\|_{L^2_x}^2- \|\dxa(\tau_bf^1,u^1,\tdx\Phi^1)\|_{L^2_x}^2,
\emas
and then
$$
\Dt \E_3(U^1(t))\ge-C[\D_3(U^1(t))+\E_3(U^1(t))].
$$
Thus, the continuity of $\E_3(U^1(t))$ follows from
\bma\label{u1lianxu}
\left|\E_3(U^1(t))-\E_3(U^1(s))\right|=\left|\int_s^t\Dtau\E_3(U^1(\tau)d\tau\right|\le C\int_s^tD_3(U^1(\tau))d\tau+CM_0|t-s|\to0,~t\to s.
\ema
Combining the results above, we obtain $U^1\in X_3(0,T^*;M_0)$.

Suppose that $U^{k}\in X_3(0,T^*;M_0)$ and $\int_0^{T^*}\D_3(U^k(t))dt\le CM_0$ 
for $k\le n$. For $k=n+1$, similar to the treatment of \eqref{geine3}, applying $\dxa$ with $0\le|\alpha|\le N$ to $\eqref{locale1}_1$, $\eqref{locale1}_2$, $\eqref{locale1}_3$, respectively, and then multiplying  them by $\dxa f^{n+1},\dxa\rho^{n+1}$ and $\dxa u^{n+1}$, respectively, making use of \eqref{dianchang1}, taking integration and summation, we obtain 
\bmas
&\frac12\Dt\(\|f^{n+1}\|_{L^2_v(H^3_x)}^2+\|(\rho^{n+1},u^{n+1},\tdx\Phi^{n+1})\|_{H^3_x}^2\)+\mu_0\(\|P_rf^{n+1}\|_{L^2_\sigma(H^3_x)}^2+\|\tdx u^{n+1}\|_{H^3_x}^2\)
\\&\le C\(1+\|(\rho^n,u^n,\tdx\Phi^n)\|_{H^3_x}^2\)\(\|f^{n+1}\|_{L^2_v(H^3_x)}^2+\|(\rho^{n+1},u^{n+1},\tdx\Phi^{n+1})\|_{H^3_x}^2\)
\\&\quad+C\|(\rho^n,u^n,\tdx\Phi^n)\|_{H^3_x}^2(1+\|\tau_af^n\|_{H^3_x}^2)+C\|(\tau_af^n,\tau_bf^n,u^n)\|_{H^3_x}^2
\\&\quad+C\|(\rho^n,u^n,\tdx\Phi^n)\|_{H^3_x}^2\(\|P_rf^n\|_{L^2_\sigma(H^3_x)}^2+\|\tdx u^n\|_{H^3_x}^2\),
\emas
and hence
\bma
&\nnm\frac12\Dt \E_3(U^{n+1}(t))+\mu_0\D_3(U^{n+1}(t))
\\
&\le C[1+\E_3(U^n(t))]\E_3(U^{n+1}(t))+C\E_3(U^n(t))[1+C\E_3(U^n(t))
+\D_3(U^n(t))].\label{unenergy}
\ema
Integrating \eqref{unenergy} over $[0,T^*]$ and taking the supremum in time yields
\be\label{intunengry}
[1-CT^*(1+M_0)]\sup\limits_{0\le t\le T^*}\E_3(U^{n+1}(t))\le M_0+CT^*(M_0+M_0^2)+CM_0^2.
\ee
By choosing $T^*>0$ sufficiently small, we obtain
$$
\sup\limits_{0\le t\le T^*}\E_3(U^{n+1}(t))\le 2M_0.
$$
Moreover, it holds that $\int_0^{T^*}\D_3(U^{n+1}(t))dt\le CM_0$. Similar to the treatment of \eqref{u1lianxu}, the continuity of $\E_3(U^{n+1}(t))$ follows from
\bma
&\nnm|\E_3(U^{n+1}(t))-\E_3(U^{n+1}(s))|=\left|\int_s^t\Dtau\E_3(U^{n+1}(\tau)d\tau\right|
\\&\label{unlianxu}\le CM_0\int_s^t[\D_3(U^n(t))+\D_3(U^{n+1}(t))]d\tau+C(M_0+M_0^2)|t-s|\to0,~t\to s.
\ema
Combining the results above, we obtain $U^{n+1}(t)\in X_3(0,T^*;M_0)$ with $n\in\N$. This completes the induction.

Denote
$$
\tilde{U}^1=U^1,~~\tilde{U}^{n+1}=(\tilde{f}^{n+1},\tilde{\rho}^{n+1},\tilde{u}^{n+1}):=(f^{n+1}-f^n,\rho^{n+1}-\rho^n,u^{n+1}-u^n),~~n\ge1,
$$
and study the following system of $\tilde{U}^{n+1}~(n\ge1)$
\bmas
&\partial_t\tilde{f}^{n+1}+v\cdot\nabla_x\tilde{f}^{n+1}-L\tilde{f}^{n+1}
\\&\quad=(\tdx\Phi^{n-1}+u^{n-1})\cdot\(\frac12v\tilde{f}^n-\tdv \tilde{f}^n\)+(\tdx\tilde{\Phi}^n+\tilde{u}^n)\cdot\(\frac12vf^n-\tdv f^n+v\chi_0\),
\\
&\dt \tilde{\rho}^{n+1}+\divx \tilde{u}^{n+1}+u^n\cdot\tdx \tilde{\rho}^{n+1}+\tilde{u}^n\cdot\tdx\rho^n+\rho^n\divx \tilde{u}^{n+1}+\tilde{\rho}^n\divx u^n=0,
\\
&\dt \tilde{u}^{n+1}-\frac{1}{1+\rho^n}\ddx \tilde{u}^{n+1}
\\&\quad=-\tilde{u}^n\cdot\tdx u^n-u^{n-1}\cdot\tdx \tilde{u}^n-\tdx\tilde{\Phi}^n-\[\frac{p'(1+\rho^n)}{1+\rho^n}-\frac{p'(1+\rho^{n-1})}{1+\rho^{n-1}}\]\tdx\rho^n
\\&\quad\quad-\frac{p'(1+\rho^{n-1})}{1+\rho^{n-1}}\tdx\tilde{\rho}^n
+\(\frac{1}{1+\rho^n}-\frac{1}{1+\rho^{n-1}}\)(\ddx u^n+\tau_bf^n-u^n-\tau_af^nu^n)
\\&\quad\quad+\frac{1}{1+\rho^{n-1}}(\tau_b\tilde{f}^n-\tilde{u}^n-\tau_a\tilde{f}^nu^n-\tau_af^{n-1}\tilde{u}^n)
\\
&\ddx\tilde{\Phi}^{n+1}=\tau_a\tilde{f}^{n+1}-\tilde{\rho}^{n+1},
\emas
with the initial data
$$
\tilde{U}^1(0,x,v)=U_0,~~\tilde{U}^{n+1}(0,x,v)=0,~~n\ge1.
$$
Since $U^n$ only has $H^3_x$ regularity, we can only establish an $H^2_x$ estimate for $\tilde{U}^{n+1}$ due to the terms such as $\tilde{u}^n\cdot\nabla_x \rho^n$. With this, we now show that $U^n$ is a Cauchy sequence in $X_2(0,T^*;M_0)$. It is easy to verify that $\tilde{U}^1(t)\in X_2(0,T^*;M_0)$. Similar to the process of \eqref{unenergy}, we obtain
\bmas
&\frac12\Dt\(\| \tilde{f}^{n+1}\|_{L^2_v(H^2_x)}^2+\| (\tilde{\rho}^{n+1},\tilde{u}^{n+1},\tdx\tilde{\Phi}^{n+1})\|_{H^2_x}^2\)+\mu_0\(\| P_r\tilde{f}^{n+1}\|_{L^2_\sigma(H^2_x)}^2+\| \tdx \tilde{u}^{n+1}\|_{H^2_x}^2\)
\\&\le C\(1+\|(\rho^n,u^n)\|_{H^3_x}^2\)\(\|\tilde{f}^{n+1}\|_{L^2_v(H^2_x)}^2+\|(\tilde{\rho}^{n+1},\tilde{u}^{n+1},\tdx\tilde{\Phi}^{n+1})\|_{H^2_x}^2\)
\\&\quad+C\(1+\|(\rho^{n-1},u^{n-1},\tdx\Phi^{n-1})\|_{H^2_x}^2\)\|\tilde{f}^n\|_{L^2_v(H^2_x)}^2
\\&\quad+C\(1+\|\rho^{n-1}\|_{H^3_x}^2+\|f^n\|_{L^2_v(H^3_x)}^2\)\|\tilde{\rho}^n\|_{H^2_x}^2
\\&\quad+C\(1+\|(f^{n-1},f^n)\|_{L^2_v(H^3_x)}^2+\|\rho^{n-1}\|_{H^3_x}^2+\|f^n\|_{L^2_v(H^2_x)}^2\|\rho^n\|_{H^2_x}^2\)\|\tilde{u}^n\|_{H^2_x}^2
\\&\quad+C\(1+\|f^n\|_{L^2_v(H^2_x)}^2\)\|\tdx\tilde{\Phi}^n\|_{H^2_x}^2.
\emas
Similar to the treatment of \eqref{intunengry}, we have
$$
[1-CT^*(1+M_0)]\sup\limits_{0\le t\le T^*}\E_2(\tilde{U}^{n+1}(t))\le CT^*(1+M_0+M_0^2)\sup\limits_{0\le t\le T^*}\E_2(\tilde{U}^{n}(t)).
$$
By choosing $T^*>0$ sufficiently small, we obtain for $\tilde{\kappa}_1\in(0,1)$ that
\be\label{Cauchyine}
\sup\limits_{0\le t\le T^*}\E_2(\tilde{U}^{n+1}(t))\le \tilde{\kappa}_1\sup\limits_{0\le t\le T^*}\E_2(\tilde{U}^{n}(t)),
\ee
which implies that $U^n=(f^n,\rho^n,u^n) $ is a Cauchy sequence in the Banach space $X_2(0,T^*;M_0)$. Hence, there exists a limit function $U=(f,\rho,u)\in X_2(0,T^*;M_0)$ such that $U^n\to U$ strongly in $X_2(0,T^*;M_0)$ as $n\to\infty$, which solves the Cauchy problem \eqref{introe2}. Moreover, the uniform bound $\sup\limits_{0\le t\le T^*}\E_3(U^{n}(t))\le 2M_0$ admits a weakly convergent subsequence still denoted by $U^n=(f^n,\rho^n,u^n)$ such that $U^n\rightharpoonup U$ as $n\to\infty$. Combined with the weak lower semicontinuity of the Sobolev norm, we obtain
$$
\sup\limits_{0\le t\le T^*}\E_3(U(t))\le\liminf\limits_{n\to\infty} \sup\limits_{0\le t\le T^*}\E_3(U^{n}(t))\le 2M_0.
$$
The proof of the continuity of $\E_3(U(t))$ is similar to \eqref{unlianxu}. Thus, we have $U=(f,\rho,u)\in X_3(0,T^*;M_0)$. For the uniqueness, let $\overline{U}=(\overline{f},\overline{\rho},\overline{u})\in X_3(0,T^*;M_0)$ be another solution. Similar to the process of  \eqref{Cauchyine}, we have for $\tilde{\kappa}_2\in(0,1)$ that
$$
\sup\limits_{0\le t\le T^*}\E_2(U(t)-\overline{U}(t))\le \tilde{\kappa}_2\sup\limits_{0\le t\le T^*}\E_2(U(t)-\overline{U}(t)),
$$
which implies $U=\overline{U}$. The proof of theorem is completed.
\end{proof}

\medskip
\noindent {\bf Acknowledgements:}  The first and third authors are supported by  the special foundation for Guangxi Ba Gui Scholars, and the National Natural Science Foundation of China  (No.  12171104). The second author is supported by the National Natural Science Foundation of China (No. 12331007).




\begin{thebibliography}{99}
\setlength{\itemsep}{-4pt}
\renewcommand{\baselinestretch}{1}
\small
\bibitem{dlzt1}F. Aminmansoor, H. Abbasi: Hybrid (Vlasov-Fluid) simulation of ion-acoustic soliton chain formation and validity of Korteweg de-Vries model, Phys. Plasmas, 22, 082108(2015).
\bibitem{pphy3}C. Baranger, L. Boudin, P.E. Jabin, S.Mancini: A modeling of biospray for the upper airways, ESAIM Probab. Stat., 14(2005), 41-47.
\bibitem{pphy4}S. Berres, R. B\"urger, E.M. Tory: Mathematical model and numerical simulation of the liquid fluidization of polydisperse solid particle mixtures, Comput. Vis. Sci., 6(2004), 67-74.
\bibitem{lxliu0}J.A. Carrillo, T. Goudon, P. Lafitte: Simulation of Fluid and Particles Flows: Asymptotic Preserving Schemes for Bubbling and Flowing Regimes, J. Comput. Phys., 227(2008), 7929-7951.
\bibitem{lxliu5}J.A.Carrillo, R.J. Duan, A. Moussa: Global classical solution close to equillibrium to the Vlasov-Euler-Fokker-Planck system, Kinet. Relat. Models, 4(2011), 227-258.
\bibitem{lxliu6}L. Chen, F.C. Li, Y. Li, N. Zamponi: Global Weak Solutions to the Vlasov-Poisson-Fokker-Planck-Navier-Stokes System, Math. Methods Appl. Sci., 46(2023), 2729-2745.
\bibitem{lxliu4}R.J. Duan, S.Q. Liu: Cauchy problem on the Vlasov-Fokker-Planck equation coupled with the compressible Euler equations through the friction force, Kinet. Relat. Models, 6(2013), 687-700.
\bibitem{pphy5}G. Falkovich, A. Fouxon, M.G. Stepanov: Acceleration of rain initiation by cloud turbulence, Nature, 219(2002), 151-154.
\bibitem{dfb2}P. Griffiths, J. Harris: Principles of Algebraic Geometry, Hoboken, Wiley-Interscience, 1994.
\bibitem{gyvpb}Y. Guo: The Vlasov-Poisson-Boltzmann system near Maxwellians, Comm. Pure Appl. Math., 55(2002), 1104-1135.
\bibitem{gyvmb}Y. Guo: The Vlasov-Maxwell-Boltzmann system near Maxwellians, Invent. math., 153(2003), 593-630.
\bibitem{dlzt2}N. Javaheri, S. Rahimi, H. Abbasi: Hybrid (Kinetic-Fluid) Simulation Scheme Based on Method of Characteristics, (2015), arXiv: 1507.01178.
\bibitem{Kato}T. Kato: Perturbation Theory of Linear Operator, Springer, 1996.
\bibitem{lxliu3}F.C. Li, Y.M. Mu, D.H. Wang: Strong solutions to the compressible Navier-Stokes-Vlasov-Fokker-Planck equations: global existence near equilibrium and large time behavior, SIAM J. Math. Anal., 49(2017), 984-1026.
\bibitem{lxliu8}F.C. Li, J.K. Ni, D.H. Wang: Global well-posedness and inviscid limit of the compressible Navier-Stokes-Vlasov-Fokker-Planck system with density-dependent friction force, (2026),  arXiv:2603.07411.
\bibitem{nsp1}H.L. Li, A. Matsumura, G.J. Zhang: Optimal decay rate of the compressible Navier-Stokes-Poisson system in $\R^3$, Arch. Ration. Mech. Anal., 196(2010), 681-713.
\bibitem{vpfp1}H.L. Li, J.W. Sun, T. Yang, M.Y. Zhong: Large time behavior of solutions to Vlasov-Poisson-Landau (Fokker-Planck) equations (in Chinese), Sci. Sin. Math., 46(2016), 981-1004.
\bibitem{lxliu7}H.L. Li, T. Wang, Y. Wang: Wave phenomena to the three-dimensional fluid-particle model, Arch. Ration. Mech. Anal., 243(2022), 1019-1089.
\bibitem{zhonggfvpb}H.L. Li, T. Yang, M.Y. Zhong: Green's Function and Pointwise Space-time Behaviors of the Vlasov-Poisson-Boltzmann System, Arch. Rational. Mech. Anal., 235(2020), 1011-1057.
\bibitem{zhongspvpb}H.L. Li, T. Yang, M.Y. Zhong: Spectrum Analysis for the Vlasov-Poisson-Boltzmann System, Arch. Rational. Mech. Anal., 241(2021), 311-355.
\bibitem{zhongspbvpb} H.L. Li, T. Yang, M.Y. Zhong: Spectrum Analysis and Optimal Decay Rates ofthe Bipolar Vlasov-Poisson-Boltzmann Equations, Indiana Univ. Math. J., 65(2016), 665-725.
\bibitem{zhongspvmb}H.L. Li, T. Yang, M.Y. Zhong: Spectrum structure and behaviours of the Vlasov-Maxwell-Boltzmann systems, SIAM J. Math. Anal., 48(2016), 595-669.
\bibitem{livfpnsbd}H.L. Li, S.Q. Liu, T. Yang: The Navier-Stokes-Vlasov-Fokker-Planck System in Bounded Domains, J. Stat. Phys., 186(2022), 42.
\bibitem{glhsns}T.P. Liu, W.K. Wang: The Pointwise Estimates of Diffusion Wave for the Navier–Stokes Systems in Odd Multi-Dimensions, Comm. Math. Phys., 196(1998), 145–173.
\bibitem{glhsb}T.P. Liu, S.H. Yu: The Green’s function of Boltzmann equation, 3D waves. Bull. Inst. Math. Acad. Sin. (N. S.), 1(2006), 1–78.
\bibitem{yusplfp}L. Luo, H.J. Yu: Spectrum analysis of the linear Fokker-Planck equation, Anal. Appl., 15(2017), 313-331.
\bibitem{ns1}A. Matsumura, T. Nishida: The initial value problem for the equation of motion of viscous and heat-conductive gases. J. Math. Kyoto. Univ., 20(1980), 67-104.
\bibitem{lxliu1}A. Mellet, A. Vasseur: Global weak solutions for a Vlasov-Fokker-Planck/compressible Navier-Stokes system of equations, Math. Models Methods Appl. Sci., 17(2007)1039-1063.
\bibitem{lxliu2}A. Mellet, A. Vasseur: Asymptotic anslysis for a Vlasov-Fokker-Planck/Navier-Stokes system of equations, Comm. Math. Phys., 281(2008), 573-596.
\bibitem{Pazy}A. Pazy: Semigroups of Linear Operators and Applications to Partial Differential Equations, Springer, 1983.
\bibitem{dfb1}V. Scheidemann: Introduction to Complex Analysis in Several Variables, Springer, 2005.
\bibitem{ns2}Y.J. Wang, Z. Tan: Global existence and optimal decay rate for the strong solutions in $H^2$ to the compressible Navier–Stokes equations, Appl. Math. Lett., 24(2011), 1778-1784.
\bibitem{pphy2}F.A. Williams: Spray combustion and atomization, Phys. Fluid, 1(1958), pp.541-555.
\bibitem{zhaovpb}T. Yang, H.J. Yu, H.J. Zhao: Cauchy Problem for the Vlasov-Poisson-Boltzmann System. Arch. Rational, Mech. Anal., 182(2006), 415-470.
\bibitem{vpfp2} M.Y. Zhong: Green’s Function and the Pointwise Behaviors of the Vlasov-Poisson-Fokker-Planck System, Acta Math. Sci., 43(2023), 205-236.
\bibitem{vmfp1} M.Y. Zhong: Spectrum structure and behaviours of the Vlasov-Maxwell-Fokker-Planck system, (2026), preprint.
\end{thebibliography}
\end{document}